\theoremstyle{plain}
\newtheorem{theorem}{Theorem}
\newtheorem{corollary}[theorem]{Corollary}
\newtheorem{lemma}[theorem]{Lemma}
\newtheorem{proposition}[theorem]{Proposition}
\theoremstyle{remark}
\newtheorem{definition}[theorem]{Definition}
\newtheorem{assumption}[theorem]{Assumption}
\newtheorem{example}[theorem]{Example}
\newtheorem{remark}[theorem]{Remark}
\numberwithin{equation}{section}
\numberwithin{theorem}{section}
\newcommand{\R}{\mathbb{R}}
\newcommand{\Rd}{\mathbb{R}^{d}}
\newcommand{\Cbq}{C_{\textnormal{b},\textnormal{q}}(\mathbb{R}^{d})}
\newcommand{\Cq}{C_{\textnormal{q}}(\mathbb{R}^{d})}
\newcommand{\Cqaff}{C_{\textnormal{q}}^{\textnormal{aff}}(\mathbb{R}^{d})}
\newcommand{\conv}{\operatorname{conv}}
\newcommand{\supp}{\operatorname{supp}}
\newcommand{\aff}{\textnormal{aff}}
\newcommand{\W}{\mathcal{W}}
\newcommand{\E}{\mathbb{E}}
\newcommand{\MT}{\mathsf{MT}}
\newcommand{\Cpl}{\mathsf{Cpl}}
\newcommand{\D}{\mathcal{D}}
\newcommand{\bary}{\operatorname{bary}}
\newcommand{\lc}{\preceq_{\textnormal{c}}}
\newcommand{\MCov}{\textnormal{MCov}}
\newcommand{\pisbm}{\pi^{\textnormal{SBM}}}
\newcommand{\pidmt}{\pi^{\textnormal{DMT}}}
\newcommand{\Law}{\operatorname{Law}}
\newcommand{\dom}{\operatorname{dom}}
\newcommand{\PP}{\mathcal{P}}
\newcommand{\psiopt}{\psi_{\textnormal{opt}}}
\newcommand{\psixsbm}{\psi_{\textnormal{SBM}}^{x}}
\newcommand{\psilim}{\psi_{\textnormal{lim}}}
\newcommand{\tpsilim}{\tilde{\psi}_{\textnormal{lim}}}
\newcommand{\Clim}{C_{\textnormal{lim}}}
\newcommand{\Ilim}{I_{\textnormal{lim}}}
\newcommand{\Nlim}{\mathscr{N}_{\textnormal{lim}}}
\newcommand{\vxsbm}{v_{\textnormal{SBM}}^{x}}
\newcommand{\OB}{Ob{\l}{\'o}j}
\begin{document}

\begin{frontmatter}

\title{Existence of Bass martingales and the martingale Benamou--Brenier problem in \texorpdfstring{$\mathbb{R}^{d}$}{Rd}}

\runtitle{Bass martingales and martingale Benamou--Brenier}

\begin{aug}

\author[A]{\fnms{Julio}~\snm{Backhoff-Veraguas}\ead[label=e1]{julio.backhoff@univie.ac.at}},
\author[A]{\fnms{Mathias}~\snm{Beiglb\"ock}\ead[label=e2]{mathias.beiglboeck@univie.ac.at}},
\author[A]{\fnms{Walter}~\snm{Schachermayer}\ead[label=e3]{walter.schachermayer@univie.ac.at}},
\and
\author[A]{\fnms{Bertram}~\snm{Tschiderer}\ead[label=e4]{bertram.tschiderer@univie.ac.at}}

\address[A]{Faculty of Mathematics, University of Vienna\printead[presep={,\ }]{e1,e2,e3,e4}}

\end{aug}

\begin{abstract}
In classical optimal transport, the contributions of Benamou--Brenier and McCann regarding the time-dependent version of the problem are cornerstones of the field and form the basis for a variety of applications in other mathematical areas.

In this article, we characterize solutions to the martingale Benamou--Brenier problem as \emph{Bass martingales}, i.e.\ transformations of Brownian motion through the gradient of a convex function. Our result is based on a new (static) Brenier-type theorem for a particular weak martingale optimal transport problem. As in the classical case, the structure of the primal optimizer is derived from its dual counterpart, whose derivation forms the technical core of this article. A key challenge is that dual attainment is a subtle issue in martingale optimal transport, where dual optimizers may fail to exist, even in highly regular settings.
\end{abstract}

\begin{keyword}[class=MSC]
\kwd[Primary ]{60G42}
\kwd{60G44}
\kwd[; secondary ]{91G20}
\end{keyword}
\begin{keyword}
\kwd{Optimal transport}
\kwd{Brenier's theorem}
\kwd{Benamou--Brenier}
\kwd{Stretched Brownian motion}
\kwd{Bass martingale}
\kwd{dual attainment}
\end{keyword}

\end{frontmatter}

\section{Introduction}

Optimal transport as a field in mathematics goes back to Monge \cite{Mo81} and Kantorovich \cite{Ka42}, who established its modern formulation. The seminal results of Benamou, Brenier, and McCann \cite{Br87, Br91, BeBr99, Mc94} form the basis of the modern theory, with striking applications in a variety of different areas; see the monographs \cite{Vi03, Vi09, AmGi13, Sa15}. 

We are interested in transport problems where the transport plan satisfies an additional martingale constraint. This additional requirement arises naturally in finance, but is of independent mathematical interest. For example, there are notable consequences for the study of martingale inequalities (e.g.\ \cite{BoNu13,HeObSpTo12,ObSp14}) and the Skorokhod embedding problem (e.g.\ \cite{BeCoHu14, KaTaTo15, BeNuSt19}). Early articles on this topic of \textit{martingale optimal transport} include \cite{HoNe12, BeHePe12, TaTo13, GaHeTo13, DoSo12, CaLaMa14}. 

In view of the central role taken by the results of Benamou--Brenier \cite{BeBr99} and McCann \cite{Mc01} on optimal transport for squared (Euclidean) distance, the related continuous-time transport problem and McCann's displacement interpolation, it is paramount to search for similar concepts in the martingale context. This is the main motivation of the present article. Before describing our results, we briefly recap the classical role models. 

\subsection{Benamou--Brenier transport problem and McCann interpolation in probabilistic terms} 

Given $\mu, \nu$ in the space $\PP_{2}(\Rd)$ of $d$-dimensional distributions with finite second moment, Brenier's theorem asserts that if $\mu$ is absolutely continuous, the following are equivalent for a coupling $\pi \in \Cpl(\mu,\nu)$, i.e., a probability on $\Rd \times \Rd$ with marginals $\mu$, $\nu$:
\begin{enumerate}[label=(\arabic*)] 
\item $\pi$ minimizes the transport costs with respect to the squared distance between $\mu$ and $\nu$. 
\item $\pi$ is concentrated on the graph of a function $f \colon \Rd \rightarrow \Rd$ which is \textit{monotone} in the sense that $f = \nabla v$ for a convex function $v \colon \Rd \rightarrow (-\infty,+\infty]$.  
\end{enumerate} 
Brenier's theorem gives a structural description of the optimal transport plan for the most widely used cost function. Another essential consequence of the result is that it provides a particularly natural way to move probabilities: it implies that $\mu$ can be transported to $\nu$ via the gradient of a convex function. In fact, in many applications it is the mere existence of this monotone transport map that is required, irrespective of its optimality properties. 

Ideally, a martingale counterpart of Brenier's theorem should mimic these aspects. That is, starting from a natural optimization problem, our goal is to define a natural martingale that connects the probabilities $\mu$ and $\nu$. 

While \cite{BeJu16, HeTo13, GhKiLi19} have proposed martingale versions of Brenier's monotone transport map based on static transport problems, our starting point is a continuous formulation in the spirit of the Benamou--Brenier continuous-time transport problem given in \cite{BaBeHuKa20, HuTr17}. 

We first recapitulate the continuous-time formulation of Brenier's theorem in probabilistic language. For $\mu, \nu \in \PP_{2}(\Rd)$, consider
\begin{equation*} \label{MBBB} \tag{BB}
T_{2}(\mu, \nu) \coloneqq 
\inf_{\substack{X_{0} \sim \mu, \, X_{1} \sim \nu, \\ X_{t} = X_{0} + \int_{0}^{t} b_{s} \, ds}}
\mathbb{E}\Big[\int_{0}^{1} \vert b_{t} \vert^{2} \, dt\Big].
\end{equation*}
Under the above assumptions, \eqref{MBBB} has a unique optimizer (in law), and the following are equivalent for a process $X = (X_{t})_{0 \leqslant t \leqslant 1}$ with $X_{0} \sim \mu$, $X_{1} \sim \nu$: 
\begin{enumerate}[label=(\arabic*)] 
\item $X$ solves \eqref{MBBB}.
\item $X_{1} = f(X_{0})$, where $f$ is the gradient of a convex function $v \colon \Rd \rightarrow (-\infty,+\infty]$ and all particles move with constant speed, i.e.\ $X_{t}= X_{0} + b t$, $t \in [0,1]$, for the random variable $b = X_{1} -X_{0}$. 
\end{enumerate}
In this case, setting $\mu_t \coloneqq \Law(X_t)$, $t\in [0,1]$, defines McCann's displacement interpolation between $\mu$ and $\nu$. This interpolation is time-consistent in the sense that interpolation between $\mu_s$ and $\mu_t$, for $0\leqslant s< t\leqslant 1$, recovers the family $(\mu_r)_{s \leqslant r \leqslant t}$ up to the obvious affine-time transformation.

\subsection{Martingale optimization problem}

Assume that $\mu, \nu \in \PP_{2}(\Rd)$ are in convex order, in signs $\mu \lc \nu$, that is, $\int \phi \, d\mu \leqslant \int \phi \, d\nu$ for all convex functions $\phi \colon \Rd \rightarrow \R$ with linear growth. We consider the optimization problem 
\begin{equation*} \label{MBMBB} \tag{MBB}
MT(\mu, \nu) \coloneqq 
\inf_{\substack{M_{0} \sim \mu, \, M_{1} \sim \nu, \\ M_{t} = M_{0} + \int_{0}^{t}  \sigma_{s} \, dB_{s}}} 
\mathbb{E}\Big[\int_{0}^{1} \vert \sigma_{t} - I_{d} \vert^{2}_{\textnormal{HS}} \, dt\Big],
\end{equation*} 
where $\vert \cdot \vert_{\textnormal{HS}}$ denotes the Hilbert--Schmidt norm. Here, the infimum is taken over all filtered probability spaces $(\Omega,\mathcal{F},\mathbb{P})$, with $\sigma$ an $\mathbb{R}^{d \times d}$-valued $\mathcal{F}$-progressive process and $B$ a $d$-dimensional $\mathcal{F}$-Brownian motion such that $M$ is a martingale. The stochastic integral $\int_{0}^{t}  \sigma_{s} \, dB_{s}$ is in the It\^{o} sense.

The main result of \cite{BaBeHuKa20} is that \eqref{MBMBB} admits a unique optimizer $M^{\ast}$. Moreover, $M^{\ast}$ is a continuous strong Markov martingale. While \eqref{MBBB} implies that particles move along straight lines, the functional in \eqref{MBMBB} stipulates that $M^{\ast}$ maximizes the correlation with Brownian motion subject to the given marginal constraints. It is also shown in \cite{BaBeHuKa20} that $M^{\ast}$ is the process whose evolution follows the movement of a Brownian particle as closely as possible with respect to an \textit{adapted Wasserstein distance} (see, e.g., \cite{BaBaBeEd19a, Fo22a}) subject to the marginal conditions $M^{\ast}_{0} \sim \mu$ and $M^{\ast}_{1} \sim \nu$. As in the classical case, setting $\mu_t \coloneqq (\Law (M^{\ast}_t))_{0 \leqslant t \leqslant 1}$ defines a time-consistent interpolation between $\mu$ and $\nu$.
These properties motivate to interpret \eqref{MBMBB} as a Martingale Benamou--Brenier problem and to call the martingale $M^{\ast}$ \textit{stretched Brownian motion} between $\mu$ and $\nu$ as in \cite{BaBeHuKa20}.

\subsection{The Bass martingale}

Our main result relates the optimality property in the definition of stretched Brownian motion to a structural description. As motivation we recall a classical construction of Bass \cite{Ba83} which provides a particularly appealing martingale $M = (M_{t})_{0 \leqslant t \leqslant 1}$ that terminates in a fixed measure $\nu$ on the real line and starts at the barycenter of $\nu$. This amounts to a solution of the Skorokhod embedding problem (modulo time change). Let $(B_{t})_{0 \leqslant t \leqslant 1}$ be Brownian motion (started in $B_{0}=0$), let $\gamma \coloneqq \Law(B_{1})$ denote the standard Gaussian and define $f \colon \R \rightarrow \R$ as the $\gamma$-a.e.\ unique increasing mapping that pushes $\gamma$ to $\nu$. Bass then defines the martingale 
\begin{equation} \label{Brownian} 
M_{t} \coloneqq \E[f(B_{1}) \, \vert \, \sigma(B_{s} \colon s \leqslant t)]
= \E[f(B_{1}) \, \vert \, B_{t}], \qquad 0 \leqslant t \leqslant 1,
\end{equation}
so that $M_{0}$ starts at the barycenter of $\nu$ and $\Law(M_{1}) = \nu$. 

As we are interested in martingales in multiple dimensions with possibly non-degenerate starting law $\mu$, we consider the construction of Bass in the following generality: 

\begin{definition} \label{def:BassMarti_intro} Let $B = (B_{t})_{0 \leqslant t \leqslant 1}$ be  Brownian motion on $\Rd$, where $B_{0} \sim \alpha \in \PP(\Rd)$, and let $v \colon \Rd \rightarrow \R$ be convex such that $\nabla v(B_{1})$ is square-integrable. Then we call 
\[
M_{t} \coloneqq 
\E[\nabla v(B_{1}) \, \vert \, \sigma(B_{s} \colon s \leqslant t)]
= \E[\nabla v(B_{1}) \, \vert \, B_{t}], \qquad 0 \leqslant t \leqslant 1
\]
a \textit{Bass martingale} from $\mu \coloneqq \Law(M_{0})$ to $\nu \coloneqq \Law(M_{1})$.
\end{definition}

The question arises under which conditions on $\mu, \nu \in \PP_{2}(\Rd)$ there is a Bass martingale $M$ from $\mu$ to $\nu$, i.e.\ satisfying $M_{0} \sim \mu$ and $M_{1} \sim \nu$, for $\mu$ and $\nu$ prescribed in advance?

\subsection{Structure of stretched Brownian motion}

We need a connectivity assumption on the marginals, known under the name of \textit{irreducibility} in the martingale transport literature.

\begin{definition} \label{defi:irreducible_intro} 
For $\mu, \nu \in \PP(\R^d)$ we say that the pair $(\mu,\nu)$ is \textit{irreducible} if for all measurable sets $A, B \subseteq \Rd$ with $\mu(A), \nu(B)>0$ there is a martingale $X= (X_{t})_{0 \leqslant t \leqslant 1}$ with $X_{0} \sim \mu$, $X_{1} \sim \nu$ such that $\mathbb{P}(X_{0}\in A, X_{1}\in B) >0$. 
\end{definition} 

With this definition in hand, we can announce our first main result.

\begin{theorem} \label{MainTheorem} Let $\mu \lc \nu$ be probabilities on $\Rd$ with finite second moments and suppose that $(\mu,\nu)$ is irreducible. Then the following are equivalent for a martingale $M = (M_{t})_{0 \leqslant t \leqslant 1}$ with $M_{0} \sim \mu$ and $M_{1} \sim \nu$:
\begin{enumerate}[label=(\arabic*)] 
\item \label{MainTheorem_1} $M$ is stretched Brownian motion, i.e.\ the optimizer of \eqref{MBMBB}.
\item \label{MainTheorem_2} $M$ is a Bass martingale.  
\end{enumerate}
\end{theorem}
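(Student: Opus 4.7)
The equivalence is established by recasting \eqref{MBMBB} as a static weak martingale optimal transport problem and then applying a Brenier-type structural theorem whose proof hinges on dual attainment.

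\textbf{Step 1 (static reduction).} For a competitor $M_{t} = M_{0} + \int_{0}^{t} \sigma_{s} \, dB_{s}$, I would expand the cost using It\^{o}'s isometry and integration by parts for the cross-variation:
\[
\E\Bigl[\int_{0}^{1} |\sigma_{t} - I_{d}|_{\textnormal{HS}}^{2} \, dt\Bigr]
= \E\bigl[|M_{1} - M_{0}|^{2}\bigr] - 2\bigl(\E[M_{1} \cdot B_{1}] - \E[M_{0} \cdot B_{0}]\bigr) + d.
\]
The first and last summands are determined by $(\mu, \nu)$, so minimizing \eqref{MBMBB} amounts to maximizing the correlation $\E[M_{1} \cdot B_{1}] - \E[M_{0} \cdot B_{0}]$ over feasible triples $(\alpha, B, M)$ with $B$ a Brownian motion started in $\alpha$. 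Following \cite{BaBeHuKa20}, this passes to a static weak martingale transport problem: optimize over $\alpha \in \PP(\Rd)$ and a joint law of $(B_{0}, B_{1}, Y)$ with $B_{1} = B_{0} + Z$, $Z \sim \gamma$ independent of $B_{0}$, $Y \sim \nu$, and $\Law(\E[Y \mid B_{0}]) = \mu$, maximizing $\E[Y \cdot B_{1}]$.

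\textbf{Step 2 (Brenier structure via duality).} The new Brenier-type theorem announced in the introduction supplies, through dual attainment, a convex function $v \colon \Rd \to (-\infty, +\infty]$ playing the role of a Kantorovich potential for the static problem. A cyclical-monotonicity argument then forces any optimal coupling to satisfy $Y = \nabla v(B_{1})$ almost surely. Irreducibility of $(\mu, \nu)$ enters here to preclude degenerate situations in which the potential could fail to exist or to pin down the coupling on its full support.

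\textbf{Step 3 (identification).} Given the optimal pair $(\alpha, v)$, set $\tilde M_{t} \coloneqq \E[\nabla v(B_{1}) \mid B_{t}]$ for $B$ a Brownian motion with $B_{0} \sim \alpha$. By construction $\tilde M$ is a Bass martingale with $\tilde M_{0} \sim \mu$ and $\tilde M_{1} \sim \nu$. Uniqueness of stretched Brownian motion from \cite{BaBeHuKa20} yields $M = \tilde M$, proving \ref{MainTheorem_1} $\Rightarrow$ \ref{MainTheorem_2}. Conversely, starting from a Bass martingale $M$ with $M_{1} = \nabla v(B_{1})$, the coupling $(B_{1}, \nabla v(B_{1}))$ is Brenier-optimal between $\Law(B_{1})$ and $\nu$, so it maximizes $\E[Y \cdot B_{1}]$ under $Y \sim \nu$. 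Running Step 1 backwards, this static optimality transfers to \eqref{MBMBB}, giving \ref{MainTheorem_2} $\Rightarrow$ \ref{MainTheorem_1}.

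\textbf{Main obstacle.} The crux is the dual attainment in Step 2. Unlike classical Kantorovich duality, dual optimizers in martingale transport may fail to exist even for smooth marginals, and generic minimizing sequences for the dual can escape to infinity. The compactness and regularity arguments producing the convex potential $v$, together with the essential use of irreducibility to discard pathological limits, constitute the genuine technical difficulty underlying the whole theorem.
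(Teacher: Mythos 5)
Your architecture coincides with the paper's: reduce \eqref{MBMBB} to the static weak martingale transport problem \eqref{eq_primal}, obtain a convex potential from the dual problem, identify the primal optimizer as a Bass martingale, and conclude by uniqueness of stretched Brownian motion from \cite{BaBeHuKa20}. The problem is that your Step 2 simply invokes ``the new Brenier-type theorem announced in the introduction'', i.e.\ the dual attainment part of Theorem \ref{theorem_new_duality} --- but that attainment \emph{is} the content to be proved; everything else in your outline is either routine or already in \cite{BaBeHuKa20}. What is missing is the entire argument of Sections \ref{sec_prep}--\ref{sec_irr_bass}: one must (i) prove that a dual optimizer $\psiopt$ with $\mu(\operatorname{ri}(\dom\psiopt))=1$ exists under irreducibility, and (ii) prove that its existence is equivalent to that of a Bass martingale (Theorem \ref{theo_du_op_b_m}). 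Step (i) is where irreducibility is genuinely used, and not merely ``to preclude degenerate situations'': one normalizes an optimizing sequence $(\psi_n)$ by subtracting tangent affine functions, uses a De March--Touzi transport $\pidmt$ with full-support kernels together with the slice estimate of Lemma \ref{lemma_bsbi_s} and Koml\'os' theorem to extract Ces\`aro means bounded on compacts of $I=\operatorname{ri}\widehat{\supp}(\nu)$, passes to a locally uniform convex limit $\psilim$, and then shows by a variational contradiction (Lemmas \ref{lemma_a_h}--\ref{lemma_irr_f}) that $\psilim$ agrees $\mu$-a.e.\ with the Brenier potentials $\psixsbm$ modulo affine functions, hence is a dual optimizer. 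None of this is supplied or sketched in your proposal; you correctly name it as the ``main obstacle'' but leave it untouched, so the proof is incomplete precisely at its core. A minor point of framing: the paper does not use cyclical monotonicity; the identification $\pisbm_x=\nabla\psiopt^\ast(\zeta(x)+\,\cdot\,)(\gamma)$ comes from complementary slackness between \eqref{eq_primal} and \eqref{eq_dual} via Proposition \ref{prop_phi_psi_x_gen_opt}.

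A secondary gap is your converse direction. Brenier optimality of $(B_1,\nabla v(B_1))$ between $\Law(B_1)=\alpha\ast\gamma$ and $\nu$ only says that this coupling maximizes correlation \emph{for that fixed} $\alpha$; in \eqref{eq_primal} the disintegration $x\mapsto\pi_x$ (equivalently, the starting law $\alpha$) is itself part of the optimization, so ``running Step 1 backwards'' does not close the argument. The paper instead cites \cite[Theorem 1.10]{BaBeHuKa20} for ``\ref{MainTheorem_2} $\Rightarrow$ \ref{MainTheorem_1}'' (equivalently one can use the ``$\Leftarrow$'' direction of Theorem \ref{theo_du_op_b_m}, which verifies that $v^{\ast}$ is a dual optimizer and then invokes the absence of a duality gap). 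You should either cite that result or reproduce its verification.
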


We briefly provide some context for the irreducibility assumption in Theorem \ref{MainTheorem}. In classical optimal transport, the product coupling of the marginals guarantees that mass can be transported from an arbitrary starting position to an arbitrary target position. However, this is not necessarily true in the case of martingale transport, where it can happen that $\Rd$ decomposes into disjoint (convex) regions that do not communicate with each other. The irreducibility assumption excludes this, as it guarantees that for any sets $A$ and $B$ that are charged by $\mu$ and $\nu$, respectively, there exists a martingale $X$ connecting them (in the sense that $\mathbb{P}(X_{0} \in A, X_{1} \in B)>0$). For $d>1$, the appearance of more than one irreducible component leads to intricate phenomena,  analyzed in the remarkable contributions \cite{GhKiLi19, DeTo17, ObSi17}. This problem is revisited in the follow-up article \cite{ScTs24} in terms of Bass martingales. For equivalent characterizations of the irreducibility assumption we refer to Theorem \ref{theo_coic} in Appendix \ref{app_sec_irr}. In particular, it is equivalent to consider continuous- or discrete-time martingales in Definition \ref{defi:irreducible_intro}. 

We emphasize that irreducibility is not only a sufficient assumption for Theorem \ref{MainTheorem}, but it is in fact necessary. Indeed, the Bass martingale connects any two sets which are charged by $\mu$ and $\nu$, see Remark \ref{rem_irr_nec} in Appendix \ref{app_sec_irr}. In particular, if there is a Bass martingale from $\mu$ to $\nu$, then $(\mu, \nu)$ is irreducible.

\smallskip  

An important consequence of Theorem \ref{MainTheorem} is that for any irreducible pair $\mu \lc \nu$ there exists a unique Bass martingale 
\begin{equation} \label{BassOnceMore}
M_{t} = \E[\nabla v(B_{1}) \, \vert \, \sigma(B_{s} \colon s \leqslant t)]
= \E[\nabla v(B_{1}) \, \vert \, B_{t}], \qquad 0 \leqslant t \leqslant 1,
\end{equation}
with $M_{0} \sim \mu$, $M_{1} \sim \nu$ and it is worthwhile to comment on the properties of $M$. We write $\gamma^{t}$ for the $d$-dimensional centered Gaussian distribution with covariance matrix $tI_{d}$ and $v_{t} \coloneqq v \ast \gamma^{1-t} \colon \Rd \rightarrow \R$ for the convolution of the function $v$ and the measure $\gamma^{1-t}$. In these terms, \eqref{BassOnceMore} amounts to
\begin{equation} \label{BassOnceMore2}
M_{t} = \nabla v_{t} (B_{t}), \qquad 0 \leqslant t \leqslant 1,
\end{equation}
which emphasizes that the Bass martingale is obtained as a monotone transformation of Brownian motion at each time point. Finally, as a Brownian martingale, $M$ is a diffusion which connects $\mu$ and $\nu$. Indeed, applying It\^{o}'s formula to \eqref{BassOnceMore2} we obtain 
\[
dM_{t} = \nabla^{2} v_{t} \circ \nabla v_{t}^{\ast}(M_{t})\, dB_{t}, \qquad 0 \leqslant t \leqslant 1,
\]
where $v_{t}^{\ast}$ denotes the convex conjugate of $v_{t}$.

\smallskip 

McCann \cite{Mc95} managed to extend the validity of Brenier's theorem: using Aleksandrov's lemma he showed that the Brenier map is well defined without any moment assumptions. We leave the question of whether a similar extension is possible in the case of Bass martingales for future research. 

\smallskip

Finally, we note that the implication ``\ref{MainTheorem_2} $\Rightarrow$ \ref{MainTheorem_1}''  is comparably easy and follows from \cite[Theorem 1.10]{BaBeHuKa20}. We will establish the reverse implication that every optimizer of \eqref{MBMBB} is a Bass martingale, based on a duality result, which we describe in the next section.

\subsection{Weak transport formulation and dual viewpoint} \label{wtfadv}
Our arguments rely on a novel dual viewpoint on stretched Brownian motion, which is in turn based on a reformulation of problem \eqref{MBMBB} as a weak transport problem. 

In classical transport, minimization of the squared distance is equivalent to maximization of correlation and the latter formulation yields a dual problem which is simpler to interpret. A similar fact holds true in the present martingale setting: problem \eqref{MBMBB} is equivalent to maximizing the covariance with Brownian motion
\begin{equation} \label{MBMBB2}
P(\mu, \nu) \coloneqq 
\sup_{\substack{M_0 \sim \mu, \, M_1 \sim \nu, \\ M_t = M_0 + \int_0^t \sigma_s \, dB_s}}
\mathbb{E}\Big[\int_0^1 \textnormal{tr} (\sigma_t) \, dt\Big],
\end{equation} 
in the sense that both problems have the same optimizer and the values are related via $MT(\mu,\nu) = d + \int \vert y \vert^2\, d\nu(y)  - \int \vert x \vert^2 \, d\mu(x) -2 P(\mu,\nu)$. 

We go on to reformulate \eqref{MBMBB2} as a weak (martingale)  transport problem in the sense of \cite{GoRoSaTe14}. Indeed 
\begin{equation} \label{eq_primal} 
P(\mu,\nu) 
= \sup_{\pi \in \MT(\mu,\nu)}  \int \MCov(\pi_{x},\gamma) \, \mu(dx),
\end{equation}
where the \textit{maximal covariance} ($\MCov$) between  $p_{1},p_{2} \in \PP_{2}(\Rd)$ is defined as
\[
\MCov(p_{1},p_{2}) \coloneqq \sup_{q \in \Cpl(p_{1},p_{2})} \int \langle x_{1},x_{2} \rangle \, q(dx_{1},dx_{2}).
\]
The optimization problem \eqref{eq_primal} is a \textit{weak} martingale optimal transport problem  since the function $\pi \mapsto \int \MCov(\pi_{x},\gamma) \, \mu(dx)$ is non-linear, as opposed to classical optimal transport, where linear problems of the form $\pi \mapsto \int c(x,y) \, \pi(dx,dy)$ are studied.

The equality \eqref{eq_primal} was established in \cite[Theorem 2.2]{BaBeHuKa20}. Furthermore, there exists a unique optimizer $\pisbm \in \MT(\mu,\nu)$ of \eqref{eq_primal} and if $(M_{t})_{0 \leqslant t \leqslant 1}$ is the stretched Brownian motion from $\mu$ to $\nu$, then the law of $(M_{0},M_{1})$ equals $\pisbm$. Indeed,  the discrete-time formulation \eqref{eq_primal} will play a central role in this paper. 
In particular, it will be the basis for the following duality result, which implies the difficult implication ``\ref{MainTheorem_1} $\Rightarrow$ \ref{MainTheorem_2}'' in Theorem \ref{MainTheorem}.

\begin{theorem} \label{theorem_new_duality}
Assume that $\mu, \nu \in \PP_{2}(\Rd)$ are in convex order. The value $P(\mu,\nu)$ of the continuous-time optimization problem \eqref{MBMBB2} is equal to 
\begin{equation} \label{WeakDual}
D(\mu,\nu) \coloneqq \inf_{\substack{\psi \in L^{1}(\nu), \\ \textnormal{$\psi$ convex}}} \Big( \int \psi \, d\nu - \int (\psi^{\ast} \ast \gamma)^{\ast} \, d \mu \Big).
\end{equation}
The infimum is attained by a lower semicontinuous convex function $\psiopt \colon \Rd \rightarrow (-\infty,+\infty]$ satisfying $\mu(\operatorname{ri}(\dom\psiopt)) = 1$ if and only if $(\mu,\nu)$ is irreducible. In this case the (unique) optimizer to \eqref{MBMBB} is given by the Bass martingale
\[
M_{t} \coloneqq \E[\nabla v(B_{1}) \, \vert \, \sigma(B_{s} \colon s \leqslant t)]
= \E[\nabla v(B_{1}) \, \vert \, B_{t}], \qquad 0 \leqslant t \leqslant 1,
\]
where $v = \psiopt^{\ast}$ and $B_{0} \sim \nabla (\psiopt^{\ast} \ast \gamma)^{\ast}(\mu)$.
\end{theorem}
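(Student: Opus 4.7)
The proof proceeds in three phases: (i) weak duality $P \leq D$ via a Fenchel--Young bound exploiting the martingale constraint; (ii) dual attainment under irreducibility, which simultaneously yields strong duality; (iii) identification of the primal optimizer as the Bass martingale via saturation of Fenchel--Young. I expect the main obstacle to be (ii): minimizing sequences can degenerate via blow-up, loss of integrability against $\nu$, or concentration on affine subspaces, and irreducibility is the precise hypothesis needed to rule out each pathology.

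For weak duality, fix a convex $\psi \in L^{1}(\nu)$ and $\pi \in \MT(\mu,\nu)$. For $\mu$-a.e.\ $x$ choose a $\MCov$-optimal $q_{x} \in \Cpl(\pi_{x},\gamma)$ and sample $(X,Y,B)$ with $(X,Y) \sim \pi$ and $(Y,B) \mid X \sim q_{X}$, so that $B$ has marginal $\gamma$. For any measurable $\xi \colon \Rd \to \Rd$, the martingale identity $\E[Y \mid X] = X$ gives $\E\langle Y, B\rangle = \E\langle Y, B + \xi(X)\rangle - \E\langle X, \xi(X)\rangle$. Combining with Fenchel--Young $\langle Y, B+\xi(X)\rangle \leq \psi(Y) + \psi^{\ast}(B+\xi(X))$, and using symmetry of $\gamma$ to compute $\E[\psi^{\ast}(B+\xi(X)) \mid X] = (\psi^{\ast} \ast \gamma)(\xi(X))$, one obtains an upper bound; optimizing pointwise over $\xi$ by measurable selection collapses the last two terms into $-\int (\psi^{\ast} \ast \gamma)^{\ast} \, d\mu$, yielding $\int \MCov(\pi_{x},\gamma)\, \mu(dx) \leq \int \psi \, d\nu - \int (\psi^{\ast} \ast \gamma)^{\ast} \, d\mu$. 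Taking sup over $\pi$ and inf over $\psi$ gives $P \leq D$.

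For dual attainment, take a minimizing sequence $(\psi_{n})$ for $D(\mu,\nu)$. Using the gauge freedom that adding an affine function to $\psi$ leaves the dual functional invariant (a short computation relying on $\bary\mu = \bary\nu$, forced by $\mu \lc \nu$), normalize $(\psi_{n})$ and extract a pointwise a.e.\ convergent subsequence whose l.s.c.\ convex envelope $\psilim$ is the candidate optimizer. Lower semicontinuity of the dual functional along this limit and the regularity $\mu(\operatorname{ri}(\dom\psilim)) = 1$ must then be verified; irreducibility enters decisively in the latter, since if $\dom\psilim$ degenerated onto a proper convex subset isolating part of the $\mu$-mass, one could construct a martingale transport violating Definition \ref{defi:irreducible_intro}. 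This gives $\psiopt := \psilim$ as a genuine minimizer, and combined with weak duality yields $P = D$.

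Finally, set $v := \psiopt^{\ast}$, $\alpha := \nabla(v \ast \gamma)^{\ast}(\mu)$, and let $B = (B_{t})_{0 \leqslant t \leqslant 1}$ be Brownian motion with $B_{0} \sim \alpha$; define $M_{t} := \E[\nabla v(B_{1}) \mid B_{t}] = \nabla(v \ast \gamma^{1-t})(B_{t})$. The construction yields $M_{0} = \nabla(v \ast \gamma)(B_{0}) \sim \mu$, since $\nabla(v \ast \gamma)$ and $\nabla(v \ast \gamma)^{\ast}$ are mutually inverse Brenier maps. Equality in the Fenchel--Young chain at $\psi = \psiopt$ forces the coupling $(M_{0},M_{1})$ to coincide with the primal weak-transport optimizer $\pisbm$; in particular $\Law(M_{1}) = \nu$ automatically. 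Hence $M$ is a Bass martingale from $\mu$ to $\nu$, and the easy implication ``\ref{MainTheorem_2} $\Rightarrow$ \ref{MainTheorem_1}'' of Theorem \ref{MainTheorem} (from \cite[Theorem 1.10]{BaBeHuKa20}) identifies it as the optimizer of \eqref{MBMBB}. The necessity of irreducibility for attainment then follows from Remark \ref{rem_irr_nec}, which shows that any Bass martingale connects arbitrary $\mu$- and $\nu$-charged sets.
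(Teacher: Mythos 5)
Your weak-duality step ($P \leqslant D$ via Fenchel--Young along a triple $(X,Y,B)$, with the martingale constraint absorbed by adding a free $\xi(X)$) is correct and is essentially the Fenchel--Young/Kantorovich estimate that appears in the paper's proof of Proposition \ref{prop_new_duality_rel}, rewritten probabilistically. The identification of the optimizer via saturation of Fenchel--Young and the appeal to \cite[Theorem 1.10]{BaBeHuKa20} for the final claim are also sound, as is the use of Remark \ref{rem_irr_nec} for the necessity of irreducibility. The gauge invariance under adding affine functions is correct, though the paper deduces it pointwise from the barycenter constraint (Remark \ref{rem_cqaff_def}) rather than via $\bary\mu = \bary\nu$.

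There are, however, two genuine gaps. First, and most seriously, your route to the equality $P = D$ is through dual attainment, which only holds when $(\mu,\nu)$ is irreducible; but the theorem asserts $P = D$ for \emph{every} pair $\mu \lc \nu$ in $\PP_2(\Rd)$. Your Phases (i)--(iii) together establish $P \leqslant D$ always and $P = D$ when a dual optimizer exists, leaving the non-irreducible case entirely unaddressed. The paper closes this by an independent strong-duality argument: $P(\mu,\nu)$ equals the auxiliary dual value $\tilde D(\mu,\nu)$ by general weak-optimal-transport duality (Theorem \ref{theorem_no_duality_gap}, via \cite{BaBePa18}); then the restriction to convex $\psi$ (Proposition \ref{prop_crucial}) and the explicit identity $\varphi^{\psi} = (\psi^{\ast}\ast\gamma)^{\ast}$ (Lemma \ref{lem_phi_psi_gen}, Proposition \ref{prop_new_duality_first}) show $\tilde D = D$, all without any attainment. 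This is the part your proposal is missing.

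Second, the dual-attainment step is far more delicate than you indicate, and the mechanism you sketch for where irreducibility enters is not the one that works. Extracting ``a pointwise a.e.\ convergent subsequence'' is not free: the hard part is local boundedness of the normalized sequence $(\psi_n^x)$ on compact subsets of the interior $I$ of $\widehat{\supp}(\nu)$. The paper obtains this (Lemmas \ref{lemma_psi_sup}--\ref{lemma_bsbi_smao}) by combining Koml\'os' theorem with the existence of a De March--Touzi transport $\pidmt$ whose fibers $\pidmt_x$ have full convex support, and by estimating the mass that $\pidmt_x$ assigns to slices beyond a compact set (Lemma \ref{lemma_bsbi_s}). Your proposed argument --- that degeneracy of $\dom\psilim$ onto a proper convex subset would produce a martingale contradicting irreducibility --- does not explain how unbounded growth of $\psi_n$ along a minimizing sequence is controlled; it conflates the conclusion with the mechanism. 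Likewise, ``lower semicontinuity of the dual functional along this limit'' is not sufficient to verify optimality of $\psilim$: the paper's Step 2 (Lemmas \ref{lemma_a_h}, \ref{lemma_irr_f_p}, \ref{lemma_irr_f}, \ref{lemma_ch_o_do}) runs a contradiction argument using localized stopped Bass martingales and the trajectorial property of Corollary \ref{cor_bm_tr_pr}, none of which your sketch anticipates.
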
 

Here, $\operatorname{ri}(\dom\psiopt)$ denotes the relative interior of the domain of $\psiopt$, i.e.\ the set on which $\psiopt$ is finite. The symbol $\ast$ used as a superscript denotes the convex conjugate of a function, otherwise it is the standard convolution operator.

\smallskip

It turns out that the optimizer $\psiopt$ is not necessarily $\nu$-integrable. Therefore, in order for the difference of the integrals in \eqref{WeakDual} to be well defined for $\psi = \psiopt$, attainment of $D(\mu,\nu)$ has to be understood in a ``relaxed'' sense frequently encountered in martingale transport problems; see \cite{BeJu16,BeNuSt19,BeNuTo16} and Propositions \ref{prop_new_duality_first_part}, \ref{prop_new_duality_rel} below. 

\smallskip

Note that, since the primal optimizer $\pisbm$ equals the law of $(M_0,M_1)$, Theorem \ref{theorem_new_duality} describes the optimizer of the static weak martingale transport problem \eqref{eq_primal} in terms of the gradient of a convex function, which originates from its dual problem \eqref{WeakDual}. More precisely, we have
\[
\pisbm = \Law\big((\nabla v \ast \gamma)(B_{0}),\nabla v(B_{1})\big).
\]
This is analogous to the classical Brenier theorem and hence one might view Theorem \ref{theorem_new_duality} as a Brenier-type theorem for weak martingale transport. 

We emphasize that while Brenier's theorem is a direct consequence of attainment for the dual transport problem (which is now well understood), the situation is more delicate in case of Theorem \ref{theorem_new_duality}. The main reason is that dual attainment for martingale transport can fail even in very regular settings, e.g.\ for Lipschitz costs and compactly supported  measures on the real line, see \cite[Section 4.3]{BeHePe12}. Positive results are only available for $d=1$ and under strong assumptions \cite{BeJu16, BeLiOb19, BeNuTo16}. In addition, the duality theory for weak optimal transport is known to be significantly more complicated than its classical counterpart. Specifically first results for dual attainment have appeared only recently \cite{BePaRiSc25} and are not applicable to weak martingale transport problems. 
Accordingly, the technical core of our work is to establish dual attainment in the framework of Theorem \ref{theorem_new_duality}.

\subsection{Bass martingales for specific marginals}

Theorem \ref{MainTheorem} is only an existence result, which does not provide an explicit Bass martingale connecting the given marginals. This is parallel to the situation in optimal transport, where the question to numerically approximate optimal transport plans has generated enormous interest. Already in dimension one, the problem to explicitly determine Bass martingales is non-trivial (as well as highly important for financial applications). Here \cite{CoHe21} gives an efficient iteration algorithm. See \cite{AcMaPa23} for a rigorous proof of the algorithm's linear convergence and \cite{JoLoOb23} for a multi-dimensional variant of the algorithm. 

\smallskip

On the other hand (as in the case of Brenier's theorem), one can easily generate examples of Bass martingales by specifying a convex function $v$ and the starting distribution of $B_0$. We give here one such example that also serves to illustrate some of the intricacies related to irreducible decompositions. Fix parameters $\rho, k>0$ and consider the radially symmetric function $v(x,y)= h(\vert(x,y)\vert)$, where $h$ is continuous and piecewise affine with slope $1/2$ on $[0,k]$, slope $8/5$ on $[k, \infty)$, and $h(0)=0$. We consider the Bass martingale $M_t=\E[\nabla v(B_1) \, \vert \, \mathcal F_t]$, where $B_0$ is uniformly distributed on the centered circle with radius $\rho$. Then $\nabla v(x)=\frac{x}{2\vert x \vert}$, $\vert x \vert\in (0,k)$ and $\nabla v(x)=\frac{8x}{5\vert x\vert}$, $\vert x\vert\in (k,\infty)$, from which it follows easily that $M$ terminates on either the circle with radius $1/2$ or the circle with radius $8/5$ and that $M$ starts uniformly distributed on some centered circle. Specifying $\rho = 3$, $k= 3.17$, we find (numerically) that $M_0$ is uniformly distributed on the unit circle, while $M_1$ charges the circles with radius $1/2$ and $8/5$ equally, see Figures \ref{figure1} and \ref{figure2}.

\begin{figure}[htbp]
\centering
\begin{minipage}{0.49\textwidth}
\includegraphics[width=\textwidth]{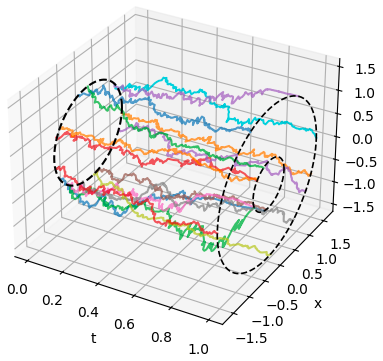} 
\caption{Sample paths of $M$, different starting values.}
\label{figure1}
\end{minipage}
\hfill
\begin{minipage}{0.49\textwidth}
\includegraphics[width=\textwidth]{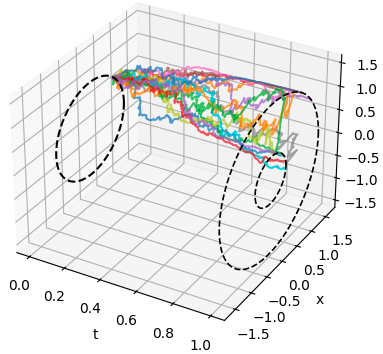} 
\caption{Sample paths of $M$, identical starting value.}
\label{figure2}
\end{minipage}
\end{figure}

Setting $\mu= \Law M_0$, $\nu=\Law M_1$, we have, of course, $\mathbb P((M_0, M_1)\in A\times B) >0 $ whenever $\mu(A), \nu(B)>0$.

\smallskip

Notably this is not the case for every martingale $M$ starting in $\mu$ and terminating in $\nu$. Specifically we may consider martingales that move only along the (violet)  segments indicated in Figure \ref{figure3}, we refer to \cite[Example 6.2]{ScTs24} for a detailed justification of this claim.

\begin{figure}[htbp]
\centering
\includegraphics[width=0.49\textwidth]{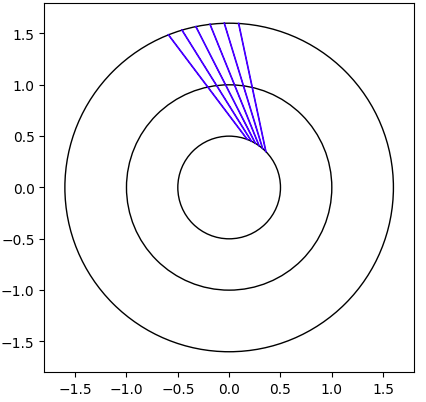} 
\caption{There is a martingale which starts uniformly distributed on the middle circle ($r=1$), terminates with equal probability on the inner circle ($r=1/2$) or outer circle ($r=8/5$), and moves only on the violet line segments. The angle between the segments and the middle circle is chosen so that the lengths to the inner and outer circle are equal.}
\label{figure3}
\end{figure}

This example illustrates that even if $(\mu, \nu)$ is irreducible, there can be martingales which are entirely confined to smaller subpartitions. This phenomenon cannot appear in the trivial case $d=1$, see \cite[Appendix A]{BeJu16}, a fact that is heavily exploited in dual attainment results for martingale transport in $d=1$. On the other hand, we believe that the existence of martingales  confined to smaller subpartitions present a significant obstacle to general dual attainment results for martingale transport (and weak martingale transport) for $d\geqslant 2$.

\subsection{Literature} \label{sec_literature}

The first article that provides structural results for the martingale transport problem in general dimensions is the work  \cite{GhKiLi19} of Ghoussoub--Kim--Lim. They obtain descriptions for the minimizers and maximizers for the cost function $c(x,y)=\vert x-y \vert$, when marginals are supported on $\R^2$, as well as for marginals on higher-dimensional state spaces that are in subharmonic order. Given a specific martingale $X$, Ghoussoub--Kim--Lim also define a finest paving of the source space into cells that are invariant under the martingale $X$. 

The contributions of De March--Touzi \cite{DeTo17} and \OB--Siorpaes \cite{ObSi17} put the theme of irreducible decompositions center stage. In contrast to the work of Ghoussoub--Kim--Lim, their interest lies in pavings that are invariant under \emph{all} martingales which start in $\mu$ and terminate in $\nu$. Specifically it is shown in \cite{DeTo17} that there exists a unique finest paving with this property. This De March--Touzi paving is revisited in the follow-up paper \cite{ScTs24}, where it is characterized in terms of Bass martingales. While the Ghoussoub--Kim--Lim paving and the De March--Touzi paving agree for $d=1$, they can be different for $d\geqslant 2$, as discussed in the previous section. The interested reader is referred to Ciosmak's works \cite{Ci23a,Ci23b}, where a more general notion of irreducibility (going beyond martingale transports) is studied.

Huesmann--Trevisan \cite{HuTr17} investigate Benamou--Brenier-type formulations for the martingale transport problem on $\R^d$. In particular, they provide equivalent PDE-formulations and establish existence and duality results. In the context of market impact in finance Loeper \cite{Lo18} arrives independently to problem \eqref{MBMBB}.

For further contributions to the martingale transport in continuous time, we mention \cite{DoSo12, BeHeTo15, CoObTo19, GhKiPa19,GuLoWa19, GhKiLi20, ChKiPrSo21, GuLo21} among many others. 

Finally, \cite{BaBeHuKa20} is a predecessor of the present article in which it is established that the martingale transport problem \eqref{MBMBB} admits a unique solution, solves further related optimization problems and has properties of time consistency in the spirit of classical optimal transport for the squared distance cost function. Furthermore, the counterpart of our main result is established in dimension one (without reference to duality). As discussed in length above, the main difficulty in higher dimensions stems from the subtleties surrounding the concept of irreducible component.




\subsection{Structure of the paper} 

In Section \ref{sec_def_a_not} we introduce some definitions and frequently used notation. Duality results for stretched Brownian motion and the important role of convexity are discussed in Section \ref{sec_dual_sbm}. The proof of the first part of Theorem \ref{theorem_new_duality}, namely that there is no duality gap between the primal problem \eqref{MBMBB2} and the dual problem \eqref{WeakDual}, is given in Section \ref{sec_pr_int_b_i}. In Section \ref{sec_prep} we outline and prepare the proof of the second part of Theorem \ref{theorem_new_duality}, which gives a necessary and sufficient condition for dual attainment in terms of the irreducibility assumption. To this end, we analyze the connection between the existence of dual optimizers and Bass martingales, which is the content of Section \ref{sec_dual_bass}. In Section \ref{sec_irr_bass} we show that the irreducibility assumption implies the existence of a dual optimizer. After these preparations we are in a position to prove Theorem \ref{MainTheorem} in Subsection \ref{subsec_pr_int_a} and complete the proof of Theorem \ref{theorem_new_duality} in Subsection \ref{subsec_pr_int_b_ii}.

\smallskip

In Appendix \ref{app_proof_no_duality_gap} we prove Theorem \ref{theorem_no_duality_gap}, which shows that there is no duality gap between the auxiliary optimization problems \eqref{eq_primal} and  \eqref{eq_dual}. The rather technical proofs of Lemmas \ref{lem_phi_psi_gen}, \ref{lem_co_fin}, \ref{lem:dim_dom_psi}, \ref{lem_aux_grad_convolution} and \ref{lem_int_dom_range} are collected in Appendix \ref{app_tech_lemm}. In Appendix \ref{app_prop_gen_case} we provide the proof of Proposition \ref{prop_1b}, a result which will be of crucial importance in the follow-up paper \cite{ScTs24} on the non-irreducible case, but which also seems of independent interest. Finally, in Appendix \ref{app_sec_irr} we give equivalent characterizations of irreducibility, as introduced in Definition \ref{defi:irreducible_intro}.

\tableofcontents

\newpage

In response to the thoughtful suggestions of a diligent referee, we provide in Figure \ref{graphic} an overview of the logical structure of our paper. This is intended to help readers navigate the dependencies between the results of this work, as well as the related literature, more effectively.

\begin{figure}[htbp]
\centering
\includegraphics[width=\textwidth]{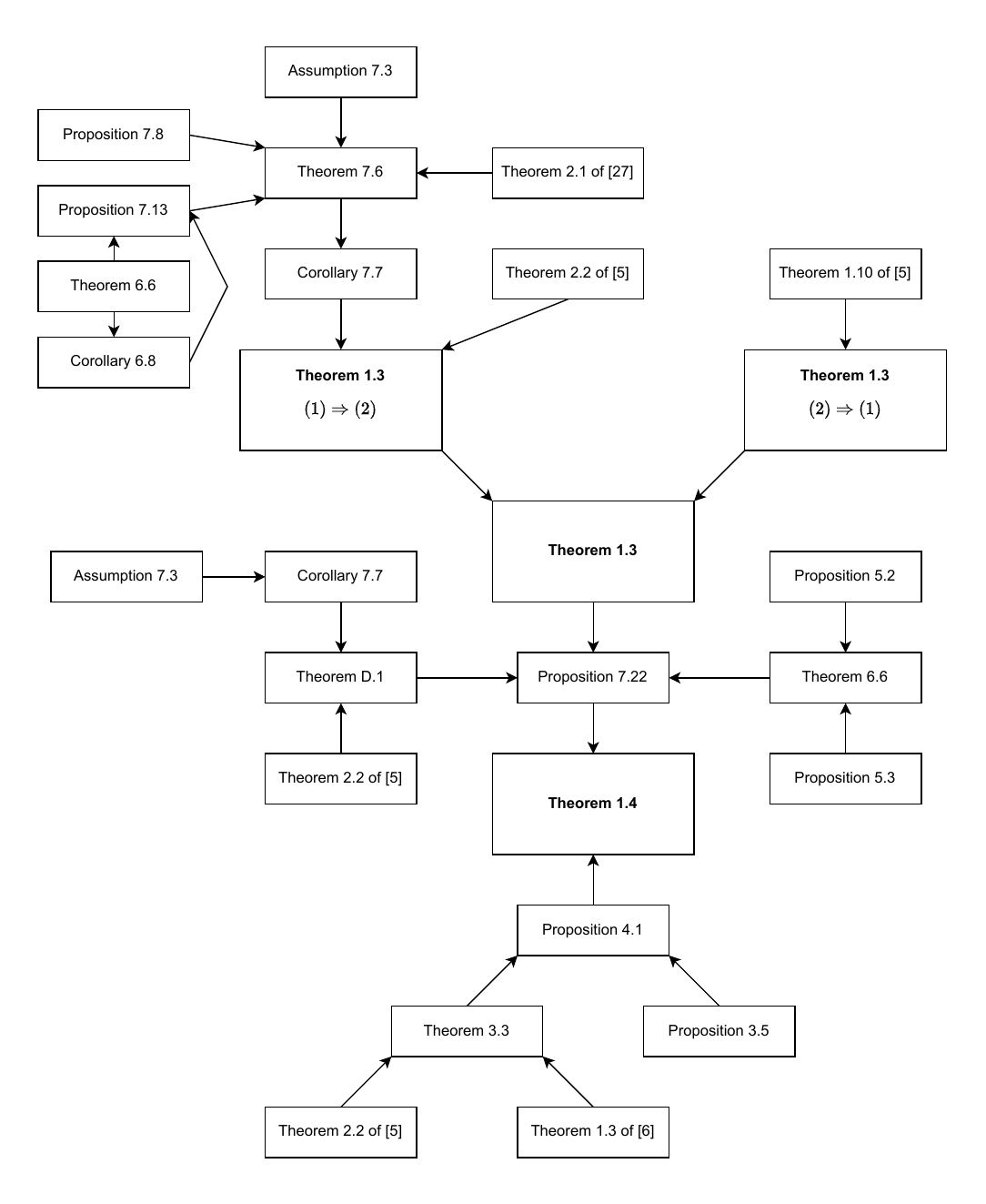}
\caption{Dependencies between results.}
\label{graphic}
\end{figure}

\newpage

\section{Definitions and notation} \label{sec_def_a_not}

\begin{itemize}
\item We write $\PP(\Rd)$ for the probability measures on $\Rd$, $\PP_{p}(\Rd)$ for the subset of probability measures satisfying $\int \vert x \vert^{p} \, d\mu < +\infty$, and $\PP_{p}^{x}(\Rd)$ for the elements of $\PP_{p}(\Rd)$ with barycenter $x \in \Rd$, for $p \in [1,+\infty)$.
\item For $\mu,\nu \in \PP(\Rd)$, we denote by $\Cpl(\mu,\nu)$ the set of all couplings $\pi \in \PP(\Rd \times \Rd)$ between $\mu$ and $\nu$, i.e., probability measures $\pi$ on $\Rd \times \Rd$ with first marginal $\mu$ and second marginal $\nu$.
\item We say that $\mu \in \PP_{1}(\Rd)$ is dominated by $\nu \in \PP_{1}(\Rd)$ in convex order and write $\mu \lc \nu$, if for all convex functions $\phi \colon \Rd \rightarrow \R$ with linear growth we have $\int \phi \, d\mu \leqslant \int \phi \, d\nu$. 
\item For $\mu,\nu \in \PP_{1}(\Rd)$ with $\mu \lc \nu$ we define the collection of martingale transports $\MT(\mu,\nu)$ as those couplings $\pi \in \Cpl(\mu,\nu)$ satisfying $\bary(\pi_{x}) \coloneqq \int y \, \pi_{x}(dy) = x$, for $\mu$-a.e.\ $x \in \Rd$. Here, the family of probability measures $\{\pi_{x}\}_{x\in\Rd} \subseteq \PP(\Rd)$ is obtained by disintegrating the coupling $\pi$ with respect to its first marginal $\mu$, i.e., $\pi(dx,dy) = \pi_{x}(dy) \, \mu(dx)$.
\item The $d$-dimensional Gaussian distribution with barycenter $x \in \Rd$ and covariance matrix $tI_{d}$ is denoted by $\gamma_{x}^{t}$ and we set $\gamma^{t} \coloneqq \gamma_{0}^{t}$, $\gamma_{x} \coloneqq \gamma_{x}^{1}$ as well as $\gamma \coloneqq \gamma_{0}$. 
\item We denote by $\Cq$ the set of continuous functions $\psi \colon \Rd \rightarrow \R$ with quadratic growth, meaning that there are constants $a,k,\ell \in \R$ with
\[
\ell + \tfrac{\vert \, \cdot \,  \vert^{2}}{2} \leqslant \psi(\, \cdot \,) \leqslant a + k \vert \cdot  \vert^{2}.
\]
We also introduce the set
\begin{equation} \label{def_cqaff}
\Cqaff \coloneqq \big\{ \psi(\, \cdot \,) + \aff(\, \cdot \,) \colon \ \psi \in \Cq, \, \aff \colon \Rd \rightarrow \R \textnormal{ affine} \big\}.
\end{equation}
\item The pushforward of a measure $\mu$ under a function $f$ is denoted by $f(\mu)$.
\item For two measures $\varrho$ and $\rho$ we write $\varrho \ast \rho$ for their convolution. If $f$ is a function, the convolution of $f$ and $\rho$ is defined as
\[
(f \ast \rho)(x) \coloneqq \int f(x-y) \, \rho(dy).
\]
In particular, $(f \ast \gamma)(x) = \int f(x+y) \, \gamma(dy)$.
\item For a function $f \colon \Rd \rightarrow (-\infty,+\infty]$, its convex conjugate is given by 
\[
f^{\ast}(y) \coloneqq \sup_{x \in \Rd} \big(\langle x,y \rangle - f(x)\big)
\]
and we write
\[
\dom f \coloneqq \{ x \in \Rd \colon f(x) < + \infty \}
\]
for the domain of $f$. If $f$ is convex and $\dom f \neq \varnothing$, we say that $f$ is a proper convex function. If $f$ is additionally lower semicontinuous, then $f = f^{\ast\ast}$ by the Fenchel--Moreau theorem, a fact that we will use repeatedly in this work.
\item The operators $\operatorname{int}$ and $\operatorname{ri}$ denote the interior and relative interior of a set, respectively. We write $\overline{A}$ for the closure of a set $A \subseteq \Rd$. 
\item The operator $\conv$ applied to a function or a set denotes the convex hull. The convex hull $\conv\psi$ of a function $\psi$ is the greatest convex function smaller or equal to $\psi$.
\item The support and the closed convex hull of the support of a measure $\rho$ are denoted by $\supp(\rho)$ and $\widehat{\supp}(\rho)$, respectively.
\item The symbol $\partial$, applied to a convex function, denotes its subdifferential or --- by abuse of notation --- also a subgradient.
\end{itemize}

\section{Duality for stretched Brownian motion} \label{sec_dual_sbm}

We fix $\mu, \nu \in \PP_{2}(\Rd)$ with $\mu \lc \nu$.
Recall from \eqref{eq_primal} that our main focus will lie on the (primal) \textit{weak martingale transport problem} 
\begin{align*}
P(\mu,\nu) 
\coloneqq  \sup_{\pi \in \MT(\mu,\nu)}  \int \MCov(\pi_{x},\gamma) \, \mu(dx), 
\end{align*}
where the maximal covariance ($\MCov$) between $p_{1},p_{2} \in \PP_{2}(\Rd)$ is given by
\[
\MCov(p_{1},p_{2}) \coloneqq \sup_{q \in \Cpl(p_{1},p_{2})} \int \langle x_{1},x_{2} \rangle \, q(dx_{1},dx_{2}).
\]

\smallskip

Throughout this paper we shall make the following assumption.

\begin{assumption} \label{ass_ful_dim}
The support of $\nu$ affinely spans $\mathbb{R}^{d}$.
\end{assumption}

This assumption will be convenient, mainly for notational reasons. But it does not restrict the generality of the results of this paper. Indeed, let $A$ denote the affine space spanned by $\supp(\nu)$. For $\pi(dx,dy) = \pi_{x}(dy) \, \mu(dx) \in \MT(\mu,\nu)$ we have, for $\mu$-a.e.\ $x \in \Rd$, that $\supp(\pi_{x}) \subseteq \supp(\nu) \subseteq A$. Choose an arbitrary point $x_{0} \in A$ and denote by $\gamma_{x_{0}}^{A}$ the standard Gaussian distribution on the affine space $A$ centered at $x_{0}$. It is easy to see that there is a constant $c = c(x_{0},A)$ such that, for $\pi \in \MT(\mu,\nu)$, we have
\[
\int \MCov(\pi_{x},\gamma) \, \mu(dx) 
= \int \MCov(\pi_{x},\gamma_{x_{0}}^{A}) \, \mu(dx) + c.
\]
In other words, replacing $\gamma$ by $\gamma_{x_{0}}^{A}$ in \eqref{eq_primal} only changes the primal problem in a trivial way by adding a constant. Denoting by $m$ the affine dimension of $A$, we can choose an isometry from $A$ to $\mathbb{R}^{m}$ which maps $x_{0}$ to $0$ and which maps $\mu, \nu$ to measures $\overline{\mu}, \overline{\nu}$ on $\mathbb{R}^{m}$. In this way we have transformed the primal problem \eqref{eq_primal} for general $\mu, \nu \in \PP_{2}(\Rd)$ with $\mu \lc \nu$ to the problem \eqref{eq_primal} for $\overline{\mu}, \overline{\nu} \in \PP_{2}(\mathbb{R}^{m})$ with $\overline{\mu} \lc \overline{\nu}$ such that $\supp(\overline{\nu})$ affinely spans $\mathbb{R}^{m}$. In conclusion, we shall assume without loss of generality Assumption \ref{ass_ful_dim} throughout this paper.

\smallskip

We define the dual problem 
\begin{equation} \label{eq_dual} 
\tilde{D}(\mu,\nu) \coloneqq 
\inf_{\psi \in \Cq} \Big( \int \psi \, d\nu - \int \varphi^{\psi} \, d \mu \Big),
\end{equation}
where the function $\Rd \ni x \mapsto \varphi^{\psi}(x)$ is given by
\begin{equation} \label{eq_phi_psi_x}
\varphi^{\psi}(x) \coloneqq \inf_{p \in \PP_{2}^{x}(\Rd)} \Big( \int \psi \, dp - \MCov(p,\gamma)\Big).
\end{equation}

\begin{lemma} \label{prop_vppx} Let $\psi \colon \Rd \rightarrow (-\infty,+\infty]$ be a proper, lower semicontinuous convex function. The function $\varphi^{\psi}$ is convex on $\dom\psi$ and $\varphi^{\psi} \leqslant \psi$.
\begin{proof} The convexity of $x \mapsto \varphi^{\psi}(x)$ is repeated and proved in Lemma \ref{lem_co_fin} below. By taking $p = \delta_{x}$ in \eqref{eq_phi_psi_x}, we immediately see that $\varphi^{\psi}(x) \leqslant \psi(x)$. 
\end{proof}
\end{lemma}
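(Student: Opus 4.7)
The plan is to handle the two assertions separately by elementary means. The bound $\varphi^{\psi} \leqslant \psi$ needs only the substitution $p = \delta_{x}$: we have $\delta_{x} \in \PP_{2}^{x}(\Rd)$, the only element of $\Cpl(\delta_{x},\gamma)$ is the product $\delta_{x} \otimes \gamma$, and since $\gamma$ is centered this gives $\MCov(\delta_{x},\gamma) = \langle x, \bary(\gamma) \rangle = 0$. Combined with $\int \psi \, d\delta_{x} = \psi(x)$, this yields $\varphi^{\psi}(x) \leqslant \psi(x)$, trivially also when $x \notin \dom\psi$.

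For convexity on $\dom\psi$, my plan is a mixture argument. Fix $x_{0}, x_{1} \in \dom\psi$, $t \in (0,1)$, and set $x_{t} \coloneqq (1-t)x_{0} + t x_{1}$. For arbitrary $p_{i} \in \PP_{2}^{x_{i}}(\Rd)$ with $\int \psi \, dp_{i} < +\infty$, $i=0,1$, the mixture $p_{t} \coloneqq (1-t)p_{0} + t p_{1}$ has barycenter $x_{t}$ and finite second moment, hence lies in $\PP_{2}^{x_{t}}(\Rd)$. Two linearity observations then close the argument: the map $p \mapsto \int \psi \, dp$ is linear in $p$, while $p \mapsto \MCov(p,\gamma)$ is \emph{super}additive under convex combinations because the second marginal $\gamma$ is common on both sides, so $(1-t) q_{0} + t q_{1} \in \Cpl(p_{t},\gamma)$ whenever $q_{i} \in \Cpl(p_{i},\gamma)$, and the bilinear integrand $\langle x_{1},x_{2}\rangle$ integrates linearly against the mixture. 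Consequently
\[
\int \psi \, dp_{t} - \MCov(p_{t},\gamma) \leqslant (1-t)\Big( \int \psi \, dp_{0} - \MCov(p_{0},\gamma) \Big) + t\Big( \int \psi \, dp_{1} - \MCov(p_{1},\gamma) \Big),
\]
and taking the infimum over admissible $p_{0}, p_{1}$ on the right (such $p_{i}$ exist, e.g.\ $p_{i} = \delta_{x_{i}}$, because $\psi(x_{i}) < +\infty$ on $\dom\psi$) while bounding the left-hand side from below by $\varphi^{\psi}(x_{t})$ gives the convexity inequality.

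I do not anticipate a genuine obstacle. The one subtlety worth flagging is that the superadditivity of $\MCov(\, \cdot \,, \gamma)$ under mixtures crucially relies on $\gamma$ being fixed as the second marginal; absent that, convex combinations of couplings would not yield a coupling with the correct marginals. The restriction to $\dom\psi$ in the statement merely secures finite values so that the arithmetic is unambiguous; essentially the same proof shows that $\varphi^{\psi}$ is convex as an extended-real-valued function on all of $\Rd$.
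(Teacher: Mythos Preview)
Your proof is correct and follows essentially the same approach as the paper. The bound $\varphi^{\psi} \leqslant \psi$ via $p=\delta_{x}$ is identical, and the convexity argument in the paper (deferred to Lemma~\ref{lem_co_fin}) is the same mixture argument you give: form $p_{t}=(1-t)p_{0}+tp_{1}$, use linearity of $p\mapsto\int\psi\,dp$ and concavity of $p\mapsto\MCov(p,\gamma)$; the paper merely splits off the case $\varphi^{\psi}(x_{i})=-\infty$ explicitly, which your infimum argument handles implicitly.
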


\begin{theorem} \label{theorem_no_duality_gap} Let $\mu, \nu \in \PP_{2}(\Rd)$ with $\mu \lc \nu$. There is no duality gap between the primal problem \eqref{eq_primal} and the dual problem \eqref{eq_dual}, i.e., $P(\mu,\nu) = \tilde{D}(\mu,\nu)$. Moreover, the primal problem is uniquely attained and has a finite value, i.e., there exists a unique $\pisbm \in \MT(\mu,\nu)$ such that
\begin{equation} \label{eq_the_no_duality_gap}
P(\mu,\nu) = \int \MCov(\pisbm_{x},\gamma) \, \mu(dx) < +\infty.
\end{equation}
\end{theorem}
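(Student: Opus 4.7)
For weak duality and finiteness: for any $\pi \in \MT(\mu,\nu)$ and $\psi \in \Cq$, the disintegration $\pi_{x}$ lies in $\PP_{2}^{x}(\Rd)$ for $\mu$-a.e.\ $x$ (since $\nu$ has finite second moment), so the definition \eqref{eq_phi_psi_x} of $\varphi^{\psi}$ applied to $p = \pi_{x}$ gives
\[
\varphi^{\psi}(x) \leqslant \int \psi\, d\pi_{x} - \MCov(\pi_{x},\gamma),
\]
and integration against $\mu$ yields $P(\mu,\nu) \leqslant \tilde{D}(\mu,\nu)$. Finiteness of $P(\mu,\nu)$ follows from Cauchy--Schwarz, $\MCov(\pi_{x},\gamma) \leqslant \sqrt{d\int|y|^{2}\pi_{x}(dy)}$, combined with Jensen's inequality applied to the $\mu$-integral. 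For primal attainment, $\MT(\mu,\nu)$ is weakly compact (as $\Cpl(\mu,\nu)$ is tight and the martingale constraint is closed under weak convergence with controlled first moments), and using the standard identity
\[
\MCov(p,\gamma) = \tfrac{1}{2}\int|y|^{2}\,dp + \tfrac{d}{2} - \tfrac{1}{2}W_{2}^{2}(p,\gamma),
\]
the primal functional on $\MT(\mu,\nu)$ equals the constant $\tfrac{1}{2}\int|y|^{2}\,d\nu+\tfrac{d}{2}$ minus $\tfrac{1}{2}\int W_{2}^{2}(\pi_{x},\gamma)\,\mu(dx)$, which is weakly upper semicontinuous in $\pi$. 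A maximizer $\pisbm$ therefore exists.

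The main obstacle is the reverse inequality $P(\mu,\nu) \geqslant \tilde{D}(\mu,\nu)$. I would use a Fenchel--Moreau/minimax argument applied to the value function
\[
V(\eta) \coloneqq \sup_{\pi \in \MT(\mu,\eta)} \int \MCov(\pi_{x},\gamma)\,\mu(dx), \qquad \mu\lc\eta,
\]
extended by $-\infty$ otherwise. Concavity of $V$ in $\eta$ follows from mixing transports, $\lambda\pi^{1}+(1-\lambda)\pi^{2} \in \MT(\mu,\lambda\eta_{1}+(1-\lambda)\eta_{2})$, together with concavity of $p \mapsto \MCov(p,\gamma)$ deducible from the identity above. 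Upper semicontinuity of $V$ on the convex-order cone in the $W_{2}$-topology follows from the same compactness argument as for primal attainment. Computing the concave conjugate over test functions $\psi \in \Cq$,
\[
V^{\#}(\psi) = \inf_{\eta}\Big(\int\psi\,d\eta - V(\eta)\Big) = \inf_{\pi}\Big(\int\psi\,d\pi_{2} - \int\MCov(\pi_{x},\gamma)\,\mu(dx)\Big) = \int\varphi^{\psi}\,d\mu,
\]
where the last equality follows by interchanging infimum and $\mu$-integral via measurable selection and using the definition of $\varphi^{\psi}$. Fenchel--Moreau then yields $V(\nu) = \inf_{\psi\in\Cq}(\int\psi\,d\nu - V^{\#}(\psi)) = \tilde{D}(\mu,\nu)$. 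The technical heart is verifying upper semicontinuity of $V$ on the convex-order cone and handling the non-compactness of $\Rd$: the quadratic upper bound in $\Cq$ keeps $\int\psi\,d\nu$ finite, while the lower bound $\ell+|y|^{2}/2\leqslant\psi$ on $\psi\in\Cq$, together with the $\MCov$ identity, forces $\varphi^{\psi}$ to be bounded below by $\ell - d/2$ (so that, in view of Lemma \ref{prop_vppx}, the duality pairing is well-defined).

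Finally, uniqueness of $\pisbm$ is inherited from \cite[Theorem 2.2]{BaBeHuKa20}: the continuous-time problem \eqref{MBMBB} is strictly convex in the volatility and admits a unique optimizer $M^{\ast}$, whose joint law $\Law(M^{\ast}_{0},M^{\ast}_{1})$ equals $\pisbm$; via the identity $MT(\mu,\nu) = d + \int|y|^{2}\,d\nu - \int|x|^{2}\,d\mu - 2P(\mu,\nu)$, this transfers uniqueness to the primal weak transport problem.
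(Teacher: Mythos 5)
Your proposal takes a genuinely different and more hands-on route than the paper, which disposes of the whole theorem in two citations: existence/uniqueness/finiteness of $\pisbm$ and finiteness of $P(\mu,\nu)$ are quoted from \cite[Theorem~2.2]{BaBeHuKa20}, and the absence of a duality gap follows by plugging the cost
\[
C(x,p)= -\MCov(p,\gamma)+\tfrac12\int|y|^2\,dp \quad (\bary(p)=x, \ +\infty \text{ otherwise})
\]
into the general weak-transport duality theorem \cite[Theorem~1.3]{BaBePa18}, after which a change of test function $\tilde\psi\mapsto\tilde\psi+|\cdot|^2/2$ converts the $\Cbq$-dual into $\tilde D(\mu,\nu)$. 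You instead re-derive weak duality and finiteness by elementary estimates (the $\varphi^{\psi}(x)\le\int\psi\,d\pi_x-\MCov(\pi_x,\gamma)$ chain; Cauchy--Schwarz/Jensen), obtain primal attainment from compactness of $\MT(\mu,\nu)$ and the identity $\MCov(p,\gamma)=\tfrac12\int|y|^2dp+\tfrac d2-\tfrac12 W_2^2(p,\gamma)$, and then attack strong duality directly via Fenchel--Moreau applied to the value function $V(\eta)$. Conceptually this is exactly what \cite{BaBePa18} does internally, so the route is sound, and it has the pedagogical virtue of showing where the dual formula $\tilde D$ comes from. What it costs you is the technical substance that the cited theorem supplies: (i) to invoke Fenchel--Moreau you must fix a locally convex pairing in which $\Cq$ (or the vector space it generates — note $\Cq$ is itself only a convex cone) plays the role of the dual space, and prove that $V$ is u.s.c.\ in the corresponding $\sigma$-topology, which is stronger than ordinary weak convergence since it tests against quadratically growing functions; (ii) your claim that ``upper semicontinuity of $V$ follows from the same compactness argument as for primal attainment'' conceals a nontrivial point — one needs joint lower semicontinuity of $\pi\mapsto\int W_2^2(\pi_x,\gamma)\,\mu(dx)$ across transports with \emph{varying} second marginal, together with the stability of the martingale constraint under the limit, which is precisely the l.s.c./compactness package the paper outsources by the hypothesis check ``$C(x,p)$ is jointly lower semicontinuous on $\Rd\times\PP_2(\Rd)$''; and (iii) the interchange $\inf_{\{\pi_x\}}\int\cdots\,\mu(dx)=\int\inf_p\cdots\,\mu(dx)$ needs a measurable-selection argument that you flag but do not carry out. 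None of these is a dead end, but as written they are the heart of the matter rather than routine verifications, so your proof is a correct plan with the load-bearing technicalities left implicit — the same technicalities the paper delegates to \cite{BaBePa18}.
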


Unique attainment of the primal problem is a consequence of the results in \cite{BaBeHuKa20}, while duality can be derived from the general duality results for weak optimal transport provided in \cite{BaBePa18}, see Appendix \ref{app_proof_no_duality_gap} for details. To motivate \eqref{eq_dual} as a plausible dual formulation of \eqref{eq_primal} we provide some heuristics: 

Indeed, considering 
\[
\chi(\pi) \coloneqq 
\inf_{\psi \in \Cq}  \Big( \int \psi \, d \nu - 
\int \psi \, d\pi_{x} \, d\mu(x)  \Big)
= 
\begin{cases} 
0, & \mbox{if }  \int \pi_{x} \, d\mu(x) = \nu, \\
-\infty, & \mbox{else},
\end{cases} 
\] 
we \textit{formally} obtain the desired duality relation by interchanging $\inf$ and $\sup$: 
\begin{align*}
P(\mu,\nu) 
&=
\sup_{\substack{\pi(dx,dy) = \pi_{x}(dy) \, \mu(dx),  \\ \pi_{x} \in \PP_{2}^{x}(\Rd)}} \, 
\Big(\int \MCov(\pi_{x},\gamma) \, d \mu(x) + \chi(\pi)\Big) \\
&= \inf_{\psi \in \Cq} \bigg( \int \sup_{\pi_{x} \in \PP_{2}^{x}(\Rd)} \Big( \MCov(\pi_{x},\gamma) + \int \psi \, d(\nu -\pi_{x}) \Big) \, d \mu(x) \bigg) = \tilde{D}(\mu,\nu). 
\end{align*}

\medskip

In the remainder of this section we make the important observation that in the dual problem \eqref{eq_dual} it suffices to optimize over the class of functions $\psi \in \Cq$ which are \textit{convex}. We refer to \cite[Theorem 1.1]{GoJu18} for an analogous result. Another similar restriction to convex functions in the setting of weak transport costs has been shown in \cite[Theorem 2.11, (3)]{GoRoSaTe14}.

We then show in Lemma \ref{lem_ext_def_dual} below that it is also equivalent to optimize over all convex functions $\psi \colon \Rd \rightarrow (-\infty,+\infty]$ which are only $\mu$-a.s.\ finite, but not necessarily of quadratic growth.

\smallskip

Given some $\psi \in \Cq$, it will be convenient to have an explicit representation for the convex hull of $\psi$, as in \eqref{lem_conv_eq_1} below. This identity is usually stated in the more specific form 
\[
(\conv\psi)(y) = \inf \bigg\{ \sum_{i=1}^{d+1} \lambda_{i} \psi(y_{i}) \colon \sum_{i=1}^{d+1} \lambda_{i} y_{i} = y \bigg\},
\]
where the infimum is taken over all expressions of $y$ as a convex combination of $d+1$ points, see \cite[Corollary 17.1.5]{Ro70}. 

\begin{lemma} \label{lem_conv} Let $\psi \in \Cq$. Then the convex hull $\conv\psi$ satisfies
\begin{equation} \label{lem_conv_eq_1}
(\conv\psi)(y) = \inf_{p \in \PP_{2}^{y}(\Rd)} \int \psi \, dp, \qquad y \in \Rd
\end{equation}
and again $\conv\psi \in \Cq$.
\end{lemma}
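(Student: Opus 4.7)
The plan is to split the argument into two pieces: first show that $\conv\psi \in \Cq$ (in particular that $\conv\psi$ is real-valued and continuous), and then use this to derive the claimed identity from Jensen's inequality combined with the Carathéodory-type formula already recalled in the statement.

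For membership in $\Cq$, I would start from the quadratic bounds $\ell + \tfrac{|\cdot|^2}{2} \leqslant \psi \leqslant a + k|\cdot|^2$ guaranteed by $\psi \in \Cq$. The lower bound $x \mapsto \ell + \tfrac{|x|^2}{2}$ is itself a convex function pointwise dominated by $\psi$, so it is dominated by the greatest convex minorant $\conv\psi$. Combined with $\conv\psi \leqslant \psi \leqslant a + k|\cdot|^2$, this yields
\[
\ell + \tfrac{|\cdot|^2}{2} \leqslant \conv\psi \leqslant a + k|\cdot|^2.
\]
Hence $\conv\psi$ is real-valued on $\Rd$, and as a finite convex function it is automatically continuous. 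This gives $\conv\psi \in \Cq$, and additionally ensures that $\conv\psi$ is $p$-integrable for any $p \in \PP_2(\Rd)$.

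For the equality, write $F(y) \coloneqq \inf_{p \in \PP_{2}^{y}(\Rd)} \int \psi \, dp$. The inequality $F(y) \leqslant (\conv\psi)(y)$ follows directly from the stated Carathéodory representation: given $\varepsilon > 0$ pick $\lambda_i \geqslant 0$, $y_i \in \Rd$ with $\sum \lambda_i = 1$, $\sum \lambda_i y_i = y$ and $\sum \lambda_i \psi(y_i) < (\conv\psi)(y) + \varepsilon$; the finitely supported measure $p \coloneqq \sum \lambda_i \delta_{y_i}$ lies in $\PP_2^y(\Rd)$ and realizes an integral strictly below $(\conv\psi)(y) + \varepsilon$. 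Conversely, for any $p \in \PP_2^y(\Rd)$, Jensen's inequality applied to the finite convex function $\conv\psi$ gives
\[
\int \psi \, dp \geqslant \int \conv\psi \, dp \geqslant (\conv\psi)\!\left(\int x \, dp(x)\right) = (\conv\psi)(y),
\]
so $F(y) \geqslant (\conv\psi)(y)$. This closes the equality.

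The argument is essentially routine; the only delicate point is to verify that $\conv\psi$ does not descend to $-\infty$, which is precisely why the quadratic lower bound built into the definition of $\Cq$ is needed. Without it, $\conv\psi$ could fail to be a proper convex function and both Jensen's inequality and the Carathéodory representation would require extra care.
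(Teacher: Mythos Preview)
Your proof is correct and matches the paper's approach: the paper does not give a formal proof of this lemma but simply recalls the Carath\'eodory formula from \cite[Corollary 17.1.5]{Ro70} as the known special case, treating the extension to $\PP_2^y(\Rd)$ as routine. Your argument fills in exactly these details --- sandwiching $\conv\psi$ between the quadratic bounds to get $\conv\psi \in \Cq$, and then combining the Carath\'eodory representation with Jensen's inequality --- and nothing more is needed.
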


Recalling the dual problem \eqref{eq_dual}, we define the dual function 
\begin{equation} \label{eq_def_dual_func}
\D(\psi) \coloneqq \int \psi \, d\nu - \int \varphi^{\psi} \, d \mu,
\end{equation}
for $\psi \in \Cq$. Now we can prove our crucial observation, that it suffices to optimize the dual function over the class of functions $\psi \in \Cq$ which are \textit{convex}. 

\begin{proposition} \label{prop_crucial} Let $\mu, \nu \in \PP_{2}(\Rd)$ with $\mu \lc \nu$. Then $\D(\conv \psi) \leqslant \D(\psi)$ for all $\psi \in \Cq$ and consequently
\begin{equation} \label{eq_prop_crucial_m}
\tilde{D}(\mu,\nu) = \inf_{\psi \in \Cq} \D(\psi) = \inf_{\substack{\psi \in \Cq, \\ \textnormal{$\psi$ convex}}}\D(\psi).
\end{equation}
\begin{proof} Let $\varepsilon > 0$, $\psi \in \Cq$ and $\{p_{x}\}_{x \in \Rd} \subseteq \PP_{2}(\Rd)$ be a measurable collection of probability measures with $\bary(p_{x}) = x$. To show the claim, it is sufficient to construct a measurable family $\{\bar{p}_{x}\}_{x \in \Rd} \subseteq \PP_{2}(\Rd)$ with $\bary(\bar{p}_{x}) = x$ such that
\begin{equation} \label{prop_crucial_01}
\MCov(p_{x},\gamma) + \int \conv \psi \, d(\nu-p_{x}) 
\leqslant \MCov(\bar{p}_{x},\gamma) + \int \psi \, d(\nu-\bar{p}_{x}) + \varepsilon.
\end{equation}
Let us construct appropriate probability measures $\{\bar{p}_{x}\}_{x \in \Rd}$. By Lemma \ref{lem_conv} and a measurable selection argument we can choose a measurable collection of probability measures $\{\tilde{p}_{y}\}_{y \in \Rd} \subseteq \PP_{2}(\Rd)$ with $\bary(\tilde{p}_{y}) = y$ such that 
\begin{equation} \label{prop_crucial_02}
\int \psi \, d\tilde{p}_{y} \leqslant  (\conv\psi)(y) + \varepsilon.
\end{equation}
Then we define $\bar{p}_{x}(dz) \coloneqq \int_{y} \tilde{p}_{y}(dz) \, p_{x}(dy)$, so that $\bary(\bar{p}_{x}) = x$. Integrating \eqref{prop_crucial_02} with respect to $p_{x}(dy)$ yields
\begin{equation} \label{prop_crucial_03}
\int \psi \, d\bar{p}_{x}  
\leqslant  \int \conv\psi \, dp_{x} + \varepsilon.
\end{equation}
Since $\psi, \conv \psi \in \Cq$ and $p_{x} \in \PP_{2}^{x}(\Rd)$ we conclude
\[
\ell + \tfrac{1}{2} \int \vert y  \vert^{2} \, \bar{p}_{x}(dy)
\leqslant a + k \int \vert y  \vert^{2} \, p_{x}(dy) +  \varepsilon < + \infty,
\]
so that $\bar{p}_{x} \in \PP_{2}^{x}(\Rd)$.

In order to show the inequality \eqref{prop_crucial_01}, we first observe that $p_{x} \lc \bar{p}_{x}$ by Jensen's inequality. Together with the Kantorovich duality (see, e.g.\ \cite[Theorem 5.10]{Vi09}), we conclude that\footnote{In fact, $p_{x} \lc \bar{p}_{x}$ is equivalent to the inequality $\MCov(p_{x},q) \leqslant \MCov(\bar{p}_{x},q)$ being valid for all probability measures $q \in \PP_{2}(\Rd)$, see.\ e.g.\ \cite[Theorem 1]{AcPa22} or \cite[Corollary 1.2]{WiZh22}.}
\begin{equation} \label{prop_crucial_04}
\begin{aligned}
\MCov(p_{x},\gamma) 
&= \inf_{\substack{f \colon \Rd \rightarrow \R \\ \textnormal{convex}}} 
\Big( \int f \, d p_{x} + \int f^{\ast} \, d\gamma \Big) \\
&\leqslant \inf_{\substack{f \colon \Rd \rightarrow \R \\ \textnormal{convex}}} 
\Big( \int f \, d \bar{p}_{x} + \int f^{\ast} \, d\gamma \Big)
= \MCov(\bar{p}_{x},\gamma).
\end{aligned}
\end{equation}
On the other hand, from $\conv \psi \leqslant \psi$ and \eqref{prop_crucial_03} we have the inequality 
\begin{equation} \label{prop_crucial_05}
\int \conv \psi \, d(\nu-p_{x})  
\leqslant \int \psi \, d(\nu-\bar{p}_{x}) + \varepsilon.
\end{equation}
Finally, summing \eqref{prop_crucial_04} and \eqref{prop_crucial_05}, we obtain the inequality \eqref{prop_crucial_01}.
\end{proof}
\end{proposition}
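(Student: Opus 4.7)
The plan is to prove the core inequality $\D(\conv \psi) \leqslant \D(\psi)$ for every $\psi \in \Cq$; given this, the identity \eqref{eq_prop_crucial_m} follows because $\conv \psi$ is convex and still lies in $\Cq$ by Lemma \ref{lem_conv}.

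The first step is to rewrite the dual functional by swapping the infimum in the definition of $\varphi^{\psi}$ with the outer integral against $\mu$, obtaining
\[
\D(\psi) = \sup_{\{p_x\}_x} \int \Big(\MCov(p_x,\gamma) + \int \psi \, d(\nu - p_x)\Big) \, \mu(dx),
\]
where the supremum ranges over measurable families $\{p_x\}_{x \in \Rd}$ with $p_x \in \PP_{2}^{x}(\Rd)$. It then suffices, given $\varepsilon > 0$ and any such family $\{p_x\}$ (one may take a near-maximizer for $\D(\conv \psi)$), to construct an admissible family $\{\bar p_x\}$ satisfying the pointwise inequality
\[
\MCov(p_x,\gamma) + \int \conv \psi \, d(\nu - p_x) \leqslant \MCov(\bar p_x,\gamma) + \int \psi \, d(\nu - \bar p_x) + \varepsilon,
\]
since integrating against $\mu$ and sending $\varepsilon \to 0$ delivers the desired bound.

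For the construction I would invoke Lemma \ref{lem_conv}: for each $y \in \Rd$ there exists $\tilde p_y \in \PP_{2}^{y}(\Rd)$ with $\int \psi \, d\tilde p_y \leqslant (\conv \psi)(y) + \varepsilon$, and a measurable selection argument produces a measurable map $y \mapsto \tilde p_y$. I then set $\bar p_x \coloneqq \int \tilde p_y \, p_x(dy)$, which has barycenter $x$ and finite second moment thanks to the quadratic growth bounds on $\psi$ and $\conv \psi$. By construction $\int \psi \, d\bar p_x \leqslant \int \conv \psi \, dp_x + \varepsilon$, and trivially $\int \conv \psi \, d\nu \leqslant \int \psi \, d\nu$; these together control the $\nu - p_x$ and $\nu - \bar p_x$ terms up to the error $\varepsilon$. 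For the covariance term I observe that $p_x \lc \bar p_x$ by Jensen, and invoke Kantorovich duality for the cost $-\langle x_{1}, x_{2} \rangle$, which expresses $\MCov(\, \cdot \, ,\gamma)$ as the infimum of $\int f \, d(\, \cdot \,) + \int f^{\ast} \, d\gamma$ over convex $f$; the convex-order relation then immediately upgrades to $\MCov(p_x,\gamma) \leqslant \MCov(\bar p_x,\gamma)$.

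The main obstacle I anticipate is precisely this monotonicity of $\MCov(\, \cdot \, , \gamma)$ under convex order: the barycenter-preserving averaging that trades $\conv \psi$ for $\psi$ unavoidably spreads out $p_x$, which would decrease the covariance under a naive pointwise estimate; one therefore has no option but to appeal to the dual description of $\MCov$, where the monotonicity becomes transparent because only convex test functions enter the infimum. Once this is recognized, the argument assembles from standard ingredients; the measurable selection for $y \mapsto \tilde p_y$ is the only other technical point requiring care.
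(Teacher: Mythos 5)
Your proposal is correct and follows essentially the same route as the paper's proof: the same construction $\bar p_x = \int \tilde p_y \, p_x(dy)$ from near-optimal selections $\tilde p_y$ provided by Lemma \ref{lem_conv}, the same control of the $\nu$-terms via $\conv\psi \leqslant \psi$, and the same use of $p_x \lc \bar p_x$ together with Kantorovich duality to obtain $\MCov(p_x,\gamma) \leqslant \MCov(\bar p_x,\gamma)$. The only cosmetic difference is that you make the sup-over-families reformulation of $\D$ explicit, which the paper leaves implicit.
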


\begin{remark} \label{rem:smooth} On the right-hand side of \eqref{eq_prop_crucial_m} we can further require $\psi$ to be smooth. Indeed, if $\psi \in \Cq$ is convex, then $\psi^{\varepsilon} \coloneqq \psi \ast \gamma^{\varepsilon}$ is a smooth convex function in $\Cq$ and 
\[
\liminf_{\varepsilon \rightarrow 0}
\Big( \int \psi^{\varepsilon} \, d\nu - \int \varphi^{\psi^{\varepsilon}} \, d\mu \Big)
\leqslant \int \psi \, d\nu - \int \varphi^{\psi} \, d\mu, 
\]
as follows by dominated convergence and the inequality $\psi^{\varepsilon} \geqslant \psi$.
\end{remark}

For $\psi \in \Cq$, we recall the definition \eqref{eq_def_dual_func} of the dual function $\mathcal{D}(\, \cdot \,)$, which we rewrite as 
\begin{equation} \label{eq_rep_dual_func}
\D(\psi) = \int  \Big( \int \psi(y) \, \pisbm_{x}(dy)  - \varphi^{\psi}(x) \Big) \, \mu(dx),
\end{equation}
where $\pisbm(dx,dy) = \pisbm_{x}(dy) \, \mu(dx)$ is the unique optimizer of the primal problem \eqref{eq_primal}. The ``relaxed'' representation \eqref{eq_rep_dual_func} of \eqref{eq_def_dual_func} allows us to extend the definition of the dual function $\mathcal{D}(\, \cdot \,)$ to convex functions $\psi \colon \Rd \rightarrow (-\infty,+\infty]$ which are not confined to be in $\Cq$, but which are only required to be $\mu$-a.s.\ finite, i.e., satisfy $\mu(\dom \psi) = 1$. This is summarized in the following lemma.

\begin{lemma} \label{lem_ext_def_dual} Let $\mu, \nu \in \PP_{2}(\Rd)$ with $\mu \lc \nu$ and let $\pisbm \in \MT(\mu,\nu)$ be the unique optimizer of the primal problem \eqref{eq_primal}. Let $\psi \colon \Rd \rightarrow (-\infty,+\infty]$ be a convex function which is $\mu$-a.s.\ finite. Formula \eqref{eq_rep_dual_func} then defines $\D(\psi) \in [0,+\infty]$ and, recalling \eqref{eq_dual}, we have the inequality
\begin{equation} \label{lem_ext_dual_func_01}
\D(\psi) \geqslant \tilde{D}(\mu,\nu).
\end{equation}
In particular, recalling \eqref{eq_prop_crucial_m}, we have
\begin{equation} \label{lem_ext_dual_func_02}
\tilde{D}(\mu,\nu) = \inf_{\substack{\psi \in \Cq, \\ \textnormal{$\psi$ convex}}}\D(\psi) = \inf_{\substack{\mu(\dom \psi) = 1, \\ \textnormal{$\psi$ convex}}}\D(\psi).
\end{equation}
\begin{proof} Let $\psi \colon \Rd \rightarrow (-\infty,+\infty]$ be a convex function which is $\mu$-a.s.\ finite. First, note that by Jensen's inequality we have
\[
\int \psi(y) \, \pisbm_{x}(dy) \geqslant \psi\Big(\int y \, \pisbm_{x}(dy)\Big) = \psi(x).
\]
Together with the inequality $\varphi^{\psi}(x) \leqslant \psi(x)$ (recall Lemma \ref{prop_vppx}), we conclude that
\begin{equation} \label{eq_ext_dual_func_03}
\int \psi(y) \, \pisbm_{x}(dy)  - \varphi^{\psi}(x) \geqslant 0,
\end{equation}
for $\mu$-a.e.\ $x \in \Rd$, so that $\D(\psi) \in [0,+\infty]$. 
In order to prove the inequality \eqref{lem_ext_dual_func_01}, we take $p=\pisbm_x$ in \eqref{eq_phi_psi_x}. This gives  
\[
\varphi^{\psi}(x) \leqslant  \int \psi \, d\pisbm_{x} - \MCov(\pisbm_{x},\gamma)
\]
and hence
\[
\D(\psi) \geqslant \int  \MCov(\pisbm_{x},\gamma)  \, \mu(dx)  = P(\mu,\nu)= \tilde{D}(\mu,\nu),
\]
where the last equality follows from  Theorem \ref{theorem_no_duality_gap}. Thus \eqref{lem_ext_dual_func_01} is satisfied.

Finally, from \eqref{lem_ext_dual_func_01} and recalling \eqref{eq_prop_crucial_m}, we conclude
\[
\tilde{D}(\mu,\nu) 
\leqslant \inf_{\substack{\mu(\dom \psi) = 1, \\ \textnormal{$\psi$ convex}} }\D(\psi) 
\leqslant \inf_{\substack{\psi \in \Cq, \\ \textnormal{$\psi$ convex}} }\D(\psi) 
= \tilde{D}(\mu,\nu),
\]
which shows \eqref{lem_ext_dual_func_02}. 
\end{proof}
\end{lemma}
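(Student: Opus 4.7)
The plan is to verify the three assertions of the lemma in order, extracting non-negativity of the integrand from Jensen and the inequality $\varphi^\psi\leqslant\psi$, then bounding $\mathcal{D}(\psi)$ from below by $P(\mu,\nu)$ via the primal optimizer provided by Theorem \ref{theorem_no_duality_gap}, and finally squeezing the chain of inequalities via Proposition \ref{prop_crucial}.

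\textbf{Integrand is non-negative.} Since $\pi\in\MT(\mu,\nu)$, the disintegration $\pi_x$ has barycenter $x$ for $\mu$-a.e.\ $x$. Jensen's inequality applied to the convex function $\psi$ gives $\int\psi(y)\,\pi_x(dy)\geqslant\psi(x)$, where the left-hand side is well defined in $(-\infty,+\infty]$ because $\psi$ is bounded below by an affine function (being a proper convex function that is $\mu$-a.s.\ finite, hence finite at least at one point; combined with convexity this ensures an affine minorant, so negative parts are integrable against $\pi_x$). Together with $\varphi^\psi(x)\leqslant\psi(x)$ from Lemma \ref{prop_vppx}, this yields $\int\psi\,d\pi_x-\varphi^\psi(x)\geqslant 0$ for $\mu$-a.e.\ $x$, so formula \eqref{eq_rep_dual_func} defines $\mathcal{D}(\psi)\in[0,+\infty]$. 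Note that $\mathcal{D}(\psi)$ does not depend on the choice of $\pi\in\MT(\mu,\nu)$, since $\int\int\psi\,d\pi_x\,\mu(dx)=\int\psi\,d\nu$ whenever this is finite.

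\textbf{Lower bound by the dual value.} We split into two cases. If $\int\psi\,d\pi_x=+\infty$ on a set of positive $\mu$-measure, then by the non-negativity just shown we get $\mathcal{D}(\psi)=+\infty$ and \eqref{lem_ext_dual_func_01} is trivial. Otherwise, let $\pisbm$ be the primal optimizer from Theorem \ref{theorem_no_duality_gap}. Taking $p=\pisbm_x\in\PP_2^x(\Rd)$ in the definition \eqref{eq_phi_psi_x} of $\varphi^\psi$,
\[
\varphi^\psi(x)\leqslant\int\psi\,d\pisbm_x-\MCov(\pisbm_x,\gamma).
\]
Substituting into \eqref{eq_rep_dual_func} and using that both $\pi$ and $\pisbm$ have second marginal $\nu$ (so that $\int\int\psi\,d\pi_x\,\mu(dx)=\int\psi\,d\nu=\int\int\psi\,d\pisbm_x\,\mu(dx)$, both integrals being finite, or becoming a trivial $+\infty\geqslant\cdot$ otherwise) gives
\[
\mathcal{D}(\psi)\geqslant\int\MCov(\pisbm_x,\gamma)\,\mu(dx)=P(\mu,\nu)=\tilde{D}(\mu,\nu),
\]
the last equality by Theorem \ref{theorem_no_duality_gap}. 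This proves \eqref{lem_ext_dual_func_01}.

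\textbf{Equality of infima.} The inclusion of the class of $\psi\in\Cq$ that are convex into the class of $\mu$-a.s.\ finite convex functions immediately gives
\[
\inf_{\mu(\dom\psi)=1,\ \psi\ \textnormal{convex}}\mathcal{D}(\psi)\leqslant\inf_{\psi\in\Cq,\ \psi\ \textnormal{convex}}\mathcal{D}(\psi)=\tilde{D}(\mu,\nu),
\]
where the final equality is Proposition \ref{prop_crucial}. The reverse inequality follows from \eqref{lem_ext_dual_func_01}, completing \eqref{lem_ext_dual_func_02}.

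\textbf{Main obstacle.} The only subtle point is bookkeeping around the identity $\int\int\psi\,d\pi_x\,\mu(dx)=\int\psi\,d\nu$: when $\psi$ is only $\mu$-a.s.\ finite but potentially not $\nu$-integrable or not $\pisbm_x$-integrable, one must justify the rearrangement leading to the bound by $P(\mu,\nu)$. The convexity of $\psi$ and the existence of an affine minorant render the negative part of $\psi$ uniformly integrable against any measure in $\PP_2(\Rd)$, so the only ambiguity is $+\infty=+\infty$, which is handled by the case split above.
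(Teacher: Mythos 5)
Your proposal is correct and follows essentially the same route as the paper's proof: Jensen's inequality combined with $\varphi^{\psi}\leqslant\psi$ for non-negativity, the case split on $\int\psi\,d\pi_{x}=+\infty$, the choice $p=\pisbm_{x}$ in \eqref{eq_phi_psi_x} to bound $\D(\psi)$ below by $P(\mu,\nu)=\tilde{D}(\mu,\nu)$, and the sandwich of infima via Proposition \ref{prop_crucial}. Your extra remark on the affine minorant guaranteeing integrability of the negative part is a sound (and slightly more careful) justification of a step the paper leaves implicit.
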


\section{Proof of the first part of Theorem \texorpdfstring{\ref{theorem_new_duality}}{1.4}} \label{sec_pr_int_b_i}

In Subsection \ref{wtfadv} we already noted (by citing Theorem 2.2 of \cite{BaBeHuKa20}) that the value of the continuous-time optimization problem \eqref{MBMBB2} is equal to the value of the discrete-time formulation \eqref{eq_primal}. We also know from Theorem \ref{theorem_no_duality_gap} that there is no duality gap between the primal problem \eqref{eq_primal} and the dual problem \eqref{eq_dual}, i.e., $P(\mu,\nu) = \tilde{D}(\mu,\nu)$. Therefore we can formulate the first part of Theorem \ref{theorem_new_duality} equivalently as follows.

\begin{proposition} \label{prop_new_duality_first_part}
Let $\mu, \nu \in \PP_{2}(\Rd)$ with $\mu \lc \nu$. Then $\tilde{D}(\mu,\nu)$ is equal to
\begin{equation} \label{WeakDual_first_part}
D(\mu,\nu) = \inf_{\substack{\psi \in L^{1}(\nu), \\ \textnormal{$\psi$ convex}}} \Big( \int \psi \, d\nu - \int (\psi^{\ast} \ast \gamma)^{\ast} \, d \mu \Big).
\end{equation}
\end{proposition}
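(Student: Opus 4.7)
The goal is to identify the infimum $\tilde{D}(\mu,\nu)$ (from Theorem \ref{theorem_no_duality_gap}) with the infimum $D(\mu,\nu)$ in Proposition \ref{prop_new_duality_first_part}. Since Proposition \ref{prop_crucial} already reduces $\tilde{D}(\mu,\nu)$ to an infimum over convex $\psi \in \Cq$, the plan is to establish the pointwise identity $\varphi^{\psi}(x) = (\psi^{\ast} \ast \gamma)^{\ast}(x)$ for every such $\psi$, and then use a short sandwich argument to reconcile the two admissible classes of $\psi$.

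The inequality $\varphi^{\psi}(x) \geq (\psi^{\ast} \ast \gamma)^{\ast}(x)$ holds for every proper convex lsc $\psi$. Given $p \in \PP_{2}^{x}(\Rd)$, $q \in \Cpl(p,\gamma)$ and $\lambda \in \Rd$, Fenchel's inequality $\psi(y) \geq \langle \lambda + z, y \rangle - \psi^{\ast}(\lambda + z)$ integrated against $q(dy,dz)$, together with $\bary(p)=x$ and the second marginal of $q$ being $\gamma$, yields
\begin{equation*}
\int \psi \, dp - \int \langle y, z \rangle \, q(dy,dz) \geq \langle \lambda, x \rangle - (\psi^{\ast} \ast \gamma)(\lambda).
\end{equation*}
Maximizing over $q$ (which recovers $\MCov(p,\gamma)$), then minimizing over $p$ and maximizing over $\lambda$ proves the claim.

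The crux of the argument is the reverse inequality $\varphi^{\psi}(x) \leq (\psi^{\ast} \ast \gamma)^{\ast}(x)$ for $\psi \in \Cq$ convex, which I would prove by an explicit construction of a near-optimal $p$. Since $\psi \in \Cq$ is convex, $\psi^{\ast}$ is in $\Cq$ as well, so $g \coloneqq \psi^{\ast} \ast \gamma$ is smooth, convex, and has matching quadratic bounds (hence coercive). For each $x$ the supremum $g^{\ast}(x) = \sup_{\lambda}(\langle \lambda, x \rangle - g(\lambda))$ is therefore attained at some $\lambda^{\ast}$, with $x = \nabla g(\lambda^{\ast}) = \int \nabla \psi^{\ast}(\lambda^{\ast}+z) \, \gamma(dz)$, where $\nabla \psi^{\ast}$ is taken to be a measurable selection of $\partial \psi^{\ast}$. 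Setting $Y \coloneqq \nabla \psi^{\ast}(\lambda^{\ast}+Z)$ with $Z \sim \gamma$ produces a square-integrable random vector with $\E[Y]=x$ and law $p \in \PP_{2}^{x}(\Rd)$. Fenchel--Young then gives $\psi(Y)+\psi^{\ast}(\lambda^{\ast}+Z) = \langle Y, \lambda^{\ast}+Z \rangle$ almost surely, so taking expectations yields $\int \psi \, dp - \E[\langle Y, Z \rangle] = \langle \lambda^{\ast}, x \rangle - g(\lambda^{\ast}) = g^{\ast}(x)$. Because $(Y,Z)$ is a coupling of $p$ and $\gamma$, $\MCov(p,\gamma) \geq \E[\langle Y, Z \rangle]$, and hence $\int \psi \, dp - \MCov(p,\gamma) \leq g^{\ast}(x)$.

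Combining these ingredients, Proposition \ref{prop_crucial} together with $\varphi^{\psi} = (\psi^{\ast} \ast \gamma)^{\ast}$ on $\Cq$-convex functions gives $\tilde{D}(\mu,\nu) = \inf_{\psi \in \Cq,\, \psi \text{ conv}} \big( \int \psi\, d\nu - \int (\psi^{\ast}\ast\gamma)^{\ast}\, d\mu \big) \geq D(\mu,\nu)$, the last inequality because $\Cq$-convex functions are contained in the class of $L^{1}(\nu)$-convex functions (quadratic growth and $\nu \in \PP_{2}(\Rd)$). For the converse $D(\mu,\nu) \geq \tilde{D}(\mu,\nu)$, I would apply the first inequality $\varphi^{\psi} \geq (\psi^{\ast}\ast\gamma)^{\ast}$ to an arbitrary $L^{1}(\nu)$-convex $\psi$ and combine it with the trivial bound $\varphi^{\psi}(x) \leq \int \psi\, d\pisbm_{x} - \MCov(\pisbm_{x},\gamma)$ (plugging $p=\pisbm_{x}$ into the definition of $\varphi^{\psi}$, where $\pisbm$ is the primal optimizer from Theorem \ref{theorem_no_duality_gap}); integrating against $\mu$ and using $\pisbm \in \MT(\mu,\nu)$ gives $\int \psi\, d\nu - \int (\psi^{\ast}\ast\gamma)^{\ast}\, d\mu \geq P(\mu,\nu) = \tilde{D}(\mu,\nu)$. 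The principal technical obstacle is the reverse pointwise inequality: one must ensure attainment of the sup in $g^{\ast}$, justify the representation $\nabla g(\lambda) = \int \nabla \psi^{\ast}(\lambda+\cdot)\, d\gamma$ via a measurable selection of $\partial \psi^{\ast}$, and verify that the selected $Y$ yields a square-integrable law with the prescribed barycenter, all of which hinge on the quadratic bounds built into $\Cq$.
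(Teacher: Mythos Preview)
Your argument is correct. The sandwich argument and the inequality $\varphi^{\psi} \geqslant (\psi^{\ast} \ast \gamma)^{\ast}$ match the paper's treatment (Propositions \ref{prop_new_duality_first}, \ref{prop_new_duality_rel}, and the final sandwich \eqref{eq_sw_arg}). The one substantive difference is in how you establish the reverse inequality $\varphi^{\psi}(x) \leqslant (\psi^{\ast} \ast \gamma)^{\ast}(x)$ for convex $\psi \in \Cq$. You construct an explicit minimizer $p = \Law(\nabla\psi^{\ast}(\lambda^{\ast}+Z))$ by first locating the optimal Lagrange multiplier $\lambda^{\ast}$ and then invoking the equality case of Fenchel--Young; this is precisely the mechanism behind the paper's later Proposition \ref{prop_phi_psi_x_gen_opt}\ref{prop_phi_psi_x_gen_opt_i}, developed there for a different purpose (identifying the structure of the primal optimizer). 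The paper instead reaches the identity $\varphi^{\psi} = (\psi^{\ast} \ast \gamma)^{\ast}$ by computing the conjugate: relaxing the barycenter constraint reduces $(\varphi^{\psi})^{\ast}(\zeta)$ to the unconstrained problem $\varrho^{\psi_{\zeta}}$, which Lemma \ref{lem_phi_psi_gen} evaluates as $\int \psi_{\zeta}^{\ast} \, d\gamma = (\psi^{\ast} \ast \gamma)(\zeta)$, and then Fenchel--Moreau gives $\varphi^{\psi} = (\varphi^{\psi})^{\ast\ast}$. Your route is more constructive and bypasses Lemma \ref{lem_phi_psi_gen}, at the cost of the technical verifications you flag (attainment in $g^{\ast}$, differentiation under the integral, square-integrability of $Y$), all of which are indeed routine under the quadratic bounds of $\Cq$. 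One small inaccuracy: $\psi^{\ast}$ need not lie in $\Cq$ as defined (the lower bound there has the specific coefficient $\tfrac{1}{2}$), but it does have two-sided quadratic bounds, which is all your argument actually uses.
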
 

In Subsection \ref{subsec_pr_int_b_ii} we will prove the second part of Theorem \ref{theorem_new_duality} and in particular discuss the existence of dual optimizers of \eqref{WeakDual_first_part}. Since in general we cannot expect a dual optimizer to be integrable with respect to the probability measure $\nu$, we need the following ``relaxed'' formulation of Proposition \ref{prop_new_duality_first_part}.

\begin{proposition} \label{prop_new_duality_rel} Let $\mu, \nu \in \PP_{2}(\Rd)$ with $\mu \lc \nu$. Then $\tilde{D}(\mu,\nu)$ is equal to
\begin{equation} \label{theorem_new_duality_sec_eq_s}
D_{\textnormal{rel}}(\mu,\nu) \coloneqq \inf_{\substack{\mu(\dom \psi) = 1, \\ \textnormal{$\psi$ convex}}} \mathcal{E}(\psi),
\end{equation}
where
\begin{equation} \label{theorem_new_duality_sec_eq_s_rel_for}
\mathcal{E}(\psi) \coloneqq \int \Big( \int \psi(y) \, \pisbm_{x}(dy)  - (\psi^{\ast} \ast \gamma)^{\ast}(x)\Big) \, \mu(dx),
\end{equation}
with $\pisbm$ the unique optimizer of the primal problem \eqref{eq_primal}.
\end{proposition}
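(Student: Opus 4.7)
The plan is to reduce the proposition to the pointwise identity
$$
\varphi^{\psi}(x) = (\psi^{\ast} \ast \gamma)^{\ast}(x),
$$
valid $\mu$-a.e.\ for every (proper lsc) convex function $\psi$ with $\mu(\dom\psi)=1$. Indeed, Lemma \ref{lem_ext_def_dual} already expresses $\tilde{D}(\mu,\nu)$ as the infimum of $\mathcal{D}(\psi)$ over exactly this class, and comparing the representation \eqref{eq_rep_dual_func} of $\mathcal{D}(\psi)$ with \eqref{theorem_new_duality_sec_eq_s_rel_for} of $\mathcal{E}(\psi)$ shows that the two functionals differ only through $\varphi^{\psi}$ versus $(\psi^{\ast}\ast\gamma)^{\ast}$. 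Thus the identity above immediately yields $\mathcal{D}(\psi) = \mathcal{E}(\psi)$ for every admissible $\psi$, and taking infima gives $\tilde{D}(\mu,\nu) = D_{\textnormal{rel}}(\mu,\nu)$.

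For the inequality $\varphi^{\psi}(x) \geqslant (\psi^{\ast}\ast\gamma)^{\ast}(x)$, I would invoke the Kantorovich duality already used in the proof of Proposition \ref{prop_crucial}: for any convex $f$ and any $p \in \PP_{2}^{x}(\Rd)$,
$$
\MCov(p,\gamma) \leqslant \int f \, dp + \int f^{\ast} \, d\gamma.
$$
Applying this with the affine shift $f(z) = \psi(z) - \langle y, z \rangle + c$, whose conjugate is $f^{\ast}(u) = \psi^{\ast}(u+y) - c$, and using $\bary(p)=x$ to cancel the $\psi$-terms and the constant $c$, one is left with
$$
\int \psi \, dp - \MCov(p,\gamma) \geqslant \langle y, x \rangle - (\psi^{\ast}\ast\gamma)(y).
$$
Supremizing the right-hand side over $y \in \Rd$ produces $(\psi^{\ast}\ast\gamma)^{\ast}(x)$, and then infimizing the left-hand side over $p \in \PP_{2}^{x}(\Rd)$ produces $\varphi^{\psi}(x)$.

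For the reverse inequality $\varphi^{\psi}(x) \leqslant (\psi^{\ast}\ast\gamma)^{\ast}(x)$, I would exhibit a concrete test measure. Pick $y^{*} \in \partial(\psi^{\ast}\ast\gamma)^{\ast}(x)$, so that $x = \nabla(\psi^{\ast}\ast\gamma)(y^{*})$, differentiability of the Gaussian convolution being automatic on the interior of its domain. Let $Z \sim \gamma$ and set $p \coloneqq \Law(\nabla\psi^{\ast}(y^{*}+Z))$; differentiation under the integral gives $\bary(p) = \nabla(\psi^{\ast}\ast\gamma)(y^{*}) = x$, so $p \in \PP_{2}^{x}(\Rd)$ (modulo integrability, see below). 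Using the natural coupling $(\nabla\psi^{\ast}(y^{*}+Z), Z)$ as a lower bound for $\MCov(p,\gamma)$ and the Fenchel identity $\psi(\nabla\psi^{\ast}(u)) = \langle u, \nabla\psi^{\ast}(u)\rangle - \psi^{\ast}(u)$ applied $\gamma$-a.e.\ at $u = y^{*}+Z$, a direct computation yields
$$
\int \psi \, dp - \MCov(p,\gamma) \leqslant \langle y^{*}, x\rangle - (\psi^{\ast}\ast\gamma)(y^{*}) = (\psi^{\ast}\ast\gamma)^{\ast}(x),
$$
where the last equality uses $y^{*} \in \partial(\psi^{\ast}\ast\gamma)^{\ast}(x)$.

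The main obstacle is the rigorous execution of the upper bound: one must verify that $\partial(\psi^{\ast}\ast\gamma)^{\ast}(x)$ is non-empty on a set of full $\mu$-measure, that $\E\vert\nabla\psi^{\ast}(y^{*}+Z)\vert^{2}<\infty$ so that the test measure genuinely lives in $\PP_{2}^{x}(\Rd)$, and that the Fenchel equality can be invoked at $y^{*}+Z$ for $\gamma$-almost every realization. These regularity checks require analyzing $\psi^{\ast}\ast\gamma$ on the interior of its domain and exploiting the standing assumption $\mu(\dom\psi)=1$ together with $\mu \lc \nu$; I expect that many of them are collected in the technical lemmas mentioned in the introduction (in particular Lemma \ref{lem_aux_grad_convolution}), and once they are in hand the pointwise identity, and hence the proposition, follow directly.
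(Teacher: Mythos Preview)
Your lower bound $\varphi^{\psi}(x) \geqslant (\psi^{\ast}\ast\gamma)^{\ast}(x)$ via Fenchel--Young is correct and is exactly how the paper proves $\tilde{D}(\mu,\nu) \leqslant D_{\textnormal{rel}}(\mu,\nu)$ (compare \eqref{cor_new_duality_sec_01}--\eqref{cor_new_duality_sec_02}). The problem is your plan for the reverse inequality $\varphi^{\psi}(x) \leqslant (\psi^{\ast}\ast\gamma)^{\ast}(x)$: the regularity you list as ``the main obstacle'' is not merely a technical check, it genuinely fails for arbitrary convex $\psi$ with $\mu(\dom\psi)=1$. Your test measure requires $\nabla\psi^{\ast}(y^{\ast}+Z)$ to be square-integrable, but nothing in the standing hypotheses forces this; indeed $\psi$ need not even be co-finite, so $\psi^{\ast}$ need not be finite everywhere, and the construction can collapse. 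Lemma \ref{lem_aux_grad_convolution} does not help here --- it \emph{assumes} $\nabla f \in L^{2}(\gamma_{\zeta};\Rd)$ rather than supplying it. The pointwise identity $\varphi^{\psi}=(\psi^{\ast}\ast\gamma)^{\ast}$ is established in the paper only for $\psi\in\Cq$ (Proposition \ref{prop_new_duality_first}) or under the extra hypothesis $\varphi^{\psi}(x)>-\infty$ for some $x\in\operatorname{int}(\dom\psi)$ (Proposition \ref{prop_phi_psi_x_gen}); it is never claimed for the full admissible class.

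The paper sidesteps this entirely with a sandwich argument: your Fenchel--Young inequality already gives $\mathcal{E}(\psi)\geqslant\mathcal{D}(\psi)$ for every admissible $\psi$, hence $D_{\textnormal{rel}}(\mu,\nu)\geqslant\tilde{D}(\mu,\nu)$. For the other direction one does \emph{not} need the pointwise identity for all $\psi$; it suffices to know it for $\psi\in\Cq$ convex, where Proposition \ref{prop_new_duality_first} gives $\varphi^{\psi}=(\psi^{\ast}\ast\gamma)^{\ast}$ exactly, so that $D_{\textnormal{rel}}(\mu,\nu)\leqslant D_{\textnormal{q}}(\mu,\nu)=\tilde{D}(\mu,\nu)$. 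The equality of the two infima follows even though $\mathcal{D}$ and $\mathcal{E}$ may disagree on individual functions.
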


The main idea behind the proofs of Propositions \ref{prop_new_duality_first_part}, \ref{prop_new_duality_rel} (which we will present at the end of this section) is to apply Proposition \ref{prop_crucial} and then to show that $\varphi^{\psi} = (\psi^{\ast} \ast \gamma)^{\ast}$, for every convex function $\psi \in \Cq$. This motivates our next goal, namely to solve the minimization problem \eqref{eq_phi_psi_x}, which we rewrite as a maximization problem
\begin{equation} \label{eq_phi_psi_x_max}
-\varphi^{\psi}(x) = \sup_{p \in \PP_{2}^{x}(\Rd)} \Big( \MCov(p,\gamma) - \int \psi \, dp\Big).
\end{equation}
As a preliminary step, we consider the simpler problem
\begin{equation} \label{eq_phi_psi}
\varrho^{\psi} \coloneqq \sup_{p \in \PP_{2}(\Rd)} \Big( \MCov(p,\gamma) - \int \psi \, dp \Big),
\end{equation}
where we do not prescribe the barycenter $x$ of $p \in \PP_{2}(\Rd)$. In Lemma \ref{lem_phi_psi_gen} below we will show for an arbitrary proper convex function $\psi \colon \Rd \rightarrow (-\infty,+\infty]$, that the value $\varrho^{\psi}$ equals $\int \psi^{\ast} \, d\gamma$. In fact, this information is essentially enough to prove Propositions \ref{prop_new_duality_first_part}, \ref{prop_new_duality_rel}.

\smallskip

Solving the maximization problem \eqref{eq_phi_psi} leads to an interesting connection with Brenier maps in Lemma \ref{lem_phi_psi} below. By \textit{Brenier's theorem} (see, e.g., \cite[Theorem 2.12]{Vi03}), the optimal transport for quadratic cost between $\gamma$ and $p \in \PP_2(\Rd)$ is induced by the $\gamma$-a.e.\ defined gradient $\nabla v$ of some convex function $v \colon \Rd \rightarrow \R$ via $(\nabla v)(\gamma) = p$. 

\begin{lemma}[``reverse Brenier''] \label{lem_phi_psi} Let $v \colon \Rd \rightarrow \R$ be a finite-valued convex function and $\psi \coloneqq v^{\ast}$ its convex conjugate. Assume that the probability measure $\hat{p} \coloneqq (\nabla v)(\gamma)$ has finite second moment. Then $\hat{p}$ is the unique maximizer of the optimization problem \eqref{eq_phi_psi} and
\[
\varrho^{\psi} = \int v \, d\gamma = \int \psi^{\ast} \, d \gamma < + \infty.
\]
\begin{proof} We denote by $T_{\gamma}^{p}$ the Brenier map from $\gamma$ to $p \in \PP_{2}(\Rd)$ and note that $T_{\gamma}^{\hat{p}} = \nabla v$. Since the convex function $v$ is finite-valued, its gradient exists $\gamma$-a.e.\ and we have that
\[
v(z) = 
\sup_{y \in \Rd} \big(\langle y,z \rangle - v^{\ast}(y)\big) 
= \langle \nabla v(z),z \rangle - v^{\ast}( \nabla v(z)),
\]
for $\gamma$-a.e.\ $z \in \Rd$. Using these observations, for $p \in \PP_{2}(\Rd)$ we get
\begin{align*}
\MCov(p,\gamma) - \int \psi \, dp 
&= \int \Big( \big\langle T_{\gamma}^{p}(z), z\big\rangle 
-  v^{\ast}\big(T_{\gamma}^{p}(z)\big) \Big)  \, \gamma(dz) \\
&\leqslant \int \sup_{y \in \Rd} \big( \langle y, z\rangle 
-  v^{\ast}(y) \big)  \, \gamma(dz) 
= \int v(z)  \, \gamma(dz) \\
&= \int \Big( \big\langle \nabla v(z),z \big\rangle - v^{\ast}\big( \nabla v(z)\big) \Big)  \, \gamma(dz) \\
&= \MCov(\hat{p},\gamma) - \int \psi \, d\hat{p},
\end{align*}
with equality if and only if $T_{\gamma}^{p}(z) = T_{\gamma}^{\hat{p}}(z)$, for $\gamma$-a.e.\ $z \in \Rd$. This in turn is the case if and only if $p = \hat{p}$. Finally, from the convexity of $v$ and the Cauchy--Schwarz inequality we obtain
\begin{equation} \label{lem_phi_psi_ccse}
\int \vert v \vert \, d \gamma \leqslant \vert v(0) \vert + \sqrt{\int \vert \nabla v \vert^{2} \, d \gamma} \cdot \sqrt{d} < + \infty,
\end{equation}
which proves that $\varrho^{\psi} < +\infty$. 
\end{proof}
\end{lemma}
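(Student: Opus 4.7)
The plan is to use Brenier's theorem to reduce the optimization over $p \in \PP_2(\Rd)$ to a pointwise maximization on $\Rd$, where Fenchel's inequality gives both the upper bound and the unique optimizer. For every $p \in \PP_2(\Rd)$, Brenier's theorem provides a $\gamma$-a.e.\ unique convex function $u_p$ such that $\nabla u_p$ pushes $\gamma$ forward to $p$, and $\MCov(p,\gamma) = \int \langle z, \nabla u_p(z)\rangle\, \gamma(dz)$. In particular, since $v$ is itself a finite-valued convex function (so that $\nabla v$ is defined $\gamma$-a.e.), $\nabla v$ is the Brenier map from $\gamma$ to $\hat p$, and by the change of variables formula
\[
\MCov(p,\gamma) - \int \psi\, dp
= \int \Bigl(\langle z, \nabla u_p(z)\rangle - v^{\ast}(\nabla u_p(z))\Bigr)\, \gamma(dz).
\]

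The key point now is that the integrand is pointwise bounded above by the Fenchel transform: for every $z$ and every $y \in \Rd$, $\langle z,y\rangle - v^{\ast}(y) \leqslant v(z)$, with equality precisely when $y \in \partial v(z)$. Since $v$ is finite-valued and convex, $\partial v(z)$ is nonempty everywhere and reduces to the singleton $\{\nabla v(z)\}$ at every point of differentiability. Taking $y = \nabla u_p(z)$, integrating against $\gamma$, and using that $v$ is differentiable $\gamma$-a.e., I get
\[
\MCov(p,\gamma) - \int \psi\, dp
\leqslant \int v\, d\gamma = \MCov(\hat p,\gamma) - \int \psi\, d\hat p,
\]
where the second equality is the case $p = \hat p$, $\nabla u_{\hat p} = \nabla v$. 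This proves both that $\hat p$ is a maximizer and the identity $\varrho^{\psi} = \int v\, d\gamma$.

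For uniqueness, equality in the pointwise Fenchel inequality forces $\nabla u_p(z) = \nabla v(z)$ for $\gamma$-a.e.\ $z$, hence $p = (\nabla u_p)(\gamma) = (\nabla v)(\gamma) = \hat p$. Finally, the integrability estimate $\int |v|\, d\gamma < +\infty$ follows from the convexity inequality $v(z) \geqslant v(0) + \langle \nabla v(0), z\rangle$ (giving an $L^1(\gamma)$ lower bound) combined with the upper bound $v(z) \leqslant v(0) + \langle \nabla v(z), z\rangle$, which after Cauchy--Schwarz and the hypothesis $\nabla v \in L^2(\gamma)$ (equivalent to $\hat p$ having a finite second moment) yields finiteness of $\int v\, d\gamma$, and therefore $\varrho^{\psi} < +\infty$. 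The main obstacle in this plan is minor: one must take some care that all the $\gamma$-a.e.\ statements concerning differentiability of $v$ and the pushforward identity are valid precisely because $v$ is a finite convex function on all of $\Rd$, which is why the hypothesis ``finite-valued'' in the statement cannot be weakened casually.
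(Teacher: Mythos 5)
Your proposal is correct and follows essentially the same route as the paper's proof: representing $\MCov(p,\gamma)$ via the Brenier map, applying the Fenchel--Young inequality pointwise with its equality case to identify $\hat p$ as the unique maximizer, and using the convexity bounds plus Cauchy--Schwarz for the integrability of $v$ under $\gamma$. The only cosmetic difference is that you phrase the upper bound via the subdifferential characterization of equality in Fenchel's inequality, whereas the paper writes it as $v = v^{\ast\ast}$ directly; these are the same argument.
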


Note that in Lemma \ref{lem_phi_psi} we started with a \textit{finite-valued} convex function $v \colon \Rd \rightarrow \R$ and then \textit{defined} $\psi \coloneqq v^{\ast}$. If we additionally \textit{know} that the probability measure $\hat{p} = (\nabla v)(\gamma)$ has finite second moment, then $\hat{p}$ is the unique maximizer of the optimization problem \eqref{eq_phi_psi}. These are rather strong assumptions. However, if we are given just a proper convex function $\psi \colon \R^{d} \rightarrow (-\infty,+\infty]$, we can still compute the value of the supremum in \eqref{eq_phi_psi}, without explicitly constructing a maximizer of this optimization problem.

\begin{lemma} \label{lem_phi_psi_gen} Let $\psi \colon \Rd \rightarrow (-\infty,+\infty]$ be a proper convex function. Then
\begin{equation} \label{lem_phi_psi_gen_04}
\varrho^{\psi} = \int \psi^{\ast}  \, d\gamma.
\end{equation}
\end{lemma}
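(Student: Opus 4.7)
The plan is to establish the identity $\varrho^{\psi} = \int \psi^{\ast}\,d\gamma$ by proving the two inequalities separately. The upper bound is an immediate consequence of Fenchel's inequality: given any $p \in \PP_{2}(\Rd)$ and any coupling $q \in \Cpl(p,\gamma)$, the pointwise estimate $\langle x,y \rangle - \psi(x) \leqslant \psi^{\ast}(y)$ integrates to
\[
\int \langle x,y \rangle \, q(dx,dy) - \int \psi \, dp
= \int \big( \langle x,y \rangle - \psi(x) \big) \, q(dx,dy)
\leqslant \int \psi^{\ast}\, d\gamma,
\]
so taking the supremum over $q$ and then over $p$ yields $\varrho^{\psi} \leqslant \int \psi^{\ast}\, d\gamma$. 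For the reverse inequality the idea is to construct, via Lemma \ref{lem_phi_psi}, a sequence of candidate measures $\hat{p}_{n} \in \PP_{2}(\Rd)$ whose scores $\MCov(\hat{p}_{n},\gamma) - \int \psi \, d\hat{p}_{n}$ converge to $\int \psi^{\ast}\, d\gamma$.

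I would first reduce to the lower semicontinuous case by replacing $\psi$ with its biconjugate $\tilde{\psi} \coloneqq \psi^{\ast\ast}$, which satisfies $\tilde{\psi}^{\ast} = \psi^{\ast}$ and coincides with $\psi$ on $\operatorname{ri}(\dom\psi)$. The set $\{\tilde{\psi} < \psi\}$ is contained in the relative boundary of $\dom\psi$ and is therefore negligible for any absolutely continuous measure; hence a small regularization of the candidate measures (for example by a strictly convex perturbation of the potentials introduced below) ensures that passing from $\psi$ to $\tilde{\psi}$ does not alter the achievable values. From now on we may assume $\psi$ is proper, lower semicontinuous and convex.

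For $n$ large enough that $B_{n} \cap \dom\psi \neq \varnothing$, set $\psi_{n} \coloneqq \psi + \chi_{B_{n}}$, where $\chi_{B_{n}}(x) = 0$ for $\vert x \vert \leqslant n$ and $+\infty$ otherwise. This function is proper, lower semicontinuous and convex, and its conjugate
\[
v_{n}(y) \coloneqq \psi_{n}^{\ast}(y) = \sup_{\vert x \vert \leqslant n}\big(\langle x,y\rangle - \psi(x)\big)
\]
is finite-valued on $\Rd$, convex, and $n$-Lipschitz (any subgradient element belongs to $\dom\psi_{n} \subseteq B_{n}$). In particular $\hat{p}_{n} \coloneqq (\nabla v_{n})(\gamma)$ is supported in $\overline{B_{n}}$ and has finite second moment, so Lemma \ref{lem_phi_psi} applies with $v = v_{n}$. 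Since $v_{n}^{\ast} = \psi_{n}$ by Fenchel--Moreau, it gives $\varrho^{\psi_{n}} = \int v_{n}\, d\gamma$. As $\psi_{n} \geqslant \psi$, we have $\varrho^{\psi_{n}} \leqslant \varrho^{\psi}$ and hence $\int v_{n}\, d\gamma \leqslant \varrho^{\psi}$.

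Finally I would pass to the limit: the suprema defining $v_{n}$ are over an increasing family of sets, so $v_{n} \uparrow \psi^{\ast}$ pointwise, and fixing any $x_{0} \in \dom\psi$, the bound $v_{n}(y) \geqslant \langle x_{0},y\rangle - \psi(x_{0})$ (valid as soon as $\vert x_{0} \vert \leqslant n$) supplies a uniform $\gamma$-integrable minorant. Monotone convergence then yields $\int v_{n}\, d\gamma \to \int \psi^{\ast}\, d\gamma \in (-\infty,+\infty]$, which combined with the upper bound gives \eqref{lem_phi_psi_gen_04}. The main obstacle I expect is the initial reduction to the l.s.c.\ case: carefully verifying that the boundary discrepancy $\psi - \tilde{\psi}$ can be absorbed into the approximation without changing the value of $\varrho^{\psi}$. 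The rest of the argument is essentially a routine truncate-and-pass-to-the-limit scheme on top of Lemma \ref{lem_phi_psi}.
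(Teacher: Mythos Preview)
Your approach is genuinely different from the paper's, and for lower semicontinuous $\psi$ it is fully correct. The paper does not invoke Lemma~\ref{lem_phi_psi} at all. Instead it rewrites $\varrho^{\psi}$ probabilistically as $\sup \E[\langle Y,Z\rangle - \psi(Y)]$ over pairs with $Z\sim\gamma$ and $Y\in L^{2}$, uses conditional Jensen to reduce to $Y = Y(Z)$, and then obtains the lower bound by restricting to $\vert Y\vert\leqslant N$, applying a measurable selection to reach $\int\sup_{\vert y\vert\leqslant N}(\langle y,z\rangle-\psi(y))\,d\gamma(z)$, and finally monotone convergence in $N$. Your scheme truncates on the dual side---adding $\chi_{B_n}$ to $\psi$ rather than constraining the selector---and then exploits Lemma~\ref{lem_phi_psi} to identify $\varrho^{\psi_n}$ exactly. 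Both routes end with the same monotone limit $v_n=\psi_n^\ast\uparrow\psi^\ast$; what you gain is an explicit sequence of maximizing measures $\hat p_n=(\nabla v_n)(\gamma)$, while the paper's argument is slightly more self-contained and needs no auxiliary Brenier-type lemma.

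The one point where your proposal is weaker than the paper's is exactly the obstacle you flag: the reduction to the lower semicontinuous case. When $\psi$ is not l.s.c.\ you only have $v_n^\ast=\psi_n^{\ast\ast}$, and since $\psi_n^{\ast\ast}\leqslant\psi_n$ the inequality $\varrho^{\psi_n^{\ast\ast}}=\int v_n\,d\gamma$ does not immediately bound $\varrho^{\psi}$ from below. Your suggested fix---perturb $v_n$ so that $\hat p_n$ becomes absolutely continuous and hence sees no difference between $\psi$ and $\psi^{\ast\ast}$---is plausible (for instance $\nabla v_n+\delta\operatorname{id}$ has $1/\delta$-Lipschitz inverse, so its pushforward of $\gamma$ is absolutely continuous), but carrying out the $\delta\to0$ limit still requires controlling $\int\psi\,d\hat p_n^{\delta}$ from above, which needs an additional argument. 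The paper's measurable-selection route sidesteps this entirely: it never produces a candidate measure, so the boundary discrepancy between $\psi$ and $\psi^{\ast\ast}$ simply does not arise. Since every application of the lemma in the paper is to continuous $\psi$, your version already covers what is needed; but as stated the lemma allows general proper convex $\psi$, and for that the paper's proof is cleaner.
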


We postpone the proof of Lemma \ref{lem_phi_psi_gen} to Appendix \ref{app_tech_lemm}. As already announced, with the help of \eqref{lem_phi_psi_gen_04}, we are now able to prove Propositions \ref{prop_new_duality_first_part}, \ref{prop_new_duality_rel}. We first show a simpler variant, where we optimize over the class of convex functions $\psi$ in $\Cq$ (i.e., which have quadratic growth), as in \eqref{prop_new_duality_first_eq_s} of Proposition \ref{prop_new_duality_first} below. Proposition \ref{prop_new_duality_rel}, where we optimize over \textit{all} $\mu$-a.s.\ finite-valued convex functions $\psi \colon \Rd \rightarrow (-\infty,+\infty]$ as in \eqref{theorem_new_duality_sec_eq_s}, is then a straightforward consequence. Proposition \ref{prop_new_duality_first_part}, where we optimize over all $\nu$-integrable convex functions $\psi$ as in \eqref{WeakDual_first_part}, follows from a ``sandwich argument''.

\begin{proposition} \label{prop_new_duality_first} Let $\mu, \nu \in \PP_{2}(\Rd)$ with $\mu \lc \nu$. Then $\tilde{D}(\mu,\nu)$ is equal to
\begin{equation} \label{prop_new_duality_first_eq_s} 
D_{\textnormal{q}}(\mu,\nu) \coloneqq \inf_{\substack{\psi \in \Cq, \\ \textnormal{$\psi$ convex}} } \Big( \int \psi \, d\nu - \int (\psi^{\ast} \ast \gamma)^{\ast} \, d \mu \Big).
\end{equation}
\begin{proof} By Proposition \ref{prop_crucial} we have
\[
\tilde{D}(\mu,\nu) = \inf_{\substack{\psi \in \Cq, \\ \textnormal{$\psi$ convex}}}  \Big( \int \psi \, d\nu - \int \varphi^{\psi} \, d \mu \Big).
\]
Hence it remains to show that $\varphi^{\psi} = (\psi^{\ast} \ast \gamma)^{\ast}$, for every convex function $\psi \in \Cq$. In order to do this, we will first prove that $(\varphi^{\psi})^{\ast} = \psi^{\ast} \ast \gamma$. By definition of the convex conjugate and \eqref{eq_phi_psi_x_max}, for $\zeta \in \Rd$, we have
\begin{align}
(\varphi^{\psi})^{\ast}(\zeta) 
&= \sup_{x \in \Rd} \big(\langle x,\zeta \rangle - \varphi^{\psi}(x)\big) \label{prop_new_duality_first_a} \\
&= \sup_{x \in \Rd} \sup_{p \in \PP_{2}^{x}(\Rd)} \Big( \MCov(p,\gamma)-\int \psi_{\zeta} \, dp\Big) \label{prop_new_duality_first_b} \\
&= \sup_{p \in \PP_{2}(\Rd)} \Big(\MCov(p,\gamma)- \int \psi_{\zeta} \, dp\Big)
= \varrho^{\psi_{\zeta}}, \label{prop_new_duality_first_c}
\end{align}
where the function $\psi_{\zeta}$ is defined by $\psi_{\zeta}(y) \coloneqq \psi(y) - \langle \zeta,y \rangle$, for $y \in \Rd$. Now applying Lemma \ref{lem_phi_psi_gen} to the proper convex function $\psi_{\zeta}$ yields
\begin{equation} \label{prop_new_duality_first_d}
(\varphi^{\psi})^{\ast}(\zeta)  
= \varrho^{\psi_{\zeta}} 
= \int \psi_{\zeta}^{\ast} \, d\gamma 
= \int \psi^{\ast}(\zeta+z)  \, d\gamma(z) 
= (\psi^{\ast} \ast \gamma)(\zeta).
\end{equation}
To complete the proof, we must justify that $(\varphi^{\psi})^{\ast\ast} = \varphi^{\psi}$, i.e.\ that the Fenchel--Moreau theorem is applicable. To see this, we first note that the function $x \mapsto \varphi^{\psi}(x)$ is convex (recall Lemma \ref{prop_vppx}) and we have the upper bound $\varphi^{\psi} \leqslant \psi < + \infty$. Furthermore, for any $p \in \PP_{2}^{x}(\Rd)$ we have the inequalities
\[
\MCov(p,\gamma) \leqslant \tfrac{1}{2} \int \vert y \vert^{2} \, dp(y) + \tfrac{d}{2}
\]
as well as
\[
\int \psi \, dp \geqslant \ell + \tfrac{1}{2} \int \vert y \vert^{2} \, dp(y),
\]
the latter following from the fact that $\psi \in \Cq$. As a consequence, we get the lower bound $\varphi^{\psi} \geqslant \ell - \frac{d}{2} > - \infty$. Altogether, $\varphi^{\psi}$ is a convex function which is finite everywhere on $\R^{d}$, thus it is continuous and we indeed have $(\varphi^{\psi})^{\ast\ast} = \varphi^{\psi}$.
\end{proof}
\end{proposition}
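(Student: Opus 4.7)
The plan is to reduce the identity $\tilde{D}(\mu,\nu) = D_{\textnormal{q}}(\mu,\nu)$ to showing the pointwise identity $\varphi^{\psi} = (\psi^{\ast}\ast\gamma)^{\ast}$ for every convex $\psi \in \Cq$. Indeed, Proposition \ref{prop_crucial} already tells us that $\tilde{D}(\mu,\nu)$ equals $\inf_{\psi \in \Cq,\, \text{convex}} \bigl(\int\psi\,d\nu - \int\varphi^{\psi}\,d\mu\bigr)$, so once the pointwise identity is established, the two formulas for the dual value will agree term by term.

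To pin down $\varphi^{\psi}$, the natural route is to compute its Legendre transform and then apply Fenchel--Moreau. For $\zeta \in \Rd$, using the dual representation in \eqref{eq_phi_psi_x_max} and interchanging the two suprema,
\begin{align*}
(\varphi^{\psi})^{\ast}(\zeta)
&= \sup_{x \in \Rd}\bigl(\langle x,\zeta\rangle - \varphi^{\psi}(x)\bigr)
 = \sup_{x \in \Rd}\sup_{p \in \PP_{2}^{x}(\Rd)}\Bigl(\MCov(p,\gamma) - \int\psi_{\zeta}\,dp\Bigr) \\
&= \sup_{p \in \PP_{2}(\Rd)}\Bigl(\MCov(p,\gamma) - \int\psi_{\zeta}\,dp\Bigr) = \varrho^{\psi_{\zeta}},
\end{align*}
where $\psi_{\zeta}(y) = \psi(y) - \langle\zeta,y\rangle$. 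The key point is that the barycenter constraint disappears once the linear term $\langle x,\zeta\rangle = \int\langle y,\zeta\rangle\,p(dy)$ is absorbed into the integrand. Lemma \ref{lem_phi_psi_gen} then gives $\varrho^{\psi_{\zeta}} = \int \psi_{\zeta}^{\ast}\,d\gamma$, and a one-line computation shows $\psi_{\zeta}^{\ast}(z) = \psi^{\ast}(z+\zeta)$, so that $(\varphi^{\psi})^{\ast}(\zeta) = (\psi^{\ast}\ast\gamma)(\zeta)$.

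It remains to invoke Fenchel--Moreau, and this is the step that needs a little care. Lemma \ref{prop_vppx} already gives convexity of $\varphi^{\psi}$ and the upper bound $\varphi^{\psi}\leqslant \psi < +\infty$. For a matching lower bound I would use that $\psi \in \Cq$ provides $\psi(y)\geqslant \ell + \tfrac{1}{2}|y|^{2}$, while the elementary inequality $\MCov(p,\gamma) \leqslant \tfrac{1}{2}\int |y|^{2}\,dp + \tfrac{d}{2}$ gives $\int \psi\,dp - \MCov(p,\gamma)\geqslant \ell - \tfrac{d}{2}$ uniformly in $p \in \PP_{2}^{x}(\Rd)$. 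Hence $\varphi^{\psi}$ is a convex function which is finite on all of $\Rd$, in particular continuous and proper, so that $(\varphi^{\psi})^{\ast\ast} = \varphi^{\psi}$ and therefore $\varphi^{\psi} = (\psi^{\ast}\ast\gamma)^{\ast}$, as needed.

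The main obstacle, in my view, lies not in the formal manipulation of conjugates but in establishing the two-sided bounds that make $\varphi^{\psi}$ amenable to Fenchel--Moreau; the $\Cq$-assumption on $\psi$ is used precisely to obtain the lower bound, which is where the quadratic control of $\psi$ against $\MCov(p,\gamma)$ becomes essential. The swap of suprema in the computation of $(\varphi^{\psi})^{\ast}$, although formally trivial, is also what makes the barycenter constraint harmless and reduces the problem to the unconstrained quantity $\varrho^{\psi_{\zeta}}$ treated in Lemma \ref{lem_phi_psi_gen}.
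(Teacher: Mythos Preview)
Your proposal is correct and follows essentially the same approach as the paper's proof: reduce via Proposition \ref{prop_crucial}, compute $(\varphi^{\psi})^{\ast}$ by swapping suprema to drop the barycenter constraint and then invoke Lemma \ref{lem_phi_psi_gen}, and finally justify Fenchel--Moreau using the two-sided bounds coming from $\psi\in\Cq$. The argument, the lemmas cited, and even the specific bounds $\varphi^{\psi}\leqslant\psi$ and $\varphi^{\psi}\geqslant\ell-\tfrac{d}{2}$ match the paper exactly.
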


\begin{proof}[Proof of Proposition \ref{prop_new_duality_rel}] We first show that the function $\mathcal{E}(\, \cdot \,)$ in \eqref{theorem_new_duality_sec_eq_s_rel_for} is well defined for every convex function $\psi \colon \Rd \rightarrow (-\infty,+\infty]$, which is $\mu$-a.s.\ finite. To this end, we will prove the inequality $(\psi^{\ast} \ast \gamma)^{\ast} \leqslant \psi$. By Jensen's inequality, this implies that the integrand in \eqref{theorem_new_duality_sec_eq_s_rel_for} is $\mu$-a.s.\ non-negative, and hence $\mathcal{E}(\psi)$ is well defined and $[0,+\infty]$-valued. Recalling the equation \eqref{prop_new_duality_first_d} above, we have $\psi^{\ast} \ast \gamma = (\varphi^{\psi})^{\ast}$. Taking the convex conjugate and using that $\varphi^{\psi} \leqslant \psi$, we obtain $(\psi^{\ast} \ast \gamma)^{\ast} = (\varphi^{\psi})^{\ast\ast} \leqslant \varphi^{\psi} \leqslant \psi$, as required.

\smallskip

Now let us turn to the proof of $\tilde{D}(\mu,\nu) = D_{\textnormal{rel}}(\mu,\nu)$. Recalling \eqref{theorem_new_duality_sec_eq_s}, \eqref {prop_new_duality_first_eq_s}, and using Proposition \ref{prop_new_duality_first}, we have $D_{\textnormal{rel}}(\mu,\nu) \leqslant D_{\textnormal{q}}(\mu,\nu) = \tilde{D}(\mu,\nu)$, so that we need to show the inequality $\tilde{D}(\mu,\nu) \leqslant D_{\textnormal{rel}}(\mu,\nu)$. Let $\pi \in \MT(\mu,\nu)$ and $\psi \colon \Rd \rightarrow (-\infty,+\infty]$ be convex with $\mu(\dom \psi) = 1$. Since $\tilde{D}(\mu,\nu) = P(\mu,\nu)$ (recall Theorem \ref{theorem_no_duality_gap} and \eqref{eq_the_no_duality_gap}), we have to verify the inequality
\[
\int \MCov(\pisbm_{x},\gamma) \, d\mu(x)
\leqslant  \int \Big( \int \psi(y) \, \pisbm_{x}(dy)  - (\psi^{\ast} \ast \gamma)^{\ast}(x)\Big) \, \mu(dx).
\]
It is sufficient to prove, for $\mu$-a.e.\ $x \in \Rd$, that
\begin{equation} \label{cor_new_duality_sec_01} 
\MCov(\pisbm_{x},\gamma) 
\leqslant  \int \psi \, d\pisbm_{x} - (\psi^{\ast} \ast \gamma)^{\ast}(x).
\end{equation}
We express the convex conjugate on the right-hand side of \eqref{cor_new_duality_sec_01} as
\[
-(\psi^{\ast} \ast \gamma)^{\ast}(x) 
= \inf_{\zeta \in \Rd} \big( (\psi^{\ast} \ast \gamma)(\zeta) - \langle \zeta,x \rangle \big) 
= \inf_{\zeta \in \Rd} \Big( \int \psi_{\zeta}^{\ast} \, d\gamma - \langle \zeta,x \rangle \Big),
\]
where the function $\psi_{\zeta}$ is defined by $\psi_{\zeta}(y) \coloneqq \psi(y) - \langle \zeta,y \rangle$, for $y \in \Rd$. Substituting back into \eqref{cor_new_duality_sec_01} yields
\begin{equation} \label{cor_new_duality_sec_02} 
\MCov(\pisbm_{x},\gamma) \leqslant \inf_{\zeta \in \Rd} \Big(\int \psi_{\zeta} \, d \pisbm_{x} + \int \psi_{\zeta}^{\ast} \, d\gamma\Big).
\end{equation}
Now observe that by the Fenchel--Young inequality we have
\[
\MCov(\pisbm_{x},\gamma) \leqslant \int \phi \, d \pisbm_{x} + \int \phi^{\ast} \, d\gamma,
\]
for every proper convex function $\phi \colon \Rd \rightarrow (-\infty,+\infty]$. In particular, for every $\zeta \in \Rd$, it holds that
\[
\MCov(\pisbm_{x},\gamma) \leqslant \int \psi_{\zeta} \, d \pisbm_{x} + \int \psi_{\zeta}^{\ast} \, d\gamma,
\]
which implies \eqref{cor_new_duality_sec_02}. This completes the proof of Proposition \ref{prop_new_duality_rel}.
\end{proof}

\begin{proof}[Proof of Proposition \ref{prop_new_duality_first_part}] The assertion follows immediately from Proposition \ref{prop_new_duality_rel} and Proposition \ref{prop_new_duality_first} by a ``sandwich argument''. Indeed, we have
\begin{equation} \label{eq_sw_arg}
\tilde{D}(\mu,\nu) = 
D_{\textnormal{rel}}(\mu,\nu)  \leqslant D(\mu,\nu) \leqslant D_{\textnormal{q}}(\mu,\nu) 
= \tilde{D}(\mu,\nu).
\end{equation}
The inequalities in \eqref{eq_sw_arg} are due to the inclusions
\[
\{ \psi \in \Cq\colon \textnormal{$\psi$ convex} \} \subseteq \{ \psi \in L^{1}(\nu) \colon \textnormal{$\psi$ convex} \} \subseteq \{ \textnormal{$\psi$ convex} \colon \mu(\dom \psi) = 1  \};
\]
the equalities on the left-hand side and on the right-hand side of \eqref{eq_sw_arg} are justified by Proposition \ref{prop_new_duality_rel} and Proposition \ref{prop_new_duality_first}, respectively.
\end{proof}

\section{Preparation for the proof of the second part of Theorem \texorpdfstring{\ref{theorem_new_duality}}{1.4}} \label{sec_prep}

The goal of this rather technical section is to outline and prepare the proof of the second part of Theorem \ref{theorem_new_duality}, for which we will need the results of Sections \ref{sec_dual_bass} and \ref{sec_irr_bass}. In Section \ref{sec_irr_bass} we will show that the value $D(\mu,\nu)$ is attained by a convex function $\psiopt$ with $\mu(\operatorname{ri}(\dom\psiopt)) = 1$ if and only if $(\mu,\nu)$ is irreducible. In Section \ref{sec_dual_bass} (see Theorem \ref{theo_du_op_b_m}) we will prove that there is such a dual optimizer $\psiopt$ if and only if there exists a Bass martingale from $\mu$ to $\nu$. At a first reading one might skip the present Section \ref{sec_prep}. 

\smallskip

We recall the primal problem \eqref{eq_primal}, with optimizer $\pisbm \in \MT(\mu,\nu)$, i.e.
\begin{equation} \label{eq_primal_sec5} 
P(\mu,\nu) = \int \MCov(\pisbm_{x},\gamma) \, \mu(dx);
\end{equation}
the dual problem \eqref{eq_dual}, with optimizer $\psiopt$ (supposing that this optimizer exists), i.e.
\begin{equation} \label{eq_dual_sec5} 
\tilde{D}(\mu,\nu) = \Big( \int \psiopt \, d\nu - \int \varphi^{\psiopt} \, d \mu \Big);
\end{equation}
and the fact that there is no duality gap by Theorem \ref{theorem_no_duality_gap}. Therefore, comparing the primal value \eqref{eq_primal_sec5} with the dual value \eqref{eq_dual_sec5}, we see that the minimization problem
\[
\varphi^{\psiopt}(x) = \inf_{p \in \PP_{2}^{x}(\Rd)} \Big( \int \psiopt \, dp - \MCov(p,\gamma)\Big)
\]
(recall \eqref{eq_phi_psi_x}) is attained by $\pisbm_{x} \in \PP_{2}^{x}(\Rd)$, for $\mu$-a.e.\  $x \in \Rd$. To draw the connection to the Bass martingale, we need to express the optimizer $\pisbm_{x}$ in terms of the (single) convex function $\psiopt$. More precisely, we will see as a consequence of Proposition \ref{prop_phi_psi_x_gen_opt} that $\pisbm_{x} = (\nabla \psiopt^{\ast})(\gamma_{\zeta})$, for some appropriate $\zeta = \zeta(x) \in \Rd$. For this purpose, we have to return to the optimization problem \eqref{eq_phi_psi_x} and study its optimizers.

\smallskip
 
First, we need the following technical result, formulated in Lemma \ref{lem_co_fin} below. Let $\psi \colon \Rd \rightarrow (-\infty,+\infty]$ be a lower semicontinuous convex function. We recall from convex analysis that $\psi$ is called co-finite if 
\[
\forall  x \in \Rd \setminus \{ 0 \} \colon \quad \lim_{t \rightarrow +\infty} \frac{\psi(t x)}{t} = +\infty.
\] 
According to \cite[Corollary 13.3.1]{Ro70}, the convex function $\psi$ is co-finite if and only if its convex conjugate $\psi^{\ast}$ is finite everywhere on $\Rd$. We derive now a sufficient condition for the co-finiteness of $\psi$, in terms of the function $\varphi^{\psi}$ defined in \eqref{eq_phi_psi_x}. We defer the proof of Lemma \ref{lem_co_fin} to Appendix \ref{app_tech_lemm}.

\begin{lemma} \label{lem_co_fin} Let $\psi \colon \Rd \rightarrow (-\infty,+\infty]$ be a proper, lower semicontinuous convex function. Then $\varphi^{\psi}$ is convex on $\dom\psi$. If we additionally assume that $\varphi^{\psi}(x) > - \infty$, for some $x \in \operatorname{int} (\dom \psi)$, then $\varphi^{\psi} > - \infty$ on $\operatorname{int}(\dom \psi)$ and $\psi$ is co-finite.
\end{lemma}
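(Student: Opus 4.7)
The plan for the convexity of $\varphi^{\psi}$ on $\dom\psi$ is a coupling argument: for $x_{1},x_{2}\in\dom\psi$, $\lambda\in(0,1)$, and $\varepsilon$-optimizers $p_{i}\in\PP_{2}^{x_{i}}(\Rd)$ of $\varphi^{\psi}(x_{i})$, I form $p\coloneqq\lambda p_{1}+(1-\lambda)p_{2}\in\PP_{2}^{\lambda x_{1}+(1-\lambda)x_{2}}(\Rd)$. Here $\int\psi\,dp$ is linear in $p$, whereas the convex combination of the optimal couplings for $(p_{i},\gamma)$ is an admissible coupling for $(p,\gamma)$, giving $\MCov(p,\gamma)\geq\lambda\MCov(p_{1},\gamma)+(1-\lambda)\MCov(p_{2},\gamma)$. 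Subtracting and letting $\varepsilon\to 0$ yields the convexity inequality on $\dom\psi$; the same argument in fact shows extended-real convexity of $\varphi^{\psi}$ on all of $\Rd$. For $\varphi^{\psi}>-\infty$ on $\operatorname{int}(\dom\psi)$, I argue by contradiction: if $\varphi^{\psi}(x_{1})=-\infty$ at some $x_{1}\in\operatorname{int}(\dom\psi)$, then for $t>0$ small enough the point $x_{2}\coloneqq x_{0}+t(x_{0}-x_{1})$ still lies in $\operatorname{int}(\dom\psi)$ and $x_{0}=\lambda x_{1}+(1-\lambda)x_{2}$ with $\lambda=t/(1+t)\in(0,1)$. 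Since $\varphi^{\psi}(x_{2})\leq\psi(x_{2})<+\infty$ by Lemma \ref{prop_vppx}, convexity forces $\varphi^{\psi}(x_{0})\leq\lambda(-\infty)+(1-\lambda)\varphi^{\psi}(x_{2})=-\infty$, contradicting the hypothesis.

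\textbf{Co-finiteness via a convolution identity.} The plan is to first derive, for every $\zeta\in\Rd$ and every proper convex $\psi$, the identity $(\varphi^{\psi})^{\ast}(\zeta)=(\psi^{\ast}\ast\gamma)(\zeta)$. This will follow by repeating the computation in the proof of Proposition \ref{prop_new_duality_first} with the shifted function $\psi_{\zeta}(y)\coloneqq\psi(y)-\langle\zeta,y\rangle$ and then invoking Lemma \ref{lem_phi_psi_gen}. Assume for contradiction that $\psi^{\ast}(\zeta_{0})=+\infty$ for some $\zeta_{0}$. Since $\dom\psi^{\ast}$ is convex --- either of empty interior (hence Lebesgue measure zero) or of nonempty exterior --- the function $\psi^{\ast}$ equals $+\infty$ on a set of positive Lebesgue measure, and since $\gamma$ has everywhere strictly positive density, the convolution satisfies $(\psi^{\ast}\ast\gamma)\equiv+\infty$, i.e.\ $(\varphi^{\psi})^{\ast}\equiv+\infty$. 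On the other hand, the previous paragraph yields that $\varphi^{\psi}$ is convex on $\Rd$, finite at $x_{0}$ (by hypothesis and $\varphi^{\psi}\leq\psi$) and locally bounded near $x_{0}$ (since $\psi$ is continuous on $\operatorname{int}(\dom\psi)$); applying the supporting hyperplane theorem to the epigraph at $(x_{0},\varphi^{\psi}(x_{0}))$ (the hyperplane being non-vertical because $x_{0}$ is an interior point of $\dom\varphi^{\psi}$) produces an affine minorant $\langle a,\cdot\rangle+b\leq\varphi^{\psi}$ on $\Rd$, from which $(\varphi^{\psi})^{\ast}(a)\leq-b<+\infty$. This contradicts $(\varphi^{\psi})^{\ast}\equiv+\infty$, so $\psi$ must be co-finite.

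\textbf{Main obstacle.} The co-finiteness step is the delicate one. The two technical inputs to verify will be (i) the extension of the convolution identity $(\varphi^{\psi})^{\ast}=\psi^{\ast}\ast\gamma$ from the $\Cq$-convex setting of Proposition \ref{prop_new_duality_first} to arbitrary proper convex $\psi$ (immediate via Lemma \ref{lem_phi_psi_gen}) and (ii) the convex-analytic fact that $\psi^{\ast}(\zeta_{0})=+\infty$ at a single point already forces $\psi^{\ast}=+\infty$ on a set of positive Lebesgue measure, using that a proper convex subset of $\Rd$ has either nonempty exterior or empty interior. A self-contained alternative, bypassing the convolution identity entirely, would construct explicit two-atom measures $p_{n}=s_{n}\delta_{y_{n}}+(1-s_{n})\delta_{w_{n}}\in\PP_{2}^{x_{0}}(\Rd)$ with $y_{n}$ witnessing $\psi(y_{n})\leq\langle\zeta_{0},y_{n}\rangle-M_{n}$ (for $M_{n}\to+\infty$ and $|y_{n}|\to+\infty$), scale $s_{n}=c/|y_{n}|$ for small $c>0$, and companion $w_{n}=(x_{0}-s_{n}y_{n})/(1-s_{n})$ kept inside $\operatorname{int}(\dom\psi)$; a direct computation of $\MCov(p_{n},\gamma)$ via the one-dimensional Brenier-map reduction, combined with Mills' ratio $\phi(\Phi^{-1}(1-s))\sim s\sqrt{2\log(1/s)}$, would yield $\MCov(p_{n},\gamma)-\int\psi\,dp_{n}\to+\infty$, directly forcing $\varphi^{\psi}(x_{0})=-\infty$.
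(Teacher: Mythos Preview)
Your convexity and propagation arguments are essentially the paper's: both mix $p_{1},p_{2}$ (the paper takes $p_{2}=\delta_{x_{2}}$, you take an $\varepsilon$-optimizer, but the mechanism---linearity of $\int\psi\,dp$ plus concavity of $p\mapsto\MCov(p,\gamma)$---is identical), and both propagate finiteness across $\operatorname{int}(\dom\psi)$ by writing $x_{0}$ as a convex combination involving the bad point.

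For co-finiteness, however, you take a genuinely different route. The paper fixes a direction $e$ along which $\psi(te)/t$ stays bounded, then builds by hand a three-valued random variable $Y\in\{Me,-\delta e,0\}$ as a function of $\langle e,Z\rangle$ (with $Z\sim\gamma$), tunes the parameters so that $\E[Y]=0$, and shows $\E[\langle Y,Z\rangle-\psi(Y)]\to+\infty$ as $C\to\infty$, forcing $\varphi^{\psi}(0)=-\infty$. Your main argument instead recycles the identity $(\varphi^{\psi})^{\ast}=\psi^{\ast}\ast\gamma$ (which follows for \emph{any} proper convex $\psi$ from Lemma~\ref{lem_phi_psi_gen}, with no circularity): if $\psi$ fails to be co-finite then $\dom\psi^{\ast}\subsetneq\Rd$ is convex, hence its complement has positive Lebesgue measure, and since $\psi^{\ast}$ is bounded below by an affine function the convolution $(\psi^{\ast}\ast\gamma)$ is identically $+\infty$; this contradicts the properness of $(\varphi^{\psi})^{\ast}$ that you extract from an affine minorant of $\varphi^{\psi}$. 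This is cleaner and more conceptual than the paper's explicit construction, and it explains structurally \emph{why} co-finiteness is forced. One small point worth making explicit: for the supporting-hyperplane step to yield a global affine minorant you need $\varphi^{\psi}>-\infty$ on all of $\Rd$, not just on $\operatorname{int}(\dom\psi)$; this follows by the same propagation argument (mix a hypothetical $x_{1}$ with $\varphi^{\psi}(x_{1})=-\infty$ against $x_{0}$ and land inside $\operatorname{int}(\dom\psi)$ for small $\lambda$), but as written you only assert it on $\operatorname{int}(\dom\psi)$. Your sketched two-atom alternative is closer in spirit to the paper's construction, though the paper's three-valued design avoids the Mills-ratio asymptotics.
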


Our next result is the analogue of Lemma \ref{lem_phi_psi_gen}. But now, instead of maximizing over all $p \in \PP_{2}(\Rd)$ as in \eqref{eq_phi_psi}, we have to maximize over all $p \in \PP_{2}^{x}(\Rd)$, with a fixed barycenter $x \in \Rd$, as in \eqref{eq_phi_psi_x_max}.

\begin{proposition} \label{prop_phi_psi_x_gen} Let $\psi \colon \Rd \rightarrow (-\infty,+\infty]$ be a lower semicontinuous convex function and assume that $\varphi^{\psi}(x) > - \infty$ for some $x \in \operatorname{int} (\dom \psi)$. Then we have the duality formula
\begin{equation} \label{eq_prop_phi_psi_x_gen_df}
\varphi^{\psi}(x) = \sup_{\zeta \in \Rd} \Big(\langle \zeta,x \rangle - \int \psi^{\ast}(\zeta+z)  \, d\gamma(z)\Big),
\end{equation}
and the right-hand side admits a unique maximizer $\zeta(x) \in \Rd$.
\begin{proof} We deploy a similar strategy as in the proof of Proposition \ref{prop_new_duality_first}. Recalling equations \eqref{prop_new_duality_first_a} -- \eqref{prop_new_duality_first_d}, for each $\zeta \in \Rd$, we have
 \begin{equation} \label{eq_phi_psi_x_gen_01_i}
(\varphi^{\psi})^{\ast}(\zeta) 
= \sup_{p \in \PP_{2}(\Rd)} \Big(\MCov(p,\gamma)- \int \psi_{\zeta} \, dp\Big)
= \int \psi^{\ast}(\zeta+z)  \, d\gamma(z),
\end{equation}
where the function $\psi_{\zeta}$ is defined by $\psi_{\zeta}(y) \coloneqq \psi(y) - \langle \zeta,y \rangle$, for $y \in \Rd$. Since $\varphi^{\psi}(x) > - \infty$ for some $x \in \operatorname{int} (\dom \psi)$, the convex function $\xi \mapsto \varphi^{\psi}(\xi)$ is finite and thus also continuous in a neighbourhood of $x$. Hence we can apply the variant \cite[Proposition 13.44]{BaCo17} of the Fenchel--Moreau theorem and obtain
 \begin{equation} \label{eq_phi_psi_x_gen_01_ii}
\varphi^{\psi}(x) 
= (\varphi^{\psi})^{\ast\ast}(x) 
= \sup_{\zeta \in \Rd} \Big(\langle \zeta,x \rangle - \int \psi^{\ast}(\zeta+z)  \, d\gamma(z)\Big),
\end{equation}
which proves the duality formula \eqref{eq_prop_phi_psi_x_gen_df}.

\smallskip

To prove the existence of a maximizer $\zeta(x) \in \Rd$, we define the function
\begin{equation} \label{eq_phi_psi_x_gen_01}
f_{x}(\zeta) \coloneqq  \sup_{p \in \PP_{2}(\Rd)} \Big(  \MCov(p,\gamma) - \int \psi \,dp + \int \langle \zeta,y -x \rangle \, dp(y) \Big),
\end{equation}
so that $-\varphi^{\psi}(x) = \inf_{\zeta \in \Rd} f_{x}(\zeta)$. Indeed, it follows from \eqref{eq_phi_psi_x_gen_01_i}, \eqref{eq_phi_psi_x_gen_01_ii} that
\begin{align*}
-\varphi^{\psi}(x) 
&= \inf_{\zeta \in \Rd} \Big(\int \psi^{\ast}(\zeta+z)  \, d\gamma(z) - \langle \zeta,x \rangle  \Big) \\
&= \inf_{\zeta \in \Rd}  \sup_{p \in \PP_{2}(\Rd)} \Big(\MCov(p,\gamma)- \int \psi_{\zeta} \, dp - \langle \zeta,x \rangle  \Big).
\end{align*}
By assumption, the function $f_{x}$ takes values in $(-\infty,+\infty]$ and is not constant equal to $+\infty$. Equivalently, we can express $f_{x}(\zeta)$ as
 \begin{equation} \label{eq_phi_psi_x_gen_02}
f_{x}(\zeta) = \int \big( \psi^{\ast}(\zeta+z) + \psi(x) - \langle \zeta + z,x\rangle \big) \, d\gamma(z) - \psi(x).
\end{equation}
We will show that $\zeta \mapsto f_{x}(\zeta)$ is lower semicontinuous and coercive, implying the existence of an optimizer $\zeta(x)$. It is easy to see from the representation \eqref{eq_phi_psi_x_gen_02} that $f_{x}$ is lower semicontinuous. Indeed, this directly follows from the lower semicontinuity of the non-negative integrand in \eqref{eq_phi_psi_x_gen_02} and Fatou's lemma. For the verification of the coercivity, we take a sequence $(\zeta^{(n)})_{n \geqslant 1}$ in $\Rd$ with $\vert \zeta^{(n)} \vert \rightarrow + \infty$. Then there is a coordinate $k \in \{1, \ldots, d\}$ such that $\vert \zeta_{k}^{(n)} \vert \rightarrow + \infty$. Since $x \in \operatorname{int} (\dom\psi)$, we can choose $\varepsilon > 0$ small enough such that $x \pm \varepsilon \, e_{k} \in \dom\psi$, where $e_{k}$ denotes the $k$-th standard basis vector of $\Rd$. Defining
\[
y^{(n)} \coloneqq x + \operatorname{sign}(\zeta_{k}^{(n)}) \, \varepsilon \, e_{k} \in \dom\psi
\]
and taking $p= \delta_{y^{(n)}}$, we conclude from \eqref{eq_phi_psi_x_gen_01} that
\[
f_{x}(\zeta^{(n)}) \geqslant  - \psi(y^{(n)})  + \varepsilon \, \vert \zeta_{k}^{(n)} \vert \rightarrow + \infty,
\]
which shows $\lim_{ \vert \zeta \vert \rightarrow +\infty}f_{x}(\zeta) = +\infty$.

\smallskip 

We now show that the maximizer $\zeta(x) \in \Rd$ of the right-hand side of \eqref{eq_prop_phi_psi_x_gen_df} is unique. As the assumptions of Lemma \ref{lem_co_fin} are satisfied, the convex function $\psi$ is co-finite, and thus $\psi^{\ast}$ is finite everywhere on $\Rd$. In particular, $\psi^{\ast}$ is continuous everywhere and differentiable Lebesgue-a.e.\ on $\Rd$. In order to establish the uniqueness of $\zeta(x)$, we show that the function $\zeta \mapsto \int \psi^{\ast}(\zeta+z) \, d\gamma(z) \in (-\infty,+\infty]$ is strictly convex on its domain. By contradiction, suppose that there are $\zeta_{1},\zeta_{2} \in \Rd$ with $\zeta_{1} \neq \zeta_{2}$ such that both $\int \psi^{\ast}(\zeta_{1}+z) \, d\gamma(z)$ and $\int \psi^{\ast}(\zeta_{2}+z) \, d\gamma(z)$ are finite and 
\[
\int \psi^{\ast}\big(t \zeta_{1}+(1-t)\zeta_{2} + z \big) \, d\gamma(z) 
= t \int \psi^{\ast}(\zeta_{1}+z) \, d\gamma(z) 
+ (1-t) \int \psi^{\ast}(\zeta_{2}+z) \, d\gamma(z),
\]
for some $t \in (0,1)$. Then
\[
\psi^{\ast}\big(t \zeta_{1}+(1-t)\zeta_{2} + z \big) 
= t\psi^{\ast}(\zeta_{1}+z) + (1-t)\psi^{\ast}(\zeta_{2}+z),
\]
for $\gamma$-a.e.\ $z \in \Rd$. Consequently, the function $\psi^{\ast}$ is affine on the line segment from $\zeta_{1} + z$ to $\zeta_{2} + z$, for Lebesgue-a.e.\ $z \in \Rd$. By convexity and finiteness of $\psi^{\ast}$ on $\Rd$, we thus have that the function
\[
\R \in \lambda \longmapsto \psi^{\ast}\big(\lambda(\zeta_{2}-\zeta_{1})+z\big) 
\]
is affine, for every $z \in \Rd$. By \cite[Theorem 8.8]{Ro70}, there exists some $c \in \Rd$ such that  
\[
\psi^{\ast}\big(\lambda(\zeta_{2}-\zeta_{1})+z\big) = \psi^{\ast}(z) + \lambda c.
\]
Differentiating at the point $\lambda = 1$ yields $\langle \nabla \psi^{\ast}(z),\zeta_{2}-\zeta_{1}\rangle= c$, for Lebesgue-a.e.\ $z \in \Rd$. But this is a contradiction to $\varnothing \neq \operatorname{int} (\dom \psi) \subseteq  (\partial \psi^{\ast})(\Rd)$, where the symbol $\partial$, applied to a convex function, denotes its subdifferential.
\end{proof}
\end{proposition}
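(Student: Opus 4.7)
The plan is to recognize the claimed duality formula as an instance of the Fenchel--Moreau theorem applied to the convex function $\varphi^{\psi}$ at the point $x$, after computing its convex conjugate $(\varphi^{\psi})^{\ast}$ in closed form. To compute the conjugate, I would unfold the definition of $\varphi^{\psi}$ and then invoke Lemma \ref{lem_phi_psi_gen} applied to the shifted function $\psi_{\zeta}(y) \coloneqq \psi(y) - \langle \zeta, y \rangle$. Concretely,
\begin{align*}
(\varphi^{\psi})^{\ast}(\zeta)
&= \sup_{x \in \Rd} \sup_{p \in \PP_{2}^{x}(\Rd)} \Big( \MCov(p,\gamma) - \int \psi \, dp + \langle \zeta, x \rangle \Big) \\
&= \sup_{p \in \PP_{2}(\Rd)} \Big( \MCov(p,\gamma) - \int \psi_{\zeta} \, dp \Big) \\
&= \int \psi_{\zeta}^{\ast} \, d\gamma = \int \psi^{\ast}(\zeta + z) \, d\gamma(z),
\end{align*}
where in the last step I use the identity $\psi_{\zeta}^{\ast}(y) = \psi^{\ast}(y + \zeta)$.

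To then conclude $\varphi^{\psi}(x) = (\varphi^{\psi})^{\ast\ast}(x)$, I need sufficient regularity of $\varphi^{\psi}$ near $x$. By Lemma \ref{prop_vppx}, $\varphi^{\psi}$ is convex on $\dom \psi$ and bounded above by $\psi$, so $\operatorname{int}(\dom \psi) \subseteq \operatorname{int}(\dom \varphi^{\psi})$. The assumption $\varphi^{\psi}(x) > -\infty$ at some $x \in \operatorname{int}(\dom \psi)$, combined with Lemma \ref{lem_co_fin}, ensures $\varphi^{\psi}$ is finite (hence continuous) throughout $\operatorname{int}(\dom \psi)$, so a Fenchel--Moreau-type result valid at continuity points yields the duality formula at $x$.

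For the existence of a maximizer $\zeta(x) \in \Rd$ I would show that $f_{x}(\zeta) \coloneqq \int \psi^{\ast}(\zeta + z) \, d\gamma(z) - \langle \zeta, x \rangle$ is lower semicontinuous and coercive. Lower semicontinuity follows from Fatou's lemma applied to the non-negative integrand $\psi^{\ast}(\zeta + z) - \langle \zeta + z, x \rangle + \psi(x)$ given by the Fenchel--Young inequality. For coercivity, given $\vert \zeta^{(n)} \vert \to \infty$ I select a coordinate $k$ with $\vert \zeta_{k}^{(n)} \vert \to \infty$ and exploit $x \pm \varepsilon e_{k} \in \dom \psi$ (valid for small $\varepsilon$ since $x$ is interior) to apply Fenchel--Young at $y = x + \operatorname{sign}(\zeta_{k}^{(n)}) \varepsilon e_{k}$, extracting linear growth of order $\varepsilon \vert \zeta_{k}^{(n)} \vert$ up to a bounded term.

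Uniqueness of $\zeta(x)$ would reduce to strict convexity of $\zeta \mapsto \int \psi^{\ast}(\zeta + z) \, d\gamma(z)$ on its domain. Co-finiteness of $\psi$ from Lemma \ref{lem_co_fin} makes $\psi^{\ast}$ finite everywhere on $\Rd$, hence Lebesgue-a.e.\ differentiable, with $(\partial \psi^{\ast})(\Rd) \supseteq \operatorname{int}(\dom \psi)$ open and non-empty. If the convolution were affine along a segment between $\zeta_{1} \neq \zeta_{2}$, unpacking the equality case in Jensen's inequality forces $\psi^{\ast}$ itself to be affine on $\gamma$-a.e.\ translate of that segment; differentiating then compels $\langle \nabla \psi^{\ast}(z), \zeta_{2} - \zeta_{1} \rangle$ to be constant in $z$, contradicting the richness of the subgradient range. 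The main obstacle I expect is carefully handling the Fenchel--Moreau step at $x$ without global lower semicontinuity of $\varphi^{\psi}$; localizing to $\operatorname{int}(\dom \psi)$ via Lemma \ref{lem_co_fin} (where continuity is automatic) is what will make that step go through cleanly.
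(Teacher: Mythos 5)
Your proposal is correct and follows essentially the same route as the paper: compute $(\varphi^{\psi})^{\ast}$ via Lemma \ref{lem_phi_psi_gen} applied to $\psi_{\zeta}$, invoke a local Fenchel--Moreau theorem at the continuity point $x$ (using Lemmas \ref{prop_vppx} and \ref{lem_co_fin} to get finiteness and continuity of $\varphi^{\psi}$ on $\operatorname{int}(\dom\psi)$), establish existence of $\zeta(x)$ by lower semicontinuity (Fatou with the Fenchel--Young-nonnegative integrand) and coercivity (testing against $\delta_{x\pm\varepsilon e_{k}}$), and uniqueness by strict convexity of the Gaussian convolution of $\psi^{\ast}$ via the subgradient-range contradiction. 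All steps match the paper's argument.
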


The final result of this section is the analogue of Lemma \ref{lem_phi_psi}. The duality formula \eqref{eq_prop_phi_psi_x_gen_df} established in Proposition \ref{prop_phi_psi_x_gen} enables us to identify the structure of the optimizer in the maximization problem \eqref{eq_phi_psi_x_max}.
 
\begin{proposition} \label{prop_phi_psi_x_gen_opt} Let $\psi \colon \Rd \rightarrow (-\infty,+\infty]$ be a lower semicontinuous convex function. For fixed $\zeta \in \Rd$ we define the function $\psi_{\zeta}$ by $\psi_{\zeta}(y) \coloneqq \psi(y) - \langle \zeta,y \rangle$, for $y \in \Rd$, so that $\psi_{\zeta}^{\ast}(y) = \psi^{\ast}(\zeta+y)$.
\begin{enumerate}[label=(\roman*)] 
\item \label{prop_phi_psi_x_gen_opt_i} If there are $x \in \Rd$ and $\zeta = \zeta(x) \in \Rd$ such that $(\nabla \psi_{\zeta}^{\ast})(\gamma) \in \PP_{2}^{x}(\Rd)$, then this is actually the unique optimizer of the supremum in \eqref{eq_phi_psi_x_max}. If additionally $\varphi^{\psi}(x) > - \infty$ and $x \in \operatorname{int} (\dom \psi)$, then $\zeta = \zeta(x)$ is the optimizer of the supremum in \eqref{eq_prop_phi_psi_x_gen_df}.
\item \label{prop_phi_psi_x_gen_opt_ii} If $\varphi^{\psi}(x) > - \infty$ for some $x \in \operatorname{int} (\dom \psi)$ and the supremum in \eqref{eq_phi_psi_x_max} is attained by some $\hat{p}_{x} \in \PP_{2}^{x}(\Rd)$, then $\hat{p}_{x} = (\nabla \psi_{\zeta}^{\ast})(\gamma)$ with $\zeta = \zeta(x)$ being the optimizer of the supremum in \eqref{eq_prop_phi_psi_x_gen_df}.
\end{enumerate}
\begin{proof} \ref{prop_phi_psi_x_gen_opt_i} We set $\hat{p}_{x} \coloneqq (\nabla \psi_{\zeta}^{\ast})(\gamma)$ and let $p_{x} \in \PP_{2}^{x}(\Rd)$ be arbitrary. We denote by $T_{\gamma}^{p_{x}}$ the Brenier map from $\gamma$ to $p_{x}$ and note that $T_{\gamma}^{\hat{p}_{x}} = \nabla \psi_{\zeta}^{\ast}$. Similar to the proof of Lemma \ref{lem_phi_psi}, we compute
\begin{align*}
\int \psi \, dp_{x} - \MCov(p_{x},\gamma)
&= \int \Big( \psi_{\zeta}\big(T_{\gamma}^{p_{x}}(z)\big)
+ \big\langle \zeta, T_{\gamma}^{p_{x}}(z)\big\rangle
- \big\langle T_{\gamma}^{p_{x}}(z),z\big\rangle 
 \Big) \, d\gamma(z) \\
&\geqslant 
\langle \zeta, x\rangle + \int \inf_{y \in \Rd} \big( 
\psi_{\zeta}(y) - \langle y,z\rangle
 \big) \, d\gamma(z)  \\
&= \langle \zeta, x\rangle - \int \psi^{\ast}(\zeta + z) \, d\gamma(z)  \\
&= \int \psi \, d\hat{p}_{x} - \MCov(\hat{p}_{x},\gamma),
\end{align*}
with equality if and only if $T_{\gamma}^{p_{x}}(z) = T_{\gamma}^{\hat{p}_{x}}(z)$, for $\gamma$-a.e.\ $z \in \Rd$. This in turn is the case if and only if $p_{x} = \hat{p}_{x}$. We conclude that $\hat{p}_{x}$ is the unique optimizer of the supremum in \eqref{eq_phi_psi_x_max}, i.e.
\begin{equation} \label{eq_phi_psi_x_gen_opt_i}
\varphi^{\psi}(x) 
= \int \psi \, d\hat{p}_{x} - \MCov(\hat{p}_{x},\gamma) 
= \langle \zeta, x\rangle - \int \psi^{\ast}(\zeta + z) \, d\gamma(z).
\end{equation}
On the other hand, if additionally $\varphi^{\psi}(x) > - \infty$ and $x \in \operatorname{int} (\dom \psi)$, by Proposition \ref{prop_phi_psi_x_gen} we have that
\[
\varphi^{\psi}(x) 
= \sup_{\zeta \in \Rd} \Big(\langle \zeta,x \rangle - \int \psi^{\ast}(\zeta+z)  \, d\gamma(z)\Big),
\]
which in light of \eqref{eq_phi_psi_x_gen_opt_i} implies that $\zeta = \zeta(x)$ is the optimizer of the supremum in \eqref{eq_prop_phi_psi_x_gen_df}.

\smallskip

\noindent \ref{prop_phi_psi_x_gen_opt_ii} By assumption and Proposition \ref{prop_phi_psi_x_gen}, we have that
\begin{align*}
\varphi^{\psi}(x) 
&= \langle \zeta,x \rangle + 
\int \Big( \psi_{\zeta}\big(T_{\gamma}^{\hat{p}_{x}}(z)\big)
- \big\langle T_{\gamma}^{\hat{p}_{x}}(z),z\big\rangle 
 \Big) \, d\gamma(z) \\
&= \langle \zeta,x \rangle - \int \psi_{\zeta}^{\ast}(z)  \, d\gamma(z) 
\end{align*}
and therefore
\[
\int \Big( \psi_{\zeta}\big(T_{\gamma}^{\hat{p}_{x}}(z)\big)
+\psi^{\ast}_{\zeta}(z)
- \big\langle T_{\gamma}^{\hat{p}_{x}}(z),z\big\rangle 
 \Big) \, d\gamma(z) = 0.
\]
As the integrand is pointwise non-negative, we conclude that
\[
\psi\big(T_{\gamma}^{\hat{p}_{x}}(z)\big) 
+ \psi^{\ast}(\zeta+z)
= \big\langle T_{\gamma}^{\hat{p}_{x}}(z),\zeta + z\big\rangle 
\]
and consequently $T_{\gamma}^{\hat{p}_{x}}(z) \in \partial\psi^{\ast}(\zeta+z)$, for $\gamma$-a.e.\ $z \in \Rd$. Hence $T_{\gamma}^{\hat{p}_{x}}(z) = \nabla\psi^{\ast}(\zeta+z)$, for $\gamma$-a.e.\ $z \in \Rd$, which implies that $\hat{p}_{x} = (\nabla \psi_{\zeta}^{\ast})(\gamma)$.
\end{proof}
\end{proposition}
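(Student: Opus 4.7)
The plan is to identify the structure of primal optimizers via a Fenchel--Young equality carried out along Brenier transport maps. For any $p_x \in \PP_2^x(\Rd)$, let $T \coloneqq T_\gamma^{p_x}$ denote the Brenier transport map from $\gamma$ to $p_x$. Splitting $\psi = \psi_\zeta(\, \cdot \,) + \langle \zeta, \, \cdot \,\rangle$ and using $\int T(z)\, d\gamma(z) = \bary(p_x) = x$, the functional to be minimized in \eqref{eq_phi_psi_x} rewrites as
\[
\int \psi \, dp_x - \MCov(p_x,\gamma) = \langle \zeta, x\rangle + \int \bigl(\psi_\zeta(T(z)) - \langle T(z),z\rangle\bigr) d\gamma(z).
\]
By the Fenchel--Young inequality applied pointwise in $z$, the integrand is bounded below by $-\psi_\zeta^\ast(z) = -\psi^\ast(\zeta+z)$, with equality precisely when $T(z) \in \partial \psi^\ast(\zeta + z)$ for $\gamma$-a.e.\ $z$. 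Integrating and using the definition of the convex conjugate yields the lower bound $\langle \zeta, x\rangle - \int \psi^\ast(\zeta+z)\, d\gamma(z)$.

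For part~\ref{prop_phi_psi_x_gen_opt_i}, set $\hat{p}_x \coloneqq (\nabla \psi_\zeta^\ast)(\gamma)$. The hypothesis $\hat{p}_x \in \PP_2^x(\Rd)$ ensures that $\nabla \psi_\zeta^\ast$ is $\gamma$-a.e.\ defined and, being the gradient of a convex function, is precisely the Brenier map from $\gamma$ to $\hat{p}_x$. Hence the Fenchel--Young equality holds $\gamma$-a.e.\ for this $\hat{p}_x$ and the lower bound above is attained. Uniqueness of the minimizer then follows since equality in Fenchel--Young forces $T(z) = \nabla \psi^\ast(\zeta + z)$ $\gamma$-a.e., so any other minimizer equals $(\nabla \psi_\zeta^\ast)(\gamma) = \hat{p}_x$. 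If additionally $\varphi^\psi(x) > -\infty$ and $x \in \operatorname{int}(\dom \psi)$, Proposition~\ref{prop_phi_psi_x_gen} supplies the dual formula
\[
\varphi^\psi(x) = \sup_{\eta \in \Rd} \Bigl( \langle \eta, x\rangle - \int \psi^\ast(\eta + z)\, d\gamma(z)\Bigr),
\]
and since the previous computation shows this supremum is already attained at $\eta = \zeta$, we conclude $\zeta = \zeta(x)$ is the dual optimizer.

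For part~\ref{prop_phi_psi_x_gen_opt_ii}, Proposition~\ref{prop_phi_psi_x_gen} provides existence of a unique dual optimizer $\zeta = \zeta(x)$ realizing $\varphi^\psi(x) = \langle \zeta, x\rangle - \int \psi^\ast(\zeta + z)\, d\gamma(z)$. If a primal optimizer $\hat{p}_x$ exists, combining this with the rewriting above forces
\[
\int \bigl(\psi_\zeta(T_\gamma^{\hat{p}_x}(z)) + \psi^\ast(\zeta+z) - \langle T_\gamma^{\hat{p}_x}(z), z\rangle\bigr) d\gamma(z) = 0.
\]
Since the integrand is pointwise non-negative by Fenchel--Young, it vanishes $\gamma$-a.e., and the equality case forces $T_\gamma^{\hat{p}_x}(z) \in \partial \psi^\ast(\zeta + z)$ for $\gamma$-a.e.\ $z$. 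Selecting the differentiable points of $\psi^\ast$ (a full Lebesgue, hence $\gamma$-measure set), we obtain $T_\gamma^{\hat{p}_x}(z) = \nabla \psi^\ast(\zeta + z)$ $\gamma$-a.e., i.e.\ $\hat{p}_x = (\nabla \psi_\zeta^\ast)(\gamma)$.

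The main subtlety I anticipate is ensuring that the dual formula of Proposition~\ref{prop_phi_psi_x_gen} is legitimately available at each invocation: in part~\ref{prop_phi_psi_x_gen_opt_ii} its hypotheses are granted, but in the last assertion of part~\ref{prop_phi_psi_x_gen_opt_i} one must invoke it explicitly, which ultimately rests on Lemma~\ref{lem_co_fin} to upgrade finiteness of $\varphi^\psi$ at a point of $\operatorname{int}(\dom \psi)$ into co-finiteness of $\psi$ and hence finiteness (and $\gamma$-a.e.\ differentiability) of $\psi^\ast$ on all of $\Rd$; this is what underwrites the pointwise Fenchel--Young equality step throughout.
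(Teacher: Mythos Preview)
Your proposal is correct and follows essentially the same approach as the paper: both proofs rewrite the primal functional via the splitting $\psi = \psi_\zeta + \langle \zeta,\cdot\rangle$ and the Brenier map, apply the Fenchel--Young inequality pointwise to obtain the lower bound $\langle \zeta,x\rangle - \int \psi^\ast(\zeta+z)\,d\gamma(z)$, identify the equality case with $T(z)\in\partial\psi^\ast(\zeta+z)$, and then invoke Proposition~\ref{prop_phi_psi_x_gen} for the dual formula. Your explicit remark that $\gamma$-a.e.\ differentiability of $\psi^\ast$ (needed to pass from the subdifferential inclusion to the gradient equality) is underwritten by co-finiteness via Lemma~\ref{lem_co_fin} makes transparent a point the paper leaves implicit.
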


\section{Existence of dual optimizers and Bass martingales} \label{sec_dual_bass}

Throughout this section we fix $\mu, \nu \in \PP_{2}(\Rd)$ with $\mu \lc \nu$.

\begin{definition} \label{def_dual_opt} A lower semicontinuous convex function $\psiopt \colon \Rd \rightarrow (-\infty,+\infty]$ satisfying $\mu(\operatorname{ri}(\dom\psiopt))=1$ is called an optimizer of the dual problem \eqref{lem_ext_dual_func_02} (in short, a dual optimizer) if $\tilde{D}(\mu,\nu) = \mathcal{D}(\psiopt)$, for the dual function $\mathcal{D}(\, \cdot \,)$ as defined in \eqref{eq_rep_dual_func}.
\end{definition}

Let us recall that by Assumption \ref{ass_ful_dim} the support of $\nu$ affinely spans $\mathbb{R}^{d}$. As the next result shows, we could have required in the definition of a dual optimizer $\psiopt$ that also the domain of $\psiopt$ affinely spans $\mathbb{R}^{d}$; we defer the proof to Appendix \ref{app_tech_lemm}.

\begin{lemma} \label{lem:dim_dom_psi} If $\psiopt$ is an optimizer according to Definition \ref{def_dual_opt}, then the domain of $\psiopt$ has non-empty interior. In particular, by Assumption \ref{ass_ful_dim}, we have that
\[
\dim (\dom\psiopt) = \dim (\supp\nu) = d.
\]
\end{lemma}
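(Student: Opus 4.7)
The plan is to argue by contradiction: suppose that $\dom\psiopt$ has empty interior. Since $\dom\psiopt$ is convex, its affine hull $A$ is then a proper affine subspace of $\Rd$, and $\psiopt \equiv +\infty$ on $\Rd \setminus A$. The hypothesis $\mu(\operatorname{ri}(\dom\psiopt)) = 1$ from Definition \ref{def_dual_opt}, together with $\operatorname{ri}(\dom\psiopt) \subseteq \dom\psiopt \subseteq A$, forces $\mu(A) = 1$.

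The crux of the argument is then to show that dual optimality forces $\int \psiopt \, d\pi_{x}$ to be finite for $\mu$-a.e.\ $x$, where for concreteness we take $\pi = \pisbm \in \MT(\mu,\nu)$ furnished by Theorem \ref{theorem_no_duality_gap}. Since $\psiopt$ is a dual optimizer we have $\D(\psiopt) = \tilde{D}(\mu,\nu) = P(\mu,\nu) < +\infty$. Representing $\D(\psiopt)$ via formula \eqref{eq_rep_dual_func}, the integrand $\int \psiopt \, d\pi_{x} - \varphi^{\psiopt}(x)$ is $\mu$-a.s.\ non-negative by Lemma \ref{lem_ext_def_dual}. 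If $\int \psiopt \, d\pi_{x} = +\infty$ on a set $N$ with $\mu(N) > 0$, then on $N \cap \operatorname{ri}(\dom\psiopt)$, which still has positive $\mu$-measure, we would have $\varphi^{\psiopt}(x) \leqslant \psiopt(x) < +\infty$ by Lemma \ref{prop_vppx}; the non-negative integrand would thus equal $+\infty$ on a set of positive $\mu$-measure, forcing $\D(\psiopt) = +\infty$, a contradiction.

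With $\int \psiopt \, d\pi_{x} < +\infty$ established for $\mu$-a.e.\ $x$, and $\psiopt \equiv +\infty$ on $\Rd \setminus A$, it follows that $\pi_{x}(\Rd \setminus A) = 0$ for $\mu$-a.e.\ $x$. Consequently $\nu = \int \pi_{x} \, \mu(dx)$ is supported on the proper affine subspace $A$, contradicting Assumption \ref{ass_ful_dim} which requires $\supp(\nu)$ to affinely span $\Rd$. Hence $\dom\psiopt$ has non-empty interior, i.e.\ $\dim(\dom\psiopt) = d$, and combining with Assumption \ref{ass_ful_dim} yields $\dim(\dom\psiopt) = \dim(\supp\nu) = d$. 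The main technical subtlety is the first step, namely guaranteeing that finiteness of the dual value forces $\int \psiopt \, d\pi_{x}$ to be $\mu$-a.e.\ \emph{finite} rather than merely $\mu$-integrable; this crucially exploits the non-negativity of the integrand together with the pointwise bound $\varphi^{\psiopt} \leqslant \psiopt$.
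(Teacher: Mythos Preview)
Your proof is correct and follows essentially the same contradiction strategy as the paper: both show that if $\dom\psiopt$ fails to have full dimension, then $\D(\psiopt) = +\infty$, contradicting optimality. The paper truncates via level sets $B_\ell = \{x : \psiopt(x) \leqslant \ell\}$ and inserts the specific choices $p_x = \delta_x$ on $B_\ell$ and $p_x = \pisbm_x$ off $B_\ell$ into the supremum defining $-\varphi^{\psiopt}$ to produce an explicit lower bound that blows up. Your route is more streamlined: you use directly that the integrand $\int \psiopt \, d\pisbm_x - \varphi^{\psiopt}(x)$ is $\mu$-a.s.\ non-negative (Lemma \ref{lem_ext_def_dual}) together with the pointwise bound $\varphi^{\psiopt} \leqslant \psiopt$ (Lemma \ref{prop_vppx}), so that on $\operatorname{ri}(\dom\psiopt)$ the integrand equals $+\infty$ wherever $\int \psiopt \, d\pisbm_x = +\infty$, forcing $\D(\psiopt) = +\infty$ without any truncation. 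This buys you a cleaner argument, and the final step---that $\int \psiopt \, d\pisbm_x < +\infty$ $\mu$-a.e.\ forces $\pisbm_x$, hence $\nu$, to be supported on the affine hull $A$ of $\dom\psiopt$---is identical in spirit to the paper's observation that otherwise $\nu$ would be concentrated on $\dom\psiopt$.
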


We recall Definition \ref{def:BassMarti_intro} and list some important properties of Bass martingales. Such martingales were introduced in \cite{BaBeHuKa20} under the name of a ``standard stretched Brownian motion''. In this paper, we use --- with Richard Bass' permission --- the term ``Bass martingale'' instead.

\begin{remark} \label{rem:manifold} Let $M = (M_{t})_{0 \leqslant t \leqslant 1}$ be a Bass martingale from $\mu$ to $\nu$, with corresponding convex function $v \colon \Rd \rightarrow \R$ and initial distribution $\PP(\Rd) \ni \alpha \sim B_{0}$ of the underlying Brownian motion $B = (B_{t})_{0 \leqslant t \leqslant 1}$.
\begin{enumerate}[label=(\roman*)] 
\item As shown in \cite{BaBeHuKa20} (and as a result of Theorem \ref{theo_du_op_b_m} below), the martingale transport 
\[
\Law(M_{0},M_{1}) \in \MT(\mu,\nu)
\]
is equal to the unique optimizer $\pisbm$ of \eqref{eq_primal}. Furthermore, the knowledge of $\pisbm$ already determines the martingale $(M_{t})_{0 \leqslant t \leqslant 1}$ as well as the function $v$, which is $(\alpha \ast \gamma)$-a.e.\ (equivalently, Lebesgue-a.e.) unique up to an additive constant.
\item The convex function $v$ and the probability measure $\alpha$ satisfy the identities 
\begin{equation} \label{eq_def_id_bm}
(\nabla v \ast \gamma)(\alpha) = \mu
\qquad \textnormal{ and } \qquad 
\nabla v(\alpha \ast \gamma) = \nu,
\end{equation}
which we summarize in the following graphic:
\[
\begin{tikzcd}
\alpha \ast \gamma  \arrow[r, "\nabla v"] & \nu  \\
\alpha \arrow[u, "\ast"] \arrow[r, "\nabla v \ast \gamma"] & \mu 
\end{tikzcd}
\]
\item We also remark (see \cite{BaBeHuKa20}) that we have 
\begin{equation} \label{rep_eq_bm_n}
M_{t} = (\nabla v \ast \gamma^{1-t})(B_{t})
\quad \textnormal{ and } \quad 
\pisbm = \Law\big((\nabla v \ast \gamma)(B_{0}),\nabla v(B_{1})\big).
\end{equation}
\end{enumerate}
\end{remark}

The next result, Theorem \ref{theo_du_op_b_m} below, explains how the existence of a dual optimizer $\psiopt$ is related to the existence of a Bass martingale. Along with it we need the following technical lemmas, whose proofs we postpone to Appendix \ref{app_tech_lemm}.

\begin{lemma} \label{lem_aux_grad_convolution}
Let $f \colon \Rd \rightarrow \R$ be a finite convex function such that $\nabla f \in L^{2}(\gamma_{\zeta};\Rd)$, for some $\zeta \in \Rd$. Then 
\begin{enumerate}[label=(\roman*)] 
\item \label{lem_aux_grad_convolution_i} $\nabla(f \ast \gamma) = (\nabla f) \ast \gamma$.
\end{enumerate}
If additionally $\operatorname{int} (\dom f^{\ast}) \neq \varnothing$, then 
\begin{enumerate}[label=(\roman*)] 
\setcounter{enumi}{1}
\item \label{lem_aux_grad_convolution_ii} $(f \ast \gamma)^{\ast}$ is differentiable and strictly convex on $\operatorname{int}(\dom(f\ast\gamma)^{\ast}) \supseteq \operatorname{int} (\dom f^{\ast})$,
\item \label{lem_aux_grad_convolution_iii} $\nabla(f\ast\gamma)^{\ast} \colon \operatorname{int} (\dom (f\ast\gamma)^{\ast}) \rightarrow \Rd$ and $\nabla(f\ast\gamma)\colon \Rd \rightarrow \operatorname{int} (\dom (f\ast\gamma)^{\ast})$ are bijections, and we have
\[
(\nabla f \ast \gamma)^{-1} = \nabla(f\ast\gamma)^{\ast}.
\]
\end{enumerate}
\end{lemma}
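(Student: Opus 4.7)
The overall plan is to deduce (i) by a dominated convergence argument, and then to derive (ii) and (iii) from Rockafellar's classical duality theorem (\cite[Theorem 26.5]{Ro70}) after establishing that $g \coloneqq f \ast \gamma$ is strictly convex on $\Rd$. Demonstrating the strict convexity of $g$, which will crucially rely on the assumption $\operatorname{int}(\dom f^*) \neq \varnothing$, is the main technical obstacle.

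For (i), I would start from $(f \ast \gamma)(x) = \int f(x+y) \, \gamma(dy)$ and differentiate under the integral. Convexity of $f$ guarantees that the difference quotient $h^{-1}(f(x+y+he_{i}) - f(x+y))$ converges to $\partial_{i} f(x+y)$ for $\gamma$-a.e.\ $y$ and is monotone in $h$; for $|h| \leqslant 1$ it is dominated in absolute value by $|\nabla f(x+y+e_{i})| + |\nabla f(x+y-e_{i})|$. The delicate point is the $\gamma$-integrability of the dominating function: rewriting the hypothesis $\nabla f \in L^{2}(\gamma_{\zeta})$ as $\int |\nabla f(z)|^{2} e^{-|z-\zeta|^{2}/2} \, dz < \infty$, a Cauchy--Schwarz estimate combined with the elementary Gaussian bound
\[
\int \exp\bigl( \tfrac{1}{2}|z-\zeta|^{2} - |z-x|^{2} \bigr) \, dz < \infty
\]
yields $\int |\nabla f(x+y)| \, \gamma(dy) < \infty$ for every $x \in \Rd$. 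Dominated convergence then delivers $\nabla(f \ast \gamma)(x) = \int \nabla f(x+y) \, \gamma(dy) = (\nabla f \ast \gamma)(x)$.

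For parts (ii) and (iii) the central task is to prove strict convexity of $g = f \ast \gamma$; smoothness of $g$ (and therefore essential smoothness in the sense of \cite{Ro70}) is standard for convolution against the Gaussian density. I would argue strict convexity by contradiction: suppose $\nabla g(u) = \nabla g(v)$ for some $u \neq v$ and set $w \coloneqq u-v$. Using (i) and monotonicity of $\nabla f$, the non-negative integrand in
\[
\langle \nabla g(u) - \nabla g(v), w \rangle = \int \langle \nabla f(u+y) - \nabla f(v+y), w \rangle \, \gamma(dy) = 0
\]
must vanish $\gamma$-a.e., which by a one-dimensional monotonicity argument forces $s \mapsto f(v+y+sw)$ to be affine on $[0,1]$ for $\gamma$-a.e.\ $y$. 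The set of base points for which this holds is dense (full support of $\gamma$) and the affineness condition is closed in the base point by continuity of $f$, so the property propagates to all translates; a gluing argument then shows $t \mapsto f(a+tw)$ is affine on $\R$ for every $a \in \Rd$. Decomposing $x = x^{\perp} + sw$ and writing $f(x) = \tilde{f}(x^{\perp}) + s\,\tilde\phi(x^{\perp})$, an explicit computation of $f^{\ast}$ (taking the supremum first in $s$) shows that $\dom f^{\ast}$ is contained in a single hyperplane of $\Rd$, contradicting $\operatorname{int}(\dom f^{\ast}) \neq \varnothing$. Hence $\nabla g$ is injective and $g$ is strictly convex.

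With $g$ essentially smooth and strictly convex, Rockafellar's theorem yields that $g^{\ast}$ has the same properties and that $\nabla g \colon \Rd \to \operatorname{int}(\dom g^{\ast})$ is a bijection with inverse $\nabla g^{\ast}$, which is (iii) once combined with the identity $\nabla g = \nabla f \ast \gamma$ from (i). Jensen's inequality gives $g \geqslant f$ and hence $g^{\ast} \leqslant f^{\ast}$, so $\operatorname{int}(\dom g^{\ast}) \supseteq \operatorname{int}(\dom f^{\ast})$, completing (ii).
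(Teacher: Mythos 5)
Your proof is correct and follows essentially the same route as the paper: differentiation under the integral for (i), justified by the $L^{2}(\gamma_{\zeta})$ hypothesis via a Gaussian Cauchy--Schwarz bound (the paper instead establishes uniform integrability of the difference quotients through an $L^{r}$ estimate with $r\in(1,2)$), and then strict convexity of $f\ast\gamma$ by contradiction with $\operatorname{int}(\dom f^{\ast})\neq\varnothing$ followed by \cite[Theorem 26.5]{Ro70} for (ii)--(iii). The paper's strict-convexity argument (borrowed from the proof of Proposition \ref{prop_phi_psi_x_gen}) runs through affineness of $f$ along a segment and \cite[Theorem 8.8]{Ro70} rather than injectivity of $\nabla(f\ast\gamma)$ and your explicit computation of $f^{\ast}$, but the two are equivalent and reach the same contradiction.
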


\begin{lemma} \label{lem_int_dom_range} Let $\psi \colon \Rd \rightarrow (-\infty,+\infty]$ be a lower semicontinuous convex function and assume that $\varphi^{\psi}(x) > - \infty$, for some $x \in \operatorname{int} (\dom \psi)$. Furthermore, we suppose that $\nabla \psi^{\ast} \in L^{2}(\gamma_{\zeta};\Rd)$, for some $\zeta \in \Rd$. Then, for all $\eta \in \Rd$ and all $t > 0$, we have
\begin{equation} \label{eq_m_lem_int_dom_range} 
\operatorname{int}(\dom \psi)
= \operatorname{int} \big( (\partial \psi^{\ast})(\Rd) \big)
= (\nabla \psi^{\ast} \ast \gamma_{\eta}^{t})(\Rd).
\end{equation}
\end{lemma}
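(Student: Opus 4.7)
The plan is to first invoke Lemma \ref{lem_co_fin}: the hypothesis $\varphi^{\psi}(x) > -\infty$ for some $x \in \operatorname{int}(\dom \psi)$ forces $\psi$ to be co-finite, so that $\psi^{\ast}$ is finite on all of $\Rd$. For the first equality $\operatorname{int}(\dom \psi) = \operatorname{int}((\partial \psi^{\ast})(\Rd))$, the Fenchel--Young relation together with $\psi = \psi^{\ast\ast}$ gives $(\partial \psi^{\ast})(\Rd) = \dom(\partial \psi)$. Standard convex analysis then provides the sandwich $\operatorname{ri}(\dom \psi) \subseteq \dom(\partial \psi) \subseteq \dom \psi$; since $\operatorname{int}(\dom \psi) \neq \varnothing$ implies $\operatorname{ri}(\dom \psi) = \operatorname{int}(\dom \psi)$, taking interiors throughout yields the claim.

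For the second equality $\operatorname{int}(\dom \psi) = (\nabla \psi^{\ast} \ast \gamma_\eta^t)(\Rd)$, I would apply Lemma \ref{lem_aux_grad_convolution}, in its natural extension to $\gamma_\eta^t$ in place of $\gamma$, to the finite convex function $f = \psi^{\ast}$ (whose conjugate $f^{\ast} = \psi$ has non-empty interior of domain). This identifies $\nabla \psi^{\ast} \ast \gamma_\eta^t$ as a bijection from $\Rd$ onto $\operatorname{int}(\dom(\psi^{\ast} \ast \gamma_\eta^t)^{\ast})$, reducing the task to establishing
\[
\operatorname{int}(\dom(\psi^{\ast} \ast \gamma_\eta^t)^{\ast}) = \operatorname{int}(\dom \psi).
\]

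For ``$\supseteq$'' here, Jensen's inequality applied to the convex function $\psi^{\ast}$ and a random variable $Z \sim \gamma_\eta^t$ yields $(\psi^{\ast} \ast \gamma_\eta^t)(\zeta) \geq \psi^{\ast}(\zeta - \eta)$; taking conjugates gives $(\psi^{\ast} \ast \gamma_\eta^t)^{\ast}(x) \leq \psi(x) + \langle \eta, x \rangle$, hence $\dom \psi \subseteq \dom(\psi^{\ast} \ast \gamma_\eta^t)^{\ast}$. For ``$\subseteq$'', it suffices to show $\dom(\psi^{\ast} \ast \gamma_\eta^t)^{\ast} \subseteq \overline{\dom \psi}$, since $\dom \psi$ is convex and hence $\operatorname{int}(\overline{\dom \psi}) = \operatorname{int}(\dom \psi)$. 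Given $x \notin \overline{\dom \psi}$, Hahn--Banach produces $v \neq 0$ and $c \in \R$ with $\sup_{y \in \dom \psi} \langle v, y \rangle \leq c < \langle v, x \rangle$. The elementary bound $\psi^{\ast}(\lambda v - z) \leq \lambda c + \psi^{\ast}(-z)$ for $\lambda > 0$, immediate from the definition of $\psi^{\ast}$, integrates to $(\psi^{\ast} \ast \gamma_\eta^t)(\lambda v) \leq \lambda c + (\psi^{\ast} \ast \gamma_\eta^t)(0)$, so that
\[
(\psi^{\ast} \ast \gamma_\eta^t)^{\ast}(x) \geq \lambda(\langle v, x \rangle - c) - (\psi^{\ast} \ast \gamma_\eta^t)(0) \to +\infty
\]
as $\lambda \to +\infty$, forcing $x \notin \dom(\psi^{\ast} \ast \gamma_\eta^t)^{\ast}$.

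The main obstacle I anticipate is the justification that $(\psi^{\ast} \ast \gamma_\eta^t)(0)$ is finite (which underpins the final separation argument) and, more broadly, that Lemma \ref{lem_aux_grad_convolution} indeed transfers cleanly to the Gaussian $\gamma_\eta^t$. The hypothesis $\nabla \psi^{\ast} \in L^2(\gamma_\zeta;\Rd)$ is only given for one $\zeta$, so one has to convert this into sufficient integrability of both $\nabla \psi^{\ast}$ and $\psi^{\ast}$ with respect to $\gamma_\eta^t$ for arbitrary $\eta$ and $t > 0$. The natural route is a change-of-variables reduction relating $\gamma_\eta^t$ to the standard Gaussian via the rescaling $\tilde{\psi}^{\ast}(y) = \psi^{\ast}(\sqrt t \, y + \eta)$, combined with a Cauchy--Schwarz estimate in the spirit of \eqref{lem_phi_psi_ccse} to bootstrap $L^2$-control of $\nabla \psi^{\ast}$ to $L^1$-control of $\psi^{\ast}$ across the requisite class of Gaussian measures.
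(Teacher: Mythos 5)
Your argument is correct and, for the harder half of the second equality, takes a genuinely different route from the paper. The first equality is handled identically (the sandwich $\operatorname{ri}(\dom\psi)\subseteq\dom\partial\psi\subseteq\dom\psi$ together with $(\partial\psi^{\ast})(\Rd)=\dom\partial\psi$), and your ``$\supseteq$'' step via Jensen is just a repackaging of the inequality $(f\ast\gamma)^{\ast}\leqslant f^{\ast}$ proved inside Lemma \ref{lem_aux_grad_convolution}\ref{lem_aux_grad_convolution_ii}. The divergence is in showing $(\nabla\psi^{\ast}\ast\gamma_{\eta}^{t})(\Rd)\subseteq\operatorname{int}(\dom\psi)$: the paper observes that the convolution takes values in $\overline{\conv}\big((\partial\psi^{\ast})(\Rd)\big)$, that its range is open by Lemma \ref{lem_aux_grad_convolution}\ref{lem_aux_grad_convolution_iii}, and then identifies $\operatorname{int}\conv\big((\partial\psi^{\ast})(\Rd)\big)$ with $\operatorname{int}\big((\partial\psi^{\ast})(\Rd)\big)$ via Mirsky's theorem on ranges of maximal monotone maps (this is the content of \eqref{eq_m_lem_int_dom_range_01}). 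You instead prove $\dom(\psi^{\ast}\ast\gamma_{\eta}^{t})^{\ast}\subseteq\overline{\dom\psi}$ directly by Hahn--Banach separation and the linear growth bound $\psi^{\ast}(\lambda v-z)\leqslant\lambda c+\psi^{\ast}(-z)$, which avoids Mirsky entirely and is more elementary; the price is that your coercivity estimate genuinely requires $(\psi^{\ast}\ast\gamma_{\eta}^{t})(0)<+\infty$. That finiteness (and, more generally, the transfer of Lemma \ref{lem_aux_grad_convolution} from $\gamma$ to $\gamma_{\eta}^{t}$) is exactly the point the paper elides with its ``without loss of generality $\eta=0$, $t=1$''; your proposed repair --- affine rescaling plus the change-of-measure/H\"older estimate \eqref{a_b_ui_f} and the Cauchy--Schwarz bound \eqref{lem_phi_psi_ccse} to pass from $L^{2}(\gamma_{\zeta})$-control of $\nabla\psi^{\ast}$ to integrability of $\psi^{\ast}$ against the relevant Gaussians --- is the right one and should be written out (it is immediate for $t\leqslant 1$, where the density ratio $d\gamma_{\eta}^{t}/d\gamma_{\zeta}$ is controlled, which covers all uses of the lemma in the paper). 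With that step supplied, your proof is complete.
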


\begin{theorem} \label{theo_du_op_b_m} There exists a dual optimizer $\psiopt$ in the sense of Definition \ref{def_dual_opt} if and only if there exists a Bass martingale $(M_{t})_{0 \leqslant t \leqslant 1}$ from $\mu$ to $\nu$. In this case, $\Law(M_{0},M_{1})$ is equal to the optimizer $\pisbm \in \MT(\mu,\nu)$ of the primal problem \eqref{eq_primal}, and $\psiopt,v,\alpha, \pisbm$ are related via $v^{\ast} = \psiopt$, $\alpha = \zeta(\mu)$ and 
\begin{equation} \label{eq_id_pisbmx_v_xi}
\pisbm_{x} = \Law(M_{1} \, \vert \, M_{0} = x) 
= \nabla v\big(\zeta(x) + \, \cdot \,\big)(\gamma),
\end{equation}
where the function $\zeta \colon X \rightarrow \Rd$ is given by
\begin{equation} \label{eq_du_op_b_m_xi}
\zeta(x) \coloneqq 
\operatorname{argmax}_{\zeta \in \Rd} \Big(\langle \zeta,x \rangle - \int v(\zeta+z)  \, d\gamma(z)\Big)
\end{equation}
as in \eqref{eq_prop_phi_psi_x_gen_df}, and we have
\begin{equation} \label{eq_du_op_b_m_xi_fin}
\zeta(x) 
= (\nabla v \ast \gamma)^{-1}(x)
= \nabla(v \ast \gamma)^{\ast}(x),
\end{equation}
for all $x \in X$. The domain $X$ of the function $\zeta$ satisfies $X \subseteq \operatorname{int}(\dom\psiopt)$ and $\mu(X) = 1$.
\begin{proof} ``$\Rightarrow$'': Suppose that a dual optimizer $\psiopt$ exists. Then $\tilde{D}(\mu,\nu) - \D(\psiopt) = 0$ and from \eqref{eq_the_no_duality_gap}, \eqref{eq_rep_dual_func} we obtain
\[
\int  \bigg(  \varphi^{\psiopt}(x) - \Big( \int \psiopt(y) \, \pisbm_{x}(dy) - \MCov(\pisbm_{x},\gamma) \Big) \bigg) \, \mu(dx) = 0.
\]
Hence, for $\mu$-a.e.\ $x \in \Rd$, the infimum in
\[
\varphi^{\psiopt}(x) = \inf_{p \in \PP_{2}^{x}(\Rd)} \Big( \int \psiopt \, dp - \MCov(p,\gamma)\Big)
\]
must be attained by $\pisbm_{x} \in \PP_{2}^{x}(\Rd)$. Furthermore, as $\D(\psiopt)$ is finite, we deduce as in the proof of Lemma \ref{lem_ext_def_dual} that $\int \psiopt \, d\pisbm_{x} < + \infty$, for $\mu$-a.e.\ $x \in \Rd$. Thus also $\varphi^{\psiopt}(x)$ is finite, for $\mu$-a.e.\ $x \in \Rd$. Recalling Lemma \ref{lem:dim_dom_psi}, we have $\mu(\operatorname{int}(\dom\psiopt)) = 1$ by Definition \ref{def_dual_opt}. In particular, $\psiopt$ is co-finite by Lemma \ref{lem_co_fin}. All in all, we can apply part \ref{prop_phi_psi_x_gen_opt_ii} of Proposition \ref{prop_phi_psi_x_gen_opt} to $\psiopt$, for $\mu$-a.e.\ $x \in \Rd$. This yields that $\pisbm_{x} = \nabla v(\zeta(x) + \, \cdot \,)(\gamma)$, with $v \coloneqq \psiopt^{\ast}$ and $\zeta(x) \in \Rd$ as in \eqref{eq_du_op_b_m_xi}. We now define $\alpha \coloneqq \zeta(\mu)$. Since $\pisbm$ has second marginal equal to $\nu$, we obtain $\nabla v(\alpha \ast \gamma) = \nu$. On the other hand, as $\pisbm_{x}$ has barycenter $x$, we have
\begin{equation} \label{eq_du_op_b_m_xi_fin_pre}
\int \nabla v\big(\zeta(x)+z\big)  \, d\gamma(z) 
= (\nabla v \ast \gamma)\big(\zeta(x)\big)
= x.
\end{equation}
We denote the set of points $x \in \Rd$ for which the identity \eqref{eq_du_op_b_m_xi_fin_pre} holds by $X$ and note that $X \subseteq \operatorname{int}(\dom\psiopt)$ as well as $\mu(X) = 1$. Since $\alpha = \zeta(\mu)$, we conclude from \eqref{eq_du_op_b_m_xi_fin_pre} that $(\nabla v \ast \gamma)(\alpha) = \mu$. By \eqref{eq_def_id_bm}, this establishes the existence of a Bass martingale $(M_{t})_{0 \leqslant t \leqslant 1}$ from $\mu$ to $\nu$, which by construction satisfies $\Law(M_{0},M_{1}) = \pisbm$, and connects $\psiopt,v,\alpha, \pisbm$ as claimed.

\smallskip

It remains to check the identity \eqref{eq_du_op_b_m_xi_fin}. Since $\pisbm_{x} \in \PP_{2}^{x}(\Rd)$, and as we already established \eqref{eq_id_pisbmx_v_xi}, we conclude that
\begin{equation} \label{eq_du_op_b_m_psi_v_a_00_fvf}
\int \big\vert \nabla v\big(\zeta(x) + z\big) \big\vert^{2} \, d\gamma(z) < + \infty,
\end{equation}
for every $x \in X$. In particular, we can apply part \ref{lem_aux_grad_convolution_iii} of Lemma \ref{lem_aux_grad_convolution} to the function $f = v$. Then from \eqref{eq_du_op_b_m_xi_fin_pre} we derive \eqref{eq_du_op_b_m_xi_fin}.

\medskip

``$\Leftarrow$'': Conversely, suppose that a Bass martingale $M = (M_{t})_{0 \leqslant t \leqslant 1}$ from $\mu$ to $\nu$ exists. We denote by $v \colon \Rd \rightarrow \R$ the associated convex function and define the proper, lower semicontinuous convex function $\psiopt \coloneqq v^{\ast}$. Recall that $\dim (\supp \nu) = d$ by Assumption \ref{ass_ful_dim}. Since $(\partial v)(\Rd) \subseteq \dom v^{\ast}$ and 
\[
\operatorname{int} \conv \big( (\partial v)(\Rd) \big) = \operatorname{int} \overline{\conv} \big( (\partial v)(\Rd) \big) = \operatorname{int}\widehat{\supp}(\nu) \neq \varnothing,
\]
we also have that $\operatorname{int}(\dom v^{\ast}) \neq \varnothing$. In the following we show that $\psiopt$ is indeed a dual optimizer in the sense of Definition \ref{def_dual_opt}. 

\smallskip

First, we verify that $\varphi^{\psiopt}(x) > - \infty$, for all $x \in \Rd$. As a consequence, we will see that the identity \eqref{eq_du_op_b_m_xi_fin} holds for all $x \in \operatorname{int} (\dom \psiopt)$. Note that from \eqref{eq_def_id_bm} we have the relation $\nabla v(\alpha \ast \gamma) = \nu$. Since $\nu$ has finite second moment, we conclude that 
\begin{equation} \label{eq_du_op_b_m_psi_v_a_00}
\int \vert \nabla v(\zeta + z) \vert^{2} \, d\gamma(z) < + \infty,
\end{equation}
for $\alpha$-a.e.\ $\zeta \in \Rd$. In particular, by analogy with \eqref{lem_phi_psi_ccse}, we obtain that 
\begin{equation} \label{eq_du_op_b_m_psi_v_a_01}
(v \ast \gamma)(\zeta) = \int v(\zeta+z) \, d\gamma(z) \in (-\infty,+\infty),
\end{equation}
for $\alpha$-a.e.\ $\zeta \in \Rd$. Recalling the second equality in \eqref{eq_phi_psi_x_gen_01_ii}, we have
\begin{equation} \label{eq_du_op_b_m_psi_v_a_02}
\varphi^{\psiopt}(x) \geqslant 
(\varphi^{\psiopt})^{\ast\ast}(x) 
= \sup_{\zeta \in \Rd} \Big(\langle \zeta,x \rangle - \int \psiopt^{\ast}(\zeta+z)  \, d\gamma(z)\Big),
\end{equation}
and by \eqref{eq_du_op_b_m_psi_v_a_01}, the right-hand side of \eqref{eq_du_op_b_m_psi_v_a_02} is greater than $-\infty$, for every $x \in \Rd$. In particular, we can apply Proposition \ref{prop_phi_psi_x_gen} to the function $\psiopt$. As a result, for every $x \in \operatorname{int} (\dom \psiopt)$, we obtain an equality in \eqref{eq_du_op_b_m_psi_v_a_02} and the right-hand side admits a unique maximizer $\zeta(x) \in \Rd$. Differentiating under the integral sign, which is justified by part \ref{lem_aux_grad_convolution_i} of Lemma \ref{lem_aux_grad_convolution} applied to the function $f = v = \psiopt^{\ast}$, the first order condition for the optimality of $\zeta(x)$ reads
\begin{equation} \label{eq_du_op_b_m_psi_v_a_03}
(\nabla \psiopt^{\ast} \ast \gamma)\big(\zeta(x)\big)
= x.
\end{equation}
Now we use part \ref{lem_aux_grad_convolution_iii} of Lemma \ref{lem_aux_grad_convolution} and obtain the identity \eqref{eq_du_op_b_m_xi_fin} from \eqref{eq_du_op_b_m_psi_v_a_03} above, for all $x \in \operatorname{int} (\dom \psiopt)$. 

\smallskip

Next, we show that $\alpha = \zeta(\mu)$ and $\mu(\operatorname{int}(\dom\psiopt)) = 1$. Recalling from \eqref{eq_def_id_bm} that $(\nabla v \ast \gamma)(\alpha) = \mu$, and using \eqref{eq_du_op_b_m_xi_fin}, we get $\alpha = (\nabla v \ast \gamma)^{-1}(\mu) = \zeta(\mu)$. For the second claim, we again use \eqref{eq_du_op_b_m_xi_fin} and obtain
\[
1 = \alpha(\Rd) 
= \mu\big((\nabla v \ast \gamma)(\Rd)\big)
= \mu(\operatorname{int}(\dom\psiopt)),
\]
where the last equality follows from Lemma \ref{lem_int_dom_range}.

\smallskip

It remains to check that $\tilde{D}(\mu,\nu) = \mathcal{D}(\psiopt)$ and $\Law(M_{0},M_{1}) = \pisbm$. For each $x \in \operatorname{int} (\dom \psiopt)$, we define a probability measure $\pi_{x}^{v} \coloneqq \nabla v(\zeta(x) + \, \cdot \,)(\gamma)$. By \eqref{eq_du_op_b_m_psi_v_a_00} and \eqref{eq_du_op_b_m_psi_v_a_03}, $\pi_{x}^{v}$ is an element of $\PP_{2}^{x}(\Rd)$, for $\mu$-a.e.\ $x \in \Rd$. Hence we can apply part \ref{prop_phi_psi_x_gen_opt_i} of Proposition \ref{prop_phi_psi_x_gen_opt}, showing that the infimum
\[
\varphi^{\psiopt}(x) = \inf_{p \in \PP_{2}^{x}(\Rd)} \Big( \int \psiopt \, dp - \MCov(p,\gamma)\Big)
\]
is attained by $\pi_{x}^{v}$. 

Set $\pi^v(dx,dy)=\pi^v_x(dy)\mu(dx)$. Since $M$ is a Bass martingale with associated convex function $v$, we have $\pi^v_x= \text{Law}(M_1|M_0=x)$, for $\mu$-a.e.\ $x\in \Rd$. Moreover, by Theorem 1.10 of \cite{BaBeHuKa20}, $M$ is stretched Brownian motion; hence, by uniqueness of the primal optimizer, $$\pi^v=\text{Law}(M_0,M_1)=\pisbm .$$
Therefore, from the representation \eqref{eq_rep_dual_func} of the dual function $\mathcal{D}(\, \cdot \,)$ and the identity $\pi^v=\pisbm$, we obtain
\begin{equation} \label{eq_du_op_b_m_psi_v_a_04}
\D(\psiopt) = \int  \Big( \int \psiopt(y) \, \pi^{v}_{x}(dy)  - \varphi^{\psiopt}(x) \Big) \, \mu(dx)
= \int \MCov(\pi_{x}^{v},\gamma) \, \mu(dx),
\end{equation}
and this is equal to $\leqslant P(\mu,\nu) 
= \tilde{D}(\mu,\nu)$.
 This completes the proof that $\psiopt$ is a dual optimizer. The identity $\Law(M_{0},M_{1})= \pisbm$ has already been obtained above, and the equality $\pi^v_x= \text{Law}(M_1|M_0=x)$ gives \eqref{eq_id_pisbmx_v_xi}.
\end{proof}
\end{theorem}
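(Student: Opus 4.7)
The plan is to prove both implications by combining the no-duality-gap result (Theorem \ref{theorem_no_duality_gap}) with the structural results of Section \ref{sec_prep}, particularly Propositions \ref{prop_phi_psi_x_gen} and \ref{prop_phi_psi_x_gen_opt}, and the convolution identities of Lemma \ref{lem_aux_grad_convolution}. Both directions pivot around the identification $v = \psiopt^{\ast}$ together with the first-order condition $(\nabla v \ast \gamma)(\zeta(x)) = x$, which will supply both the Bass structure and the optimizing measure $\pisbm_x$.

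For the forward direction, suppose $\psiopt$ is a dual optimizer. Combining the representation \eqref{eq_rep_dual_func} of $\mathcal{D}(\psiopt)$ with the primal value $P(\mu,\nu) = \int\MCov(\pisbm_x,\gamma)\,d\mu(x)$ from \eqref{eq_the_no_duality_gap}, and using the trivial bound $\varphi^{\psiopt}(x) \leqslant \int \psiopt\,d\pisbm_x - \MCov(\pisbm_x,\gamma)$, the equality $\mathcal{D}(\psiopt) = P(\mu,\nu)$ forces $\pisbm_x$ to attain the infimum defining $\varphi^{\psiopt}(x)$ for $\mu$-a.e.\ $x$. Moreover, finiteness of $\mathcal{D}(\psiopt)$ shows $\int \psiopt\,d\pisbm_x < \infty$ and hence $\varphi^{\psiopt}(x) > -\infty$ $\mu$-a.s.; by Definition \ref{def_dual_opt} and Lemma \ref{lem:dim_dom_psi} we also have $\mu(\operatorname{int}\dom\psiopt)=1$, so Lemma \ref{lem_co_fin} gives co-finiteness of $\psiopt$ and $v := \psiopt^{\ast}$ is finite on $\Rd$. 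Applying Proposition \ref{prop_phi_psi_x_gen_opt}\ref{prop_phi_psi_x_gen_opt_ii} $\mu$-a.s.\ then yields $\pisbm_x = \nabla v(\zeta(x)+\,\cdot\,)(\gamma)$ with $\zeta(x)$ the maximizer in \eqref{eq_du_op_b_m_xi}. Setting $\alpha := \zeta(\mu)$, the two identities in \eqref{eq_def_id_bm} follow: the second marginal of $\pisbm$ being $\nu$ gives $\nabla v(\alpha \ast \gamma)=\nu$, while the barycenter condition $\int \nabla v(\zeta(x)+z)\,d\gamma(z)=x$ combined with Lemma \ref{lem_aux_grad_convolution}\ref{lem_aux_grad_convolution_iii} (applicable thanks to the $L^2$-bound on $\nabla v(\zeta(x)+\,\cdot\,)$ coming from $\pisbm_x \in \PP_2^x$) gives both $(\nabla v \ast \gamma)(\alpha) = \mu$ and the explicit inversion \eqref{eq_du_op_b_m_xi_fin}. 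This constructs the Bass martingale.

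For the converse, start from a Bass martingale with convex $v$ and initial law $\alpha$, and set $\psiopt := v^{\ast}$. The key preparatory step is to show $\varphi^{\psiopt}(x) > -\infty$ for every $x \in \Rd$: from $\nabla v(\alpha \ast \gamma)=\nu \in \PP_2$ we get $\nabla v(\zeta+\,\cdot\,) \in L^2(\gamma)$ for $\alpha$-a.e.\ $\zeta$, hence $(v\ast\gamma)(\zeta)$ is finite there (via the Cauchy--Schwarz bound as in \eqref{lem_phi_psi_ccse}), and the Fenchel--Moreau-type lower bound \eqref{eq_du_op_b_m_psi_v_a_02} furnishes $\varphi^{\psiopt} > -\infty$ on all of $\Rd$. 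Proposition \ref{prop_phi_psi_x_gen} then applies on $\operatorname{int}\dom\psiopt$ (which has non-empty interior because $\operatorname{int}\widehat{\supp}(\nu)\neq\varnothing$ by Assumption \ref{ass_ful_dim}), giving a unique maximizer $\zeta(x)$. Differentiating under the integral via Lemma \ref{lem_aux_grad_convolution}\ref{lem_aux_grad_convolution_i} produces the first-order condition $(\nabla v \ast \gamma)(\zeta(x)) = x$, and Lemma \ref{lem_aux_grad_convolution}\ref{lem_aux_grad_convolution_iii} inverts this to \eqref{eq_du_op_b_m_xi_fin}. From $(\nabla v \ast \gamma)(\alpha)=\mu$ this forces $\alpha=\zeta(\mu)$, and Lemma \ref{lem_int_dom_range} gives $\mu(\operatorname{int}\dom\psiopt)=\alpha(\Rd)=1$. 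To verify $\mathcal{D}(\psiopt)=\tilde{D}(\mu,\nu)$, set $\pi_x^v := \nabla v(\zeta(x)+\,\cdot\,)(\gamma) \in \PP_2^x$; Proposition \ref{prop_phi_psi_x_gen_opt}\ref{prop_phi_psi_x_gen_opt_i} shows $\pi_x^v$ realizes the infimum in $\varphi^{\psiopt}(x)$, so \eqref{eq_rep_dual_func} collapses to $\mathcal{D}(\psiopt) = \int \MCov(\pi_x^v,\gamma)\,d\mu(x) \leqslant P(\mu,\nu)$. Since the reverse inequality always holds by weak duality (Lemma \ref{lem_ext_def_dual}), Theorem \ref{theorem_no_duality_gap} yields equality, $\psiopt$ is optimal, and $\pi_x^v$ must coincide with $\pisbm_x$; as $\pi_x^v = \Law(M_1\mid M_0=x)$ by the definition of the Bass martingale, we conclude $\Law(M_0,M_1)=\pisbm$.

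The main obstacle is the backward direction, where one must carefully orchestrate several measure-theoretic subtleties: ensuring $\varphi^{\psiopt}$ is finite (so that Section \ref{sec_prep}'s machinery applies), legitimizing differentiation under the integral defining $v \ast \gamma$, and identifying the domain $X$ on which $\zeta$ is defined with a full $\mu$-measure subset of $\operatorname{int}\dom\psiopt$. The forward direction is structurally cleaner but relies on the comparatively deep Proposition \ref{prop_phi_psi_x_gen_opt}\ref{prop_phi_psi_x_gen_opt_ii}, which extracts Brenier-type structure from optimality alone.
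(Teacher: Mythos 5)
Your proposal is correct and follows essentially the same route as the paper's proof: both directions are driven by the combination of Theorem \ref{theorem_no_duality_gap}, Propositions \ref{prop_phi_psi_x_gen} and \ref{prop_phi_psi_x_gen_opt}, and the convolution/inversion Lemma \ref{lem_aux_grad_convolution}, with the dual optimizer identified via $v = \psiopt^{\ast}$ and the first-order condition $(\nabla v \ast \gamma)(\zeta(x)) = x$. The only cosmetic divergence is that in the forward direction the paper obtains $(\nabla v \ast \gamma)(\alpha) = \mu$ directly by pushing the barycenter identity through $\mu$ and invokes Lemma \ref{lem_aux_grad_convolution}\ref{lem_aux_grad_convolution_iii} solely for the inversion \eqref{eq_du_op_b_m_xi_fin}, whereas you bundle both consequences under that lemma.
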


We now give a one-dimensional example in which a dual optimizer $\psiopt$ in the sense of Definition \ref{def_dual_opt} is not integrable with respect to $\nu$. Example \ref{ex_du_not_int} below illustrates that the form \eqref{eq_def_dual_func} of the dual function $\D(\, \cdot \,)$ may fail to make sense when we allow $\psi$ to range more generally than in $\Cq$. On the other hand, the dual function written in the form \eqref{eq_rep_dual_func} makes perfect sense for general convex functions $\psi \colon \Rd \rightarrow (-\infty,+\infty]$, which are $\mu$-a.s.\ finite (recall Lemma \ref{lem_ext_def_dual}). This leads to the satisfactory characterization of dual attainment as given by Theorem \ref{theo_du_op_b_m}.

\begin{example} \label{ex_du_not_int} Let $\alpha \coloneqq \sum_{n=1}^{\infty} 2^{-n} \delta_{z_n} \in \PP(\R)$, with the sequence $(z_{n})_{n \geqslant 1} \subseteq \R$ satisfying $\lim_{n \rightarrow \infty} \frac{\vert z_n \vert}{2^{n}} = + \infty$. Consider the convex function $v(z) \coloneqq z \arctan z - \frac{1}{2} \log(1+z^{2})$, with the derivative $v'(z) = \arctan z$ being a strictly increasing, continuous and bounded function. Define $\mu \coloneqq (v' \ast \gamma)(\alpha)$, so that $\mu = \sum_{n=1}^{\infty} 2^{-n} \delta_{x_n}$ for some bounded sequence $(x_n)_{n \geqslant 1} \subseteq \R$, and let $\nu \coloneqq v'(\alpha \ast \gamma)$. Note that $\mu$ and $\nu$ are supported by $(-\frac{\pi}{2},\frac{\pi}{2})$. Then, according to \eqref{eq_def_id_bm}, the pair $(v,\alpha)$ defines a Bass martingale from $\mu$ to $\nu$. Theorem \ref{theo_du_op_b_m} shows that the derivative of the dual optimizer is given by 
\[
\tfrac{d}{dy} \psiopt(y) = (v')^{-1}(y) =\tan y.
\]
By definition of $\nu$ and $\alpha$, this function is not $\nu$-integrable, and neither is its antiderivative $\psiopt$. \hfill $\Diamond$ 
\end{example}

We finish this section by deducing a trajectorial property of Bass martingales from Lemma \ref{lem_int_dom_range} and Theorem \ref{theo_du_op_b_m}, namely that a Bass martingale $M$ with $M_{1}\sim \nu$ can only reach the boundary of $\widehat{\supp}(\nu)$ at time $1$. In this paper we only make use of this result as a technical step in the proof of Lemma \ref{lemma_irr_f_p}. However, this is a natural property to examine from the point of view of stochastic analysis (e.g.\ Feller's explosion test is devoted to a related question), and it gives us some intuition about the behavior of Bass martingales.

\begin{corollary} \label{cor_bm_tr_pr} Let $M = (M_{t})_{0 \leqslant t \leqslant 1}$ be a Bass martingale with $M_{1} \sim \nu$. Define $\tau$ as the stopping time 
\[
\tau \coloneqq 
\inf\big\{t\in[0,1] \colon M_{t} \notin \operatorname{ri}\widehat{\supp}(\nu) \big\} \wedge 1, 
\]
i.e., the minimum of $1$ and the first time that $M$ reaches the boundary of $\widehat{\supp}(\nu)$. Then $\tau=1$ a.s.
\begin{proof} 
We will prove that 
\[
\mathbb{P}\big(t \in [0,1) \colon M_{t}\in\operatorname{ri}\widehat{\supp}(\nu)\big)=1. 
\]
Thus, if $M$ ever reaches the boundary of $\widehat{\supp}(\nu)$, this will happen at time $t = 1$, so $\tau = 1$. Otherwise, i.e., if $M$ never reaches the boundary of $\widehat{\supp}(\nu)$, then by definition $\tau = 1$, too.

Recall that $\dim (\supp \nu) = d$ by Assumption \ref{ass_ful_dim}, so we can replace the relative interior operator by the interior operator throughout the argument. By \eqref{rep_eq_bm_n}, the Bass martingale is given by $M_{t} = (\nabla v \ast \gamma^{1-t})(B_{t})$, where $\nabla v(\gamma) =\nu$, for some finite-valued convex function $v$. We now define the proper, lower semicontinuous convex function $\psi \coloneqq v^{\ast}$. As in the proof of the second implication of Theorem \ref{theo_du_op_b_m}, namely the part of the proof corresponding to ``$\Leftarrow$'', we see that $\operatorname{int}(\dom v^{\ast}) \neq \varnothing$, that $\varphi^{\psi}(x) > - \infty$ for all $x \in \Rd$, and that $\nabla \psi^{\ast} \in L^{2}(\gamma_{B_0};\Rd)$ a.s.\ (for the latter, see \eqref{eq_du_op_b_m_psi_v_a_00} and recall that $B_{0}\sim\alpha$). Therefore we can apply Lemma \ref{lem_int_dom_range} and from \eqref{eq_m_lem_int_dom_range} we obtain for all $t \in [0,1)$ the  equality
\[
\operatorname{int}(\dom \psi)
= \operatorname{int}  \big( (\partial \psi^{\ast})(\Rd) \big)
= (\nabla \psi^{\ast} \ast \gamma^{1-t})(\Rd).
\]
As we justify in \eqref{eq_m_lem_int_dom_range_01} in Appendix \ref{app_tech_lemm}, we automatically have the equality
\[
\operatorname{int} \big( (\partial \psi^{\ast})(\Rd) \big) 
= \operatorname{int} \conv \big( (\partial \psi^{\ast})(\Rd) \big), 
\]
so we conclude that 
\[
\operatorname{int} \conv \big( (\partial \psi^{\ast})(\Rd) \big)= (\nabla \psi^{\ast} \ast \gamma^{1-t})(\Rd).
\]
On the other hand, we clearly have
\[
\operatorname{int} \conv \big( (\partial \psi^{\ast})(\Rd) \big) = \operatorname{int} \overline{\conv} \big( (\partial \psi^{\ast})(\Rd) \big) = \operatorname{int}\widehat{\supp}(\nu),
\]
so we obtain for all $t \in [0,1)$ that 
\[
\operatorname{int}\widehat{\supp}(\nu) = (\nabla \psi^{\ast} \ast \gamma^{1-t})(\Rd).
\]
Since $M_t= (\nabla \psi^\ast \ast \gamma^{1-t})(B_{t}) $, we conclude as desired that $M_{t}$ lies in $\operatorname{int}\widehat{\supp}(\nu)$ as long as $t \in [0,1)$. 
\end{proof}

\end{corollary}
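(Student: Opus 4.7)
The plan is to reduce everything to showing that $M_{t}\in \operatorname{int}\widehat{\supp}(\nu)$ almost surely for every $t\in[0,1)$, at which point the conclusion $\tau=1$ a.s.\ follows directly from the definition of $\tau$ (if $M$ never leaves the interior before $t=1$, then $\tau$ equals $1$ by the $\wedge 1$ in its definition). Replacing $\operatorname{ri}$ by $\operatorname{int}$ is legitimate since $\dim(\supp\nu)=d$ by Assumption \ref{ass_ful_dim}.

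Next I would invoke the representation from Remark \ref{rem:manifold}(iii), $M_{t}=(\nabla v\ast\gamma^{1-t})(B_{t})$, where $v\colon\Rd\rightarrow\R$ is the finite convex function associated with the Bass martingale and $B_{0}\sim\alpha$. It then suffices to prove the deterministic statement that the range of the map $\nabla v\ast\gamma^{1-t}$ equals $\operatorname{int}\widehat{\supp}(\nu)$ for every $t\in[0,1)$. To get this, set $\psi\coloneqq v^{\ast}$ and apply Lemma \ref{lem_int_dom_range} to $\psi$ with $t$ replaced by $1-t\in(0,1]$, giving
\[
(\nabla\psi^{\ast}\ast\gamma^{1-t})(\Rd)=\operatorname{int}\big((\partial\psi^{\ast})(\Rd)\big)=\operatorname{int}(\dom\psi).
\]
The hypotheses of the lemma are verified exactly as in the ``$\Leftarrow$'' half of Theorem \ref{theo_du_op_b_m}: $\psi$ is proper lsc convex (since $v$ is finite convex), $\operatorname{int}(\dom\psi)\neq\varnothing$ (since $\widehat{\supp}(\nu)$ has non-empty interior by Assumption \ref{ass_ful_dim} and $(\partial v)(\Rd)\subseteq\dom v^{\ast}$), $\varphi^{\psi}>-\infty$ at some interior point of $\dom\psi$ (the Fenchel--Moreau bound \eqref{eq_du_op_b_m_psi_v_a_02} combined with finiteness of $v\ast\gamma$ at $\alpha$-a.e.\ point), and $\nabla\psi^{\ast}=\nabla v\in L^{2}(\gamma_{\zeta};\Rd)$ for some $\zeta$ (follows from $\nabla v(\alpha\ast\gamma)=\nu\in\PP_{2}(\Rd)$, so $\int|\nabla v(\zeta+z)|^{2}\,d\gamma(z)<\infty$ for $\alpha$-a.e.\ $\zeta$).

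It remains to identify $\operatorname{int}(\dom\psi)=\operatorname{int}((\partial v^{\ast})(\Rd))$ with $\operatorname{int}\widehat{\supp}(\nu)$. Since $(\partial v^{\ast})(\Rd)$ and $\conv((\partial v^{\ast})(\Rd))$ have the same interior (a standard convex analysis fact also used inside the proof of Lemma \ref{lem_int_dom_range}), and since the closed convex hull of $(\partial v^{\ast})(\Rd)$ coincides with $\widehat{\supp}(\nu)$ (using $\nabla v(\gamma)=\nu$), one obtains
\[
\operatorname{int}\big((\partial v^{\ast})(\Rd)\big)=\operatorname{int}\conv\big((\partial v^{\ast})(\Rd)\big)=\operatorname{int}\overline{\conv}\big((\partial v^{\ast})(\Rd)\big)=\operatorname{int}\widehat{\supp}(\nu).
\]
Combining the two displayed identities gives $M_{t}\in\operatorname{int}\widehat{\supp}(\nu)$ for every $t\in[0,1)$, concluding the argument.

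The main obstacle is not really technical, since everything reduces to a careful application of Lemma \ref{lem_int_dom_range}; the point to recognise is that this lemma, originally designed to describe the geometry of dual optimizers, precisely encodes the non-explosion property required here. The only bookkeeping step is verifying the $L^{2}$-integrability hypothesis $\nabla\psi^{\ast}\in L^{2}(\gamma_{\zeta};\Rd)$, which is handled by choosing $\zeta$ in the support of $\alpha$ and using the finite second moment of $\nu$.
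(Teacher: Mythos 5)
Your proposal is correct and follows essentially the same route as the paper's own proof: reduce to showing $M_{t}\in\operatorname{int}\widehat{\supp}(\nu)$ for $t\in[0,1)$, apply Lemma \ref{lem_int_dom_range} to $\psi=v^{\ast}$ with hypotheses checked exactly as in the ``$\Leftarrow$'' part of Theorem \ref{theo_du_op_b_m}, and identify $\operatorname{int}(\dom\psi)$ with $\operatorname{int}\widehat{\supp}(\nu)$ via the convex-hull identities. No gaps.
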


\section{Irreducibility and existence of dual optimizers} \label{sec_irr_bass}

\subsection{Outline and objectives}

In this technically demanding section we connect our findings with the groundbreaking analysis 
presented in \cite{DeTo17} by H.\ De March and N.\ Touzi. Their work establishes the existence of a 
unique maximal paving of $\R^d$ into relatively open, convex, invariant sets. Each martingale transport from $\mu$ to $\nu$ maps these relatively open sets into their closures. Our focus here is on the irreducible case, where this paving is trivial, i.e., it consists of a single cell. The general case is treated in the follow-up paper \cite{ScTs24}.

\smallskip

We show that the triviality of the De March--Touzi (DMT) paving (as formalized in Definition \ref{def_dmti} below) is equivalent to the irreducibility condition in Definition \ref{defi:irreducible_intro}. This equivalence, along with additional equivalent characterizations of irreducibility, is established in Theorem \ref{theo_coic} of Appendix \ref{app_sec_irr}, which builds on the results of the present section.

\smallskip

Assuming the triviality of the DMT paving in Assumption \ref{assumptions_single_cell} below, we fix a martingale transport $\pidmt \in \MT(\mu,\nu)$ such that every transition kernel $\pidmt_{x}$ has full convex support. The existence of such a martingale transport $\pidmt$ was established in \cite{DeTo17}. In Lemma \ref{lemma_bsbi_s}, we prove a uniformity property of this full-support feature, valid for $\mu$-a.e.\ $x \in \Rd$.

\smallskip

The main result of this section is Theorem \ref{theorem_single}, which links the triviality of the DMT paving to the existence of an optimizer for the dual problem \eqref{lem_ext_dual_func_02}. As a direct consequence of this theorem, we obtain that triviality of the DMT paving implies the existence of a Bass martingale $M$ from $\mu$ to $\nu$ with $\Law(M_{0},M_{1}) = \pisbm$. Notably, this Bass martingale serves as a concrete example of a De March--Touzi transport since, for $\mu$-a.e.\ $x \in \Rd$, the measures $\pidmt_{x}$ are equivalent to $\nu$.

\smallskip

The first step in proving Theorem \ref{theorem_single} is to identify a limit $\psilim$ of an arbitrary optimizing sequence of convex functions $(\psi_{n})_{n \geqslant 1}$ for the dual problem \eqref{eq_dual}. A crucial observation is that the value of the dual function \eqref{eq_rep_dual_func} remains invariant under the addition of affine functions to $\psi$. This invariance allows us to choose and add affine functions that facilitate our analysis. Concrete examples of such choices of affine functions are provided in \cite{ScTs24} (Examples 6.1 and 6.2). After appropriately selecting representatives of $\psi_{n}$ (by adding affine functions), we establish in Lemma \ref{lemma_bsbi_smao} (by applying Koml\'{o}s' theorem) the existence of a sequence of Ces\`{a}ro means of a subsequence that is bounded on compact subsets of $I = \operatorname{ri} \widehat{\supp}(\nu)$. Using arguments from convex analysis, we extract a further subsequence that converges uniformly on compact subsets $K \subseteq I$ to a convex function $\psilim$.

\smallskip

The second step in proving Theorem \ref{theorem_single} is to verify that the limiting function $\psilim$ is indeed a dual optimizer. To this end, we introduce in Definition \ref{def_gsbmx} the Brenier map $\nabla \vxsbm$ from $\gamma$ to $\pisbm_{x}$. The next goal (see Lemma \ref{lemma_irr_f}) is to show that, for $\mu$-a.e.\ $x \in I$, the function $\psilim$ equals $(\vxsbm)^{\ast}$, modulo the addition of an affine function. Specifically, we introduce (see \eqref{def_set_a}) the set
\[
A = \big\{ x \in I \colon \psilim \not\equiv (\vxsbm)^{\ast} \textnormal{ mod (aff)}\big\},
\]
and prove that $\mu(A) = 0$. This proof proceeds by contradiction. Assuming $\mu(A) > 0$, we construct, for $x \in A$, measures $\check{\pi}_{x} \in \PP^{2}_{x}(\Rd)$ in Lemma \ref{lemma_a_h}, which are adjusted in Lemma \ref{lemma_irr_f_p} to have compact support. The measures $\check{\pi}_{x}$ then contradict the optimality of $\pisbm_{x}$, allowing us to conclude $\mu(A) = 0$ and thereby complete the proof of Lemma \ref{lemma_irr_f}. With the help of Lemma \ref{lemma_ch_o_do} we can then finish the second step and thus also the proof of Theorem \ref{theorem_single}. Broadly speaking, Lemma \ref{lemma_ch_o_do} shows that $\psilim$ is a dual optimizer if and only if $\mu(A) = 0$. The proof of this result is based on Proposition \ref{prop_phi_psi_x_gen_opt} and Theorem \ref{theo_du_op_b_m}.

\smallskip

At this point we encounter a remarkable and somewhat surprising feature. In Proposition \ref{prop_1b} we use the wisdom of hindsight to show that in the arguments outlined above, there is no need of passing to a subsequence or of forming Ces\`{a}ro means of the optimizing sequence $(\psi_{n})_{n \geqslant 1}$. Rather already the original sequence $(\psi_{n})_{n \geqslant 1}$ --- modulo adding affine functions --- converges to the optimizer $\psilim$. Note that in Proposition \ref{prop_1b} the pair $(\mu,\nu)$ is not necessarily irreducible, rather the boundedness assumption \eqref{prop_1b_ass} is made (cf.\ Lemma \ref{lemma_bsbi_smao}). This refinement plays a crucial role in the follow-up paper \cite{ScTs24}. 

\smallskip

Towards the end of Section \ref{sec_irr_bass} we are finally in the position to prove Theorem \ref{MainTheorem} as well as the second part of Theorem \ref{theorem_new_duality}, thereby completing the main body of the paper (apart from four appendices).

\smallskip

In the following subsections we now proceed with a formal and detailed treatment of the outlined program for Section \ref{sec_irr_bass}.

\subsection{Irreducibility and dual attainment}

We fix $\mu, \nu \in \PP_{2}(\Rd)$ with $\mu \lc \nu$. The following notation from \cite{DeTo17} will be used throughout this section.

\begin{definition} We denote by $C \coloneqq \widehat{\supp}(\nu)$ the closed convex hull of the support of $\nu$ and by $I \coloneqq \operatorname{ri} C$ its relative interior. 
\end{definition}

Recall that $\dim (\supp \nu) = d$ by Assumption \ref{ass_ful_dim}, so that $I$ is open in $\Rd$.

\begin{definition} \label{def_dmti} We say that the pair $(\mu,\nu)$ is \textit{De March--Touzi irreducible}, if the irreducible convex paving of De March--Touzi \cite{DeTo17} consists of the single irreducible component $I$. By \cite[Theorem 2.1]{DeTo17}, this condition means that
\begin{enumerate}[label=(\roman*)] 
\item \label{dmt_p1} the set $I = \operatorname{ri} C$, with $C = \widehat{\supp}(\nu)$, satisfies $\mu(I) = 1$,
\item \label{dmt_p2} there exists some martingale transport $\pidmt \in \MT(\mu,\nu)$ with the property that $C = \widehat{\supp}(\pidmt_{x})$, for $\mu$-a.e.\ $x \in \Rd$. We refer to $\pidmt$ as a \textit{De March--Touzi transport}. 
\end{enumerate}
\end{definition}

The main assumption of this section (with the exception of Lemma \ref{lemma_psi_sup}, Lemma \ref{lemma_ch_o_do}, Proposition \ref{prop_1b} and Corollary \ref{cor_1b}) is the following.

\begin{assumption} \label{assumptions_single_cell} The pair $(\mu,\nu)$ is De March--Touzi irreducible. 
\end{assumption}

Assumption \ref{assumptions_single_cell} is equivalent to the irreducibility of the pair $(\mu,\nu)$ in the sense of Definition \ref{defi:irreducible_intro}. For a proof of this result and for further equivalent characterizations of irreducibility we refer to Theorem \ref{theo_coic} in Appendix \ref{app_sec_irr}. 

\smallskip

In Lemma \ref{lemma_bsbi_s} below we give a more quantitative description of the defining property of a De March--Touzi transport $\pidmt \in \MT(\mu,\nu)$ between an irreducible pair $(\mu,\nu)$. 

\begin{definition} Let $K \subseteq I$ be compact. For an element $y^{\ast}$ of the unit sphere $\mathbb{S}^{d-1}$ in $\Rd$ we define the slice $S_{y^{\ast}}$ of $C$ beyond $K$ by 
\begin{equation} \label{def_slice}
S_{y^{\ast}} \coloneqq \big\{ y \in C \colon \langle y,y^{\ast} \rangle > \sup \{ \langle \tilde{y},y^{\ast} \rangle \colon \tilde{y} \in K\} \big\}.
\end{equation}
\end{definition}

\begin{lemma} \label{lemma_bsbi_s} Under Assumption \ref{assumptions_single_cell}, for $\mu$-a.e.\ $x \in \Rd$ and for every compact set $K \subseteq I$, there exists a constant $\delta(K,x) > 0$ such that $\pidmt_{x}(S_{y^{\ast}}) \geqslant \delta(K,x)$ for every $y^{\ast} \in \mathbb{S}^{d-1}$.
\begin{proof} We fix $x \in \Rd$ with $C = \widehat{\supp}(\pidmt_{x})$ and note that the map $r_{x} \colon \mathbb{S}^{d-1} \rightarrow [0,1]$ given by $r_{x}(y^{\ast}) \coloneqq \pidmt_{x}(S_{y^{\ast}})$, for $y^{\ast} \in \mathbb{S}^{d-1}$, is lower semicontinuous. As the set $C$ equals the closed convex hull of the support of $\pidmt_{x}$, the map $r_{x}$ is also strictly positive. Indeed, if there was some $y^{\ast} \in \mathbb{S}^{d-1}$ with $r_{x}(y^{\ast}) = 0$, then the closed convex set $C \setminus S_{y^{\ast}}$ would support $\pidmt_{x}$, which is in contradiction to $C \setminus S_{y^{\ast}} \subsetneq C = \widehat{\supp}(\pidmt_{x})$. Since $\mathbb{S}^{d-1}$ is compact we conclude that $r_{x}$ attains its minimum $\delta(K,x) \coloneqq \min_{y^{\ast} \in \mathbb{S}^{d-1}} r_{x}(y^{\ast}) > 0$.
\end{proof}
\end{lemma}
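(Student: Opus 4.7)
The plan is to fix a generic point $x \in \Rd$ in the full-$\mu$-measure set $\{\widehat{\supp}(\pidmt_{x}) = C\}$ granted by Assumption \ref{assumptions_single_cell} and Definition \ref{def_dmti}, and then establish that
\[
\delta(K,x) \coloneqq \inf_{y^{\ast} \in \mathbb{S}^{d-1}} r_{x}(y^{\ast}), \qquad r_{x}(y^{\ast}) \coloneqq \pidmt_{x}(S_{y^{\ast}}),
\]
is strictly positive, via a lower semicontinuity plus compactness argument on $\mathbb{S}^{d-1}$. The first step is to verify that $r_{x}$ is lower semicontinuous on $\mathbb{S}^{d-1}$. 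Writing $m(y^{\ast}) \coloneqq \sup_{\tilde{y} \in K}\langle \tilde{y},y^{\ast}\rangle$, compactness of $K$ ensures that $m$ is continuous on $\mathbb{S}^{d-1}$, so for any sequence $y_{n}^{\ast} \to y^{\ast}$ and any $y \in S_{y^{\ast}}$ we have $\langle y, y_{n}^{\ast}\rangle > m(y_{n}^{\ast})$ for $n$ large. Thus $1_{S_{y^{\ast}}}(y) \leqslant \liminf_{n} 1_{S_{y_{n}^{\ast}}}(y)$ pointwise, and Fatou's lemma yields $r_{x}(y^{\ast}) \leqslant \liminf_{n} r_{x}(y_{n}^{\ast})$.

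The substantive step is to show that $r_{x}(y^{\ast}) > 0$ for every $y^{\ast} \in \mathbb{S}^{d-1}$. The key observation is that, since $K$ is compact and contained in the relatively open set $I = \operatorname{ri} C$, which by Assumption \ref{ass_ful_dim} is open in $\Rd$, for every $y^{\ast} \in \mathbb{S}^{d-1}$ one can pick a maximizer $y_{0} \in K$ of $\langle\cdot,y^{\ast}\rangle$ on $K$ and a small $\varepsilon > 0$ with $y_{0} + \varepsilon y^{\ast} \in I \subseteq C$. Hence
\[
\sup_{y \in C}\langle y, y^{\ast}\rangle \geqslant \langle y_{0}+\varepsilon y^{\ast},y^{\ast}\rangle = m(y^{\ast}) + \varepsilon > m(y^{\ast}),
\]
so that the closed half-slab $H_{y^{\ast}} \coloneqq C \cap \{y : \langle y,y^{\ast}\rangle \leqslant m(y^{\ast})\}$ is a \emph{proper} closed convex subset of $C$. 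If one had $r_{x}(y^{\ast}) = 0$, then since $S_{y^{\ast}}$ is relatively open in $C$, the support of $\pidmt_{x}$ would be disjoint from $S_{y^{\ast}}$ and thus contained in the closed convex set $H_{y^{\ast}}$. Passing to closed convex hulls would then give $\widehat{\supp}(\pidmt_{x}) \subseteq H_{y^{\ast}} \subsetneq C$, contradicting the defining full-support property of $\pidmt$.

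The final step is routine: lower semicontinuity of $r_{x}$ on the compact set $\mathbb{S}^{d-1}$ ensures the infimum is attained, and the previous step shows it is strictly positive, yielding $\delta(K,x) > 0$ with the required uniformity in $y^{\ast}$. The only mildly delicate point I would expect is the quantitative nonemptiness of $S_{y^{\ast}}$; this is really where Assumption \ref{assumptions_single_cell} interacts with the geometry, so I would take care to formulate it using the openness of $I$ in $\Rd$ (afforded by $\dim(\supp \nu) = d$) rather than some more general relative-interior argument, in order to keep the translation $y_{0} + \varepsilon y^{\ast}$ inside $C$ for an arbitrary direction $y^{\ast}$ on the sphere.
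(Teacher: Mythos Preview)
Your proof is correct and follows essentially the same approach as the paper's: fix a full-support point $x$, show $r_{x}$ is lower semicontinuous and strictly positive on $\mathbb{S}^{d-1}$, and conclude by compactness. You give more detail than the paper in two places---the Fatou/continuity argument for lower semicontinuity, and the explicit verification (via $y_{0}+\varepsilon y^{\ast}\in I$) that $S_{y^{\ast}}$ is nonempty so that $C\setminus S_{y^{\ast}}\subsetneq C$---both of which the paper leaves implicit.
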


The main result of this section is the following.

\begin{theorem} \label{theorem_single} Under Assumption \ref{assumptions_single_cell}, there exists a dual optimizer $\psiopt$ in the sense of Definition \ref{def_dual_opt}.
\end{theorem}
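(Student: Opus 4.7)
The plan is to construct $\psiopt$ as a limit of a minimizing sequence for the dual problem in the relaxed form \eqref{lem_ext_dual_func_02}, exploiting two features specific to the irreducible case: the invariance of $\D(\cdot)$ under addition of affine functions, and the compactness provided by the De March--Touzi transport $\pidmt$ from Definition \ref{def_dmti}. Affine invariance is direct: for every affine $a \colon \Rd \to \R$ we have $\int a \, d\pi_x = a(x)$ for $\pi \in \MT(\mu,\nu)$, and $\varphi^{\psi + a}(x) = \varphi^{\psi}(x) + a(x)$ by the barycenter constraint in \eqref{eq_phi_psi_x}, so $\D(\psi + a) = \D(\psi)$. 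Given a minimizing sequence $(\psi_n)_{n \geqslant 1}$ of lower semicontinuous convex functions, I would normalize each $\psi_n$ by subtracting a suitable affine function that pins down its value and a chosen subgradient at a reference point $x_0 \in I$. Lemma \ref{lemma_bsbi_s} then supplies quantitative control: the uniform slice bound $\pidmt_x(S_{y^{\ast}}) \geqslant \delta(K,x) > 0$ forces a convex function with controlled $\pidmt_x$-integral to be uniformly bounded above on any compact $K \subseteq I$ (a convex function growing too fast on $K$ must grow even faster on every slice beyond $K$, inflating its $\pidmt_x$-integral). Combining this with Koml\'{o}s' theorem, I pass to Ces\`{a}ro means of a subsequence that is still minimizing and uniformly bounded on compact subsets of $I$.

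A standard compactness argument for convex functions --- a sequence uniformly bounded on compacta of an open convex set is locally uniformly Lipschitz, hence precompact by Arzel\`{a}--Ascoli --- then yields a further subsequence converging locally uniformly on $I$ to a convex limit, which I extend by lower semicontinuity to a proper lower semicontinuous convex function $\psilim \colon \Rd \to (-\infty,+\infty]$ with $I \subseteq \dom \psilim$. Since $\mu(I) = 1$ by item (i) of Definition \ref{def_dmti}, this candidate automatically satisfies the support requirement $\mu(\operatorname{ri}(\dom \psilim)) = 1$ of Definition \ref{def_dual_opt}.

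The remaining and hardest task is to verify $\D(\psilim) = \tilde{D}(\mu,\nu)$. A direct Fatou argument on the dual functional fails because the convergence $\psi_n \to \psilim$ is only locally uniform on $I$, while $\nu$ and the measures $\pisbm_x$ may charge $\partial C$. The strategy is instead to compare $\psilim$ with the structural form of dual optimizers provided by Proposition \ref{prop_phi_psi_x_gen_opt}. Letting $\nabla \vxsbm$ denote the Brenier map from $\gamma$ to the primal conditional $\pisbm_x$ (well-defined $\mu$-a.s.\ by Theorem \ref{theorem_no_duality_gap}), define
\[
A \coloneqq \big\{ x \in I \colon \psilim \not\equiv (\vxsbm)^{\ast} \text{ modulo an affine function} \big\}.
\]
By Proposition \ref{prop_phi_psi_x_gen_opt}, the primal conditional $\pisbm_x$ attains $\varphi^{\psilim}(x)$ precisely when $\psilim$ and $(\vxsbm)^{\ast}$ agree modulo an affine function at $x$. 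Combined with the primal--dual equality of Theorem \ref{theorem_no_duality_gap}, the identity $\mu(A) = 0$ is thus equivalent to $\psilim$ being a dual optimizer.

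The main obstacle is proving $\mu(A) = 0$ by contradiction. Assuming $\mu(A) > 0$, for $x \in A$ the discrepancy between $\psilim$ and $(\vxsbm)^{\ast}$ allows the construction of a competing measure $\check{\pi}_x \in \PP_2^x(\Rd)$ that strictly improves $p \mapsto \int \psilim \, dp - \MCov(p,\gamma)$ over $\pisbm_x$. Two technical hurdles must be overcome: first, the $\check{\pi}_x$ must be compactly supported inside $C$ so that the integrals are finite and one can arrange $\check{\pi}_x \lc \nu$ --- here Corollary \ref{cor_bm_tr_pr}, stating that Bass trajectories reach $\partial C$ only at time $1$, is the tool that enables truncation of $\pisbm_x$ without destroying the martingale structure; second, the family $x \mapsto \check{\pi}_x$ must assemble measurably into an admissible competitor to $\pisbm$ whose integrated value strictly outperforms $P(\mu,\nu)$, contradicting the primal attainment in Theorem \ref{theorem_no_duality_gap}. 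Once $\mu(A) = 0$ is established, combining Proposition \ref{prop_phi_psi_x_gen_opt} with Theorem \ref{theo_du_op_b_m} yields $\D(\psilim) = P(\mu,\nu) = \tilde{D}(\mu,\nu)$, so $\psilim$ is the required dual optimizer $\psiopt$.
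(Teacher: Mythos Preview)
Your outline follows the paper's proof almost exactly: normalize an optimizing sequence by affine functions, use Lemma \ref{lemma_bsbi_s} together with Koml\'{o}s to get local boundedness on $I$, extract a locally uniform limit $\psilim$, define the exceptional set $A$ via the Brenier potentials $(\vxsbm)^{\ast}$, and argue $\mu(A)=0$ by constructing improving measures $\check{\pi}_x$ with compact support inside $I$.

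There is one point where your description diverges from what actually works. You say the $\check{\pi}_x$ should ``assemble measurably into an admissible competitor to $\pisbm$ whose integrated value strictly outperforms $P(\mu,\nu)$, contradicting the primal attainment.'' This is not how the contradiction runs, and it cannot: the modified kernel $\tilde{\pi}_x$ (equal to $\check{\pi}_x^{j}$ on $B$ and $\pisbm_x$ elsewhere) does \emph{not} have second marginal $\nu$, so it is not an element of $\MT(\mu,\nu)$ and cannot compete in the primal problem. The actual argument (Lemma \ref{lemma_irr_f}) stays on the dual side: the compact support of $\check{\pi}_x^{j}$ lets you replace $\psilim$ by $\psi_n$ in $\int \psilim\,d(\pisbm_x-\tilde{\pi}_x)$ via uniform convergence on compacta, and the resulting expression is bounded above by $\D(\psi_n)\to\tilde{D}(\mu,\nu)$, yielding $\tilde{D}(\mu,\nu)\geqslant P(\mu,\nu)+\beta\mu(B)$, which contradicts the absence of a duality gap. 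Relatedly, the truncation via Corollary \ref{cor_bm_tr_pr} is applied to the Bass martingale from $\delta_x$ to $\check{\pi}_x$, not to $\pisbm_x$; and the condition $\check{\pi}_x\lc\nu$ is neither needed nor used.
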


Together with Theorem \ref{theo_du_op_b_m}, this has the following consequence for the primal optimizer $\pisbm \in \MT(\mu,\nu)$ of the primal problem \eqref{eq_primal}.

\begin{corollary} \label{thm:equivalence_kernels} Under Assumption \ref{assumptions_single_cell}, there exists a Bass martingale $(M_{t})_{0 \leqslant t \leqslant 1}$ from $\mu$ to $\nu$. Moreover, $\Law(M_{0},M_{1}) = \pisbm$ and, for $\mu$-a.e.\ $x \in \Rd$, the measure $\pisbm_{x}$ is equivalent to $\nu$. In particular, $\pisbm$ is a De March--Touzi transport in the sense of Definition \ref{def_dmti}, \ref{dmt_p2}.
\begin{proof} Admitting Theorem \ref{theorem_single}, there exists a dual optimizer $\psiopt$. Thus, by Theorem \ref{theo_du_op_b_m}, there is a Bass martingale from $\mu$ to $\nu$ with the property that $\Law(M_{0},M_{1}) = \pisbm$. In particular, by \eqref{eq_id_pisbmx_v_xi}, there is a measurable set $A$ with $\mu(A) = 1$, such that $\pisbm_{x}$ and $\pisbm_{x'}$ are image measures of suitably translated Gaussians under the same function $\nabla v$, for all $x,x' \in A$. Hence $\pisbm_{x} \sim \pisbm_{x'}$, for all $x, x' \in A$, and this in turn implies the equivalence of $ \nu(dy) = \int \pisbm_{x}(dy) \, \mu(dx)$ with $\pisbm_{x'}(dy)$, for each $x' \in A$. In particular, $\supp (\pisbm_{x}) = \supp(\nu)$, for $\mu$-a.e.\ $x \in \Rd$, which implies Definition \ref{def_dmti}, \ref{dmt_p2} for $\pisbm$.
\end{proof}
\end{corollary}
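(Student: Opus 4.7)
The plan is to deduce the corollary as a direct consequence of the two main results that immediately precede it. First I would invoke Theorem \ref{theorem_single}, which under Assumption \ref{assumptions_single_cell} produces a dual optimizer $\psiopt$ in the sense of Definition \ref{def_dual_opt}. Feeding this optimizer into Theorem \ref{theo_du_op_b_m} yields a Bass martingale $(M_{t})_{0 \leqslant t \leqslant 1}$ from $\mu$ to $\nu$ such that $\Law(M_{0},M_{1})$ coincides with the primal optimizer $\pisbm$ of \eqref{eq_primal}. This already takes care of the first two assertions of the corollary, so the substantive work lies in the equivalence claim.

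For the equivalence of $\pisbm_{x}$ with $\nu$, I would unpack the structural formula \eqref{eq_id_pisbmx_v_xi} given by Theorem \ref{theo_du_op_b_m}, namely $\pisbm_{x} = \nabla v\big(\zeta(x)+\,\cdot\,\big)(\gamma)$ with $v = \psiopt^{\ast}$, valid for every $x$ in a measurable set $X \subseteq \operatorname{int}(\dom\psiopt)$ with $\mu(X)=1$. Since all conditional measures $\{\pisbm_{x}\}_{x \in X}$ are images, under the \emph{same} map $\nabla v$, of translated Gaussians $\gamma_{\zeta(x)}$ (which are pairwise equivalent on $\Rd$), any two $\pisbm_{x}$ and $\pisbm_{x'}$ are mutually equivalent as measures on $\Rd$.

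Fixing some $x' \in X$, I would then pass to the marginal relation $\nu(dy) = \int \pisbm_{x}(dy)\,\mu(dx)$: if $B \subseteq \Rd$ satisfies $\pisbm_{x'}(B)=0$, pairwise equivalence gives $\pisbm_{x}(B)=0$ for every $x \in X$, hence $\nu(B)=0$; conversely, $\nu(B)=0$ forces $\pisbm_{x}(B)=0$ for $\mu$-a.e.\ $x$, and in particular for $x'$. Therefore $\pisbm_{x} \sim \nu$ for $\mu$-a.e.\ $x \in \Rd$, which entails $\widehat{\supp}(\pisbm_{x}) = \widehat{\supp}(\nu) = C$ for $\mu$-a.e.\ $x$. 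Combined with $\pisbm \in \MT(\mu,\nu)$, this is precisely property \ref{dmt_p2} of Definition \ref{def_dmti}, so $\pisbm$ is a De March--Touzi transport.

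There is no genuine obstacle in this argument: the corollary is really a clean bookkeeping step that records how the existence of a dual optimizer (Theorem \ref{theorem_single}) translates, via the Bass structure extracted in Theorem \ref{theo_du_op_b_m}, into a concrete De March--Touzi transport whose kernels are equivalent to $\nu$. The only mild point to check is that pushing forward the equivalent Gaussians $\gamma_{\zeta(x)}$ through the common (and generally non-injective) map $\nabla v$ still preserves equivalence of the resulting image measures, which holds because pushforwards preserve both absolute continuity and sets of measure zero.
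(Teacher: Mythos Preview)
Your proposal is correct and follows essentially the same argument as the paper: invoke Theorem \ref{theorem_single} to obtain a dual optimizer, pass through Theorem \ref{theo_du_op_b_m} to get the Bass martingale with $\Law(M_{0},M_{1})=\pisbm$, and then use \eqref{eq_id_pisbmx_v_xi} to see that the kernels $\pisbm_{x}$ are pushforwards of mutually equivalent translated Gaussians under the common map $\nabla v$, hence pairwise equivalent and equivalent to their $\mu$-mixture $\nu$. Your explicit check that equivalence survives the (possibly non-injective) pushforward is a nice touch the paper leaves implicit.
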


\subsection{Proof of Theorem \texorpdfstring{\ref{theorem_single}}{7.6}} \label{subs_theorem_single} The proof is split into two main steps. 

\subsubsection{Step 1 of the proof of Theorem \texorpdfstring{\ref{theorem_single}}{7.6}} We first construct a \textit{convergent} optimizing sequence of convex functions $(\psi_{n})_{n \geqslant 1} \subseteq \Cqaff$ (see Remark \ref{rem_cqaff_def} below) for the dual problem \eqref{eq_dual}.

\begin{proposition} \label{prop_step_1_the_sing} Under Assumption \ref{assumptions_single_cell}, there is an optimizing sequence $(\psi_{n})_{n \geqslant 1}$ of non-negative convex functions in $\Cqaff$ for the dual problem \eqref{eq_dual}, which converges compactly on $I$ (i.e., uniformly on compact subsets $K \subseteq I$) to some convex function $\psilim \colon I \rightarrow [0,+\infty)$. 
\end{proposition}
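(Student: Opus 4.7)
Starting from Proposition \ref{prop_new_duality_first} (or directly from \eqref{eq_prop_crucial_m}), I would fix any minimizing sequence $(\tilde\psi_n)_{n\geqslant 1}$ of convex functions in $\Cq$ for $\tilde D(\mu,\nu)$; by Remark \ref{rem:smooth} these may be taken smooth. The central observation is that the functional $\D(\cdot)$ in the representation \eqref{eq_rep_dual_func} is invariant under the addition of affine functions: for any affine $\ell$ and any $\pi \in \MT(\mu,\nu)$, the martingale property $\int y\,\pi_x(dy)=x$ yields $\int \ell\, d\pi_x = \ell(x)$, while the definition \eqref{eq_phi_psi_x} gives $\varphi^{\psi+\ell}(x)=\varphi^{\psi}(x)+\ell(x)$, so that the $\ell$-contributions cancel. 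Thus, the freedom of choosing an affine correction $\ell_n$ for each $\tilde\psi_n$ is available without disturbing the optimizing property.

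The next step, which is the technical heart (and the main obstacle), is to exploit this affine freedom together with Assumption \ref{assumptions_single_cell} to produce representatives that are bounded on compact subsets of $I = \operatorname{ri} C$. The plan here is to apply Koml\'os' theorem: after extraction of a subsequence, Ces\`aro averages $\psi_n := \tfrac{1}{n}\sum_{k\leqslant n}(\tilde\psi_{n_k}+\ell_{n_k})$ should stabilize. Convexity of each summand is preserved, and invariance of $\D$ under affine shifts together with its convexity in $\psi$ (the dual functional is a $\sup$ of affine-in-$\psi$ expressions, hence convex) ensures the averages remain optimizing. The crucial quantitative input is Lemma \ref{lemma_bsbi_s}: for $\mu$-a.e.\ $x$ and every compact $K \subseteq I$ there is $\delta(K,x)>0$ such that $\pidmt_{x}(S_{y^\ast}) \geqslant \delta(K,x)$ uniformly in $y^\ast \in \mathbb{S}^{d-1}$. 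Testing $\pi=\pidmt$ in \eqref{eq_rep_dual_func}, convexity forces that if some (appropriately affinely normalized) $\psi_n$ grew without bound on a compact $K$, then it would grow linearly into every slice $S_{y^\ast}$ beyond $K$, and the slice lower bound would inflate $\int \psi_n\,d\pidmt_x$ uniformly, contradicting the bounded non-negative integrand $\int\psi_n\,d\pi^{\text{DMT}}_x - \varphi^{\psi_n}(x)$ of an optimizing sequence. This is exactly the Koml\'os-plus-slice argument that yields a sequence bounded on every compact $K \Subset I$.

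Once compact boundedness on $I$ is in hand, the passage to the limit is essentially routine convex analysis. A pointwise bounded family of convex functions on an open convex set $I$ is locally equi-Lipschitz: the Lipschitz constant on $K \Subset I$ is controlled by the oscillation on a slightly enlarged compact set. By Arzel\`a--Ascoli and a diagonal extraction, one obtains a further subsequence converging uniformly on compact subsets of $I$ to a function $\psilim \colon I \to \R$, which is convex as a pointwise limit of convex functions. By adding a final constant (an affine shift, permitted by invariance) one ensures $\psilim \geqslant 0$ on $I$; equivalently, subtracting $\inf_{K_0}\psi_n$ for a fixed compact $K_0 \subseteq I$ before passing to the limit. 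The resulting sequence $(\psi_n)$ lies in $\Cqaff$ (each $\tilde\psi_n$ does, and affine corrections preserve this class, recall \eqref{def_cqaff}), is non-negative modulo a final constant shift, and is optimizing, which is precisely the statement of Proposition \ref{prop_step_1_the_sing}.

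\textbf{Main obstacle.} The delicate point is the second paragraph: establishing compact boundedness of Ces\`aro-averaged, affine-normalized representatives on $I$. This is where the irreducibility assumption is genuinely used — via Lemma \ref{lemma_bsbi_s} — and without it the sequence is only controlled $\nu$-almost everywhere on $C$, not locally uniformly on $I$, so Arzel\`a--Ascoli cannot be invoked and no convex limit on $I$ can be extracted. The Koml\'os step is essential because pointwise control along a subsequence is generally weaker than the local uniform control required; averaging converts almost sure convergence into the locally uniform regime via convexity.
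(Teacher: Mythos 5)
Your proposal is correct and follows essentially the same route as the paper: affine normalization of the optimizing sequence (the paper uses the supporting affine function at a point, yielding the non-negative $\psi_n^x$ of Definition \ref{def_psi_nx}), the $L^1(\mu)$-bound of Lemma \ref{lemma_psi_sup} feeding into Koml\'os' theorem, the slice estimate of Lemma \ref{lemma_bsbi_s} to upgrade the resulting $\mu$-a.e.\ bound on $\int\psi_n^x\,d\pidmt_x$ to local boundedness on $I$ by contradiction, and finally the standard convex-analysis compactness (\cite[Theorem 10.9]{Ro70}) to extract a locally uniformly convergent subsequence. The only cosmetic difference is that the paper applies Koml\'os to the scalar sequence $\Psi_n(x)=\int\psi_n^x\,d\pidmt_x$ and normalizes by the tangent affine function rather than by a constant, but these are the same argument.
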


Before we turn to the proof of Proposition \ref{prop_step_1_the_sing}, we still need some preparation. The following auxiliary result does not require the irreducibility Assumption \ref{assumptions_single_cell}, but solely relies on the finiteness of the value $\tilde{D}(\mu,\nu)$ of the dual problem \eqref{eq_dual}.

\begin{lemma} \label{lemma_psi_sup} Let $(\psi_{n})_{n \geqslant 1} \subseteq \Cq$ be an optimizing sequence for the dual problem \eqref{eq_dual} and take any $\pi \in \MT(\mu,\nu)$. Then
\begin{equation} \label{lemma_psi_sup_aa}
\sup_{n \geqslant 1} \int \int \big( \psi_{n}(y) - \psi_{n}(x) \big) \, \pi_{x}(dy) \, \mu(dx) < +\infty.
\end{equation}
\begin{proof} Recalling \eqref{eq_phi_psi_x}, we consider for $n \geqslant 1$ the functions
\begin{equation} \label{eq_phi_psi_n_al}
\Rd \ni x \longmapsto \varphi^{\psi_{n}}(x) = \inf_{p \in \PP_{2}^{x}(\Rd)} \Big( \int \psi_{n} \, dp - \MCov(p,\gamma)\Big).
\end{equation}
By taking $p = \delta_{x}$ in \eqref{eq_phi_psi_n_al} we obtain the trivial estimate $\varphi^{\psi_{n}}(x) \leqslant \psi_{n}(x)$ and consequently
\[
\int \int \big( \psi_{n}(y) - \psi_{n}(x) \big) \, \pi_{x}(dy) \, \mu(dx) 
\leqslant \int \int \big( \psi_{n}(y) - \varphi^{\psi_{n}}(x) \big) \, \pi_{x}(dy) \, \mu(dx).
\]
Since $\psi_n\in \Cq$ and $\pi$ has second marginal $\nu$, the definition \eqref{eq_def_dual_func} gives
\[
\D(\psi_{n}) = \int  \Big( \int \psi_{n}(y) \, \pi_{x}(dy)  - \varphi^{\psi_{n}}(x) \Big) \, \mu(dx),
\]
and as the sequence of real numbers $(\mathcal{D}(\psi_{n}))_{n \geqslant 1}$ converges to the finite number $\tilde{D}(\mu,\nu)$, we conclude \eqref{lemma_psi_sup_aa}.
\end{proof}
\end{lemma}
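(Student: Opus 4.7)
The plan is to rewrite the double integral as a difference of one-dimensional integrals and then control it from above by the dual functional $\mathcal{D}(\psi_n)$, using only the trivial one-sided bound $\varphi^\psi \leqslant \psi$ recorded in Lemma \ref{prop_vppx}. Since $(\psi_n)_{n\geqslant 1}$ is an optimizing sequence, $\mathcal{D}(\psi_n)$ converges to the finite number $\tilde{D}(\mu,\nu) = P(\mu,\nu)$ (finiteness by Theorem \ref{theorem_no_duality_gap}), so boundedness follows for free.

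First I would use that $\pi \in \MT(\mu,\nu)$ has first marginal $\mu$ and second marginal $\nu$, and that each $\pi_x$ is a probability measure. A direct application of Fubini then gives
\[
\int \int \big(\psi_n(y) - \psi_n(x)\big) \, \pi_x(dy) \, \mu(dx) \;=\; \int \psi_n \, d\nu \;-\; \int \psi_n \, d\mu.
\]
Next, inserting $p = \delta_x \in \PP_{2}^{x}(\Rd)$ into the defining infimum \eqref{eq_phi_psi_x} of $\varphi^{\psi_n}$, and noting that $\MCov(\delta_x,\gamma) = \langle x, \bary(\gamma)\rangle = 0$, one obtains at once the pointwise bound $\varphi^{\psi_n}(x) \leqslant \psi_n(x)$ (this is exactly the content of Lemma \ref{prop_vppx}). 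Integrating against $\mu$ and combining with the identity above yields
\[
\int \psi_n \, d\nu - \int \psi_n \, d\mu \;\leqslant\; \int \psi_n \, d\nu - \int \varphi^{\psi_n} \, d\mu \;=\; \mathcal{D}(\psi_n).
\]

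Finally, since $(\psi_n)_{n\geqslant 1}$ is an optimizing sequence for the dual problem \eqref{eq_dual}, the real sequence $(\mathcal{D}(\psi_n))_{n\geqslant 1}$ converges to $\tilde{D}(\mu,\nu)$; by Theorem \ref{theorem_no_duality_gap} this limit equals the finite primal value $P(\mu,\nu)$. Hence $\sup_{n\geqslant 1} \mathcal{D}(\psi_n) < +\infty$, and the chain of inequalities above delivers \eqref{lemma_psi_sup_aa}. There is no substantial obstacle here: the whole argument hinges on the costless one-sided inequality $\varphi^{\psi_n} \leqslant \psi_n$, which is available because $\delta_x$ is always an admissible competitor with the correct barycenter and zero correlation with the centered Gaussian $\gamma$. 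Note that \emph{no} lower bound on $\psi_n$ is needed, which is important since the $\psi_n$ are not assumed convex in this lemma and hence Jensen's inequality cannot be invoked to compare $\int \psi_n \, d\pi_x$ with $\psi_n(x)$.
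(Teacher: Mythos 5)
Your proof is correct and follows essentially the same route as the paper: both rest on the trivial bound $\varphi^{\psi_{n}}(x)\leqslant\psi_{n}(x)$ obtained from the competitor $p=\delta_{x}$, and both conclude from the convergence of $\mathcal{D}(\psi_{n})$ to the finite value $\tilde{D}(\mu,\nu)$. The only (cosmetic) difference is that you split the double integral via Fubini into $\int\psi_{n}\,d\nu-\int\psi_{n}\,d\mu$, which is legitimate here because $\psi_{n}\in\Cq$ has quadratic growth in both directions and $\mu,\nu\in\PP_{2}(\Rd)$, so both integrals are finite — note that this integrability (and hence the two-sided growth bound in the definition of $\Cq$) is genuinely used for the split, contrary to your closing remark that no lower bound is needed.
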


\begin{definition} \label{def_psi_nx} We fix an arbitrary optimizing sequence $(\psi_{n})_{n \geqslant 1} \subseteq \Cq$ of \textit{convex} functions for the dual problem \eqref{eq_dual}, which is possible thanks to Proposition \ref{prop_crucial}. For $x \in \Rd$, we define
\begin{equation} \label{eq_def_psi_n_x_opt_s}
\psi_{n}^{x}(\, \cdot \,) \coloneqq \psi_{n}(\, \cdot \, )  - \psi_{n}(x)  - \langle \partial \psi_{n}(x), \, \cdot \, - x\rangle,
\end{equation}
so that $\psi_{n}^{x}(x) = 0$ and $\psi_{n}^{x}(\, \cdot \,)$ takes values in $[0,+\infty)$, for every $n \geqslant 1$. Here --- by abuse of notation --- $\partial \psi_{n}(x)$ denotes a subgradient of $\psi_{n}$ at $x$, i.e., an arbitrary element of the subdifferential of $\psi_{n}$ at $x$.
\end{definition}

\begin{remark} \label{rem_cqaff_def} We observe that passing from $\psi \in \Cq$ to $\psi + \aff$, where $\aff \colon \Rd \rightarrow \R$ is an arbitrary affine function, does not change the value of the dual function \eqref{eq_rep_dual_func}, i.e., $\D(\psi) = \D(\psi + \aff)$. In particular, we note that the sequence $(\psi_{n}^{x})_{n \geqslant 1}$ defined in \eqref{eq_def_psi_n_x_opt_s} is contained in the class of functions $\Cqaff$, as introduced in \eqref{def_cqaff}, and hence $\D(\psi_{n}^{x}) = \D(\psi_{n})$.
\end{remark}

Our goal is to show that --- under the irreducibility Assumption \ref{assumptions_single_cell} and after passing to Ces\`{a}ro means of a suitable subsequence --- the sequence $(\psi_{n}^{x})_{n \geqslant 1}$ of \eqref{eq_def_psi_n_x_opt_s} is bounded on compact subsets of $I$, for $\mu$-a.e.\ $x \in I$. This is the content of the following lemma.

\begin{lemma} \label{lemma_bsbi_smao} Under Assumption \ref{assumptions_single_cell}, for any given optimizing sequence $(\psi_{n})_{n \geqslant 1} \subseteq \Cq$ of convex functions, there is a sequence of Ces\`{a}ro means of a subsequence, still denoted by $(\psi_{n})_{n \geqslant 1}$, such that, for $\mu$-a.e.\ $x \in I$ and for every compact set $K \subseteq I$, we have that $\sup_{n \geqslant 1, y \in K} \psi_{n}^{x}(y) < + \infty$, for every choice of $(\psi_{n}^{x})_{n \geqslant 1} \subseteq \Cqaff$ as in \eqref{eq_def_psi_n_x_opt_s}.
\begin{proof} We rely on \cite{DeTo17} and fix a De March--Touzi transport $\pidmt \in \MT(\mu,\nu)$ satisfying point \ref{dmt_p2} of Definition \ref{def_dmti}, which exists by the irreducibility Assumption \ref{assumptions_single_cell}. Let $(\psi_{n})_{n \geqslant 1}$ and $(\psi_{n}^{x})_{n \geqslant 1}$, measurable selected for $x \in \Rd$, be as in Definition \ref{def_psi_nx} above. By Lemma \ref{lemma_psi_sup}, the sequence 
\[
\Psi_{n}(x) \coloneqq  \int \psi_{n}^{x}(y) \, \pidmt_{x}(dy) 
= \int \big( \psi_{n}(y) - \psi_{n}(x) \big) \, \pidmt_{x}(dy) 
\]
is bounded in $L^{1}(\mu)$. Applying Koml\'{o}s' theorem \cite[Theorem 1]{Koml67}, we can find a subsequence $(n_{k})_{k \geqslant 1} \subseteq \mathbb{N}$, such that the Ces\`{a}ro means 
\[
\overline{\Psi}_{k} \coloneqq \frac{\Psi_{n_{1}} + \Psi_{n_{2}}+ \ldots + \Psi_{n_{k}}}{k}
\]
converge $\mu$-a.s.\ to some random variable $\Psi \in L^{1}(\mu)$. If we define the Ces\`{a}ro means
\[
\bar{\psi}_{k}^{x} \coloneqq \frac{\psi_{n_{1}}^{x} + \psi_{n_{2}}^{x}+ \ldots + \psi_{n_{k}}^{x}}{k},
\]
we have 
\[
\overline{\Psi}_{k}(x) = \int \bar{\psi}_{k}^{x}(y) \, \pidmt_{x}(dy), \qquad x \in \Rd.
\]
Note that passing to a subsequence $(n_{k})_{k \geqslant 1} \subseteq \mathbb{N}$ and forming Ces\`{a}ro means $(\bar{\psi}_{k}^{x})$ preserves the property of being an optimizing sequence of convex functions in $\Cqaff$ of the form \eqref{eq_def_psi_n_x_opt_s}. Therefore we can replace the original sequences $(\Psi_{n})$ and $(\psi_{n}^{x})$ by $(\overline{\Psi}_{k})$ and $(\bar{\psi}_{k}^{x})$, respectively, and may again relabel them as $(\Psi_{n})$ and $(\psi_{n}^{x})$, respectively. With that said, as a consequence of the $\mu$-a.s.\ convergence to a finite limit, we have that
\begin{equation} \label{eq_ko_sd_v}
m(x) \coloneqq \sup_{n \geqslant 1}  \Psi_{n}(x) 
= \sup_{n \geqslant 1}  \int \psi_{n}^{x}(y) \, \pidmt_{x}(dy) < +\infty,
\end{equation}
for $\mu$-a.e.\ $x \in \Rd$.

\smallskip

Arguing by contradiction to the statement of Lemma \ref{lemma_bsbi_smao}, we assume that there is a compact set $K_{0} \subseteq I$ such that the set
\[
A_{1} \coloneqq \Big\{ x \in \Rd \colon \sup_{n \geqslant 1, y \in K_{0}} \psi_{n}^{x}(y) = + \infty \Big\}
\]
has positive $\mu$-measure. Furthermore, by Lemma \ref{lemma_bsbi_s}, the set
\[
A_{2} \coloneqq \Big\{ x \in \Rd \colon \ \exists \, \delta(K_{0},x) > 0 \textnormal{ such that } \pidmt_{x}(S_{y^{\ast}}) \geqslant \delta(K_{0},x) \textnormal{ for every } y^{\ast} \in \mathbb{S}^{d-1} \Big\}
\]
has full $\mu$-measure. By \eqref{eq_ko_sd_v} above, also the set
\[
A_{3} \coloneqq \{ x \in \Rd \colon m(x) < + \infty \}
\]
has full $\mu$-measure, so that the intersection $A \coloneqq A_{1} \cap A_{2} \cap A_{3}$ has positive $\mu$-measure. Pick a point $x_{0} \in A$. As $x_{0} \in A_{1}$, for arbitrarily large $M > m(x_{0}) / \delta(K_{0},x_{0})$, we can find $n_{0} \geqslant 1$ and $y_{0} \in K_{0}$ such that $\psi_{n_{0}}^{x_{0}}(y_{0}) \geqslant M$. The function 
\[
\ell(y) \coloneqq \big\langle \partial \psi_{n_{0}}^{x_{0}}(y_{0}), y-y_{0} \big\rangle + M, \qquad y \in \Rd,
\]
satisfies $\ell \leqslant \psi_{n_{0}}^{x_{0}}$ and $\ell(y) \geqslant M$, for all $y \in S_{y^{\ast}}$, with $y^{\ast} \coloneqq \frac{z}{\vert z \vert}$ and $z \coloneqq \partial \psi_{n_{0}}^{x_{0}}(y_{0})$. We conclude with
\[
m(x_{0}) \geqslant
\int \psi_n^x(y) \, \pidmt_{x_{0}}(dy)
\geqslant \int \ell^{+}(y) \, \pidmt_{x_{0}}(dy)
\geqslant M \, \pidmt_{x_{0}}(S_{y^{\ast}}) 
\geqslant M \, \delta(K_{0},x_{0}),
\]
which is the desired contradiction, as $M$ is arbitrarily large.
\end{proof}
\end{lemma}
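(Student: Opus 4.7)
The plan is to exploit two ingredients: the integral $L^{1}(\mu)$-bound on the ``deficiency'' of an optimizing sequence furnished by Lemma \ref{lemma_psi_sup}, and the uniform slice-mass estimate of Lemma \ref{lemma_bsbi_s} provided by the irreducibility assumption. Specializing Lemma \ref{lemma_psi_sup} to a De March--Touzi transport $\pidmt \in \MT(\mu,\nu)$ (which exists by Assumption \ref{assumptions_single_cell}), I would consider the sequence
\[
\Psi_{n}(x) \coloneqq \int \psi_{n}^{x}(y) \, \pidmt_{x}(dy) = \int \big(\psi_{n}(y)-\psi_{n}(x)\big) \, \pidmt_{x}(dy),
\]
which is bounded in $L^{1}(\mu)$. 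By Koml\'{o}s' theorem I can extract a subsequence whose Ces\`{a}ro means converge $\mu$-a.s.\ to a finite limit. Since forming Ces\`{a}ro means preserves convexity, preserves optimality for \eqref{eq_dual}, and commutes with the normalization $\psi_{n} \mapsto \psi_{n}^{x}$ up to subgradient choice (which only perturbs the result by an affine function that is absorbed into \eqref{eq_def_psi_n_x_opt_s}), after relabeling the Ces\`{a}ro means as $(\psi_{n})$ I obtain the pointwise bound
\[
m(x) \coloneqq \sup_{n \geqslant 1} \int \psi_{n}^{x}(y) \, \pidmt_{x}(dy) < +\infty \qquad \text{for } \mu\text{-a.e.\ } x \in \Rd.
\]

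The next step is to promote this integral bound to a uniform pointwise bound on compact subsets of $I$ by a contradiction argument based on convex geometry. Assume for contradiction that there exist a compact set $K_{0} \subseteq I$ and a positive-$\mu$-measure set of points $x_{0}$ for which $\sup_{n,\, y \in K_{0}} \psi_{n}^{x_{0}}(y) = +\infty$, along with $m(x_{0}) < +\infty$ and the slice-mass bound $\pidmt_{x_{0}}(S_{y^{\ast}}) \geqslant \delta(K_{0},x_{0}) > 0$ uniformly in $y^{\ast} \in \mathbb{S}^{d-1}$ (the latter two conditions hold on a set of full $\mu$-measure by the previous step and by Lemma \ref{lemma_bsbi_s}). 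For any $M > m(x_{0})/\delta(K_{0},x_{0})$ I can pick $n_{0}$ and $y_{0} \in K_{0}$ with $\psi_{n_{0}}^{x_{0}}(y_{0}) \geqslant M$. Since $\psi_{n_{0}}^{x_{0}}$ is convex and non-negative, any subgradient $z \in \partial \psi_{n_{0}}^{x_{0}}(y_{0})$ yields a supporting affine function $\ell(y) = M + \langle z, y-y_{0}\rangle$ which satisfies $\ell \leqslant \psi_{n_{0}}^{x_{0}}$ everywhere and $\ell \geqslant M$ on the half-space $\{y \colon \langle y-y_{0}, z\rangle \geqslant 0\}$, which contains the slice $S_{y^{\ast}}$ for $y^{\ast} \coloneqq z/|z|$ as defined in \eqref{def_slice}. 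Integrating then yields
\[
m(x_{0}) \geqslant \Psi_{n_{0}}(x_{0}) \geqslant M \cdot \pidmt_{x_{0}}(S_{y^{\ast}}) \geqslant M \cdot \delta(K_{0},x_{0}),
\]
contradicting the choice of $M$.

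The central difficulty, and the place where irreducibility is essential, is the passage from an integral bound against $\pidmt_{x}$ to a pointwise uniform bound. Without the uniform slice-mass estimate from Lemma \ref{lemma_bsbi_s}, a localized blow-up of $\psi_{n}^{x}$ near a point of $K_{0}$ could go undetected by the integral (if $\pidmt_{x}$ gave negligible mass to the corresponding half-space), and the contradiction would collapse. The Koml\'{o}s step is likewise not cosmetic: Lemma \ref{lemma_psi_sup} only provides $L^{1}(\mu)$-control, and passing to Ces\`{a}ro means is precisely what upgrades this to the $\mu$-a.e.\ finiteness of $m(x)$ required to run the contradiction argument. The remainder is a routine convex-analytic manipulation, whose only subtlety is checking that the conclusion is insensitive to the choice of subgradient in \eqref{eq_def_psi_n_x_opt_s}, which holds because any two such choices differ by an affine function, hence are simultaneously bounded on any compact set.
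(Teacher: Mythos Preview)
Your proposal is correct and follows essentially the same approach as the paper: apply Lemma \ref{lemma_psi_sup} with $\pi = \pidmt$, use Koml\'{o}s' theorem to pass to Ces\`{a}ro means with $\mu$-a.s.\ finite $m(x) = \sup_{n} \Psi_{n}(x)$, and then derive a contradiction via the supporting affine function at a blow-up point together with the uniform slice-mass bound of Lemma \ref{lemma_bsbi_s}. Your explicit remark that the conclusion is insensitive to the subgradient choice in \eqref{eq_def_psi_n_x_opt_s} (since two choices differ by an affine function) is a useful clarification that the paper leaves implicit.
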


\begin{proof}[Proof of Proposition \ref{prop_step_1_the_sing}] By Lemma \ref{lemma_bsbi_smao} we can fix some $x_{0} \in I$ such that the sequence of convex functions $(\psi_{n}^{x_{0}})_{n \geqslant 1}$ is bounded on all compact subsets $K \subseteq I$. In particular, the sequence $(\psi_{n}^{x_{0}})_{n \geqslant 1}$ is pointwise bounded on $I$. By \cite[Theorem 10.9]{Ro70} we can select a subsequence, still denoted by $(\psi_{n}^{x_{0}})_{n \geqslant 1}$, which converges uniformly on compact subsets $K \subseteq I$ to some convex function $\psilim^{x_{0}} \colon I \rightarrow [0,+\infty)$. Dropping the superscript $x_{0}$ to ease notation, we arrive at a sequence $(\psi_{n})_{n \geqslant 1}$ with limit $\psilim$ as required in the statement of Proposition \ref{prop_step_1_the_sing}.
\end{proof}

\subsubsection{Step 2 of the proof of Theorem \texorpdfstring{\ref{theorem_single}}{7.6}} We extend the function $\psilim \colon I \rightarrow [0,+\infty)$ of Proposition \ref{prop_step_1_the_sing} to a lower semicontinuous convex function $\psilim \colon \Rd \rightarrow [0,+\infty]$ which is equal to $+\infty$ on $\Rd \setminus C$. Since $I \subseteq \dom\psilim$, $I$ is open and $\mu(I) = 1$, it follows that $\mu(\operatorname{int}(\dom\psilim))=1$. 

\begin{proposition} \label{prop_step_2_the_sing} Under Assumption \ref{assumptions_single_cell}, the convex function $\psilim \colon \Rd \rightarrow [0,+\infty]$ is a dual optimizer in the sense of Definition \ref{def_dual_opt}, i.e., satisfies $\tilde{D}(\mu,\nu) = \mathcal{D}(\psilim)$.
\end{proposition}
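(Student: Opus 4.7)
The direction $\tilde{D}(\mu,\nu) \leqslant \mathcal{D}(\psilim)$ is already contained in Lemma \ref{lem_ext_def_dual}, so the task is to prove the reverse inequality $\mathcal{D}(\psilim) \leqslant \tilde{D}(\mu,\nu)$. The plan is to show that, for $\mu$-a.e.\ $x \in I$, the convex function $\psilim$ coincides modulo an affine function with the Brenier potential $\vxsbm$ associated with $\pisbm_x$, i.e., the convex function whose gradient $\nabla \vxsbm$ transports $\gamma$ onto the primal optimizer's kernel $\pisbm_x$. Once this structural coincidence is established, the primal--dual equality can be read off directly, since by part \ref{prop_phi_psi_x_gen_opt_i} of Proposition \ref{prop_phi_psi_x_gen_opt} the kernel $\pisbm_x$ is then the unique optimizer in the definition of $\varphi^{\psilim}(x)$, and so $\mathcal{D}(\psilim)$ reduces to $\int \MCov(\pisbm_x,\gamma)\,\mu(dx) = P(\mu,\nu) = \tilde{D}(\mu,\nu)$.

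The central step is therefore to show that the exceptional set
\[
A \coloneqq \big\{ x \in I \colon \psilim \not\equiv \vxsbm \ \text{mod (aff)}\big\}
\]
satisfies $\mu(A)=0$. I would argue by contradiction: assuming $\mu(A)>0$, I would construct, for each $x \in A$, a competitor $\check{\pi}_x \in \PP_2^{x}(\Rd)$ distinct from $\pisbm_x$ for which
\[
\int \psilim\,d\check{\pi}_x - \MCov(\check{\pi}_x,\gamma) < \int \psilim\,d\pisbm_x - \MCov(\pisbm_x,\gamma).
\]
The natural candidate is the pushforward of a Gaussian by the gradient of (a suitable translate of) $\psilim^{\ast}$, which, by Lemma \ref{lem_phi_psi} and Proposition \ref{prop_phi_psi_x_gen_opt}, is a strictly better kernel for the functional defining $\varphi^{\psilim}(x)$ whenever $\psilim$ and $\vxsbm$ differ by more than an affine function on the support of $\pisbm_x$. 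A delicate point is that these $\check{\pi}_x$ need not be supported inside $C$, so I would truncate them to compact subsets $K_\varepsilon \subseteq I$ while maintaining the barycenter condition and the strict inequality above (this is the role of Lemma \ref{lemma_irr_f_p}, which is where Corollary \ref{cor_bm_tr_pr} together with the boundary-avoidance of $\pisbm_x$ plays in).

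With such a compactly supported competitor kernel $\check{\pi}_x$ in hand, I would integrate against $\mu$ over $A$ and glue with $\pisbm_x$ on $I \setminus A$ to form a new martingale coupling $\check{\pi} \in \MT(\mu,\nu')$ for some $\nu' \lc \nu$, then apply a convex ordering argument to compare with $\nu$. The compact support of each $\check{\pi}_x$ is essential here, because the compact convergence $\psi_n \to \psilim$ on $I$ (coming from Proposition \ref{prop_step_1_the_sing}) gives $\int \psi_n\,d\check{\pi}_x \to \int \psilim\,d\check{\pi}_x$, but nothing on the (possibly unbounded) $\pisbm_x$. Plugging this improved coupling against the optimizing sequence $(\psi_n)$ and passing to the limit using Fatou on one side and uniform convergence on the other would then contradict the optimality of $\pisbm$ for the primal problem \eqref{eq_primal}, proving $\mu(A)=0$.

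The hard part will be the construction and truncation of $\check{\pi}_x$ (Lemmas \ref{lemma_a_h}--\ref{lemma_irr_f_p}): one must extract a strict gain in the dual functional from the mere fact that $\psilim\not\equiv \vxsbm$ mod affine, in a way that is measurable in $x$ and survives a compactification of the support. Once $\mu(A)=0$ is in place, the conclusion is essentially a bookkeeping step (Lemma \ref{lemma_ch_o_do}): using Proposition \ref{prop_phi_psi_x_gen_opt} \ref{prop_phi_psi_x_gen_opt_i} applied fibrewise, the infimum defining $\varphi^{\psilim}(x)$ is attained at $\pisbm_x$ for $\mu$-a.e.\ $x$, which collapses the relaxed dual functional \eqref{theorem_new_duality_sec_eq_s_rel_for} to the primal value, yielding $\mathcal{D}(\psilim)=P(\mu,\nu)=\tilde{D}(\mu,\nu)$ and hence optimality of $\psilim$ in the sense of Definition \ref{def_dual_opt}.
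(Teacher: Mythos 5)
Your plan is correct and follows essentially the same route as the paper's proof (Lemmas \ref{lemma_a_h}--\ref{lemma_ch_o_do}): the same exceptional set $A$, the same strict-gain competitors truncated to compact subsets of $I$ via the stopped Bass martingale and Corollary \ref{cor_bm_tr_pr}, the same Fatou/uniform-convergence split against the optimizing sequence, and the same fibrewise application of Proposition \ref{prop_phi_psi_x_gen_opt}~\ref{prop_phi_psi_x_gen_opt_i} at the end. Two immaterial slips: the set $A$ should compare $\psilim$ with $\psixsbm=(\vxsbm)^{\ast}$ rather than with the Brenier potential $\vxsbm$ itself, and the final contradiction is with the absence of a duality gap (the glued kernel only bounds $\varphi^{\psi_n}$ from above, since its second marginal is no longer $\nu$), not directly with the primal optimality of $\pisbm$.
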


\begin{definition} \label{def_gsbmx} For $\mu$-a.e.\ $x \in \Rd$, we denote by $\vxsbm$ the Brenier potential from $\gamma$ to $\pisbm_{x}$, so that $\nabla \vxsbm(\gamma) = \pisbm_{x}$ and 
\[
\MCov(\pisbm_{x},\gamma) 
= \int  \big\langle \nabla \vxsbm(z), z\big\rangle \, \gamma(dz);
\]
we write $\psixsbm \coloneqq (\vxsbm)^{\ast}$ for its convex conjugate.
\end{definition}

\begin{remark} \label{def_gsbmx_rem} As $\gamma$ has full support, the convex function $\vxsbm$ is finite-valued and continuous everywhere on $\Rd$. In particular, $\vxsbm$ is unique, up to an additive constant. Therefore also its convex conjugate $\psixsbm \colon \Rd \rightarrow (-\infty,+\infty]$ is unique, up to an additive constant. Also note that $\psixsbm$ is finite-valued on $I$ and takes the value $+\infty$ on $\Rd \setminus C$, with $C = \widehat{\supp}(\nu)$ and $I = \operatorname{int} C$.
\end{remark}

Our goal is to show that $\psilim \colon \Rd \rightarrow [0,+\infty]$ is a dual optimizer. For this purpose, we will prove that, for $\mu$-a.e.\ $x \in I$, the function $\psilim(\, \cdot \,)$ equals $\psixsbm(\, \cdot \,)$, modulo adding an affine function; in short, $\psilim \equiv \psixsbm$ mod (aff), for $\mu$-a.e.\ $x \in \Rd$. In Lemma \ref{lemma_irr_f} below we will show that the set
\begin{equation} \label{def_set_a}
A \coloneqq \big\{ x \in I \colon \psilim \not\equiv \psixsbm  \textnormal{ mod (aff)}\big\}
\end{equation}
indeed has $\mu$-measure zero. It will then follow from Lemma \ref{lemma_ch_o_do} that $\psilim$ is actually a dual optimizer. First, we need some auxiliary results.

\begin{lemma} \label{lemma_a_h} Under Assumption \ref{assumptions_single_cell}, for $\mu$-a.e.\ $x \in A$, there exists a measure $\check{\pi}_{x} \in \PP_{2}^{x}(\Rd)$ supported by $C$ and there is a constant $\tilde{\beta}(x) > 0$ such that
\begin{equation} \label{lemma_a_h_in}
\MCov(\check{\pi}_{x},\gamma) + \int \psilim \, d(\pisbm_{x}-\check{\pi}_{x}) 
\geqslant \MCov(\pisbm_{x},\gamma) + \tilde{\beta}(x).
\end{equation}
\begin{proof} Recalling the representation \eqref{eq_rep_dual_func} of the dual function $\D(\, \cdot \,)$ and the definition \eqref{eq_phi_psi_x} of the function $x \mapsto \varphi^{\psi}(x)$, we consider for $\mu$-a.e.\ $x \in I$ the functions 
\begin{equation} \label{eq_vph}
\begin{aligned}
\phi^{\psixsbm}(x) \coloneqq \, & \int \psixsbm(y) \, \pisbm_{x}(dy) - \varphi^{\psixsbm}(x) \\
= \, &  \sup_{p \in \PP_{2}^{x}(\Rd)} \Big( \MCov(p,\gamma) + \int \psixsbm \, d(\pisbm_{x}-p) \Big)
\end{aligned}
\end{equation}
and
\begin{equation} \label{eq_vpc} 
\begin{aligned}
\phi^{\psilim}(x)  \coloneqq \, & \int \psilim(y) \, \pisbm_{x}(dy) - \varphi^{\psilim}(x) \\
= \, &  \sup_{p \in \PP_{2}^{x}(\Rd)} \Big( \MCov(p,\gamma) + \int \psilim \, d(\pisbm_{x}-p) \Big).
\end{aligned}
\end{equation}
Recalling Definition \ref{def_gsbmx}, we have $\nabla (\psixsbm)^{\ast}(\gamma) = \pisbm_{x} \in \PP_{2}^{x}(\Rd)$. Therefore, we can apply part \ref{prop_phi_psi_x_gen_opt_i} of Proposition \ref{prop_phi_psi_x_gen_opt}, which yields that the supremum in \eqref{eq_vph} is attained by $\pisbm_{x}$, so that $\phi^{\psixsbm}(x) = \MCov(\pisbm_{x},\gamma)$. By taking $p = \pisbm_{x}$ in \eqref{eq_vpc}, we obtain the inequality $\phi^{\psilim}(x) \geqslant \phi^{\psixsbm}(x)$, for $\mu$-a.e.\ $x \in I$.

\smallskip

Now we define the sets
\[
B \coloneqq \big\{ x \in I \colon \phi^{\psilim}(x) > \phi^{\psixsbm}(x) \big\}, \qquad
X \coloneqq \big\{ x \in I \colon \phi^{\psixsbm}(x) < + \infty \big\}
\]
and claim that for the set $A$ defined in \eqref{def_set_a} we have the relation $\tilde{A} \coloneqq A \cap X = B$. In other words, since $\mu(X) = 1$, the sets $A$ and $B$ are equal, up to a set of $\mu$-measure zero.

In fact, if $x \notin \tilde{A}$, then $\psilim \equiv \psixsbm$ mod (aff) or $\phi^{\psixsbm}(x) = + \infty$, so that in both cases $\phi^{\psilim}(x) = \phi^{\psixsbm}(x)$. Conversely, if $x \notin B$, then $x \notin X$ or we have the equality $\phi^{\psilim}(x) = \phi^{\psixsbm}(x)$ of real numbers and thus also the supremum in the definition \eqref{eq_vpc} of $\phi^{\psilim}(x)$ is attained by
\begin{equation} \label{eq_opt_1_x}
\pisbm_{x} = \nabla (\psixsbm)^{\ast}(\gamma) \in \PP_{2}^{x}(\Rd).
\end{equation} 
Hence we can apply part \ref{prop_phi_psi_x_gen_opt_ii} of Proposition \ref{prop_phi_psi_x_gen_opt} to the lower semicontinuous convex function $\psilim \colon \Rd \rightarrow [0,+\infty]$ satisfying $\mu(\operatorname{int}(\dom\psilim))=1$, which tells us that the optimizer \eqref{eq_opt_1_x} of the supremum in \eqref{eq_vpc} is equal to 
\[
\nabla \psilim^{\ast}(\gamma_{\zeta(x)}),
\]
for some $\zeta(x) \in \Rd$. We conclude the $\gamma$-a.s.\ equality
\[
\nabla (\psixsbm)^{\ast} = \nabla \psilim^{\ast}\big(\, \cdot \, + \, \zeta(x)\big),
\]
which is equivalent to the $\gamma$-a.s.\ equality
\begin{equation} \label{eq_opt_asgehe}
(\psixsbm)^{\ast} = \psilim^{\ast}\big(\, \cdot \, + \, \zeta(x)\big) + c,
\end{equation}
for some constant $c \in \R$. Since the convex function $(\psixsbm)^{\ast}$ is finite-valued and continuous everywhere on $\Rd$, the equality \eqref{eq_opt_asgehe} has to hold everywhere on $\Rd$. This implies that $\psilim \equiv \psixsbm$ mod (aff) and we deduce that $x \notin A$. Altogether, we have seen that $x \notin B$ implies that $x \notin \tilde{A}$.

\smallskip

As a consequence of $A = B$, up to a set of $\mu$-measure zero, for $\mu$-a.e.\ $x \in A$ we have $\phi^{\psilim}(x) > \phi^{\psixsbm}(x)$, so that we can measurably select some $\check{\pi}_{x} \in \PP_2^x(\Rd)$ with
\begin{equation} \label{lemma_a_h_za} 
\MCov(\check{\pi}_{x},\gamma) + \int \psilim \, d(\pisbm_{x}-\check{\pi}_{x}) 
> \MCov(\pisbm_{x},\gamma),
\end{equation}
which gives \eqref{lemma_a_h_in}. As the right-hand side of \eqref{lemma_a_h_za} is finite, for $\mu$-a.e.\ $x \in I$, and since $\psilim(y) = + \infty$ for $y \in \Rd \setminus C$, we see that $\check \pi_{x}$ is supported by $C$. 
\end{proof}
\end{lemma}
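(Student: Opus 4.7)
The plan is to encode the desired improvement \eqref{lemma_a_h_in} as a strict inequality between two variational functionals. For $\mu$-a.e.\ $x \in I$, I would introduce
\[
\phi^{\psixsbm}(x) \coloneqq \sup_{p \in \PP_{2}^{x}(\Rd)} \Big( \MCov(p,\gamma) + \int \psixsbm \, d(\pisbm_{x}-p) \Big),
\]
and the analogous quantity $\phi^{\psilim}(x)$ with $\psixsbm$ replaced by $\psilim$. Because $\pisbm_{x} = \nabla(\psixsbm)^{\ast}(\gamma)$ by definition of the Brenier potential, and because $\pisbm_{x} \in \PP_{2}^{x}(\Rd)$ by construction, part \ref{prop_phi_psi_x_gen_opt_i} of Proposition \ref{prop_phi_psi_x_gen_opt} applied to $\psi = \psixsbm$ identifies $\pisbm_{x}$ as the (unique) maximizer of the supremum defining $\phi^{\psixsbm}(x)$, and evaluates it as $\MCov(\pisbm_{x},\gamma)$. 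Plugging $p = \pisbm_{x}$ into the supremum defining $\phi^{\psilim}(x)$ then yields the pointwise lower bound $\phi^{\psilim}(x) \geqslant \phi^{\psixsbm}(x)$.

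The crux of the argument is to upgrade this to a strict inequality on the set $A$. Setting $B \coloneqq \{x \in I \colon \phi^{\psilim}(x) > \phi^{\psixsbm}(x)\}$ and $X \coloneqq \{x \in I \colon \phi^{\psixsbm}(x) < +\infty\}$, noting that $X$ has full $\mu$-measure because $\MCov(\pisbm_{x},\gamma)$ is integrable, it suffices to establish $A \cap X = B$. The inclusion $B \subseteq A \cap X$ is direct, since on $B$ the two suprema differ and in particular $\psilim$ and $\psixsbm$ cannot coincide up to an affine correction. For the converse, if $x \in X \setminus B$, the finite supremum $\phi^{\psilim}(x)$ is attained by $\pisbm_{x}$, and part \ref{prop_phi_psi_x_gen_opt_ii} of Proposition \ref{prop_phi_psi_x_gen_opt} applied to $\psilim$ forces $\pisbm_{x} = \nabla \psilim^{\ast}(\gamma_{\zeta(x)})$ for some $\zeta(x) \in \Rd$. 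Matching this representation with $\pisbm_{x} = \nabla(\psixsbm)^{\ast}(\gamma)$ gives $\nabla(\psixsbm)^{\ast} = \nabla \psilim^{\ast}(\,\cdot\, + \zeta(x))$ $\gamma$-a.e., hence $(\psixsbm)^{\ast} = \psilim^{\ast}(\,\cdot\, + \zeta(x)) + c$ a.e.\ for some $c \in \R$. Invoking Remark \ref{def_gsbmx_rem}, namely that $(\psixsbm)^{\ast}$ is finite and continuous on all of $\Rd$, this pointwise equality extends everywhere; dualizing yields $\psilim \equiv \psixsbm$ modulo affine functions, so $x \notin A$.

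Once $A = B$ holds up to a $\mu$-null set, a measurable selection theorem produces, for $\mu$-a.e.\ $x \in A$, a measure $\check{\pi}_{x} \in \PP_{2}^{x}(\Rd)$ with
\[
\MCov(\check{\pi}_{x},\gamma) + \int \psilim \, d(\pisbm_{x} - \check{\pi}_{x}) > \MCov(\pisbm_{x},\gamma),
\]
and setting $\tilde{\beta}(x)$ equal to the positive slack gives \eqref{lemma_a_h_in}. The support condition on $\check{\pi}_{x}$ is automatic: since $\psilim \equiv +\infty$ on $\Rd \setminus C$ while the left-hand side is finite, one must have $\check{\pi}_{x}(\Rd \setminus C) = 0$. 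The main obstacle I anticipate is the passage from a $\gamma$-a.e.\ identity of gradients of convex conjugates to a global identity of the conjugates themselves, which in turn hinges on the regularity of $(\psixsbm)^{\ast}$ inherited from the full support of $\gamma$.
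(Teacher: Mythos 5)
Your proposal is correct and follows essentially the same route as the paper's own proof: the same functionals $\phi^{\psixsbm}$ and $\phi^{\psilim}$, the same identification $A\cap X=B$ via parts \ref{prop_phi_psi_x_gen_opt_i} and \ref{prop_phi_psi_x_gen_opt_ii} of Proposition \ref{prop_phi_psi_x_gen_opt}, the same gradient-matching and regularity argument from Remark \ref{def_gsbmx_rem}, and the same measurable selection and support observation at the end. No substantive differences.
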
 

Our next step is to modify the measures $\{\check \pi_{x}\}_{x \in A}$ of Lemma \ref{lemma_a_h}, so that they have compact support and still satisfy \eqref{lemma_a_h_in} for some $\beta(x) > 0$ instead of $\tilde{\beta}(x)$. To this end, we choose an increasing sequence $(K_{j})_{j \geqslant 1}$ of compact subsets of $I$ such that $\bigcup_{j \geqslant 1}K_{j} = I$. Denoting by $M^{x}$ the Bass martingale from $\delta_{x}$ to $\check{\pi}_{x}$, and by $\tau_{j}^{x}$ the first exit time of $M^{x}$ from $K_{j}$ (similarly as in Corollary \ref{cor_bm_tr_pr}), we define $\check{\pi}_{x}^{j} \coloneqq \Law(M_{ \tau_{j}^{x} \wedge 1}^{x})$. By optional sampling, $\check{\pi}_{x}^{j} \in \PP_{2}^{x}(\Rd)$ and, by definition, $\check{\pi}_{x}^{j}$ is supported by the compact set $K_{j}$.   

\begin{lemma} \label{lemma_irr_f_p} Under Assumption \ref{assumptions_single_cell}, for $\mu$-a.e.\ $x \in A$, there exists $\check \pi_{x}^{j(x)} \in \PP_{2}^{x}(\Rd)$ supported by $K_{j(x)}$ for some $j(x) \in \mathbb{N}$ and there is a constant $\beta(x) > 0$ such that 
\begin{equation} \label{lemma_a_h_in_se}
\MCov(\check{\pi}_{x}^{j(x)},\gamma) + \int \psilim \, d(\pisbm_{x}-\check{\pi}^{j(x)}_{x}) 
\geqslant \MCov(\pisbm_{x},\gamma) + \beta(x).
\end{equation}
\begin{proof} From Lemma \ref{lemma_a_h} we already have the inequality \eqref{lemma_a_h_in}. In order to derive \eqref{lemma_a_h_in_se}, we have to show that
\begin{equation} \label{lemma_a_h_zb} 
\lim_{j \rightarrow +\infty} \MCov(\check \pi_{x}^{j},\gamma) = \MCov(\check \pi_{x},\gamma)
\end{equation}
and
\begin{equation} \label{lemma_a_h_zc} 
\limsup_{j \rightarrow + \infty} \int \psilim(y) \, d\check \pi_{x}^{j}(y) 
\leqslant \int \psilim(y) \, d\check \pi_{x}(y).
\end{equation}

\smallskip

We begin with the proof of \eqref{lemma_a_h_zb}. First, observe that $M_{\tau_{j}^{x}\wedge 1}^{x} \rightarrow M_{ \tau^{x} \wedge 1}^{x}$ in $L^{2}$, where $\tau^{x}$ is the first exit time of $M^{x}$ from $I$. Since $\tau^{x} \wedge 1=1$ a.s.\ by Corollary \ref{cor_bm_tr_pr}, we conclude that $M_{\tau_{j}^{x}\wedge 1}^{x} \rightarrow M_{1}^{x}$ in $L^{2}$. Consequently, 
\[
\W_{2}^{2}(\check \pi_{x}^{j},\check \pi_{x}) \coloneqq \inf_{q \in \Cpl(\check \pi_{x}^{j},\check \pi_{x})} \int \vert x_{1} - x_{2} \vert^{2} \, q(dx_{1},dx_{2})
\]
converges to zero as $j \rightarrow + \infty$, and the inequality
\[
\vert \MCov(\check \pi_{x}^{j},\gamma) - \MCov(\check \pi_{x},\gamma) \vert
\leqslant \W_{2}(\check \pi_{x}^{j},\check \pi_{x}) \, \sqrt{d}
\]
yields \eqref{lemma_a_h_zb}.

\smallskip

Finally, we show \eqref{lemma_a_h_zc}. Note that by optional sampling, $\check{\pi}_{x}^{j} \in \PP_{2}^{x}(\Rd)$. Since a martingale composed with a convex function is a submartingale, it follows that
\[
\forall j \geqslant 1 \colon \int \psilim \, d\check\pi_{x}^{j} 
\leqslant \int \psilim \, d\check\pi_{x}
\]
and we obtain \eqref{lemma_a_h_zc}.
\end{proof}
\end{lemma}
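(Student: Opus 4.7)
The strategy is to define $\check{\pi}_{x}^{j}\coloneqq \Law(M^{x}_{\tau_{j}^{x}\wedge 1})$ exactly as in the setup preceding the lemma, where $M^{x}$ is the Bass martingale from $\delta_{x}$ to the measure $\check{\pi}_{x}$ furnished by Lemma \ref{lemma_a_h}. Optional sampling guarantees $\check{\pi}_{x}^{j}\in \PP_{2}^{x}(\Rd)$, and by construction its support is contained in $K_{j}$. Starting from the strict inequality \eqref{lemma_a_h_in} with surplus $\tilde{\beta}(x)>0$, I would reduce the proof to the two limit statements
\[
\lim_{j\to\infty}\MCov(\check{\pi}_{x}^{j},\gamma)=\MCov(\check{\pi}_{x},\gamma) \qquad\text{and}\qquad \limsup_{j\to\infty}\int \psilim\,d\check{\pi}_{x}^{j}\leqslant \int \psilim\,d\check{\pi}_{x},
\]
so that any $0<\beta(x)<\tilde{\beta}(x)$ can then be achieved by choosing $j(x)$ large enough.

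For the covariance limit, the key input will be Corollary \ref{cor_bm_tr_pr}: the Bass martingale $M^{x}$ cannot reach the boundary of $C=\widehat{\supp}(\check{\pi}_{x})\subseteq \widehat{\supp}(\nu)$ before time $1$. Writing $\tau^{x}$ for the first exit time of $M^{x}$ from $I=\operatorname{ri} C$, this yields $\tau^{x}\wedge 1=1$ a.s. Since $K_{j}\nearrow I$, the exit times satisfy $\tau_{j}^{x}\wedge 1\nearrow 1$ a.s., and by path continuity $M^{x}_{\tau_{j}^{x}\wedge 1}\to M^{x}_{1}$ a.s. Doob's $L^{2}$ maximal inequality, applicable because $M^{x}_{1}\sim\check{\pi}_{x}$ has finite second moment, furnishes an $L^{2}$ dominating majorant and upgrades the convergence to $L^{2}$, hence to $\mathcal{W}_{2}(\check{\pi}_{x}^{j},\check{\pi}_{x})\to 0$. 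The elementary bound $|\MCov(p,\gamma)-\MCov(q,\gamma)|\leqslant \sqrt{d}\,\mathcal{W}_{2}(p,q)$ then closes this step. For the $\psilim$-bound, I would note that $\psilim$ is non-negative and convex, so $(\psilim(M^{x}_{t}))_{0\leqslant t\leqslant 1}$ is a non-negative submartingale by conditional Jensen; terminal integrability $\E[\psilim(M^{x}_{1})]=\int \psilim\,d\check{\pi}_{x}<\infty$ is inherited from the finiteness of the right-hand side of \eqref{lemma_a_h_in}. Optional sampling at the bounded stopping times $\tau_{j}^{x}\wedge 1\leqslant 1$ then yields $\int \psilim\,d\check{\pi}_{x}^{j}\leqslant \int \psilim\,d\check{\pi}_{x}$, which is actually stronger than the required $\limsup$ bound.

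The main obstacle is the first step: without a guarantee that $M^{x}$ stays inside $I$ until time $1$, a non-negligible fraction of its mass could escape through $\partial C$, which would simultaneously destroy the $L^{2}$ approximation $M^{x}_{\tau_{j}^{x}\wedge 1}\to M^{x}_{1}$ and ruin any hope of controlling $\int \psilim\,d\check{\pi}_{x}^{j}$ near the boundary, where $\psilim=+\infty$. That this escape cannot occur is a delicate trajectorial feature of Bass martingales, which is precisely why the argument depends on having Corollary \ref{cor_bm_tr_pr} at hand at this point in the paper.
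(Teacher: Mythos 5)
Your proposal is correct and follows essentially the same route as the paper's proof: the same stopped Bass martingale construction for $\check{\pi}_{x}^{j}$, the same reduction to the two limits, the appeal to Corollary \ref{cor_bm_tr_pr} to get $\tau^{x}\wedge 1=1$ and hence $L^{2}$-convergence $M^{x}_{\tau_{j}^{x}\wedge 1}\to M^{x}_{1}$, the bound $\vert\MCov(p,\gamma)-\MCov(q,\gamma)\vert\leqslant\sqrt{d}\,\W_{2}(p,q)$, and optional sampling for the submartingale $\psilim(M^{x}_{t})$. Your added detail (Doob's maximal inequality for the $L^{2}$ upgrade) is a harmless elaboration of what the paper leaves implicit.
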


\begin{lemma} \label{lemma_irr_f} Under Assumption \ref{assumptions_single_cell}, the set $A \subseteq \Rd$ defined in \eqref{def_set_a} has $\mu$-measure zero. 
\begin{proof} We assume for contradiction that $\mu(A) > 0$. Thanks to Lemma \ref{lemma_irr_f_p}, the set $A$ is $\mu$-a.s.\ equal to the union
\[
\bigcup_{\substack{j \in \mathbb{N},\\ \beta\in \mathbb{Q}_{+} \setminus \{0\}}} 
\bigg\{x\in A \colon \MCov(\check{\pi}_{x}^{j},\gamma) + \int \psilim \, d(\pisbm_{x}-\check{\pi}^{j}_{x}) 
\geqslant \MCov(\pisbm_{x},\gamma) + \beta\bigg\}.
\]
Hence we can find a subset $B \subseteq A$ with $\mu(B) > 0$, such that \eqref{lemma_a_h_in_se} holds with uniform constants $j \in \mathbb{N}$ and $\beta > 0$ for all $x \in B$, i.e.,
\begin{equation} \label{lem_a_m_z_02}
\exists  j \ \exists  \beta \ \forall x \in B \colon \quad 
\MCov(\check{\pi}_{x}^{j},\gamma) + \int \psilim \, d(\pisbm_{x}-\check{\pi}^{j}_{x}) \geqslant \MCov(\pisbm_{x},\gamma) + \beta.
\end{equation}
Now we define a measurable collection of probability measures $\{\tilde{\pi}_{x} \}_{x \in \Rd} \subseteq \PP_{2}(\Rd)$ with $\bary(\tilde{\pi}_{x}) = x$ by 
\begin{equation} \label{def_tilde_pi}
\tilde{\pi}_{x} \coloneqq
\begin{cases}
\check{\pi}_{x}^{j}, & x \in B, \\
\pisbm_{x},& x \in \Rd \setminus B.
\end{cases}
\end{equation}
From the inequality \eqref{lem_a_m_z_02} and the definition \eqref{def_tilde_pi} we deduce that
\begin{align}
&\int \int \psilim \, d(\pisbm_{x}-\tilde{\pi}_{x}) \, d\mu(x) + \int \MCov(\tilde{\pi}_{x},\gamma)\, d\mu(x) \label{lemma_irr_f_i} \\
& \qquad \geqslant 
\int \MCov(\pisbm_{x},\gamma)  \, d\mu(x)
+ \beta \, \mu(B) \label{lemma_irr_f_ii} \\
&\qquad = P(\mu,\nu) + \beta \, \mu(B). \label{lemma_irr_f_iii}
\end{align}
Recall from Proposition \ref{prop_step_1_the_sing} that there is an optimizing sequence $(\psi_{n})_{n \geqslant 1}$ of convex functions $\psi_{n} \colon \Rd \rightarrow [0,+\infty)$ for the dual problem \eqref{eq_dual}, which converges uniformly on compact subsets $K \subseteq I$ to $\psilim$. To find the desired contradiction, we want to replace $\psilim$ in \eqref{lemma_irr_f_i} by $\lim_{n \rightarrow \infty} \psi_{n}$ and write the limit outside of the integral. This is clearly not a problem if $x \notin B$. For $x \in B$, note that the measure $\tilde{\pi}_{x} = \check{\pi}_{x}^{j}$ is supported by the compact set $K_{j}$. As $(\psi_{n})_{n \geqslant 1}$ converges to $\psilim$ uniformly on $K_{j}$, we have
\begin{equation} \label{lemma_irr_f_iv}
\lim_{n \rightarrow \infty} \int_{B} \int \psi_{n}(y)  \, d\tilde{\pi}_{x}(y) \, d\mu(x) 
= \int_{B} \int \psilim(y) \, d \tilde{\pi}_{x}(y) \, d\mu(x).
\end{equation}
On the other hand, Fatou's lemma gives
\begin{equation} \label{lemma_irr_f_v}
\liminf_{n \rightarrow \infty} \int_{B} \int \psi_{n}(y) \, d\pisbm_{x} (y) \, d\mu(x) 
\geqslant \int_{B} \int \psilim(y)\, d \pisbm_{x}(y)\, d\mu(x).
\end{equation}
Now combining \eqref{lemma_irr_f_i} -- \eqref{lemma_irr_f_iii} with \eqref{lemma_irr_f_iv}, \eqref{lemma_irr_f_v} yields the inequality
\[
\liminf_{n \rightarrow \infty}\int \int \psi_{n} \, d(\pisbm_{x}-\tilde{\pi}_{x}) \, d\mu(x) + \int \MCov(\tilde{\pi}_{x},\gamma)\, d\mu(x)  
 \geqslant P(\mu,\nu) + \beta \, \mu(B).
\]
Recalling the definition \eqref{eq_phi_psi_x} of the function $x \mapsto \varphi^{\psi}(x)$ and the dual function \eqref{eq_rep_dual_func}, we observe that the left-hand side of this inequality is less than or equal to
\[
\liminf_{n \rightarrow \infty} \D(\psi_{n}) 
= \liminf_{n \rightarrow \infty} \int  \Big( \int \psi_{n}(y) \, \pisbm_{x}(dy)  - \varphi^{\psi_{n}}(x) \Big) \, \mu(dx).
\]
Since $(\psi_{n})_{n \geqslant 1}$ is an optimizing sequence for the dual problem, it follows that
\[
\tilde{D}(\mu,\nu) 
= \lim_{n \rightarrow \infty} \D(\psi_{n})  
= \liminf_{n \rightarrow \infty}  \D(\psi_{n}) 
\geqslant P(\mu,\nu) + \beta \, \mu(B)
> P(\mu,\nu).
\]
But this is a contradiction to the fact that there is no duality gap by Theorem \ref{theorem_no_duality_gap}.
\end{proof}
\end{lemma}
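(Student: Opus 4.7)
The plan is a proof by contradiction: assuming $\mu(A) > 0$, I will build from the kernels furnished by Lemma~\ref{lemma_irr_f_p} a single competitor which, tested against the optimizing sequence $(\psi_n)$ of Proposition~\ref{prop_step_1_the_sing}, forces $\tilde{D}(\mu,\nu) > P(\mu,\nu)$ in the limit, contradicting the no-duality-gap Theorem~\ref{theorem_no_duality_gap}.

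First I would quantize the improvement parameters. Lemma~\ref{lemma_irr_f_p} associates to $\mu$-a.e.\ $x \in A$ a measure $\check\pi_x^{j(x)} \in \PP_2^{x}(\Rd)$ supported in the compact set $K_{j(x)} \subseteq I$ together with a constant $\beta(x) > 0$ satisfying \eqref{lemma_a_h_in_se}. Partitioning $A$ according to pairs $(j,\beta) \in \mathbb{N} \times (\mathbb{Q}_+ \setminus \{0\})$, countable additivity produces a measurable subset $B \subseteq A$ with $\mu(B) > 0$ on which both $j$ and $\beta$ can be taken uniformly, so that
\[
\MCov(\check\pi_x^j,\gamma) + \int \psilim \, d(\pisbm_x - \check\pi_x^j) \geqslant \MCov(\pisbm_x,\gamma) + \beta \quad \text{for all } x \in B.
\]

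Next I would glue these into the kernel $\tilde\pi_x \coloneqq \check\pi_x^j$ for $x \in B$ and $\tilde\pi_x \coloneqq \pisbm_x$ otherwise; by construction $\tilde\pi_x \in \PP_2^{x}(\Rd)$ throughout. Since each $\tilde\pi_x$ is admissible in the infimum defining $\varphi^{\psi_n}(x)$, one gets $\varphi^{\psi_n}(x) \leqslant \int \psi_n \, d\tilde\pi_x - \MCov(\tilde\pi_x,\gamma)$, and integrating against $\mu$ via the representation \eqref{eq_rep_dual_func} yields
\[
\mathcal{D}(\psi_n) \geqslant \int \int \psi_n \, d(\pisbm_x - \tilde\pi_x)\, \mu(dx) + \int \MCov(\tilde\pi_x,\gamma)\, \mu(dx),
\]
where the first integrand vanishes outside $B$ and hence localizes the problem to $B$.

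Finally I would let $n \to \infty$. On $B$, the measure $\tilde\pi_x = \check\pi_x^j$ is concentrated in the compact set $K_j \subseteq I$ on which Proposition~\ref{prop_step_1_the_sing} provides uniform convergence $\psi_n \to \psilim$, so bounded convergence gives $\int \psi_n\, d\tilde\pi_x \to \int \psilim \, d\tilde\pi_x$; for the complementary term $\int_B \int \psi_n\, d\pisbm_x\, \mu(dx)$, the nonnegativity of the chosen $\psi_n$ lets Fatou deliver the matching $\liminf$-inequality against $\psilim$. Combining this with the uniform improvement from the quantization step, the liminf of the right-hand side of the displayed inequality is at least $P(\mu,\nu) + \beta\mu(B)$, whereas the left-hand side tends to $\tilde{D}(\mu,\nu)$ by optimality of $(\psi_n)$. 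The resulting strict inequality $\tilde{D}(\mu,\nu) \geqslant P(\mu,\nu) + \beta\mu(B) > P(\mu,\nu)$ contradicts Theorem~\ref{theorem_no_duality_gap}. The main technical obstacle is controlling $\int \psi_n \, d\pisbm_x$ in the limit, since $\pisbm_x$ may charge the boundary $\partial C$ while uniform convergence of $\psi_n$ to $\psilim$ is only available on compacta of $I$; the argument succeeds precisely because the optimizing sequence was arranged in Step~1 of the proof of Theorem~\ref{theorem_single} to consist of nonnegative functions, reducing this delicate step to a one-sided Fatou estimate.
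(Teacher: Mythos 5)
Your proposal is correct and follows essentially the same route as the paper: the same countable quantization over $(j,\beta)\in\mathbb{N}\times(\mathbb{Q}_{+}\setminus\{0\})$ to extract $B$ with uniform constants, the same glued kernel $\tilde\pi_{x}$, the same test against $\varphi^{\psi_{n}}$ via \eqref{eq_rep_dual_func}, and the same limit passage combining uniform convergence on the compact set $K_{j}$ with a one-sided Fatou estimate made possible by the nonnegativity of the $\psi_{n}$. The only difference is cosmetic ordering — you state the inequality $\mathcal{D}(\psi_{n})\geqslant\int\int\psi_{n}\,d(\pisbm_{x}-\tilde\pi_{x})\,d\mu+\int\MCov(\tilde\pi_{x},\gamma)\,d\mu$ before taking limits, while the paper takes limits first — which changes nothing of substance.
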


\begin{lemma} \label{lemma_ch_o_do} A lower semicontinuous convex function $\psiopt \colon \Rd \rightarrow (-\infty,+\infty]$ satisfying $\mu(\operatorname{int}(\dom \psiopt))=1$ is a dual optimizer in the sense of Definition \ref{def_dual_opt} if and only if
\begin{equation} \label{eq_ch_o_do_b}
\mu\big(\big\{ x \in \Rd \colon \psiopt \equiv \psixsbm  \textnormal{ mod (aff)}\big\}\big) = 1.
\end{equation}
\begin{proof} ``$\Leftarrow$'': As we have seen in the proof of Lemma \ref{lemma_a_h}, for $\mu$-a.e.\ $x \in \Rd$, the supremum in \eqref{eq_vph} is attained by $p = \pisbm_{x}$. Hence integrating with respect to $\mu(dx)$ yields
\[
\int \MCov(\pisbm_{x},\gamma) \, \mu(dx) 
= \int \Big( \int \psixsbm(y) \, \pisbm_{x}(dy) - \varphi^{\psixsbm}(x) \Big) \, \mu(dx).
\]
By assumption \eqref{eq_ch_o_do_b} it follows that
\begin{equation} \label{eq_ch_o_do_a}
\int \MCov(\pisbm_{x},\gamma) \, \mu(dx) 
= \int \Big( \int \psiopt(y) \, \pisbm_{x}(dy) - \varphi^{\psiopt}(x) \Big) \, \mu(dx).
\end{equation}
Recalling the representation \eqref{eq_rep_dual_func} of the dual function $\mathcal{D}(\, \cdot \,)$, we see that the expression on the right-hand side of \eqref{eq_ch_o_do_a} equals $\mathcal{D}(\psiopt)$. On the other hand, the left-hand side is equal to $P(\mu,\nu) = \tilde{D}(\mu,\nu)$ by Theorem \ref{theorem_no_duality_gap}, so that $\psiopt$ is a dual optimizer.

\smallskip

``$\Rightarrow$'': Let $\psiopt$ be a dual optimizer. As in the proof of the implication ``$\Rightarrow$'' in Theorem \ref{theo_du_op_b_m} we conclude that 
\[
\pisbm_{x} = \nabla \psiopt^{\ast}\big(\zeta(x) + \, \cdot \,\big)(\gamma),
\]
for $\mu$-a.e.\ $x \in \Rd$, with $\zeta(x) \in \Rd$ as in \eqref{eq_du_op_b_m_xi}. On the other hand, by Definition \ref{def_gsbmx} we have $\pisbm_{x} = \nabla (\psixsbm)^{\ast}(\gamma)$, for $\mu$-a.e.\ $x \in \Rd$. Arguing as in the proof of Lemma \ref{lemma_a_h}, we obtain \eqref{eq_ch_o_do_b}.
\end{proof} 
\end{lemma}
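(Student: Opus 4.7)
The plan is to prove both implications by relating the condition of being a dual optimizer to the \emph{pointwise} attainment of the infimum in $\varphi^{\psiopt}(x)$ by the primal optimizer $\pisbm_{x}$, and then to invoke the characterization of such attainers provided by Proposition \ref{prop_phi_psi_x_gen_opt}. The key observation is that for any $p \in \PP_{2}^{x}(\Rd)$ one has $\int \psi \, dp - \varphi^{\psi}(x) \geqslant \MCov(p,\gamma)$ for $p=\pisbm_{x}$, and integrating against $\mu$ this yields $\D(\psi) \geqslant P(\mu,\nu)$; since $P(\mu,\nu) = \tilde D(\mu,\nu)$ by Theorem \ref{theorem_no_duality_gap}, being a dual optimizer is equivalent to pointwise equality $\mu$-a.e.

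For the direction ``$\Leftarrow$'', I would exploit that $\D(\, \cdot \,)$ and the map $p \mapsto \int \psi \, dp - \MCov(p,\gamma)$ restricted to $\PP_{2}^{x}(\Rd)$ are invariant under adding an affine function to $\psi$: indeed, affine functions integrate to a constant against any probability with barycenter $x$, producing the same shift in both $\int \psi \, d\pisbm_{x}$ and $\varphi^{\psi}(x)$. Since $\nabla (\psixsbm)^{\ast}(\gamma) = \pisbm_{x} \in \PP_{2}^{x}(\Rd)$, part \ref{prop_phi_psi_x_gen_opt_i} of Proposition \ref{prop_phi_psi_x_gen_opt} shows that $\pisbm_{x}$ attains the infimum in $\varphi^{\psixsbm}(x)$, so $\int \psixsbm \, d\pisbm_{x} - \varphi^{\psixsbm}(x) = \MCov(\pisbm_{x},\gamma)$. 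Under the hypothesis $\psiopt \equiv \psixsbm$ mod (aff) for $\mu$-a.e.\ $x$, the affine invariance then transfers this identity to $\psiopt$, and integration against $\mu$ gives $\D(\psiopt) = P(\mu,\nu) = \tilde D(\mu,\nu)$.

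For the direction ``$\Rightarrow$'', suppose $\psiopt$ is a dual optimizer. The pointwise inequality above must hold with equality for $\mu$-a.e.\ $x$, so $\pisbm_{x}$ attains the infimum defining $\varphi^{\psiopt}(x)$. Since $\mu(\operatorname{int}(\dom \psiopt)) = 1$ and the equality $\int \psiopt \, d\pisbm_{x} - \varphi^{\psiopt}(x) = \MCov(\pisbm_{x},\gamma)$ forces in particular $\varphi^{\psiopt}(x) > -\infty$ a.e., the hypotheses of part \ref{prop_phi_psi_x_gen_opt_ii} of Proposition \ref{prop_phi_psi_x_gen_opt} are met, and we obtain $\pisbm_{x} = \nabla \psiopt^{\ast}(\zeta(x) + \, \cdot \,)(\gamma)$ for some $\zeta(x) \in \Rd$. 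On the other hand, $\pisbm_{x} = \nabla (\psixsbm)^{\ast}(\gamma)$ by Definition \ref{def_gsbmx}. Uniqueness of the Brenier map from $\gamma$ to $\pisbm_{x}$ forces $\nabla \psiopt^{\ast}(\zeta(x) + \, \cdot \,) = \nabla (\psixsbm)^{\ast}$ $\gamma$-a.e., hence $\psiopt^{\ast}(\zeta(x) + \, \cdot \,) = (\psixsbm)^{\ast} + c(x)$. Using Remark \ref{def_gsbmx_rem} (that $(\psixsbm)^{\ast} = \vxsbm$ is finite and continuous everywhere, so the identity extends globally), and then taking convex conjugates, yields $\psiopt = \psixsbm + \ell_{x}$ for an affine function $\ell_{x}$, which is \eqref{eq_ch_o_do_b}.

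The main obstacle I expect is bookkeeping rather than any deep new idea: one has to verify that the hypotheses $x \in \operatorname{int}(\dom \psiopt)$ and $\varphi^{\psiopt}(x) > -\infty$ needed for Proposition \ref{prop_phi_psi_x_gen_opt}\ref{prop_phi_psi_x_gen_opt_ii} hold on a set of full $\mu$-measure, and to pass cleanly from the $\gamma$-a.e.\ equality of gradients to a global identity of convex functions modulo an affine shift. The invariance of $\D(\, \cdot \,)$ under affine perturbations (also used in Remark \ref{rem_cqaff_def}) is what makes the ``mod (aff)'' formulation natural and unavoidable here, and it is precisely this symmetry that matches the parametrization $\zeta(x)$ appearing in Proposition \ref{prop_phi_psi_x_gen_opt_i}.
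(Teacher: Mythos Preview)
Your proposal is correct and follows essentially the same route as the paper's proof: both directions reduce to the fact that $\psiopt$ is a dual optimizer precisely when $\pisbm_{x}$ attains the infimum defining $\varphi^{\psiopt}(x)$ for $\mu$-a.e.\ $x$, and then Proposition \ref{prop_phi_psi_x_gen_opt} together with the Brenier-map uniqueness argument (exactly as around \eqref{eq_opt_asgehe} in the proof of Lemma \ref{lemma_a_h}) converts this into the mod-(aff) identification with $\psixsbm$. The paper merely packages these steps as references to the proofs of Lemma \ref{lemma_a_h} and Theorem \ref{theo_du_op_b_m}, whereas you spell them out directly; the only minor point to tighten is that finiteness of $\varphi^{\psiopt}(x)$ $\mu$-a.e.\ follows from $\D(\psiopt)<\infty$ together with $\int \psiopt \, d\pisbm_{x} \geqslant \psiopt(x) > -\infty$, rather than immediately from the pointwise equality.
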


Let us summarize why the above arguments complete the proof of Proposition \ref{prop_step_2_the_sing} and thus also of Theorem \ref{theorem_single}.

\begin{proof}[Proof of Proposition \ref{prop_step_2_the_sing}] According to Lemma \ref{lemma_irr_f}, the function $\psilim \colon \Rd \rightarrow [0,+\infty]$ equals $\psixsbm$ mod (aff), for $\mu$-a.e.\ $x \in I$. By Lemma \ref{lemma_ch_o_do}, this implies that $\psilim$ is a dual optimizer. 
\end{proof}

\begin{proof}[Proof of Theorem \ref{theorem_single}] The existence of a dual optimizer follows from Proposition \ref{prop_step_1_the_sing} and Proposition \ref{prop_step_2_the_sing}.
\end{proof}

Let us have one more look at the structure of the proof of Theorem \ref{theorem_single} and Corollary \ref{thm:equivalence_kernels} above. We started with an arbitrary optimizing sequence $(\psi_{n})_{n \geqslant 1} \subseteq \Cq$ of convex functions for the dual problem \eqref{eq_dual}, which we normalized to obtain $(\psi_{n}^{x})_{n \geqslant 1}$ as in \eqref{eq_def_psi_n_x_opt_s}. Then we showed in Proposition \ref{prop_step_1_the_sing} that under the irreducibility Assumption \ref{assumptions_single_cell}, which guarantees the existence of a De March--Touzi transport $\pidmt \in \MT(\mu,\nu)$, we could find a limiting function $\psilim$. However, this was only possible \textit{after passing to a subsequence, forming Ces\`{a}ro means, and then choosing a further subsequence of $(\psi_{n}^{x})_{n \geqslant 1}$}. In Proposition \ref{prop_step_2_the_sing} we argued that $\psilim$ is a dual optimizer and hence, by Theorem \ref{theo_du_op_b_m}, there is a Bass martingale from $\mu$ to $\nu$.

\smallskip

In the follow-up paper \cite{ScTs24}, the general case of a pair $(\mu,\nu)$ which is not necessarily irreducible --- and thus does not necessarily satisfy Assumption \ref{assumptions_single_cell} --- is treated. In this case, a variant of this line of reasoning is needed. Suppose we \textit{already know} that an optimizing sequence $(\psi_{n})_{n \geqslant 1}$ is pointwise bounded on a relatively open convex set $I \subseteq \Rd$ with $\mu(I) = 1$ and such that $\widehat{\supp}(\nu)$ is contained in the closure of $I$. Under this assumption --- \textit{but without imposing irreducibility on the pair} $(\mu,\nu)$ --- we will prove in Proposition \ref{prop_1b} below that there is no need of passing to a subsequence or of forming convex combinations of the optimizing sequence $(\psi_{n})_{n \geqslant 1}$. Rather already the original sequence $(\psi_{n})_{n \geqslant 1}$ --- modulo adding affine functions --- converges. 

\begin{proposition} \label{prop_1b} Let $\mu, \nu \in \PP_{2}(\Rd)$ with $\mu \lc \nu$. Let $I \subseteq \Rd$ be a relatively open convex set with $\mu(I) = 1$. Denote by $C$ the closure of $I$ and assume that $\widehat{\supp}(\nu) \subseteq C$. Let $(\psi_{n})_{n \geqslant 1}$ be an optimizing sequence of non-negative convex functions in $\Cqaff$ for the dual problem \eqref{eq_dual} such that
\begin{equation} \label{prop_1b_ass}
\forall y \in I \colon \ \sup_{n \geqslant 1} \psi_{n}(y) < + \infty.
\end{equation}
Then there is a lower semicontinuous convex function $\psilim \colon \Rd \rightarrow [0,+\infty]$ and a sequence $(\tilde{\psi}_{n})_{n \geqslant 1}$ such that $\psi_{n} \equiv \tilde{\psi}_{n}$ mod (aff), for each $n \geqslant 1$, and
\begin{align}
\forall y \in I \colon \ \psilim(y) &= \lim_{n \rightarrow \infty} \tilde{\psi}_{n}(y) < + \infty, \label{prop_1b_i} \\
\forall y \in \Rd \setminus C \colon \ \psilim(y) &= \lim_{n \rightarrow \infty} \tilde{\psi}_{n}(y) = + \infty. \label{prop_1b_ii}
\end{align}
The convergence in \eqref{prop_1b_i} is uniform on compact subsets of $I$. Moreover, we have that $C = \widehat{\supp}(\nu)$ and $\psilim$ is a dual optimizer, which is unique modulo adding affine functions.
\end{proposition}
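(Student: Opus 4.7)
The strategy parallels the proof of Theorem \ref{theorem_single}, but replaces the Koml\'{o}s/Ces\`{a}ro step by a direct appeal to the pointwise bound \eqref{prop_1b_ass} and exploits uniqueness of the dual optimizer modulo affine functions to upgrade subsequential convergence to convergence of the entire renormalized sequence. Since the convex functions $\psi_n$ are pointwise bounded on the relatively open convex set $I$ (which has non-empty interior in $\Rd$ by Assumption \ref{ass_ful_dim}), \cite[Theorem 10.9]{Ro70} yields uniform boundedness on compact subsets of $I$; a subsequence (still denoted $(\psi_n)$) then converges uniformly on compact subsets of $I$ to a finite convex function, which I extend to a lower semicontinuous convex $\psilim \colon \Rd \to [0,+\infty]$ by setting $\psilim \equiv +\infty$ on $\Rd \setminus C$.

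\textbf{Step 2 (dual optimality and $C = \widehat{\supp}(\nu)$).} Setting $A \coloneqq \{x \in I \colon \psilim \not\equiv \psixsbm \textnormal{ mod (aff)}\}$, I run the contradiction argument of Lemmas \ref{lemma_a_h}--\ref{lemma_irr_f} with this $\psilim$. The improving competitor $\check\pi_{x}$ from Lemma \ref{lemma_a_h} is provided by Proposition \ref{prop_phi_psi_x_gen_opt} applied to $\psilim$ and $\psixsbm$ (no irreducibility is needed here); its truncation $\check\pi_{x}^{j(x)}$ with compact support in $I$ from Lemma \ref{lemma_irr_f_p} is obtained by stopping a Bass martingale from $\delta_x$ to $\check\pi_{x}$, whose existence follows from Theorem \ref{theorem_single} applied to the trivially irreducible pair $(\delta_x,\check\pi_{x})$. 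The compact convergence $\psi_n \to \psilim$ on $K_{j(x)}$ then supplies the contradiction, so $\mu(A) = 0$ and $\psilim$ is a dual optimizer by Lemma \ref{lemma_ch_o_do}. Theorem \ref{theo_du_op_b_m} gives $\widehat{\supp}(\nu) = \overline{\dom \psilim}$; chaining $I \subseteq \dom \psilim \subseteq \widehat{\supp}(\nu) \subseteq C = \overline{I}$ yields $C = \widehat{\supp}(\nu)$ and retroactively justifies the extension of $\psilim$.

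\textbf{Step 3 (full sequence convergence and behavior outside $C$).} By Lemma \ref{lemma_ch_o_do}, any two dual optimizers differ by an affine function. Fix $x_0 \in I$, pick $s_0 \in \partial \psilim(x_0)$ and $s_n \in \partial \psi_n(x_0)$, and set
\[
\tilde\psi_n(y) \coloneqq \psi_n(y) - \psi_n(x_0) + \psilim(x_0) - \langle s_n - s_0, y - x_0\rangle,
\]
pinning the zeroth- and first-order data at $x_0$ while preserving $\psi_n \equiv \tilde\psi_n$ mod (aff) and hence optimality by Remark \ref{rem_cqaff_def}. Any subsequence of $(\tilde\psi_n)$ admits, by Steps 1--2, a further compactly convergent sub-subsequence whose limit is a dual optimizer agreeing with $\psilim$ at $x_0$ in both value and subgradient; uniqueness modulo affine then forces the affine ambiguity to vanish, so this limit is $\psilim$. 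Hence $(\tilde\psi_n)$ converges to $\psilim$ uniformly on compact subsets of $I$, proving \eqref{prop_1b_i} and uniqueness of $\psilim$ modulo affine. Finally, uniform-on-compacts convergence of convex functions on $\operatorname{int}(\dom\psilim) = I$ automatically yields epi-convergence on all of $\Rd$, so for $y \in \Rd \setminus C = \Rd \setminus \overline{\dom\psilim}$ one has $\liminf_n \tilde\psi_n(y) \geq \psilim(y) = +\infty$, giving \eqref{prop_1b_ii}.

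\textbf{Main obstacle.} The most delicate point is Step 3's passage from compact convergence inside $I$ to pointwise divergence on $\Rd \setminus C$: this rests on the classical fact that uniform-on-compacts convergence of convex functions on the interior of the limiting effective domain automatically forces epi-convergence (hence $+\infty$-blow-up outside the closed effective domain), combined with the identification $\overline{\dom\psilim} = C$ extracted in Step 2 via Theorem \ref{theo_du_op_b_m}. A secondary subtlety is checking that the contradiction argument of Lemmas \ref{lemma_a_h}--\ref{lemma_irr_f} survives the removal of the irreducibility Assumption \ref{assumptions_single_cell}: irreducibility was used in the proof of Theorem \ref{theorem_single} only to secure the pointwise bound on the optimizing sequence, a bound that \eqref{prop_1b_ass} now assumes outright.
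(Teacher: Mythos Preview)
Your overall architecture mirrors the paper's proof: extract a subsequential limit on $I$, verify dual optimality via the $\mu(A)=0$ argument of Lemmas~\ref{lemma_a_h}--\ref{lemma_irr_f}, then use uniqueness modulo affine to upgrade to full-sequence convergence. Steps~1 and~2 are essentially correct (the appeal to Theorem~\ref{theo_du_op_b_m} in Step~2 should rather be to Remark~\ref{def_gsbmx_rem}, which gives $\operatorname{ri}(\dom\psixsbm)=\operatorname{ri}\widehat{\supp}(\nu)$ directly, but your chain of inclusions and the conclusion $C=\widehat{\supp}(\nu)$ stand).

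There is, however, a genuine gap in Step~3. Your assertion that ``uniform-on-compacts convergence of convex functions on $\operatorname{int}(\dom\psilim)=I$ automatically yields epi-convergence on all of $\Rd$'' is false in general. Take $d=1$, $\psilim=0$ on $[-1,1]$ and $+\infty$ outside, and $\tilde\psi_n\equiv 0$: uniform convergence on compacts in $(-1,1)$ holds, yet $\tilde\psi_n(2)=0\not\to+\infty$. The finite-valued functions $\tilde\psi_n\in\Cqaff$ could in principle remain bounded at some $y_0\notin C$ along a subsequence, and nothing you have proved rules this out. The paper closes this gap via a separate compactness lemma (Lemma~\ref{lemma_1b}), which for \emph{any} subsequence produces, by transfinite recursion, a further subsequence together with a \emph{maximal} closed convex set $\Clim$ such that the limit is finite on $\operatorname{ri}\Clim$ and $+\infty$ outside $\Clim$; Step~2 of the paper's proof then forces $\Clim=C$. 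An alternative closer to your approach: argue by contradiction that if $\sup_k\tilde\psi_{n_k}(y_0)<+\infty$ along a subsequence for some $y_0\notin C$, then by convexity this subsequence is locally bounded on the strictly larger relatively open convex set $I':=\operatorname{ri}\conv(I\cup\{y_0\})$, and rerunning your Steps~1--2 with $I'$ in place of $I$ yields $\widehat{\supp}(\nu)=\overline{I'}\supsetneq C$, contradicting the hypothesis $\widehat{\supp}(\nu)\subseteq C$.

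A smaller issue: your subgradient-pinning normalization gives $\chi(x_0)=\psilim(x_0)$ and $s_0\in\partial\chi(x_0)$ for any subsequential limit $\chi$, but from $\chi=\psilim+a$ with $a$ affine these conditions only yield $a(x_0)=0$ and $s_0-\nabla a\in\partial\psilim(x_0)$, which does not force $\nabla a=0$ unless $\partial\psilim(x_0)$ is a singleton. Choosing $x_0$ to be a point of differentiability of $\psilim$ fixes this. The paper's Step~4 instead pins the \emph{values} at $d+1$ affinely independent points of $I$, which determines the affine correction uniquely without any smoothness assumption.
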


The proof of Proposition \ref{prop_1b} is delayed until Appendix \ref{app_prop_gen_case}. 

\begin{corollary} \label{cor_1b} Under the assumptions of Proposition \ref{prop_1b}, there exists a Bass martingale $(M_{t})_{0 \leqslant t \leqslant 1}$ from $\mu$ to $\nu$. Moreover, $\Law(M_{0},M_{1}) = \pisbm$ and, for $\mu$-a.e.\ $x \in I$, the measure $\pisbm_{x}$ is equivalent to $\nu$. In particular, $\pisbm$ is a De March--Touzi transport and the pair $(\mu,\nu)$ is irreducible.    
\begin{proof} By Proposition \ref{prop_1b}, the limiting function $\psilim$ is a dual optimizer. Therefore, by analogy with the deduction of Corollary \ref{thm:equivalence_kernels} from Theorem \ref{theorem_single}, we deduce Corollary \ref{cor_1b} from Proposition \ref{prop_1b}. To see that the pair $(\mu,\nu)$ is irreducible we refer to Theorem \ref{theo_coic}.
\end{proof}
\end{corollary}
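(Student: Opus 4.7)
The plan is to run the same argument that derived Corollary \ref{thm:equivalence_kernels} from Theorem \ref{theorem_single}, only now feeding in the stronger output of Proposition \ref{prop_1b}. First I would observe that Proposition \ref{prop_1b} already supplies a dual optimizer $\psilim$: the function is lower semicontinuous convex, finite on the relatively open set $I$ with $\mu(I)=1$, and equal to $+\infty$ outside $C=\widehat{\supp}(\nu)$. Since $I$ is relatively open, $I\subseteq\operatorname{ri}(\dom\psilim)$, so $\mu(\operatorname{ri}(\dom\psilim))=1$, and $\mathcal{D}(\psilim)=\tilde{D}(\mu,\nu)$ is also recorded in Proposition \ref{prop_1b}. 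Thus $\psilim$ qualifies as a dual optimizer in the sense of Definition \ref{def_dual_opt}.

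Next I would invoke Theorem \ref{theo_du_op_b_m}. Applied to $\psilim$, it yields a Bass martingale $(M_t)_{0\leqslant t\leqslant 1}$ from $\mu$ to $\nu$, with associated convex potential $v=\psilim^{\ast}$ and initial law $\alpha=\zeta(\mu)$, and guarantees $\Law(M_0,M_1)=\pisbm$. The identity \eqref{eq_id_pisbmx_v_xi} then gives the explicit structure
\[
\pisbm_{x} = \nabla v\bigl(\zeta(x)+\,\cdot\,\bigr)(\gamma)
\qquad\text{for $\mu$-a.e.\ }x\in I.
\]

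To obtain the equivalence $\pisbm_x\sim\nu$, I would copy the short argument from the proof of Corollary \ref{thm:equivalence_kernels}: on the full $\mu$-measure set where the previous display holds, the measures $\pisbm_x$ are pushforwards of mutually equivalent translated Gaussians under the common map $\nabla v$, hence mutually equivalent; since $\nu=\int \pisbm_x\,\mu(dx)$, it follows that $\nu\sim\pisbm_x$ for $\mu$-a.e.\ $x\in I$. In particular $\widehat{\supp}(\pisbm_x)=\widehat{\supp}(\nu)=C$ for $\mu$-a.e.\ $x$, which together with the fact (part of Proposition \ref{prop_1b}) that $C=\widehat{\supp}(\nu)$ and $\mu(I)=\mu(\operatorname{ri} C)=1$ is precisely the content of Definition \ref{def_dmti}. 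Thus $\pisbm$ is a De March--Touzi transport.

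Finally, to conclude irreducibility of $(\mu,\nu)$ in the sense of Definition \ref{defi:irreducible_intro}, I would appeal to Theorem \ref{theo_coic} in Appendix \ref{app_sec_irr}, which records the equivalence between De March--Touzi irreducibility and the definition used in the introduction. The whole argument is essentially bookkeeping once Proposition \ref{prop_1b} has been established; no genuine obstacle arises here, and the only point deserving care is verifying that $\psilim$ as produced by Proposition \ref{prop_1b} fits Definition \ref{def_dual_opt} (in particular the relative-interior condition), which is immediate from the uniform local boundedness statement \eqref{prop_1b_i}.
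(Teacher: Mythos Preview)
Your proposal is correct and follows essentially the same approach as the paper: the paper's proof simply says ``by analogy with the deduction of Corollary \ref{thm:equivalence_kernels} from Theorem \ref{theorem_single}'' and then invokes Theorem \ref{theo_coic} for irreducibility, and you have unpacked exactly that analogy (dual optimizer $\Rightarrow$ Theorem \ref{theo_du_op_b_m} $\Rightarrow$ Bass martingale with $\pisbm_x\sim\nu$ via the translated-Gaussian argument $\Rightarrow$ De March--Touzi transport $\Rightarrow$ irreducibility via Theorem \ref{theo_coic}). The only minor redundancy is your care in re-verifying that $\psilim$ satisfies Definition \ref{def_dual_opt}; this is already asserted as part of the conclusion of Proposition \ref{prop_1b}.
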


\subsection{Proof of Theorem \texorpdfstring{\ref{MainTheorem}}{1.3}} \label{subsec_pr_int_a}

We are now in the position to prove our first main result of the introduction.

\begin{proof}[Proof of Theorem \ref{MainTheorem}]
The implication ``\ref{MainTheorem_2} $\Rightarrow$ \ref{MainTheorem_1}'' is Theorem 1.10 of \cite{BaBeHuKa20}. For the proof of ``\ref{MainTheorem_1} $\Rightarrow$ \ref{MainTheorem_2}'' we apply Corollary \ref{thm:equivalence_kernels} and obtain the existence of a Bass martingale from $\mu$ to $\nu$. By the uniqueness results of Theorem 2.2 in \cite{BaBeHuKa20}, this Bass martingale has to agree with the given stretched Brownian motion.
\end{proof}

\subsection{Proof of the second part of Theorem \texorpdfstring{\ref{theorem_new_duality}}{1.4}} \label{subsec_pr_int_b_ii}

Finally, we complete the proof of Theorem \ref{theorem_new_duality}. Recalling the results of Section \ref{sec_pr_int_b_i} and the definition \eqref{theorem_new_duality_sec_eq_s_rel_for} of $\mathcal{E}(\, \cdot \,)$, we can formulate the second part of Theorem \ref{theorem_new_duality} equivalently as follows.

\begin{proposition} \label{prop_new_duality_second_part} Let $\mu, \nu \in \PP_{2}(\Rd)$ with $\mu \lc \nu$. The value
\begin{equation} \label{eq_drel_du_opt}
D_{\textnormal{rel}}(\mu,\nu) = \inf_{\substack{\mu(\dom \psi) = 1, \\ \textnormal{$\psi$ convex}}} \mathcal{E}(\psi)
\end{equation}
is attained by a lower semicontinuous convex function $\psiopt \colon \Rd \rightarrow (-\infty,+\infty]$ satisfying $\mu(\operatorname{ri}(\dom\psiopt)) = 1$ if and only if $(\mu,\nu)$ is irreducible. In this case the (unique) optimizer to \eqref{MBMBB} is given by the Bass martingale
\[
M_{t} \coloneqq \E[\nabla v(B_{1}) \, \vert \, \sigma(B_{s} \colon s \leqslant t)]
= \E[\nabla v(B_{1}) \, \vert \, B_{t}], \qquad 0 \leqslant t \leqslant 1,
\]
where $v = \psiopt^{\ast}$ and $B_{0} \sim \nabla (\psiopt^{\ast} \ast \gamma)^{\ast}(\mu)$.
\begin{proof} We call a function $\psiopt$ as in the statement of the proposition a dual optimizer of \eqref{eq_drel_du_opt} and first show that this notion is equivalent to a dual optimizer in the sense of Definition \ref{def_dual_opt}. Indeed, if $\psiopt$ is a dual optimizer of \eqref{eq_drel_du_opt}, it follows from the inequality $(\psi^{\ast} \ast \gamma)^{\ast} = (\varphi^{\psi})^{\ast\ast} \leqslant \varphi^{\psi}$ that $\psiopt$ is also a dual optimizer according to Definition \ref{def_dual_opt}. Conversely, suppose that $\psiopt$ is an optimizer in the latter sense. Then it follows from Lemma \ref{lemma_ch_o_do} that \eqref{eq_ch_o_do_b} is satisfied. Note that by the classical Kantorovich duality we have
\[
\MCov(\pisbm_{x},\gamma) = \int \psixsbm \, d \pisbm_{x} + \int (\psixsbm)^{\ast} \, d\gamma,
\]
for $\mu$-a.e.\ $x \in \Rd$. Thus, by \eqref{eq_ch_o_do_b}, for $\mu$-a.e.\ $x \in \Rd$ there exists $\zeta(x) \in \Rd$ such that
\[
\MCov(\pisbm_{x},\gamma) = \int \big(\psiopt(\, \cdot \,) - \langle \zeta(x), \, \cdot \, \rangle\big) \, d \pisbm_{x} + \int \big(\psiopt(\, \cdot \,) - \langle \zeta(x), \, \cdot \, \rangle\big)^{\ast} \, d\gamma.
\]
Reading the proof of Proposition \ref{prop_new_duality_rel} backwards we conclude that
\[
\int \MCov(\pisbm_{x},\gamma) \, d\mu(x)
=  \int \Big( \int \psiopt(y) \, \pisbm_{x}(dy)  - (\psiopt^{\ast} \ast \gamma)^{\ast}(x)\Big) \, \mu(dx),
\]
i.e., $D_{\textnormal{rel}}(\mu,\nu) = \mathcal{E}(\psiopt)$. This shows that $\psiopt$ is also a dual optimizer of \eqref{eq_drel_du_opt}. 

\smallskip

Now that we know that both definitions of a dual optimizer are equivalent, we can conclude the assertions of Proposition \ref{prop_new_duality_second_part} from the results we have already established. Indeed, by Theorem \ref{theo_du_op_b_m}, the existence of a dual optimizer is equivalent to the existence of a Bass martingale $(M_{t})_{0 \leqslant t \leqslant 1}$ in the given form. Finally, by Theorem \ref{MainTheorem} and Theorem \ref{theo_coic}, the existence of a Bass martingale is equivalent to the irreducibility of $(\mu,\nu)$.
\end{proof}
\end{proposition}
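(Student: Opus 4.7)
The plan is to reduce the statement to results already established in the body of the paper. First I would argue that being an optimizer of the relaxed dual problem \eqref{eq_drel_du_opt} is equivalent to being a dual optimizer in the sense of Definition \ref{def_dual_opt}; once this equivalence is in hand, Theorem \ref{theo_du_op_b_m} translates dual attainment into the existence of a Bass martingale (in the explicit form claimed), and the equivalence of ``irreducible'' and ``De March--Touzi irreducible'' (Theorem \ref{theo_coic} in Appendix \ref{app_sec_irr}), combined with Theorem \ref{theorem_single} and Corollary \ref{thm:equivalence_kernels}, finishes both implications.

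For the equivalence of the two notions of dual optimizer, the easy direction is: if $\psiopt$ attains $D_{\textnormal{rel}}(\mu,\nu)$, then since $(\psi^{\ast}\ast\gamma)^{\ast}=(\varphi^{\psi})^{\ast\ast}\leqslant \varphi^{\psi}$ (as recorded in the proof of Proposition \ref{prop_new_duality_rel}), one obtains $\mathcal{D}(\psiopt)\leqslant \mathcal{E}(\psiopt)=D_{\textnormal{rel}}(\mu,\nu)=\tilde{D}(\mu,\nu)$, while the reverse inequality $\mathcal{D}(\psiopt)\geqslant \tilde{D}(\mu,\nu)$ is Lemma \ref{lem_ext_def_dual}. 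Hence $\mathcal{D}(\psiopt)=\tilde{D}(\mu,\nu)$, i.e.\ $\psiopt$ is a dual optimizer in the sense of Definition \ref{def_dual_opt}.

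The opposite direction is the delicate one: starting from $\psiopt$ as in Definition \ref{def_dual_opt}, one invokes Lemma \ref{lemma_ch_o_do} to infer that for $\mu$-a.e.\ $x\in\Rd$ there exists $\zeta(x)\in\Rd$ with $\psiopt(\,\cdot\,)-\langle\zeta(x),\,\cdot\,\rangle\equiv\psixsbm(\,\cdot\,)$ modulo a constant. Combined with the classical Kantorovich identity $\MCov(\pisbm_{x},\gamma)=\int \psixsbm\,d\pisbm_{x}+\int(\psixsbm)^{\ast}\,d\gamma$, this yields
\[
\MCov(\pisbm_{x},\gamma)=\int\bigl(\psiopt(\,\cdot\,)-\langle\zeta(x),\,\cdot\,\rangle\bigr)\,d\pisbm_{x}+\int\bigl(\psiopt(\,\cdot\,)-\langle\zeta(x),\,\cdot\,\rangle\bigr)^{\ast}\,d\gamma
\]
for $\mu$-a.e.\ $x$; integrating over $x$ and recognising the inner infimum over $\zeta$ as $-(\psiopt^{\ast}\ast\gamma)^{\ast}(x)$ (reading the proof of Proposition \ref{prop_new_duality_rel} backwards) produces the equality $\mathcal{E}(\psiopt)=P(\mu,\nu)=\tilde{D}(\mu,\nu)=D_{\textnormal{rel}}(\mu,\nu)$. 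This is the main technical step, but it reduces essentially to an unfolding of Lemma \ref{lemma_ch_o_do} and the Fenchel--Young equality case.

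Having identified the two notions, the ``if'' direction of the proposition is obtained by chaining: irreducibility implies De March--Touzi irreducibility (Theorem \ref{theo_coic}), hence by Theorem \ref{theorem_single} there is a dual optimizer $\psiopt$ in the sense of Definition \ref{def_dual_opt}, which by the equivalence above solves \eqref{eq_drel_du_opt}; Theorem \ref{theo_du_op_b_m} then produces the explicit Bass martingale with $v=\psiopt^{\ast}$ and $B_{0}\sim\alpha=\zeta(\mu)=\nabla(\psiopt^{\ast}\ast\gamma)^{\ast}(\mu)$ (using \eqref{eq_du_op_b_m_xi_fin}), and the identification with the optimizer of \eqref{MBMBB} is then the content of Theorem \ref{MainTheorem}. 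For the ``only if'' direction, attainment in \eqref{eq_drel_du_opt} gives dual attainment in the sense of Definition \ref{def_dual_opt}, whence Theorem \ref{theo_du_op_b_m} yields a Bass martingale from $\mu$ to $\nu$, and Theorem \ref{MainTheorem} together with Theorem \ref{theo_coic} implies that $(\mu,\nu)$ is irreducible. I expect the main obstacle to be nothing new in terms of argument but rather the careful bookkeeping of the additive affine degrees of freedom (the ``mod (aff)'' ambiguity) when passing between $\psiopt$ and $\psixsbm$, which is exactly where Lemma \ref{lemma_ch_o_do} does the work.
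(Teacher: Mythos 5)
Your proposal is correct and follows essentially the same route as the paper's proof: establish the equivalence of the two notions of dual optimizer (easy direction via $(\psi^{\ast}\ast\gamma)^{\ast}=(\varphi^{\psi})^{\ast\ast}\leqslant\varphi^{\psi}$, hard direction via Lemma \ref{lemma_ch_o_do}, the Kantorovich identity, and reading Proposition \ref{prop_new_duality_rel} backwards), then chain Theorem \ref{theo_du_op_b_m}, Theorem \ref{MainTheorem} and Theorem \ref{theo_coic}. The only cosmetic difference is that you invoke Theorem \ref{theorem_single} and Corollary \ref{thm:equivalence_kernels} explicitly where the paper routes through Theorem \ref{MainTheorem}, which rests on the same results.
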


\begin{appendix}
\section{Proof of Theorem \texorpdfstring{\ref{theorem_no_duality_gap}}{3.3}}
\label{app_proof_no_duality_gap}

\begin{proof}[Proof of Theorem \ref{theorem_no_duality_gap}] Existence and uniqueness of the optimizer $\pisbm \in \MT(\mu,\nu)$ of the primal problem \eqref{eq_primal}, as well as finiteness of the primal value $P(\mu,\nu)$ in \eqref{eq_the_no_duality_gap}, were proved in \cite[Theorem 2.2]{BaBeHuKa20}. In order to show that there is no duality gap, we apply \cite[Theorem 1.3]{BaBePa18} with the cost function
\[
C(x,p) \coloneqq
\begin{cases}
-\MCov(p,\gamma) + \tfrac{1}{2} \int \vert y \vert^{2} \, dp, & \textnormal{ if } \bary(p) = x,\\
+\infty, & \textnormal{ if } \bary(p) \neq x,
\end{cases}
\]
for $x \in \Rd$ and $p \in \PP_{2}(\Rd)$. This function is bounded from below and convex in the second argument. If we equip $\PP_{2}(\Rd)$ with the topology induced by the quadratic Wasserstein distance one verifies that $C(x,p)$ is also jointly lower semicontinuous with respect to the product topology on $\Rd \times \PP_{2}(\Rd)$. We introduce the space of continuous functions which are bounded from below and have at most quadratic growth
\[
\Cbq \coloneqq \big\{ \tilde{\psi} \colon \Rd \rightarrow \R \textnormal{ continuous s.t.\ } \exists \, a,k,\ell \in \R \textnormal{ with } \ell \leqslant \tilde{\psi}(\, \cdot \,) \leqslant a + k \vert \cdot  \vert^{2} \big\}.
\]
Then by \cite[Theorem 1.3]{BaBePa18} the value $P(\mu,\nu)$ of the primal problem \eqref{eq_primal} equals
\[
\tilde{D}_{\textnormal{b},\textnormal{q}}(\mu,\nu) \coloneqq \inf_{\tilde{\psi} \in \Cbq} \Big( \int \big( \tilde{\psi}(\, \cdot \,) + \tfrac{\vert \, \cdot \, \vert^{2}}{2}\big) \, d\nu - \int \tilde{\varphi}^{\tilde{\psi}} \, d\mu \Big),
\]
where
\[
\tilde{\varphi}^{\tilde{\psi}}(x) \coloneqq \inf_{p \in \PP_{2}^{x}(\Rd)} \Big( \int \big( \tilde{\psi}(\, \cdot \,) + \tfrac{\vert \, \cdot \, \vert^{2}}{2}\big) \, dp - \MCov(p,\gamma)\Big).
\]
Finally, passing from the functions $\tilde{\psi} \in \Cbq$ to $\psi(\, \cdot \,) \coloneqq \tilde{\psi}(\, \cdot \,) + \frac{\vert \, \cdot \, \vert^{2}}{2} \in \Cq$, we see that $\tilde{D}_{\textnormal{b},\textnormal{q}}(\mu,\nu) = \tilde{D}(\mu,\nu)$.
\end{proof}

\section{Proofs of Lemmas \texorpdfstring{\ref{lem_phi_psi_gen}}{4.4}, \texorpdfstring{\ref{lem_co_fin}}{5.1}, \texorpdfstring{\ref{lem:dim_dom_psi}}{6.2}, \texorpdfstring{\ref{lem_aux_grad_convolution}}{6.4} and \texorpdfstring{\ref{lem_int_dom_range}}{6.5}}
\label{app_tech_lemm}

\begin{proof}[Proof of Lemma \ref{lem_phi_psi_gen}] Using probabilistic notation, we rewrite the supremum in \eqref{eq_phi_psi} as
\begin{equation} \label{lem_phi_psi_gen_01}
\varrho^{\psi} = \sup \E\big[ \langle Y, Z \rangle - \psi(Y)\big],
\end{equation}
where the supremum in \eqref{lem_phi_psi_gen_01} is taken over all probability spaces such that $Z \sim \gamma$ and $Y$ is an $\Rd$-valued random variable with finite second moment. Replacing $Y$ by $\E[Y \, \vert \, Z]$, we observe that the maximization in \eqref{lem_phi_psi_gen_01} can be restricted to random variables $Y$ which are measurable functions of $Z$, and we obtain
\begin{equation} \label{lem_phi_psi_gen_02}
\varrho^{\psi} 
= \sup_{Y \in L^{2}(\gamma;\Rd)} 
\int \Big( \big\langle Y(z), z \big\rangle - \psi\big(Y(z)\big) \Big) \, \gamma(dz),
\end{equation}
where $L^{2}(\gamma;\Rd)$ denotes the space of $\Rd$-valued Borel measurable functions on $\Rd$, which are square-integrable under $\gamma$. Clearly, for any $Y \in L^{2}(\gamma;\Rd)$ we have 
\[
\int \Big( \big\langle Y(z), z \big\rangle - \psi\big(Y(z)\big) \Big) \, \gamma(dz)
\leqslant \int \sup_{y \in \Rd} \big(\langle y,z \rangle - \psi(y)\big) \, \gamma(dz) = \int \psi^{\ast}(z) \, \gamma(dz),
\]
which shows the inequality $\varrho^{\psi} \leqslant \int \psi^{\ast}  \, d\gamma$. In order to see the reverse inequality, we define the auxiliary problem
\begin{equation} \label{lem_phi_psi_gen_03}
\varrho_{\infty}^{\psi} 
\coloneqq \sup_{Y \in L^{\infty}(\gamma;\Rd)} 
\int \Big( \big\langle Y(z), z \big\rangle - \psi\big(Y(z)\big) \Big) \, \gamma(dz),
\end{equation}
where $L^{\infty}(\gamma;\Rd)$ denotes the space of $\Rd$-valued Borel measurable functions on $\Rd$, which are bounded $\gamma$-a.e. Comparing \eqref{lem_phi_psi_gen_02} with \eqref{lem_phi_psi_gen_03}, we obviously have $\varrho^{\psi} \geqslant \varrho_{\infty}^{\psi}$. Now we claim that
\begin{equation} \label{lem_phi_psi_gen_05}
\varrho_{\infty}^{\psi} 
\geqslant \int \sup_{y \in \Rd} \big(\langle y,z \rangle - \psi(y)\big) \, \gamma(dz) 
= \int \psi^{\ast}(z) \, \gamma(dz),
\end{equation}
which will finish the proof of \eqref{lem_phi_psi_gen_04}. To see this, we first write
\[
\varrho_{\infty}^{\psi} 
= \lim_{N \rightarrow \infty} \, \sup_{\substack{Y \in L^{\infty}(\gamma;\Rd), \\ \vert Y  \vert \leqslant N}} \, 
\int \Big( \big\langle Y(z), z \big\rangle - \psi\big(Y(z)\big) \Big) \, \gamma(dz).
\]
Using a measurable selection argument, we obtain
\[
\sup_{\substack{Y \in L^{\infty}(\gamma;\Rd), \\ \vert Y  \vert \leqslant N}} \, 
\int \Big( \big\langle Y(z), z \big\rangle - \psi\big(Y(z)\big) \Big) \, \gamma(dz) 
\geqslant
\int \sup_{\substack{y \in \Rd, \\ \vert y  \vert \leqslant N}} \,  \big(\langle y,z \rangle - \psi(y)\big) \, \gamma(dz).
\]
Since $\psi$ is proper we can choose $y_{0} \in \dom \psi \neq \varnothing$. Then for $N$ large enough we have
\[
\sup_{\substack{y \in \Rd, \\ \vert y  \vert \leqslant N}} \,  \big(\langle y,z \rangle - \psi(y)\big) 
\geqslant \langle y_{0},z \rangle - \psi(y_{0}),
\]
with the right-hand side being integrable with respect to $\gamma(dz)$. Hence we can apply the monotone convergence theorem and deduce that
\[
\lim_{N \rightarrow \infty} \int \sup_{\substack{y \in \Rd, \\ \vert y  \vert \leqslant N}} \,  \big(\langle y,z \rangle - \psi(y)\big) \, \gamma(dz)
= \int \sup_{y \in \Rd} \big(\langle y,z \rangle - \psi(y)\big) \, \gamma(dz),
\]
which completes the proof of \eqref{lem_phi_psi_gen_05}.
\end{proof}

\begin{proof}[Proof of Lemma \ref{lem_co_fin}] To see that $\varphi^{\psi}$ is convex on $\dom\psi$, we let $x \coloneqq c x_{1}+(1-c)x_{2}$ for $x_{1},x_{2} \in \dom\psi$ and $c\in(0,1)$. If $\varphi^{\psi}(x_{1}) = -\infty$, then there are $p_{x_{1}}^{(n)} \in \PP_{2}^{x_{1}}(\Rd)$ with $\int \psi \, dp_{x_{1}}^{(n)} - \MCov(p_{x_{1}}^{(n)},\gamma) \rightarrow -\infty$. We observe that $p_{x}^{(n)} \coloneqq c p_{x_{1}}^{(n)} + (1-c)\delta_{x_{2}}\in  \PP_{2}^{x}(\Rd)$, and consequently
\begin{align*}
\varphi^{\psi}(x)
&\leqslant c \int \psi \, dp_{x_{1}}^{(n)} + (1-c) \psi(x_{2}) - \MCov(p_{x}^{(n)},\gamma) \\
&\leqslant c \Big( \int \psi \, dp_{x_{1}}^{(n)}  - \MCov(p_{x_{1}}^{(n)},\gamma) \Big) 
+(1-c)\psi(x_{2}),
\end{align*}
where we have used the convexity of $\PP_{2}(\Rd) \ni p \mapsto - \MCov(p,\gamma)$. We conclude that  $\varphi^{\psi}(x) = -\infty$. The case $\varphi^{\psi}(x_{2}) = -\infty$ is treated similarly. If, on the other hand, both $\varphi^{\psi}(x_{1}) > - \infty$ and $\varphi^{\psi}(x_{2}) > -\infty$, then $   \varphi^{\psi}(x)\leqslant c \varphi^{\psi}(x_{1}) + (1-c)\varphi^{\psi}(x_{2})$ follows by standard arguments. All in all we see that $\varphi^{\psi}$ is convex on $\dom \psi$. 

\smallskip

If $\varphi^{\psi}(x) > - \infty$ for one $x \in \operatorname{int} (\dom \psi)$, then $\varphi^{\psi}(\tilde{x}) > - \infty$ for all $\tilde{x} \in \operatorname{int}(\dom \psi)$, as can be seen directly by convexity. 

\smallskip

Finally, given some $x \in \operatorname{int} (\dom \psi)$ with $\varphi^{\psi}(x) > - \infty$, we show that $\psi$ is co-finite. Without loss of generality, we assume that $x = 0$ and $\psi(0) = 0$. Using probabilistic notation, we rewrite the supremum in \eqref{eq_phi_psi_x_max} as
\begin{equation} \label{lem_co_fin_01}
- \varphi^{\psi}(0) = \sup \E\big[ \langle Y, Z \rangle - \psi(Y)\big] < + \infty,
\end{equation}
where the supremum in \eqref{lem_co_fin_01} is taken over all probability spaces such that $Z \sim \gamma$ and $Y$ is an $\Rd$-valued random variable with finite second moment and $\E[Y] = 0$. By contradiction, suppose there is $e \in \Rd \setminus \{ 0 \}$ such that
\begin{equation} \label{lem_co_fin_02}
\lim_{t \rightarrow +\infty} \frac{\psi(t e)}{t} < +\infty.
\end{equation}
We define the convex function $\bar{\psi} \colon \R \rightarrow (-\infty,+\infty]$ by $\bar{\psi}(t) \coloneqq \psi(te)$, for $t \in \R$. By assumption we have $0 \in \operatorname{int} (\dom\psi)$, thus also $0 \in \operatorname{int} (\dom\bar{\psi})$. In particular, $\bar{\psi}$ is continuous in a neighbourhood of $0$. By \eqref{lem_co_fin_02}, there is a constant $K$ such that $\bar{\psi}(t)\leqslant Kt$, for all $t$ large enough. As $\bar{\psi}(0) = 0$ and $\bar{\psi}$ is convex, we conclude that $\bar{\psi}(t)\leqslant Kt$, for all $t \geqslant 0$. Next we introduce four parameters, $A<0$ and $\delta,M,C>0$, and define the $\Rd$-valued function
\[
\Rd \ni z \longmapsto f_{\delta,A}^{M,C}(z) \coloneqq
\begin{cases}
M e, & \textnormal{ if } \langle e,z\rangle \geqslant C,\\
-\delta e, & \textnormal{ if } \langle e,z\rangle \in (A,0),\\
0, & \textnormal{ else}.
\end{cases}
\]
Given $Z \sim \gamma$, this induces a random variable $Y_{\delta,A}^{M,C} \coloneqq f_{\delta,A}^{M,C}(Z)$. We impose the relation 
\begin{equation} \label{lem_co_fin_03}
M \, \mathbb{P}\big[ \langle e,Z\rangle \geqslant C\big] 
= \delta \, \mathbb{P} \big[\langle e,Z\rangle \in (A,0)\big]
\end{equation}
between $M$ and $\delta$, so that $\E[Y_{\delta,A}^{M,C}]=0$. Now we fix $A<0$ and some $\varepsilon >0$. Then we choose $\delta > 0$ small enough such that 
\[
\vert \bar{\psi}(-\delta) \vert \vee \delta \, \E\big[\vert \langle e,Z\rangle \vert\big] \leqslant \tfrac{\varepsilon}{2},
\] 
which we achieve by continuity of $\bar{\psi}$ and the fact that $\bar{\psi}(0)=0$. We leave $C$ as a free parameter, which fixes $M$ via \eqref{lem_co_fin_03}. We compute
\begin{align*}
& \, \E\big[\langle Y_{\delta,A}^{M,C},Z \rangle - \psi(Y_{\delta,A}^{M,C}) \big] \\
= & \, \E\big[  
\boldsymbol{1}_{\langle e,Z\rangle \geqslant C} \big( M \langle e,Z\rangle - \bar{\psi}(M)\big) 
+ \boldsymbol{1}_{\langle e,Z\rangle \in (A,0)} \big(-\delta \langle e,Z\rangle - \bar{\psi}(-\delta)\big)
\big] \\
\geqslant & \, M \, \E\big[  
\boldsymbol{1}_{\langle e,Z\rangle \geqslant C} \big( \langle e,Z\rangle - K\big)\big] 
-\delta \, \E\big[\boldsymbol{1}_{\langle e,Z\rangle \in (A,0)}  \langle e,Z\rangle \big]
- \bar{\psi}(-\delta) \, \mathbb{P}\big[\langle e,Z\rangle \in (A,0)\big] \\
\geqslant & \, M (C-K) \, \mathbb{P}\big[\langle e,Z\rangle \geqslant C\big] - \varepsilon \\
= & \, \delta (C-K) \, \mathbb{P} \big[\langle e,Z\rangle \in (A,0)\big] - \varepsilon.
\end{align*}
Now taking $C \nearrow + \infty$ we conclude that $- \varphi^{\psi}(0) =+\infty$, which is a contradiction to \eqref{lem_co_fin_01}.
\end{proof}

\begin{proof}[Proof of Lemma \ref{lem:dim_dom_psi}] By contradiction, we suppose that 
\[
\dim (\dom\psiopt) < \dim (\supp\nu) = d. 
\]
Defining $B_{\ell} \coloneqq \{x \in \Rd \colon \psiopt(x) \leqslant \ell\}$ for $\ell \geqslant 1$, we clearly have $\mu(B_{\ell}) \nearrow 1$ and furthermore 
\[
\sup_{\ell \geqslant 1} \ \pisbm\big( B_{\ell} \times (\Rd \setminus \dom\psiopt) \big) >0.
\]
Indeed, otherwise $\nu$ would be concentrated on $\dom\psiopt$, contradicting the assumption that $\dim (\dom\psiopt) < \dim (\supp\nu)$. As a consequence, 
\[
\int_{B_{\ell} \times \Rd} \psiopt(y) \, \pisbm(dx,dy) = +\infty
\]
for all $\ell \geqslant 1$ large enough. We now show that $\psiopt$ could not have been optimal, by establishing that
\[
\D(\psiopt) = \int  \bigg( \sup_{p \in \PP_{2}^{x}(\Rd)} \Big(\MCov(p,\gamma)- \int \psiopt \, dp \Big) + \int \psiopt(y) \, \pisbm_{x}(dy)  \bigg) \, \mu(dx) = + \infty.
\]
To wit, selecting $p_{x} = \pisbm_{x}$ for $x \in \Rd \setminus B_{\ell}$ and $p_{x} = \delta_{x}$ for $x \in B_{\ell}$ we find
\begin{align*}
\D(\psiopt) 
&\geqslant \int_{\Rd \setminus B_{\ell}} \MCov(\pisbm_{x},\gamma) \, \mu(dx) 
- \int_{B_{\ell}}\Big(\psiopt(x) - \int \psiopt(y) \, \pisbm_{x}(dy) \Big) \, \mu(dx)\\
&\geqslant c - \ell + \int_{B_{\ell} \times \Rd} \psiopt(y) \, \pisbm(dx,dy),
\end{align*}
for a finite constant $c$. We conclude by taking $\ell \geqslant 1$ large enough.
\end{proof}

\begin{proof}[Proof of Lemma \ref{lem_aux_grad_convolution}] We first prove the identity \ref{lem_aux_grad_convolution_i}. We denote $F \coloneqq \nabla f$, which is well defined Lebesgue-a.e. For $y,\eta \in \Rd$ we have to compute
\[
\frac{(f \ast \gamma)(y+h\eta) -(f \ast \gamma)(y)}{h}
 = \int \frac{f(y+h\eta+z) -f(y+z) }{h} \, d\gamma(z),
\]
as $h \rightarrow 0$. For all $y,\eta$ and for $\gamma$-a.e.\ $z \in \Rd$, we have
\[
\lim_{h \rightarrow 0} \frac{f(y+h\eta+z) - f(y+z)}{h} 
= \langle F(y+z),\eta\rangle,
\]
so we only need to justify the exchange of limit and integral. To this end, it suffices to show the uniform integrability, with respect to the reference measure $\gamma$, of the family 
\begin{equation} \label{eq_uifu}
\mathscr{U} \coloneqq 
\bigg\{ \frac{f(y+h\eta+ \, \cdot \, ) -f(y+ \, \cdot \, )}{h} \colon 0 \leqslant h \leqslant 1 \bigg\}.
\end{equation}
Using twice the above-tangent characterization of convexity, we obtain
\begin{equation} \label{eq:grad_bounds}
\langle F(y+z),\eta\rangle
\leqslant \frac{f(y+h\eta+z) -f(y+z) }{h}
\leqslant \langle F(y+h\eta+z),\eta\rangle.
\end{equation}
We take $r \in (1,2)$, $p \coloneqq 2/r$, and $q \coloneqq p/(p-1)$ its H\"older conjugate, and compute
\begin{align}
&\int \vert F(y+h\eta+z)\vert^{r} \, d\gamma(z) 
= \int \vert F(\zeta+z)\vert^{r} \, d\gamma_{y+h\eta-\zeta}(z) \nonumber \\
=\, & \int \vert F(\zeta+z) \vert^{r} \, 
\exp\big(-\vert y-\zeta+h\eta \vert^{2}/2+\langle z ,y-\zeta+h\eta \rangle\big) \, d\gamma(z) \nonumber \\
\leqslant \, & \sqrt[p]{\Vert F \Vert^{2}_{L^{2}(\gamma_{\zeta};\Rd)}} \,
\sqrt[q]{\int \exp\big(-q\vert y-\zeta+h\eta \vert^{2}/2+q\langle z ,y-\zeta+h\eta \rangle\big)} \, d\gamma(z) \nonumber \\
\leqslant \, & \sqrt[p]{\Vert F \Vert^{2}_{L^{2}(\gamma_{\zeta};\Rd)}} \, 
\sqrt[q]{\int \exp\big(q\langle z ,y-\zeta+h\eta \rangle\big)  \, d\gamma(z)} \nonumber \\
= \, & \sqrt[p]{\Vert F \Vert^{2}_{L^{2}(\gamma_{\zeta};\Rd)}} \,
\sqrt[q]{ \exp\big(q^{2}\vert y-\zeta+h\eta \vert^{2}/2\big)}. \label{a_b_ui_f}
\end{align}
We see that the first factor in \eqref{a_b_ui_f} is finite (and independent of $h$) by assumption, while the second factor is uniformly bounded for $0 \leqslant h \leqslant 1$. Hence both the upper and lower bounds in \eqref{eq:grad_bounds} are uniformly integrable for $0 \leqslant h \leqslant 1$, implying the same property for the family $\mathscr{U}$ of \eqref{eq_uifu}.

\smallskip

We now turn to \ref{lem_aux_grad_convolution_ii} and \ref{lem_aux_grad_convolution_iii}. The arguments given at the end of the proof of Proposition \ref{prop_phi_psi_x_gen} show that, under the assumptions made here (but with $f$ instead of $\psi^{\ast}$), the function $f\ast\gamma$ is strictly convex. We already know that this function is differentiable, and that the interior of its domain is $\Rd$. Hence by \cite[Theorem 26.5]{Ro70} we deduce the stated properties. The only thing that merits an explanation is the inclusion $\operatorname{int} (\dom (f\ast\gamma)^{\ast})\supseteq \operatorname{int} (\dom f^\ast)$. To this end, observe that 
\begin{align*}
(f\ast\gamma)^{\ast}(x)
&= \sup_{\zeta \in \Rd} \Big\{\langle \zeta,x\rangle - \int f(\zeta+z) \, d\gamma(z)\Big\} \leqslant \int \sup_{\zeta \in \Rd} \{\langle \zeta,x\rangle -f(\zeta+z) \} \, d\gamma(z) \\
&= \int \sup_{\zeta \in \Rd} \big\{\langle \zeta-z,x\rangle -f(\zeta) \big\} \, d\gamma(z) 
= \int \big( f^{\ast}(x) - \langle z,x\rangle \big) \, d\gamma(z) 
= f^{\ast}(x),
\end{align*}
so $\dom (f\ast\gamma)^{\ast} \supseteq \dom f^{\ast}$.
\end{proof}

\begin{proof}[Proof of Lemma \ref{lem_int_dom_range}] The first equality in \eqref{eq_m_lem_int_dom_range} follows from 
\[
\operatorname{int}(\dom \psi)\subseteq \dom \partial \psi \subseteq \dom \psi
\]
and $\dom \partial \psi = (\partial \psi^{\ast})(\Rd)$. We also note that
\begin{equation} \label{eq_m_lem_int_dom_range_01} 
\operatorname{int} \big( (\partial \psi^{\ast})(\Rd) \big) 
= \operatorname{int} \conv \big( (\partial \psi^{\ast})(\Rd) \big),
\end{equation}
where $\conv$ denotes the convex hull. Indeed, as a consequence of a result by Mirsky \cite{Mi61}, we have that $\operatorname{int} \conv ( (\partial \psi^{\ast})(\Rd) ) \subseteq (\partial \psi^{\ast})(\Rd)$, since the subdifferential (multi-valued) mapping of any lower semicontinuous proper convex function is also a maximal monotone mapping by \cite[Corollary 31.5.2]{Ro70}. 

\smallskip

Let us turn to the proof of the second equality in \eqref{eq_m_lem_int_dom_range}. Without loss of generality we can assume that $\eta = 0$ and $t = 1$. Since we already verified the first equality and by virtue of \eqref{eq_m_lem_int_dom_range_01}, it suffices to show the inclusions 
\begin{equation} \label{eq_m_lem_int_dom_range_fi} 
(\nabla \psi^{\ast} \ast \gamma)(\Rd) \subseteq \operatorname{int} \conv ( (\partial \psi^{\ast})(\Rd))
\end{equation}
and
\begin{equation} \label{eq_m_lem_int_dom_range_si} 
\operatorname{int}(\dom \psi) \subseteq (\nabla \psi^{\ast} \ast \gamma)(\Rd).
\end{equation}

Note that we have $(\nabla \psi^{\ast} \ast \gamma)(\Rd) \subseteq \overline{\conv} ( (\partial \psi^{\ast})(\Rd))$ and therefore
\begin{equation} \label{eq_m_lem_int_dom_range_si_aa} 
\operatorname{int} (\nabla \psi^{\ast} \ast \gamma)(\Rd) 
\subseteq \operatorname{int} \overline{\conv} ( (\partial \psi^{\ast})(\Rd))
= \operatorname{int} \conv ( (\partial \psi^{\ast})(\Rd)).
\end{equation}
Under the assumptions of Lemma \ref{lem_int_dom_range}, the function $\psi$ is co-finite by Lemma \ref{lem_co_fin} and therefore we can apply Lemma \ref{lem_aux_grad_convolution} to its convex conjugate $f = \psi^{\ast}$. In particular, by \ref{lem_aux_grad_convolution_iii}, the set $(\nabla \psi^{\ast} \ast \gamma)(\Rd)$ is open and hence the inclusion \eqref{eq_m_lem_int_dom_range_fi} follows from \eqref{eq_m_lem_int_dom_range_si_aa}.

In order to prove \eqref{eq_m_lem_int_dom_range_si}, we first observe that $\operatorname{int} (\dom \psi) \subseteq \operatorname{int}(\dom(\psi^{\ast}\ast\gamma)^{\ast})$ by Lemma \ref{lem_aux_grad_convolution}, \ref{lem_aux_grad_convolution_ii}. Since $\operatorname{int} (\dom (\psi^{\ast}\ast\gamma)^{\ast}) = (\nabla \psi^{\ast} \ast\gamma)(\Rd)$ by Lemma \ref{lem_aux_grad_convolution}, \ref{lem_aux_grad_convolution_iii}, the inclusion \eqref{eq_m_lem_int_dom_range_si} follows.
\end{proof}

\section{Proof of Proposition \texorpdfstring{\ref{prop_1b}}{7.20}} \label{app_prop_gen_case}

\begin{lemma} \label{lemma_1b} For every sequence $(\psi_{n})_{n \geqslant 1}$ of lower semicontinuous convex functions $\psi_{n} \colon \Rd \rightarrow [0,+\infty)$ there is a closed convex set $\Clim \subseteq \Rd$ with relative interior $\Ilim$ and a subsequence $(\psi_{n_{k}})_{k \geqslant 1}$ such that the limits
\begin{alignat}{2}
\psilim(y) &\coloneqq \lim_{k \rightarrow \infty} \psi_{n_{k}}(y) < + \infty,
\qquad &&y \in \Ilim \label{lemma_1b_i} \\
\psilim(y) &\coloneqq  \lim_{k \rightarrow \infty} \psi_{n_{k}}(y) = + \infty, 
\qquad &&y \in \Rd \setminus \Clim \label{lemma_1b_ii}
\end{alignat}
exist. Moreover, the convergence in \eqref{lemma_1b_i} is uniform on compact subsets of $\Ilim$.
\begin{proof} \textit{Step 1.} We denote by $\mathcal{N}$ the collection of increasing subsequences $\mathscr{N} = (n_{k})_{k \geqslant 1}$ of $\mathbb{N}$. For two subsequences $\mathscr{N}, \mathscr{N}^{\prime} \in \mathcal{N}$ we call $\mathscr{N}^{\prime}$ \textit{finer} than $\mathscr{N}$, denoted by $\mathscr{N}^{\prime} \succcurlyeq \mathscr{N}$, if $\mathscr{N}^{\prime} \subseteq \mathscr{N}$, up to finitely many elements.

\smallskip

We write $\mathcal{I}$ for the collection of relatively open convex sets $I \subseteq \Rd$. For two pairs $(I,\mathscr{N}), (I^{\prime},\mathscr{N}^{\prime}) \in \mathcal{I} \times \mathcal{N}$ we say that $(I^{\prime},\mathscr{N}^{\prime})$ is \textit{finer} than $(I,\mathscr{N})$, again denoted by $(I^{\prime},\mathscr{N}^{\prime}) \succcurlyeq (I,\mathscr{N})$, if $I^{\prime} \supsetneq I$ and $\mathscr{N}^{\prime} \succcurlyeq \mathscr{N}$.

\smallskip

We call a pair $(I,\mathscr{N}) \in \mathcal{I} \times \mathcal{N}$ \textit{admissible} if it satisfies 
\begin{equation} \label{eq_def_admissible}
\forall y \in I \colon \ \sup_{n_{k} \in \mathscr{N}} \psi_{n_{k}}(y) < + \infty.
\end{equation}

\medskip 

\noindent \textit{Step 2.} Let $(I,\mathscr{N})$ be admissible. Write $\mathscr{N} = (n_{k})_{k \geqslant 1} \in \mathcal{N}$ and denote by $C$ the closure of $I$. Then one of the following two alternatives hold: either
\begin{equation} \label{eq_def_maximal}
\lim_{k \rightarrow \infty} \psi_{n_{k}}(y) = + \infty, 
\qquad y \in \Rd \setminus C, 
\end{equation}
in which case we call $(I,\mathscr{N})$ \textit{maximal}; or there is some $y_{0} \in \Rd \setminus C$ and a subsequence $\mathscr{N}^{\prime} = (n_{k}^{\prime})_{k \geqslant 1} \in \mathcal{N}$ finer than $\mathscr{N}$ such that
\[
\sup_{n_{k}^{\prime} \in \mathscr{N}^{\prime}} \psi_{n_{k}^{\prime}}(y_{0}) < + \infty.
\]
Now we define the relatively open convex set $I^{\prime} \coloneqq \operatorname{ri} \conv(y_{0},I)$ and note that $I^{\prime} \supsetneq I$. Then the pair $(I^{\prime},\mathscr{N}^{\prime})$ is admissible
and finer than $(I,\mathscr{N})$. In this second case we say that $(I,\mathscr{N})$ is \textit{refinable}.

\medskip 

\noindent \textit{Step 3.} Assume that $(I,\mathscr{N})$ is maximal, so that both \eqref{eq_def_admissible} and \eqref{eq_def_maximal} are satisfied. In particular, the sequence $(\psi_{n_{k}})_{k \geqslant 1}$, with $\mathscr{N} = (n_{k})_{k \geqslant 1}$, is pointwise bounded on $I$. By \cite[Theorem 10.9]{Ro70} we can select a further subsequence, still denoted by $(\psi_{n_{k}})_{k \geqslant 1}$, such that \eqref{lemma_1b_i} holds for all $y \in \Ilim \coloneqq I$ and the convergence is uniform on compact subsets of $\Ilim$. Furthermore, condition \eqref{lemma_1b_ii} is still satisfied for this further subsequence thanks to \eqref{eq_def_maximal}.

\medskip 

\noindent \textit{Step 4.} In light of Step 3, in order to prove Lemma \ref{lemma_1b}, it suffices to show the existence of a maximal pair $(I,\mathscr{N})$, which we will denote by $(\Ilim,\Nlim)$. The construction of the maximal pair $(\Ilim,\Nlim)$ is done via transfinite recursion along the countable ordinal numbers $\alpha < \omega_{1}$, where $\omega_{1}$ is the first uncountable ordinal. 

\smallskip

We start the construction as follows. If $\sup_{n \in \mathbb{N}} \psi_{n}(y) = + \infty$ holds for all $y \in \Rd$, the pair $(I_{0},\mathscr{N}_{0}) = (\varnothing,\mathbb{N})$ is admissible. Otherwise, we can find some $y_{0} \in \Rd$ such that $\sup_{n \in \mathbb{N}} \psi_{n}(y_{0}) < + \infty$ and then the pair $(I_{0},\mathscr{N}_{0}) = (\{y_{0}\},\mathbb{N})$ is admissible.

\smallskip

Now let $\alpha < \omega_{1}$ be an ordinal number and suppose that the transfinite family $(I_{\beta},\mathscr{N}_{\beta})_{\beta < \alpha}$ in $\mathcal{I} \times \mathcal{N}$ is such that 
\begin{enumerate}[label=(\roman*)] 
\item \label{tfi_i} the pair $(I_{\beta},\mathscr{N}_{\beta})$ is admissible, for all $\beta < \alpha$, 
\item \label{tfi_ii} the pair $(I_{\beta_{2}},\mathscr{N}_{\beta_{2}})$ is finer than $(I_{\beta_{1}},\mathscr{N}_{\beta_{1}})$, for all $\beta_{1} < \beta_{2} < \alpha$.
\end{enumerate}

\smallskip

We first assume that $\alpha$ is a successor ordinal, i.e., there is an ordinal $\beta$ such that $\alpha = \beta + 1$. Since $(I_{\beta},\mathscr{N}_{\beta})$ is admissible, by Step 2 only two cases are possible. If $(I_{\beta},\mathscr{N}_{\beta})$ is maximal, we set $(\Ilim,\Nlim) \coloneqq (I_{\beta},\mathscr{N}_{\beta})$ and finish the construction. If $(I_{\beta},\mathscr{N}_{\beta})$ is refinable, we can find a pair $(I_{\alpha},\mathscr{N}_{\alpha})$ which is finer than $(I_{\beta},\mathscr{N}_{\beta})$.

\smallskip

Suppose that $\alpha$ is a limit ordinal, i.e., it can be written as $\alpha = \{ \beta \colon \beta < \alpha\}$. Since $\alpha < \omega_{1}$, the ordinal $\alpha$ is countable and can be enumerated as $\alpha = \{ \alpha_{k} \colon k < \omega\}$, where $\omega$ is the first infinite ordinal. Letting $\beta_{0} \coloneqq 0$ and defining inductively $\beta_{m+1} \coloneqq \alpha_{\ell}$, where $\ell \coloneqq \min \{k \colon \beta_{m} < \alpha_{k}\}$, we obtain a strictly increasing cofinal sequence $(\beta_{m})_{m < \omega}$ in $\alpha$. Now we define $I_{\alpha} \coloneqq \bigcup_{m < \omega} I_{\beta_{m}}$. By \ref{tfi_ii}, $(I_{\beta_{m}})_{m < \omega}$ is a strictly increasing sequence of relatively open convex sets, therefore also $I_{\alpha} \in \mathcal{I}$. Again by  \ref{tfi_ii}, the sequence $(\mathscr{N}_{\beta_{m}})_{m < \omega}$ satisfies $\mathscr{N}_{\beta_{m_{1}}} \succcurlyeq \mathscr{N}_{\beta_{m_{2}}}$, for all $m_{1} < m_{2} < \omega$. Thus we can find a diagonal subsequence $\mathscr{N}_{\alpha} \in \mathcal{N}$ of $(\mathscr{N}_{\beta_{m}})_{m < \omega}$, meaning that $\mathscr{N}_{\alpha} \succcurlyeq \mathscr{N}_{\beta_{m}}$, for all $m < \omega$. Since by \ref{tfi_i} the pair $(I_{\beta_{m}},\mathscr{N}_{\beta_{m}})$ is admissible, for all $m < \omega$, by construction also $(I_{\alpha},\mathscr{N}_{\alpha})$ is admissible. Furthermore, $(I_{\alpha},\mathscr{N}_{\alpha})$ satisfies $(I_{\alpha},\mathscr{N}_{\alpha}) \succcurlyeq (I_{\beta_{m}},\mathscr{N}_{\beta_{m}})$, for all $m < \omega$. Since $(\beta_{m})_{m < \omega}$ is cofinal, it follows that $(I_{\alpha},\mathscr{N}_{\alpha}) \succcurlyeq (I_{\beta},\mathscr{N}_{\beta})$, for all $\beta < \alpha$.

\smallskip

After countably many steps, this transfinite recursion must stop, which happens when $\alpha < \omega_{1}$ is such that $(I_{\alpha},\mathscr{N}_{\alpha})$ is maximal and we can then set $(\Ilim,\Nlim) \coloneqq (I_{\alpha},\mathscr{N}_{\alpha})$. Indeed, otherwise we would have a transfinite sequence $(I_{\beta})_{\beta < \omega_{1}}$ of strictly increasing relatively open convex sets, indexed by the countable ordinals, which is impossible. 
\end{proof}
\end{lemma}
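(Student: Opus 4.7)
The plan is to construct $\Clim$ and the limit function $\psilim$ by greedily enlarging the region on which some subsequence of $(\psi_n)$ stays pointwise bounded, and then use the standard fact (e.g.\ \cite[Theorem 10.9]{Ro70}) that a pointwise bounded sequence of convex functions on an open convex set admits a subsequence converging uniformly on compacta. Call a pair $(I,\mathscr{N})$ with $I \subseteq \Rd$ relatively open and convex and $\mathscr{N}$ a subsequence of $\mathbb{N}$ \emph{admissible} if $\sup_{n\in\mathscr{N}}\psi_n(y)<+\infty$ for every $y\in I$. Call it \emph{maximal} if, in addition, $\psi_{n_k}(y)\to+\infty$ for every $y\in\Rd\setminus\overline{I}$ along $\mathscr{N}=(n_k)$. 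I will show that some maximal admissible pair exists; writing $\Clim \coloneqq \overline I$ and $\Ilim \coloneqq I$, the convex function limit $\psilim$ on $\Ilim$ is then obtained by one final Rockafellar-style extraction.

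The crucial dichotomy is the following: if $(I,\mathscr{N})$ is admissible but not maximal, there exists $y_0\notin\overline{I}$ and a further subsequence $\mathscr{N}'\subseteq\mathscr{N}$ with $\sup_{n\in\mathscr{N}'}\psi_n(y_0)<+\infty$ (by passing to a subsequence realizing a finite liminf). Then the enlarged set $I' \coloneqq \operatorname{ri}\conv(\{y_0\}\cup I)$ is relatively open and convex, contains $I$ strictly, and admissibility on $I'$ follows by a convex combination estimate: any point in $I'$ lies on an open segment between $y_0$ and a point in $I$, and convexity together with non-negativity of $\psi_n$ yields the uniform bound along $\mathscr{N}'$. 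So as long as a pair is not maximal, it can be \emph{strictly refined}.

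The construction now runs by transfinite recursion along the countable ordinals $\alpha<\omega_1$, exactly in the spirit of the argument sketched in Step 4 of the proof. Start from any admissible pair (e.g.\ $(\varnothing,\mathbb{N})$, or a singleton $(\{y_0\},\mathbb{N})$ if some $y_0$ has $\sup_n\psi_n(y_0)<+\infty$). At successor stages, if the current pair is maximal we stop; otherwise we refine using the dichotomy above. At limit stages $\alpha<\omega_1$, choose a cofinal sequence $(\beta_m)_{m<\omega}$ in $\alpha$, set $I_\alpha \coloneqq \bigcup_{m} I_{\beta_m}$ (still relatively open convex as the union of an increasing chain of such sets), and extract a diagonal subsequence $\mathscr{N}_\alpha$ with $\mathscr{N}_\alpha\succcurlyeq\mathscr{N}_{\beta_m}$ for every $m$; admissibility of $(I_\alpha,\mathscr{N}_\alpha)$ follows because any $y\in I_\alpha$ lies in some $I_{\beta_m}$ and the tail of $\mathscr{N}_\alpha$ is contained in $\mathscr{N}_{\beta_m}$. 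The recursion must terminate at some countable ordinal, since otherwise we would produce an uncountable strictly increasing chain of relatively open convex subsets of $\Rd$, contradicting the fact that $\Rd$ is second countable (such a chain would inject into a countable base).

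Given a maximal admissible pair $(\Ilim,\Nlim)$, set $\Clim \coloneqq \overline{\Ilim}$. The sequence $(\psi_{n_k})_{n_k\in\Nlim}$ is pointwise bounded on the relatively open convex set $\Ilim$, so by \cite[Theorem 10.9]{Ro70} (applied on the affine span of $\Ilim$) there is a further subsequence along which $\psi_{n_k}$ converges uniformly on compact subsets of $\Ilim$ to a finite convex function $\psilim$, giving \eqref{lemma_1b_i}. The relation \eqref{lemma_1b_ii} is preserved under passage to this further subsequence by maximality. The main obstacle in executing this plan is a clean handling of the transfinite recursion, in particular the construction of diagonal subsequences at limit stages and the termination argument at a countable ordinal; once that scaffolding is in place, the remaining analytic content is routine convex-analysis.
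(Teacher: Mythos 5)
Your proposal is correct and follows essentially the same route as the paper's proof: the same admissible/maximal/refinable dichotomy, the same transfinite recursion over countable ordinals with diagonal subsequences at limit stages, and the same final extraction via \cite[Theorem 10.9]{Ro70}. The two details you spell out that the paper leaves implicit (admissibility of the refined pair via a convex-combination bound, and termination via second countability of $\Rd$) are both correct.
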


\begin{proof}[Proof of Proposition \ref{prop_1b}:] \

\noindent \textit{Step 1.} By Lemma \ref{lemma_1b}, there is a closed convex set $\Clim \subseteq \Rd$ with relative interior $\Ilim$ and a subsequence $(\psi_{n_{k}})_{k \geqslant 1}$ such that the limits
\begin{alignat}{2}
\psilim(y) &\coloneqq \lim_{k \rightarrow \infty} \psi_{n_{k}}(y) < + \infty,
\qquad &&y \in \Ilim \label{lemma_1b_i_apl} \\
\psilim(y) &\coloneqq  \lim_{k \rightarrow \infty} \psi_{n_{k}}(y) = + \infty, 
\qquad &&y \in \Rd \setminus \Clim \label{lemma_1b_ii_apl}
\end{alignat}
exist, where the convergence in \eqref{lemma_1b_i_apl} is uniform on compact subsets of $\Ilim$. Via \eqref{lemma_1b_i_apl} and \eqref{lemma_1b_ii_apl} we define a lower semicontinuous convex function $\psilim \colon \Rd \rightarrow [0,+\infty]$ on all of $\Rd$.

\medskip 

\noindent \textit{Step 2.} By the hypothesis \eqref{prop_1b_ass} and the construction of $\Ilim$, we clearly have $I \subseteq \Ilim$. Our goal is to show that $\Ilim = I$. To this end, we define the set $A \subseteq \Rd$ as in \eqref{def_set_a}, now with $I$ replaced by $\Ilim$, i.e.,
\[
A = \big\{ x \in \Ilim \colon \psilim \not\equiv \psixsbm  \textnormal{ mod (aff)}\big\}.
\]
Again, we will show that $\mu(A) = 0$. Repeating the reasoning of Lemma \ref{lemma_a_h} we obtain, for $\mu$-a.e.\ $x \in A$, measures $\check{\pi}_{x} \in \PP_{2}^{x}(\Rd)$ satisfying \eqref{lemma_a_h_in}, which now are supported by $\Clim$. Then we choose an increasing sequence $(K_{j})_{j \geqslant 1}$ of relatively compact subsets of $\Ilim$ such that $\bigcup_{j \geqslant 1}K_{j} = \Ilim$. As in Lemma \ref{lemma_irr_f_p}, for $\mu$-a.e.\ $x \in A$, we find measures $\check \pi_{x}^{j(x)} \in \PP_{2}^{x}(\Rd)$ satisfying \eqref{lemma_a_h_in_se} and supported by $K_{j(x)}$, for some $j(x) \in \mathbb{N}$. Arguing as in Lemma \ref{lemma_irr_f} we conclude that $\mu(A) = 0$, i.e., $\psilim \equiv \psixsbm$ mod (aff), for $\mu$-a.e.\ $x \in \Ilim$. In particular, since $\Ilim = \operatorname{ri} (\dom \psilim)$ and $\operatorname{ri}(\dom \psixsbm) = \operatorname{ri} \widehat{\supp}(\nu)$, we conclude from the assumption $\widehat{\supp}(\nu) \subseteq C$ that $\Ilim = I$ and consequently $C = \widehat{\supp}(\nu)$. From Lemma \ref{lemma_ch_o_do}, we conclude that the limiting function $\psilim$ is a dual optimizer. Together with Step 1, this proves the statement of Proposition \ref{prop_1b} for the subsequence $(\psi_{n_{k}})_{k \geqslant 1}$.


\medskip 

\noindent \textit{Step 3.} Applying Step 1 and Step 2 to an arbitrary subsequence $(\psi_{m_{\ell}})_{\ell \geqslant 1}$ instead of the original sequence $(\psi_{n})_{n \geqslant 1}$, we obtain a further subsequence $(\psi_{m_{\ell_{j}}})_{j \geqslant 1}$ and a lower semicontinuous convex function $\tpsilim \colon \Rd \rightarrow [0,+\infty]$ such that 
\begin{align*}
\forall y \in I \colon \ \tpsilim(y) &= \lim_{j \rightarrow \infty} \psi_{m_{\ell_{j}}}(y) < + \infty, \\
\forall y \in \Rd \setminus C \colon \ \tpsilim(y) &= \lim_{j \rightarrow \infty} \psi_{m_{\ell_{j}}}(y) = + \infty, 
\end{align*}
and $\tpsilim \equiv \psixsbm$ mod (aff), for $\mu$-a.e.\ $x \in I$. We conclude that there is an affine function $\aff$ depending on the subsequence $({m_{\ell_{j}}})_{j \geqslant 1}$ such that $\psilim = \tpsilim + \aff$. In particular,
\begin{equation} \label{eq_step_3_fin}
\forall y \in I \cup (\Rd \setminus C) \colon \ 
\lim_{j \rightarrow \infty} \big( \psi_{m_{\ell_{j}}}(y) + \aff(y) \big) = \psilim(y).
\end{equation}

\medskip 

\noindent \textit{Step 4.} Let $x_{0}, \ldots, x_{d}$ be affinely independent points in $I \cup (\Rd \setminus C)$. For each $n \geqslant 1$ there is a unique affine function $\aff_{n}$ such that
\begin{equation} \label{eq_st4_i}
\forall i \in \{0, \ldots, d\} \colon \ \psi_{n}(x_{i}) + \aff_{n}(x_{i}) = \psilim(x_{i}).
\end{equation}
We claim that
\begin{equation} \label{eq_st4_ii}
\forall y \in I \cup (\Rd \setminus C) \colon \ 
\lim_{n \rightarrow \infty} \big( \psi_{n}(y) + \aff_{n}(y) \big) = \psilim(y).
\end{equation}
Indeed, if this were not the case, there would be some $y_{0} \in I \cup (\Rd \setminus C)$ and a subsequence $(m_{\ell})_{\ell \geqslant 1}$ such that
\begin{equation} \label{eq_st4_iii}
\lim_{\ell \rightarrow \infty} \big( \psi_{m_{\ell}}(y_{0}) + \aff_{m_{\ell}}(y_{0}) \big) \neq \psilim(y_{0}).
\end{equation}
By Step 3 there is a further subsequence $(m_{\ell_{j}})_{j \geqslant 1}$ and an affine function $\aff$ such that \eqref{eq_step_3_fin} holds. In particular,
\begin{align} 
\forall i \in \{0, \ldots, d\} \colon \ 
\lim_{j \rightarrow \infty} \big( \psi_{m_{\ell_{j}}}(x_{i}) + \aff(x_{i}) \big) &= \psilim(x_{i}), \label{eq_st4_iv} \\
\lim_{j \rightarrow \infty} \big( \psi_{m_{\ell_{j}}}(y_{0}) + \aff(y_{0}) \big) &= \psilim(y_{0}). \label{eq_st4_v}
\end{align}
From \eqref{eq_st4_i} and \eqref{eq_st4_iv} we obtain that
\begin{equation} \label{eq_st4_vi}
\forall i \in \{0, \ldots, d\} \colon \ 
\lim_{j \rightarrow \infty}  \aff_{m_{\ell_{j}}}(x_{i}) = \aff(x_{i}). 
\end{equation}
Since the points $x_{0}, \ldots, x_{d}$ are affinely independent, the convergence in \eqref{eq_st4_vi} holds for every $y \in I \cup (\Rd \setminus C)$. Then \eqref{eq_st4_v} leads to a contradiction to \eqref{eq_st4_iii}, which proves claim \eqref{eq_st4_ii}. Defining $\tilde{\psi}_{n} \coloneqq \psi_{n} + \aff_{n}$, for $n \geqslant 1$, we derive from \eqref{eq_st4_ii} the limiting assertions \eqref{prop_1b_i} and \eqref{prop_1b_ii}. Finally, by \cite[Theorem 10.8]{Ro70} the convergence in \eqref{prop_1b_i} is uniform on compact subsets of $I$ and we have already seen in Step 2 that $\psilim \equiv \psixsbm$ mod (aff), for $\mu$-a.e.\ $x \in I$.
\end{proof}

\section{Characterization of irreducibility} \label{app_sec_irr}

\begin{theorem} \label{theo_coic} Let $\mu,\nu$ in $\PP_{2}(\Rd)$ with $\mu \lc \nu$. Then the following are equivalent. 
\begin{enumerate}[label=(\arabic*)] 
\item \label{theo_coic_1} The pair $(\mu,\nu)$ is De March--Touzi irreducible in the sense of Definition \ref{def_dmti}.
\item \label{theo_coic_2} There exists $\pi \in \MT(\mu, \nu)$ such that $\pi_{x} \sim \nu$, for $\mu$-a.e.\ $x \in \Rd$.
\item \label{theo_coic_3} The pair $(\mu,\nu)$ is irreducible in the sense of Definition \ref{defi:irreducible_intro}, i.e., for all Borel sets $A, B \subseteq \Rd$ with $\mu(A), \nu(B)>0$ there is a martingale $(X_{t})_{0 \leqslant t \leqslant 1}$ with $X_{0} \sim \mu$, $X_{1} \sim \nu$ such that $\mathbb{P}(X_{0}\in A, X_{1}\in B) >0$. 
\item \label{theo_coic_4} For all Borel sets $A,B \subseteq \Rd$ with $\mu(A), \nu(B)>0$ there exists $\pi \in \MT(\mu, \nu)$ such that $\pi(A\times B)>0$. 
\item \label{theo_coic_5} For all compact sets $A \subseteq \R^d$ and open halfspaces $H$ with $\mu(A),\nu(H)>0$ there exists $\pi \in \MT(\mu, \nu)$ such that $\pi(A\times H)>0$.
\end{enumerate}
\begin{proof} For the proof of ``\ref{theo_coic_1} $\Rightarrow$ \ref{theo_coic_2}'' we can take $\pi = \pisbm$ in \ref{theo_coic_2} by Corollary \ref{thm:equivalence_kernels}. We turn to the proof of the implication ``\ref{theo_coic_2} $\Rightarrow$ \ref{theo_coic_3}'': Clearly condition \ref{theo_coic_2} implies \ref{theo_coic_4}. Thus, in order to show \ref{theo_coic_3}, it suffices to construct a continuous-time martingale $(X_{t})_{0 \leqslant t \leqslant 1}$ with $\operatorname{Law}(X_{0},X_{1}) = \pi$. This can be achieved as in the proof of \cite[Theorem 2.2]{BaBeHuKa20}. For the proof of ``\ref{theo_coic_3} $\Rightarrow$ \ref{theo_coic_4}'' we can take $\pi = \Law(X_{0},X_{1})$ in \ref{theo_coic_4}. The implication ``\ref{theo_coic_4} $\Rightarrow$ \ref{theo_coic_5}'' is trivial. It remains to show that ``\ref{theo_coic_5} $\Rightarrow$ \ref{theo_coic_1}'': We fix an open halfspace $H$ satisfying $\nu(H)>0$ and set
\[
m \coloneqq \sup_{\pi \in \MT(\mu, \nu)} \Big\{ \mu\big(\{x \in \Rd \colon  \pi_{x}(H)>0\}\big)\Big\}.
\]
Considering countable convex combinations of elements of $\MT(\mu,\nu)$ it follows that the supremum is attained by some $\bar{\pi} \in \MT(\mu,\nu)$. If  the set 
\[
\{ x \in \Rd \colon \bar{\pi}_{x}(H) = 0\}
\]
had positive $\mu$-measure, it would contain a compact set $A$ with positive $\mu$-measure in contradiction to condition \ref{theo_coic_5}. Hence $m=1$. 

Observe that there is a countable family of open halfspaces $\{H_{n}\}_{n \geqslant 1}$ with $\nu(H_{n})>0$ such that, for every open halfspace $H$ with $\nu(H)>0$, there is some $n \geqslant 1$ satisfying $H_{n} \subseteq H$. 

Next, for each $n \geqslant 1$, we pick $\pi^{(n)} \in \MT(\mu, \nu)$ such that $\pi^{(n)}_{x}(H_{n}) >0$, for $\mu$-a.e.\ $x \in \Rd$. Set $\hat{\pi} \coloneqq \sum_{n \geqslant 1} 2^{-n} \pi^{(n)} \in \MT(\mu,\nu)$. Then by Lemma \ref{ConvexSuppChar} below it follows that $\widehat{\supp}(\nu) \subseteq \widehat{\supp}(\hat{\pi}_{x})$, for $\mu$-a.e.\ $x \in \Rd$. 
\end{proof}
\end{theorem}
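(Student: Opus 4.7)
My plan is to prove the cycle $(1) \Rightarrow (2) \Rightarrow (3) \Rightarrow (4) \Rightarrow (5) \Rightarrow (1)$. The first four implications are essentially immediate. $(1) \Rightarrow (2)$ is a direct consequence of Corollary~\ref{thm:equivalence_kernels}: one simply takes $\pi = \pisbm$, whose kernels are equivalent to $\nu$. For $(2) \Rightarrow (3)$, given such a $\pi$, the quantity $\pi(A \times B) = \int_A \pi_x(B)\,\mu(dx)$ is positive whenever $\mu(A), \nu(B) > 0$; the standard interpolation construction in \cite[Theorem~2.2]{BaBeHuKa20} then realizes $\pi$ as the joint law of $(X_0, X_1)$ for a continuous-time martingale. $(3) \Rightarrow (4)$ is obtained by setting $\pi = \Law(X_0, X_1)$, and $(4) \Rightarrow (5)$ is trivial since open halfspaces are Borel.

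The heart of the proof is the implication $(5) \Rightarrow (1)$. As an intermediate step I would establish: for every open halfspace $H$ with $\nu(H) > 0$ there exists $\pi \in \MT(\mu,\nu)$ with $\pi_x(H) > 0$ for $\mu$-a.e.\ $x$. To this end, fix such an $H$ and set
\[
m \coloneqq \sup_{\pi \in \MT(\mu,\nu)} \mu\big(\{x \in \Rd \colon \pi_x(H) > 0\}\big).
\]
The set $\MT(\mu,\nu)$ is closed under countable convex combinations: if $\pi = \sum_k \lambda_k \pi^{(k)}$ with $\lambda_k > 0$ and $\sum_k \lambda_k = 1$, then $\pi_x = \sum_k \lambda_k \pi^{(k)}_x$, whence $\{x : \pi_x(H) > 0\} = \bigcup_k \{x : \pi^{(k)}_x(H) > 0\}$. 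Applying this to a maximizing sequence shows that the supremum is attained by some $\bar\pi$. If $m < 1$, the set $\{x : \bar\pi_x(H) = 0\}$ would have positive $\mu$-measure and hence contain a compact $A$ with $\mu(A) > 0$. By condition (5), some $\pi^{\ast} \in \MT(\mu,\nu)$ would charge $A \times H$; then $\tfrac{1}{2}(\bar\pi + \pi^{\ast}) \in \MT(\mu,\nu)$ would strictly beat $\bar\pi$, contradicting maximality. Hence $m = 1$.

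To conclude (1), I would cover the collection of open halfspaces $H$ with $\nu(H) > 0$ by a countable subfamily $\{H_n\}_{n \geqslant 1}$ (using, for instance, rational outward normals and rational offsets) such that every such $H$ contains some $H_n$. For each $n$ pick $\pi^{(n)} \in \MT(\mu,\nu)$ with $\pi^{(n)}_x(H_n) > 0$ for $\mu$-a.e.\ $x$, and set $\hat\pi \coloneqq \sum_{n \geqslant 1} 2^{-n}\pi^{(n)} \in \MT(\mu,\nu)$. Then $\hat\pi_x(H_n) > 0$ for every $n$ and $\mu$-a.e.\ $x$. A supporting convex-hull lemma---stating that a probability measure supported in $C$ and charging every half-space of $C$ of positive $\nu$-measure must have $\widehat{\supp}$ containing $\widehat{\supp}(\nu)$---would then give $\widehat{\supp}(\hat\pi_x) \supseteq \widehat{\supp}(\nu)$ $\mu$-a.e.; the reverse inclusion is automatic since $\hat\pi \in \MT(\mu,\nu)$, so $\widehat{\supp}(\hat\pi_x) = C$, which is exactly condition (ii) of Definition~\ref{def_dmti}. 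Condition (i), $\mu(I) = 1$, follows from $\mu \lc \nu$ and the structure of irreducible components.

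The main obstacle is the halfspace maximality step, together with the supporting convex-hull lemma needed to upgrade ``charges every $H_n$'' into ``$\widehat{\supp}$ contains $\widehat{\supp}(\nu)$''. The maximality argument itself is an exhaustion/contradiction, and is delicate only because one must check that countable convex combinations of martingale transports behave linearly at the level of disintegrations. The choice of the countable family $\{H_n\}$ has to be rich enough that separating halfspaces between $\widehat{\supp}(\nu)$ and any fixed outside point appear in the list; once this is arranged the rest of the argument is bookkeeping.
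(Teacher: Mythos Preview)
Your proposal is correct and follows essentially the same route as the paper's proof: the same cycle $(1)\Rightarrow(2)\Rightarrow(3)\Rightarrow(4)\Rightarrow(5)\Rightarrow(1)$, the same use of Corollary~\ref{thm:equivalence_kernels} for $(1)\Rightarrow(2)$, and the same maximality/exhaustion argument via countable convex combinations for $(5)\Rightarrow(1)$. The ``supporting convex-hull lemma'' you anticipate is precisely Lemma~\ref{ConvexSuppChar} in the paper, and your explicit verification of $\mu(I)=1$ (which the paper leaves implicit) is a welcome addition.
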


\begin{lemma} \label{ConvexSuppChar} Let $\rho \in \PP(\Rd)$. Then $y \in \widehat{\supp}(\rho)$ if and only if $\rho(H)>0$ for every open halfspace $H$ such that $y\in H$. 
\begin{proof} This is a simple consequence of Hahn--Banach. 
\end{proof}
\end{lemma}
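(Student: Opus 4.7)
The plan is to prove both directions directly from the definition of $\widehat{\supp}(\rho)$ as the closed convex hull of $\supp(\rho)$, together with the observation that the complement of an open halfspace is a closed convex set and that any two disjoint closed convex sets in $\Rd$, one of them compact, can be strictly separated by a hyperplane.

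First I would handle the forward implication by contraposition on the complement. Suppose $H = \{x \in \Rd \colon \langle a,x \rangle > c\}$ is an open halfspace containing $y$, and assume toward contradiction that $\rho(H) = 0$. Then $\supp(\rho) \subseteq H^{c} = \{x \in \Rd \colon \langle a,x \rangle \leqslant c\}$. Since $H^{c}$ is closed and convex, taking closed convex hulls preserves this inclusion, so $\widehat{\supp}(\rho) \subseteq H^{c}$. But $y \in H$ forces $\langle a,y \rangle > c$, contradicting $y \in \widehat{\supp}(\rho) \subseteq H^{c}$.

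For the reverse implication I would again argue by contraposition. Assume $y \notin \widehat{\supp}(\rho)$. Since $\widehat{\supp}(\rho)$ is a closed convex subset of $\Rd$ and $\{y\}$ is a disjoint compact convex set, the Hahn--Banach separation theorem (in its strict separation form for finite-dimensional spaces) yields $a \in \Rd \setminus \{0\}$ and $c \in \R$ such that
\[
\sup_{z \in \widehat{\supp}(\rho)} \langle a,z \rangle < c < \langle a,y \rangle.
\]
Define the open halfspace $H \coloneqq \{x \in \Rd \colon \langle a,x \rangle > c\}$. Then $y \in H$, while $H \cap \widehat{\supp}(\rho) = \varnothing$ and in particular $H \cap \supp(\rho) = \varnothing$, which gives $\rho(H) = 0$.

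I do not expect any genuine obstacle: the only subtlety is invoking the strict, rather than merely weak, form of Hahn--Banach separation, which is available precisely because one of the two sets is compact (a singleton) and the ambient space is finite-dimensional, so that the separating functional can be chosen with strict inequality on both sides.
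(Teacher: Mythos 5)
Your argument is correct and is exactly the Hahn--Banach separation argument the paper is invoking when it says the lemma ``is a simple consequence of Hahn--Banach''; you have simply spelled out the details. Both directions are sound: the forward implication correctly uses that an open null set is disjoint from the support and that the complement of an open halfspace is closed and convex, and the reverse implication correctly applies strict separation of a point from a disjoint closed convex set in $\Rd$.
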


\begin{remark} \label{rem_irr_nec} We note that irreducibility is not only a sufficient assumption for the existence of a Bass martingale from $\mu$ to $\nu$, but in fact also necessary. Indeed, as in the proof of Corollary \ref{thm:equivalence_kernels} one sees that the Bass martingale does the job of connecting any two sets which are charged by $\mu$ and $\nu$, implying the irreducibility of $(\mu,\nu)$ by Theorem \ref{theo_coic}.
\end{remark}

\end{appendix}

\begin{acks}[Acknowledgments]
The authors thank Ben Robinson and Gudmund Pammer for their valuable feedback and comments during the preparation of this paper.
\end{acks}

\begin{funding}
This research was funded by the Austrian Science Fund (FWF) [Grant DOIs: 10.55776/P35197 and 10.55776/P35519]. For open access purposes, the authors have applied a CC BY public copyright license to any author-accepted manuscript version arising from this submission.
\end{funding}

\bibliographystyle{imsart-number.bst} 
\bibliography{references.bib}

\end{document}